\newtheorem{thm}{Theorem}[section]
\newtheorem{prp}[thm]{Proposition}
\newtheorem{lmm}[thm]{Lemma}
\newtheorem{crl}[thm]{Corollary}
\theoremstyle{definition}
\newtheorem{dfn}[thm]{Definition}
\newtheorem{eg}[thm]{Example}
\theoremstyle{remark}
\newtheorem{rmk}[thm]{Remark}
\numberwithin{equation}{section}
\def\lra{\longrightarrow}
\def\BE#1{\begin{equation}\label{#1}}
\def\EE{\end{equation}}
\def\lr#1{\langle#1\rangle}
\def\flr#1{\left\lfloor{#1}\right\rfloor}
\def\blr#1{\big\langle#1\big\rangle}
\def\wt#1{\widetilde{#1}}
\def\ov#1{\overline{#1}}
\def\eref#1{(\ref{#1})}
\def\tn#1{\textnormal{#1}}
\def\sf#1{\textsf{#1}}
\def\wh#1{\widehat{#1}}
\def\wch#1{\widecheck{#1}}
\def\De{\Delta}
\def\Ga{\Gamma}
\def\La{\Lambda}
\def\Om{\Omega}
\def\OmN{\Omega_N}
\def\Si{\Sigma}
\def\Th{\Theta}
\def\al{\alpha}
\def\de{\delta}
\def\ga{\gamma}
\def\io{\iota}
\def\na{\nabla}
\def\om{\omega}
\def\si{\sigma}
\def\th{\theta}
\def\ve{\varepsilon}
\def\vph{\varphi}
\def\vp{\varpi}
\def\ze{\zeta}
\def\fB{\mathfrak B}
\def\C{\mathbb C}
\def\cC{\mathcal C}  
\def\cD{\mathcal{D}}
\def\cE{\mathcal E}
\def\E{\tn{E}}
\def\ne{\textnormal{e}}
\def\cH{\mathcal H}
\def\bI{\mathbb I}
\def\cI{\mathcal I}
\def\fI{\mathfrak i}
\def\cJ{\mathcal J}
\def\fJ{\mathfrak j}
\def\cK{\mathcal K}
\def\cL{\mathcal L}
\def\cM{\mathcal M}
\def\fM{\mathfrak M}
\def\cN{\mathcal N}
\def\cO{\mathcal O}
\def\P{\mathbb P}
\def\Q{\mathbb Q}
\def\R{\mathbb{R}}
\def\cR{\mathcal{R}}
\def\fR{\mathfrak R}
\def\cT{\mathcal T}
\def\cU{\mathcal U}
\def\Z{\mathbb{Z}}
\def\a{\mathbf a}
\def\fa{\mathfrak a}
\def\fc{\mathfrak c}
\def\ff{\mathfrak f}
\def\fs{\mathfrak s}
\def\ft{\mathfrak t}
\def\t{\mathbf t}
\def\u{\mathbf u}
\def\can{\tn{can}}
\def\cok{\textnormal{cok}}
\def\tnd{\textnormal{d}}
\def\ev{\tn{ev}}
\def\GL{\tn{GL}}
\def\Hom{\tn{Hom}}
\def\id{\textnormal{id}}
\def\Im{\tn{Im}}
\def\Id{\tn{Id}}
\def\ind{\textnormal{ind}}
\def\pt{\tn{pt}}
\def\rk{\textnormal{rk}}
\def\rdet{\wh{\tn{det}}}
\def\SU{\tn{SU}}
\def\supp{\tn{supp}}
\def\SL{\tn{SL}}
\def\top{\textnormal{top}}
\def\vrt{\tn{vrt}}
\def\0{\mathbf 0}
\def\1{\mathbf 1}
\def\dbar{\bar\partial}
\def\prt{\partial}
\def\eset{\emptyset}
\def\i{\infty}
\def\bp{\bar\partial}
\def\w{\wedge}
\def\bu{\bullet}
\begin{document}

\title{Real Gromov-Witten Theory in All Genera and\\ 
Real Enumerative Geometry: Construction}
\author{Penka Georgieva\thanks{Partially supported by ERC grant STEIN-259118} ~and 
Aleksey Zinger\thanks{Partially supported by NSF grants  DMS 0846978 and 1500875}}
\date{\today}
\maketitle

\begin{abstract}
\noindent
We construct positive-genus analogues of Welschinger's invariants for 
many real symplectic manifolds,
including the odd-dimensional projective spaces and the renowned quintic threefold.
In some cases, our invariants provide lower bounds for counts of real positive-genus curves
in real algebraic varieties.
Our approach to the orientability problem is based entirely on the topology of real 
bundle pairs over symmetric surfaces;
the previous attempts involved direct computations for 
the determinant lines of Fredholm operators over bordered surfaces.
We use the notion of real orientation introduced in this paper to obtain 
isomorphisms of real bundle pairs over families of symmetric surfaces and 
then apply the determinant functor to these isomorphisms.
This allows us to endow the uncompactified moduli spaces of real maps 
from symmetric surfaces of all topological types with natural orientations 
and to verify that they extend 
across the codimension-one boundaries of these spaces,
thus implementing a far-reaching proposal from C.-C.~Liu's thesis
for a fully fledged real Gromov-Witten theory. 
The second and third parts of this work concern applications:
they describe important properties of our orientations on the moduli spaces,
establish some connections with real enumerative geometry,
provide the relevant equivariant localization data for projective spaces,
and obtain vanishing results in the spirit of Walcher's predictions.
\end{abstract}

\tableofcontents

\section{Introduction}
\label{intro_sec}

\noindent
The theory of $J$-holomorphic maps plays prominent roles in symplectic topology,
algebraic geometry, and string theory.
The foundational work of~\cite{Gr,McSa94,RT,LT,FO} has 
established the theory of (closed) Gromov-Witten invariants,
i.e.~counts of $J$-holomorphic maps from closed Riemann surfaces to symplectic manifolds.
In contrast, the theory of real Gromov-Witten invariants, 
i.e.~counts of $J$-holomorphic maps from symmetric Riemann surfaces commuting 
with the involutions on the domain and the target,
is still in early stages of development, especially in positive genera. 
The two main obstacles to defining real Gromov-Witten invariants are the potential 
non-orientability of the moduli space of real $J$-holomorphic maps and 
the existence of real codimension-one boundary strata.\\

\noindent
In this paper, we introduce the notion of \textsf{real orientation} on 
a real symplectic $2n$-manifold $(X,\om,\phi)$; 
see Definitions~\ref{realorient_dfn} and~\ref{realorient_dfn2}.
We overcome the first obstacle by
showing that a real orientation induces orientations on the uncompactified moduli spaces 
of real maps for {\it all} genera of and for {\it all} types of 
involutions~$\si$ on the domain if $n$ is odd; see Theorem~\ref{orient_thm}.
We then show that these orientations do not change across the codimension-one boundary strata
after they are reversed for half of the involution types in each genus.
This allows us to overcome the second obstacle by gluing the moduli spaces
for different types of involutions along their common boundaries;
this realizes an aspiration going back to~\cite{Melissa}.
We thus obtain real Gromov-Witten invariants of arbitrary genus 
for many real symplectic manifolds; see Theorems~\ref{main_thm} and~\ref{dim3_thm}.
Many projective complete intersections, including the quintic threefold
which plays a central role in Gromov-Witten theory and string theory, are among these manifolds; 
see Proposition~\ref{CIorient_prp}.
These invariants can be used to obtain lower bounds for counts of real positive-genus curves
in real algebraic varieties; see Proposition~\ref{g1EG_prp}.
For example, we find that there are at least 4~real genus~1 degree~6 irreducible curves 
passing  through a generic collection of 6~pairs of conjugate points in~$\P^3$.

\subsection{Terminology and setup}
\label{RealGWth_subs}

\noindent
An \textsf{involution} on a smooth manifold~$X$ is a diffeomorphism
$\phi\!:X\!\lra\!X$ such that $\phi\!\circ\!\phi\!=\!\id_X$.
Let
$$X^\phi=\big\{x\!\in\!X\!:~\phi(x)\!=\!x\big\}$$
denote the fixed locus.
An \sf{anti-symplectic involution~$\phi$} on a symplectic manifold $(X,\om)$
is an involution $\phi\!:X\!\lra\!X$ such that $\phi^*\om\!=\!-\om$.
For example, the~maps
\begin{alignat*}{2}
\tau_n\!:\P^{n-1}&\lra\P^{n-1}, &\qquad [Z_1,\ldots,Z_n]&\lra[\ov{Z}_1,\ldots,\ov{Z}_n],\\
\eta_{2m}\!: \P^{2m-1}& \lra\P^{2m-1},&\qquad
[Z_1,Z_2,\ldots,Z_{2m-1},Z_{2m}]&\lra 
\big[-\ov{Z}_2,\ov{Z}_1,\ldots,-\ov{Z}_{2m},\ov{Z}_{2m-1}\big],
\end{alignat*}
are anti-symplectic involutions with respect to the standard Fubini-Study symplectic
forms~$\om_n$ on~$\P^{n-1}$ and~$\om_{2m}$ on $\P^{2m-1}$, respectively.
If 
$$k\!\ge\!0, \qquad \a\equiv(a_1,\ldots,a_k)\in(\Z^+)^k\,,$$
and $X_{n;\a}\!\subset\!\P^{n-1}$ is a complete intersection of multi-degree~$\a$
preserved by~$\tau_n$,  then $\tau_{n;\a}\!\equiv\!\tau_n|_{X_{n;\a}}$
is an anti-symplectic involution on $X_{n;\a}$ with respect to the symplectic form
$\om_{n;\a}\!=\!\om_n|_{X_{n;\a}}$. 
Similarly, if $X_{2m;\a}\!\subset\!\P^{2m-1}$ is preserved by~$\eta_{2m}$, then
$\eta_{2m;\a}\!\equiv\!\eta_{2m}|_{X_{2m;\a}}$
is an anti-symplectic involution on $X_{2m;\a}$ with respect to the symplectic form
$\om_{2m;\a}\!=\!\om_{2m}|_{X_{2m;\a}}$.
A \sf{real symplectic manifold} is a triple $(X,\om,\phi)$ consisting 
of a symplectic manifold~$(X,\om)$ and an anti-symplectic involution~$\phi$.\\

\noindent
Let $(X,\phi)$ be a manifold with an involution.
A \sf{conjugation} on a complex vector bundle $V\!\lra\!X$ 
\sf{lifting} an involution~$\phi$ is a vector bundle homomorphism 
$\vph\!:V\!\lra\!V$ covering~$\phi$ (or equivalently 
a vector bundle homomorphism  $\vph\!:V\!\lra\!\phi^*V$ covering~$\id_X$)
such that the restriction of~$\vph$ to each fiber is anti-complex linear
and $\vph\!\circ\!\vph\!=\!\id_V$.
A \sf{real bundle pair} $(V,\vph)\!\lra\!(X,\phi)$   
consists of a complex vector bundle $V\!\lra\!X$ and 
a conjugation~$\vph$ on $V$ lifting~$\phi$.
For example, 
$$(TX,\tnd\phi)\lra(X,\phi) \qquad\hbox{and}\qquad
(X\!\times\!\C^n,\phi\!\times\!\fc)\lra(X,\phi),$$
where $\fc\!:\C^n\!\lra\!\C^n$ is the standard conjugation on~$\C^n$,
are real bundle pairs.
For any real bundle pair $(V,\vph)\!\lra\!(X,\phi)$, 
we denote~by
$$\La_{\C}^{\top}(V,\vph)=(\La_{\C}^{\top}V,\La_{\C}^{\top}\vph)$$
the top exterior power of $V$ over $\C$ with the induced conjugation.
Direct sums, duals, and tensor products over~$\C$ of real bundle pairs over~$(X,\phi)$
are again real bundle pairs over~$(X,\phi)$.\\

\noindent
A \sf{symmetric surface} $(\Si,\si)$ is a closed connected oriented smooth 
surface~$\Si$ (manifold of real dimension~2) with an orientation-reversing involution~$\si$.
The fixed locus of~$\si$ is a disjoint union of circles.
If in addition $(X,\phi)$ is a manifold with an involution, 
a \sf{real map} 
$$u\!:(\Si,\si)\lra(X,\phi)$$ 
is a smooth map $u\!:\Si\!\lra\!X$ such that $u\!\circ\!\si=\phi\!\circ\!u$.
We denote the space of such maps by~$\fB_g(X)^{\phi,\si}$,
with $g$ denoting the genus of the domain~$\Si$ of~$\si$.\\

\noindent
For a symplectic manifold $(X,\om)$, we denote~by $\cJ_{\om}$
the space of $\om$-compatible almost complex structures on~$X$.
If $\phi$ is  an anti-symplectic involution on~$(X,\om)$, let 
\BE{cJomdfn_e}\cJ_{\om}^{\phi}=\big\{J\!\in\!\cJ_{\om}\!:\,\phi^*J\!=\!-J\big\}.\EE
For a genus~$g$ symmetric surface~$(\Si,\si)$, we similarly denote by $\cJ_{\Si}^{\si}$
the space of complex structures~$\fJ$ on~$\Si$ compatible with the orientation such that 
$\si^*\fJ\!=\!-\fJ$.
For $J\!\in\!\cJ_{\om}^{\phi}$, $\fJ\!\in\!\cJ_{\Si}^{\si}$, and
$u\!\in\!\fB_g(X)^{\phi,\si}$, let 
$$\dbar_{J,\fJ}u=\frac{1}{2}\big(\tnd u+J\circ\tnd u\!\circ\!\fJ\big)$$
be the \textsf{$\dbar_J$-operator} on~$\fB_g(X)^{\phi,\si}$.\\

\noindent
Let $g,l\!\in\!\Z^{\ge0}$,  $(\Si,\si)$ be a genus~$g$ symmetric surface,
$B\!\in\!H_2(X;\Z)\!-\!0$, and $J\!\in\!\cJ_{\om}^{\phi}$.
Let $\De^{2l}\!\subset\!\Si^{2l}$ be the big diagonal, i.e.~the subset of $2l$-tuples
with at least two coordinates equal.
Denote~by
\begin{equation*}\begin{split}
\fM_{g,l}(X,B;J)^{\phi,\si}= 
\big\{(u,(z_1^+,z_1^-),\ldots,(z_l^+,z_l^-),\fJ)\!\in\!
\fB_g(X)^{\phi,\si}\!\times\!(\Si^{2l}\!-\!\De^{2l})\!\times\!\cJ_{\Si}^{\si}\!:\qquad&\\
z_i^-\!=\!\si(z_i^+)~\forall\,i\!=\!1,\ldots,l,~u_*[\Si]_{\Z}\!=\!B,~\dbar_{J,\fJ}
u\!=\!0\big\}\big/\!\!\sim&
\end{split}\end{equation*}
the (uncompactified)
moduli space of equivalence classes of degree~$B$ real $J$-holomorphic maps
from $(\Si,\si)$ to~$(X,\phi)$ with $l$~conjugate pairs of  marked points. 
Two marked $J$-holomorphic $(\phi,\si)$-real maps determine the same element of this 
moduli space if they differ by an orientation-preserving diffeomorphism of~$\Si$ 
commuting with~$\si$.
We denote~by 
\BE{notgluedspace_e}\ov\fM_{g,l}(X,B;J)^{\phi,\si}\supset\fM_{g,l}(X,B;J)^{\phi,\si}\EE
Gromov's convergence compactification of $\fM_{g,l}(X,B;J)^{\phi,\si}$ obtained
by including stable real maps from nodal symmetric surfaces.
The (virtually) codimension-one boundary strata~of
$$\ov\fM_{g,l}(X,B;J)^{\phi,\si}-\fM_{g,l}(X,B;J)^{\phi,\si}
\subset \ov\fM_{g,l}(X,B;J)^{\phi,\si}$$
consist of real $J$-holomorphic maps from one-nodal symmetric surfaces to~$(X,\phi)$.
Each stratum is either a (virtual) hypersurface in $\ov\fM_{g,l}(X,B;J)^{\phi,\si}$
or a (virtual) boundary of the spaces $\ov\fM_{g,l}(X,B;J)^{\phi,\si}$
for precisely two  topological types of orientation-reversing involutions~$\si$
on~$\Si$.
Let 
\BE{gluedspace_e}\fM_{g,l}(X,B;J)^{\phi}=\bigsqcup_{\si}\fM_{g,l}(X,B;J)^{\phi,\si}
\quad\hbox{and}\quad
\ov\fM_{g,l}(X,B;J)^{\phi}=\bigcup_{\si}\ov\fM_{g,l}(X,B;J)^{\phi,\si}\EE
denote the (disjoint) union of the uncompactified real moduli spaces 
and the union of the compactified real moduli spaces, respectively, 
taken over all topological types of orientation-reversing involutions~$\si$ on~$\Si$.\\

\noindent
Similarly to Example~\ref{ex_tbdl}, we denote~by 
$$\det\dbar_{\C}\lra \ov\fM_{g,l}(X,B;J)^{\phi}$$
the determinant line bundle of the standard real Cauchy-Riemann operator with values in~$(\C,\fc)$.
This real line bundle is not orientable if $X$ is a point and $g\!\ge\!1$.
It is not needed to formulate the main immediately applicable
results of this paper,
Theorems~\ref{main_thm} and~\ref{dim3_thm} below, but
is used in the overarching statement of Theorem~\ref{orient_thm}.

\subsection{Real orientations and real GW-invariants}
\label{MainTh_subs}

\noindent
We now introduce the notion of real orientation on a real symplectic manifold
and state the main theorems of this paper.

\begin{dfn}\label{realorient_dfn}
A real symplectic manifold $(X,\om,\phi)$ is \sf{real-orientable} if
there exists a rank~1 real bundle pair $(L,\wt\phi)$ over $(X,\phi)$ such~that 
\BE{realorient_e}w_2(TX^{\phi})=w_1(L^{\wt\phi})^2
\qquad\hbox{and}\qquad
\La_{\C}^{\top}(TX,\tnd\phi)\approx(L,\wt\phi)^{\otimes 2}\,.\EE
\end{dfn}

\begin{dfn}\label{realorient_dfn2}
A \sf{real orientation} on a real-orientable symplectic manifold $(X,\om,\phi)$ consists~of 
\begin{enumerate}[label=(RO\arabic*),leftmargin=*]

\item\label{LBP_it} a rank~1 real bundle pair $(L,\wt\phi)$ over $(X,\phi)$
satisfying~\eref{realorient_e},

\item\label{isom_it} a homotopy class~$[\psi]$ of isomorphisms 
of real bundle pairs in~\eref{realorient_e}, and

\item\label{spin_it} a spin structure~$\fs$ on the real vector bundle
$TX^{\phi}\!\oplus\!2(L^*)^{\wt\phi^*}$ over~$X^{\phi}$
compatible with the orientation induced by~\ref{isom_it}. 

\end{enumerate}
\end{dfn}

\begin{thm}\label{orient_thm} 
Let $(X,\om,\phi)$ be a real-orientable $2n$-manifold, 
$g,l\!\in\!\Z^{\ge0}$, 
\hbox{$B\!\in\!H_2(X;\Z)$}, and $J\!\in\!\cJ_{\om}^{\phi}$. 
Then a real orientation on $(X,\om,\phi)$ determines an orientation on the real line bundle
\BE{orient_thm_e}\La_{\R}^{\top}\big(T\ov\fM_{g,l}(X,B;J)^{\phi}\big)
\otimes \big(\!\det\dbar_{\C}\big)^{\otimes(n+1)}\lra 
\ov\fM_{g,l}(X,B;J)^{\phi}.\EE
In particular, the real moduli space $\ov\fM_{g,l}(X,B;J)^{\phi}$ is orientable 
if $n$ is odd.
\end{thm}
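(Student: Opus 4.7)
The plan is to reduce the orientation on $\La_\R^{\top}T\ov\fM_{g,l}(X,B;J)^{\phi} \otimes (\det\dbar_\C)^{\otimes(n+1)}$ to the spin trivialization provided by~(RO3) of a real bundle pair canonically built from $(TX,\tnd\phi)$ and $(L,\wt\phi)$.

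At a regular smooth point $[u,(z_i^\pm),\fJ]$ of $\fM_{g,l}(X,B;J)^{\phi,\si}$, I would identify the tangent space as the sum of (i) the index of the linearized $\dbar$-operator $D_u=\dbar_{(u^*TX,u^*\tnd\phi)}$, (ii) the tangent to the Teichm\"uller-type moduli of real complex structures on $(\Si,\si)$, controlled by $\det\dbar_{(T\Si,\tnd\si)}$, and (iii) the complex-oriented tangent space to the conjugate marked-point configuration. Index theory then expresses $\La_\R^{\top}T\ov\fM \otimes (\det\dbar_\C)^{\otimes(n+1)}$, modulo canonically oriented factors, as a tensor product of $\det\dbar_{(u^*TX,u^*\tnd\phi)}$ with $\det\dbar_{(T\Si,\tnd\si)}$ and appropriate powers of $\det\dbar_\C$.

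Next, I would use~(RO1)--(RO3) to trivialize the main factor. Form the rank-$(n+2)$ real bundle pair
\[
(W,\psi) := (u^*TX,u^*\tnd\phi)\oplus 2(u^*L^*,u^*\wt\phi^*).
\]
By~(RO2), $\La_\C^{\top}(W,\psi)$ is canonically isomorphic to $(\C,\fc)$, so $(W,\psi)$ is topologically a complex-trivializable bundle. Its class as a real bundle pair is then determined by its restriction to $\Si^\si$, namely the pullback by $u|_{\Si^\si}$ of $TX^\phi\oplus 2(L^*)^{\wt\phi^*}$, whose second Stiefel-Whitney class vanishes by~(RO1) and which is spin-trivialized by~(RO3). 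Classification of real bundle pairs over symmetric surfaces with trivial complex top exterior power and spin real part then yields a canonical-up-to-homotopy isomorphism $(W,\psi)\cong(\C^{n+2},\fc)$, producing
\[
\det\dbar_{(u^*TX,u^*\tnd\phi)}\otimes\bigl(\det\dbar_{(u^*L^*,u^*\wt\phi^*)}\bigr)^{\otimes 2}\cong(\det\dbar_\C)^{\otimes(n+2)}.
\]
Since tensor squares are canonically oriented, this identifies $\det\dbar_{(u^*TX,u^*\tnd\phi)}$ with $(\det\dbar_\C)^{\otimes(n+2)}$ up to canonical orientation. A parallel argument applied to $(T\Si,\tnd\si)$, combined with the complex-oriented marked-point contribution, assembles the two determinant lines with the $(\det\dbar_\C)^{\otimes(n+1)}$ twist into a canonically oriented even power of $\det\dbar_\C$, yielding the claimed orientation.

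The main obstacle will be executing this classification canonically in families over the full moduli space and extending the resulting orientation across the codimension-one boundary of $\ov\fM_{g,l}(X,B;J)^{\phi,\si}$. In particular, for involutions $\si$ with empty fixed locus, the restriction-to-$\Si^\si$ step is vacuous and must be replaced by a degeneration argument: pinch a $\si$-invariant circle to produce a $\si$-symmetric pair of nodes, then transport the orientation across the resulting boundary stratum of $\ov\fM$ using multiplicativity of the determinant line under nodal degeneration. Verifying that this transport matches the direct construction on nearby strata of positive-dimensional-fixed-locus type, and that the resulting orientations patch consistently over all of $\ov\fM_{g,l}(X,B;J)^{\phi}$, is the technical heart of the proof.
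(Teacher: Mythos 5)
Your construction of the orientation on the uncompactified moduli space is essentially the paper's: the decomposition of $T\fM_{g,l}(X,B;J)^{\phi,\si}$ into the linearized-operator determinant, the domain-moduli contribution, and the marked-point contribution is the isomorphism~\eref{TuMdecomp_e}, and the trivialization of $(u^*TX,u^*\tnd\phi)\oplus 2(u^*L^*,u^*\wt\phi^*)$ up to homotopy using the $\La_\C^{\top}$ condition and the spin structure is exactly Proposition~\ref{canonisom_prp}, leading to the orientation on the relative determinant in Corollary~\ref{canonisom_crl2a}. However, your remark that ``for involutions $\si$ with empty fixed locus, the restriction-to-$\Si^\si$ step is vacuous and must be replaced by a degeneration argument'' is a misconception for the smooth orientation: when $\Si^\si=\emptyset$ the spin condition is vacuous and the homotopy class of trivializations is determined by the $\La_\C^{\top}$ condition alone, using $\pi_1(\SL_n\C)=\pi_2(\SL_n\C)=0$ as in Lemma~\ref{homotopextend_lmm}; no degeneration is needed there. (Also, your ``parallel argument applied to $(T\Si,\tnd\si)$'' glosses over the fact that orienting the Deligne-Mumford factor in Proposition~\ref{DM_prp} requires the Kodaira-Spencer isomorphism, Dolbeault isomorphism, Serre duality, and the residue exact sequence in addition to the squaring argument.)

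The genuine gap is in the boundary extension, which you correctly flag as the technical heart but leave entirely to hope. The point is that the two factors behave very differently across the codimension-one strata: the orientation on the relative determinant $\rdet\,D_{(TX,\tnd\phi)}$ does extend across all strata (Corollary~\ref{canonisomExt2_crl2a}), but the canonical orientation on $\La_\R^{\top}(T\cM_{g,l}^\si)\otimes\det\dbar_\C$ from Proposition~\ref{DM_prp} does \emph{not} patch consistently. The KS-isomorphism orientation flips across every codimension-one stratum (Lemma~\ref{KSext_lmm}), while the squaring orientation from Corollary~\ref{canonisom_crl} extends across strata of types~(E) and~(H1) and flips across~(H2) and~(H3) (Corollary~\ref{realorient_crl}), so the net DM orientation flips across~(E)/(H1) and extends across~(H2)/(H3) (Proposition~\ref{DMext_prp}). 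The orientations you produce on the different $\fM_{g,l}(X,B;J)^{\phi,\si}$ therefore do \emph{not} glue, and the proof requires the explicit sign correction $(-1)^{g+|\si|_0+1}$, which flips exactly across~(E)/(H1) because those transitions change $|\si|_0$ by one. Without identifying this sign, ``verifying that the resulting orientations patch consistently'' will simply fail.
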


\noindent
A homotopy class of isomorphisms as in~\eref{realorient_e} determines an orientation
on $TX^{\phi}$ and thus on $TX^{\phi}\!\oplus\!2(L^*)^{\wt\phi^*}$;
see the paragraph after Definition~\ref{realorient_dfn4}.
In particular, Theorem~\ref{orient_thm} does not apply to any real symplectic manifold
$(X,\om,\phi)$ with unorientable Lagrangian~$X^{\phi}$.
By the first assumption in~\eref{realorient_e}, the real vector bundle
$TX^{\phi}\!\oplus\!2(L^*)^{\wt\phi^*}$ over~$X^{\phi}$ admits a spin structure.
Since $2(L^*)^{\wt\phi^*}\!\approx\!L^*|_{X^{\phi}}$, 
a real orientation on $(X,\om,\phi)$ includes a relative spin structure on $X^{\phi}\!\subset\!X$
in the sense of \cite[Definition~8.1.2]{FOOO}.\\

\noindent
The moduli space $\ov\fM_{g,l}(X,B;J)^{\phi}$ is not smooth in general.
Its tangent bundle  in~\eref{orient_thm_e} 
should be viewed in the usual moduli-theoretic (or virtual) sense, 
i.e.~as the index of suitably defined linearization of the $\dbar_J$-operator
(which includes deformations of the complex structure~$\fJ$ on~$\Si$).
The first statement of Theorem~\ref{orient_thm} and its proof also apply to Kuranishi charts 
for $\ov\fM_{g,l}(X,B;J)^{\phi}$
and the tangent spaces of the moduli spaces of real $(J,\nu)$-maps for 
generic local $\phi$-invariant deformations~$\nu$ of~\cite{RT2}.
A Kuranishi structure for $\ov\fM_{g,l}(X,B;J)^{\phi}$ is obtained by carrying out
the constructions of \cite{LT,FO} in a $\phi$-invariant manner;
see \cite[Section~7]{Sol} and \cite[Appendix]{FOOO9}.
Since the (virtual) boundary of $\ov\fM_{g,l}(X,B;J)^{\phi}$ is empty,
Theorem~\ref{orient_thm} implies that this moduli space carries a virtual fundamental class
in some cases and thus gives rise to real GW-invariants in arbitrary genus.

\begin{thm}\label{main_thm} 
Let $(X,\om,\phi)$ be a compact real-orientable $2n$-manifold with $n\!\not\in\!2\Z$,
$g,l\!\in\!\Z^{\ge0}$, \hbox{$B\!\in\!H_2(X;\Z)$}, and $J\!\in\!\cJ_{\om}^{\phi}$.
Then a real orientation on~$(X,\om,\phi)$ endows the moduli space\linebreak 
$\ov\fM_{g,l}(X,B;J)^{\phi}$ with a virtual fundamental class and  
thus gives rise to  genus~$g$ real GW-invariants of $(X,\om,\phi)$
that are independent of the choice of~$J\!\in\!\cJ_{\om}^{\phi}$.
\end{thm}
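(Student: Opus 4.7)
The plan is to deduce Theorem~\ref{main_thm} from Theorem~\ref{orient_thm} by combining it with a $\phi$-equivariant virtual fundamental class construction and a standard cobordism argument for $J$-independence. First, since $n$ is odd, $n+1$ is even and $(\det\dbar_{\C})^{\otimes(n+1)}$ is a tensor square of a real line bundle; the tensor square $L\!\otimes\!L$ of any real line bundle carries a canonical orientation via the section $v\!\otimes\!v$. Thus the second factor in~\eref{orient_thm_e} is canonically oriented, and Theorem~\ref{orient_thm} descends to an orientation on the virtual tangent bundle $\La_{\R}^{\top}(T\ov\fM_{g,l}(X,B;J)^{\phi})$.

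Next I would verify that $\ov\fM_{g,l}(X,B;J)^{\phi}$ is compact and has empty virtual boundary. Compactness follows from Gromov compactness applied in the $\phi$-equivariant setting: the domain involutions pass to any Gromov limit, and the hypothesis that $X$ is compact controls the targets. The vanishing of the virtual boundary is the entire point of the gluing in~\eref{gluedspace_e}: each codimension-one boundary stratum of $\ov\fM_{g,l}(X,B;J)^{\phi,\si}$ that is a genuine boundary (rather than a virtual hypersurface) arises for exactly two topological types of involutions~$\si$, and these strata are identified pairwise in the glued space, so every codimension-one stratum of $\ov\fM_{g,l}(X,B;J)^{\phi}$ is an interior (virtual) hypersurface.

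To construct the virtual fundamental class I would invoke the $\phi$-equivariant Kuranishi atlases already mentioned in the excerpt, obtained by running the constructions of~\cite{LT,FO} $\phi$-invariantly as in \cite[Section~7]{Sol} and \cite[Appendix]{FOOO9}. Together with the orientation above and the absence of virtual boundary, this yields a virtual fundamental class $[\ov\fM_{g,l}(X,B;J)^{\phi}]^{\tn{vir}}$, and pairing it against evaluation classes at the $l$ pairs of conjugate marked points (and $\psi$-classes as usual) produces the genus-$g$ real GW-invariants. For $J$-independence, any $J_0,J_1\!\in\!\cJ_{\om}^{\phi}$ are joined by a path $(J_t)_{t\in[0,1]}$ in the convex, hence contractible, space $\cJ_{\om}^{\phi}$; the parametrized moduli space over~$[0,1]$ is then a compact, oriented Kuranishi cobordism because the construction in Theorem~\ref{orient_thm} is uniform in~$J$, and therefore the resulting invariants at $J_0$ and $J_1$ coincide.

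The principal obstacle I anticipate is verifying that the $\phi$-equivariant Kuranishi charts on the individual components $\ov\fM_{g,l}(X,B;J)^{\phi,\si}$ assemble into a global atlas on $\ov\fM_{g,l}(X,B;J)^{\phi}$ that is compatible with the orientation supplied by Theorem~\ref{orient_thm}. The delicate point is matching obstruction bundles and gluing parameters across the shared codimension-one strata of distinct involution types, where the sign convention ``reversed for half of the involution types in each genus'' described in the introduction must be reconciled with the local Kuranishi data on both sides. Once this local-to-global matching is in hand, the construction of the virtual fundamental class and its $J$-invariance proceed along essentially the same lines as in closed Gromov-Witten theory.
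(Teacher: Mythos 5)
Your proposal is correct and follows essentially the same route as the paper: deduce Theorem~\ref{main_thm} from Theorem~\ref{orient_thm} via the canonical orientation of $(\det\dbar_{\C})^{\otimes(n+1)}$ (a tensor square when $n$ is odd), the empty virtual boundary of the glued space $\ov\fM_{g,l}(X,B;J)^{\phi}$, $\phi$-equivariant Kuranishi atlases following \cite{Sol,FOOO9}, and a cobordism in the contractible space $\cJ_{\om}^{\phi}$ for $J$-independence. One small clarification: the ``obstacle'' you flag at the end---reconciling the sign reversal across shared boundary strata with the Kuranishi data---is already resolved by Theorem~\ref{orient_thm} itself, which orients the line bundle~\eref{orient_thm_e} over the \emph{compactified} moduli space directly, so no further matching across strata is required.
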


\noindent
The resulting real GW-invariants of~$(X,\om,\phi)$ in general depend on the choice
of real orientation.
This situation is analogous to the dependence on the choice of relative spin structure
often seen in open GW-theory.\\

\noindent
A notion of \textsf{semi-positive} for a real symplectic manifold 
$(X,\om,\phi)$ is introduced in \cite[Definition~1.2]{RealRT}.
Monotone symplectic manifolds with an anti-symplectic involution, 
including all projective spaces with the standard involutions and 
real Fano hypersurfaces of dimension at least~3, are semi-positive.
By \cite[Theorem~3.3]{RealRT}, the semi-positive property of \cite[Definition~1.2]{RealRT} 
plays the same role in real GW-theory as 
the semi-positive property of \cite[Definition~6.4.1]{MS}
plays in ``classical" GW-theory.
In particular, the real analogues of the geometric perturbations of~\cite{RT2} 
introduced in \cite[Section~3.1]{RealRT}
suffice to define the invariants of Theorem~\ref{main_thm}
with constraints pulled back from the target and the  Deligne-Mumford moduli space of real curves
for a semi-positive real symplectic manifold $(X,\om,\phi)$ endowed with a real orientation.
In these cases, the virtual tangent space of $\ov\fM_{g,l}(X,B;J)^{\phi}$ appearing 
in~\eref{orient_thm_e} can be replaced by the actual tangent space of the moduli space 
of simple real $(J,\nu)$-holomorphic maps from smooth and one-nodal symmetric surfaces
of genus~$g$.
The invariance of the resulting counts of such maps
can then be established by following along a path of auxiliary data;
it can pass only through one-nodal degenerations.\\

\noindent
Theorem~\ref{main_thm} yields  counts of real curves with conjugate pairs of insertions only.
By the last statement of \cite[Theorem~6.5]{Ge2},
the orientability of the Deligne-Mumford moduli space $\R\ov\cM_{g,l;k}$
of real genus~$g$ curves
with $l$ conjugate pairs of marked points and $k$ real marked points
does not capture the orientability of the analogous moduli space 
$\ov\fM_{g,l;k}(X,B;J)^{\phi}$ of real maps whenever $k\!>\!0$.
Theorem~\ref{orient_thm} remains valid for such moduli spaces outside of certain
``bad" codimension-one strata.
However, these strata are avoided by generic one-parameter families of real maps
in certain cases;
Theorem~\ref{orient_thm} then yields counts of real curves with conjugate pairs of insertions 
and real point insertions.

\begin{thm}\label{dim3_thm} 
Let $(X,\om,\phi)$ be a compact real-orientable 6-manifold 
such that $\lr{c_1(X),B}\!\in\!4\Z$ for all $B\!\in\!H_2(X;\Z)$
with $\phi_*B\!=\!-B$.
For all 
$$B \in H_2(X;\Z),\quad
 \mu_1,\ldots,\mu_l\!\in\!H^6(X;\Q)\!\cup\!H^2(X;\Q), 
\quad\hbox{and}\quad k\in\Z^{\ge0},$$
a real orientation on~$(X,\om,\phi)$ determines a signed count 
$$\blr{\mu_1,\ldots,\mu_l;\pt^k}_{1,B}^{\phi}\in\Q$$
of real $J$-holomorphic genus~1 degree~$B$ curves which is independent of the choice 
of~$J\in\!\cJ_{\om}^{\phi}$.
\end{thm}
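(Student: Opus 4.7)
The strategy is to define the signed count by cutting down a suitably perturbed moduli space of real genus-$1$ maps by geometric constraints, and to orient the resulting cycle using Theorem \ref{orient_thm}. The moduli space in question is $\fM_{1,l;k}(X,B;J,\nu)^{\phi}$, of simple real $(J,\nu)$-holomorphic maps from smooth and one-nodal genus-$1$ symmetric surfaces with $l$ conjugate pairs and $k$ real marked points, made transverse by a generic $\phi$-invariant Ruan-Tian-type inhomogeneous term $\nu$ as in \cite{RT2} and its real adaptation in \cite{Ge2} (or, more generally, by $\phi$-equivariant Kuranishi perturbations as described after Theorem \ref{main_thm}). Away from boundary strata, this gives a smooth manifold of real dimension $\langle c_1(X),B\rangle+2l+k$.

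Next I would invoke the extension of Theorem \ref{orient_thm} to moduli spaces with real marked points alluded to in the paragraph preceding this theorem: an orientation exists on the good part of $\fM_{1,l;k}(X,B;J,\nu)^{\phi}$, i.e.~away from certain ``bad'' codimension-one strata. The central step is to show that, under the hypotheses $n\!=\!3$, $g\!=\!1$, and $\langle c_1(X),B_0\rangle\!\in\!4\Z$ for every bubble class $B_0$ appearing (which again satisfies $\phi_*B_0\!=\!-B_0$), every bad codimension-one stratum has codimension at least two after imposing the constraints from $\mu_1,\ldots,\mu_l$ and from $\pt^k$, and is therefore avoided by a generic one-parameter family of $(J_t,\nu_t)$. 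The bad strata arise from degenerations in which a real marked point lies on a sphere bubble, is absorbed into a real node, or sits on a real component about to split along a real node; each is controlled by a dimension count that exploits the $4\Z$-divisibility of $\langle c_1,\cdot\rangle$ on bubble classes together with the fact that a real node carries a codimension-one real constraint on its own.

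Granting this, the signed count $\blr{\mu_1,\ldots,\mu_l;\pt^k}_{1,B}^{\phi}$ is defined as the intersection number of the conjugate-pair evaluation maps with (geometric representatives of) the Poincar\'e duals of $\mu_1,\ldots,\mu_l$ and of the real-point evaluation maps with a generic point of $X^{\phi}$, all signed by the orientation just constructed. Independence of $J\!\in\!\cJ_\om^{\phi}$ then follows from the standard cobordism argument along a generic path $(J_t,\nu_t)$: the cut-down parametrized moduli is a compact oriented $1$-manifold whose only boundary strata are the non-bad codimension-one degenerations of real genus-$1$ maps, across which the orientations match by the involution-type gluing mechanism of Theorem \ref{orient_thm}.

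The hard part will be the dimension count in the second paragraph: a complete enumeration of the bad codimension-one strata that appear when $k\!>\!0$, and a uniform verification that each has codimension at least two after cutting down, using the $4\Z$-divisibility hypothesis together with $n\!=\!3$ and $g\!=\!1$ in an essential way. This classification is the substantive content of the theorem, and once it is in place the rest of the argument is a routine adaptation of the invariance proofs already available for closed GW invariants and for the $k\!=\!0$ real invariants of Theorem~\ref{main_thm}.
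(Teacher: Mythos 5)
Your overall strategy matches the paper's: work with $\fM_{1,l;k}(X,B;J,\nu)^{\phi}$, orient it away from problematic strata, cut down by constraints, and show that the $4\Z$-divisibility hypothesis forces a generic one-parameter family to avoid the bad strata. But the pieces you defer as "the hard part" are the substantive content, and your characterization of the bad strata does not agree with the actual analysis, which you would need in order to carry the dimension count through.

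Two concrete gaps. First, you never establish \emph{for which} $k$ the vertical tangent bundle of the forgetful map $\ov\fM_{1,l;k}\to\ov\fM_{1,l}$ is orientable, nor do you construct the canonical orientation on its fibers. In the paper this is a prerequisite step: the fibers over the main strata are open subsets of $(S^1)^k$, there are real diffeomorphisms in $\cD_\sigma$ reversing the orientation of the fixed circle, so the vertical bundle is non-orientable when $k$ is odd; when $k$ is even one writes down a canonical fiber orientation depending on whether $|\sigma|_0=1$ or $|\sigma|_0=2$, and one must then check that this orientation extends across each boundary type. That $k$ is forced to be even is itself derived from the dimension identity $\sum(\deg\mu_i-2)+2k=\lr{c_1(X),B}$ together with $\deg\mu_i-2\in 4\Z$ and $\lr{c_1(X),B}\in 4\Z$; your proposal uses the $4\Z$ hypothesis only inside an unstated bubble dimension count, not here.

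Second, your list of bad strata ("a real marked point on a sphere bubble, absorbed into a real node, or on a component about to split") does not match what actually occurs and what actually fails. In genus~$1$ the codimension-one boundary types are only (E), (H1), and (H3). The paper shows the canonical fiber orientation extends across all (E) strata (all real marked points end up on the one fixed circle, with the node an isolated fixed point) and across all (H1) strata (via the matching condition on $\Omega(\cT|_{\Sigma_0})$ from Lemma~\ref{TSiext_lmm}, which shows the boundary orientation does not extend over the node); a real marked point sitting on a sphere bubble in an (H3) degeneration is \emph{not} automatically bad. The only problematic strata are (H3) degenerations where both the genus-$1$ component and the sphere bubble carry an \emph{odd} number of real marked points. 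It is exactly this parity condition on $k_2$ that the $4\Z$-divisibility is used to rule out in the final dimension count: one gets $\sum(\deg\mu_{j_i}-2)+2k_2-1\le\lr{c_1(X),B_2}\le\sum(\deg\mu_{j_i}-2)+2k_2+1$, and since the outer quantities modulo $4$ force $\lr{c_1(X),B_2}$ to equal the middle term, $k_2$ must be even. Without the precise extendability analysis for (E) and (H1), and without the correct identification of (H3$^-$) as the sole obstruction, the dimension count cannot be set up, so the proof is incomplete as written.
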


\noindent
The $n\!=\!0$ case of Theorem~\ref{orient_thm} is essentially Proposition~\ref{DMext_prp}
which describes the orientability of the Deligne-Mumford moduli space
$\R\ov\cM_{g,l}$ of genus~$g$ symmetric surfaces with $l$~conjugate pairs of marked points.
If $n\!\in\!2\Z$ and $g\!+\!l\!\ge\!2$, Theorem~\ref{orient_thm} implies that 
a real orientation on~$(X,\om,\phi)$ induces an orientation on 
the real line bundle
\BE{twist_rmk_e}\La_{\R}^{\top}\big(T\ov\fM_{g,l}(X,B;J)^{\phi}\big)
\otimes \ff^*\big(\La_{\R}^{\top}(T\R\ov\cM_{g,l})\big)\lra  \ov\fM_{g,l}(X,B;J)^{\phi},\EE
where $\ff$ is the forgetful morphism~\eref{ffdfn_e}.
This orientation can be used to construct GW-invariants of~$(X,\om,\phi)$
with classes twisted by the orientation system of~$\R\ov\cM_{g,l}$,
as done in~\cite{Ge2} in the $g\!=\!0$ case.

\subsection{Previous results and acknowledgments}
\label{introend_subs}

\noindent
Invariant signed counts of real genus~0 curves with point constraints in real symplectic 4-manifolds 
and in many real symplectic 6-manifolds are defined in~\cite{Wel4,Wel6}.
An approach to interpreting these counts in the style of Gromov-Witten theory, 
i.e.~as counts of parametrizations of such curves, is presented in~\cite{Cho,Sol}. 
Signed counts of real genus~0 curves with conjugate pairs of arbitrary (not necessarily point)
constraints in arbitrary dimensions are defined in~\cite{Ge2}. 
All of these invariants involve morphisms from $\P^1$ with the standard involution 
$\tau\!\equiv\!\tau_2$ only and are constructed under the assumption that 
the fixed circle cannot shrink in a limit;
thus, only the degenerations of type~(H3) in Section~\ref{bnd_subs} are relevant in this case.
This assumption is dropped in~\cite{Teh} by combining  counts of $(\P^1,\tau)$-morphisms
with counts of $(\P^1,\eta)$-morphisms for the fixed-point-free involution $\eta\!\equiv\!\eta_2$
on~$\P^1$ and thus also considering  the degenerations of type~(E).
As the degenerations of types~(H1) and~(H2) do not appear in genus~0,
\cite{Teh} thus implements the genus~0 case of an aspiration raised in~\cite{Melissa}
and   elucidated in \cite[Section~1.5]{PSW}.
The target manifolds considered in \cite{Teh} are real-orientable in the sense
of Definition~\ref{realorient_dfn} and have spin fixed locus.\\

\noindent
We would like to thank E.~Brugall\'e, R.~Cr\'etois, E.~Ionel, S.~Lisi,
C.-C.~Liu, J.~Solomon, J.~Starr, M.~Tehrani, G.~Tian, and J.~Welschinger for related discussions.
We would also like to thank a referee for very thorough comments on
a previous version of this paper which led to corrections of a number of misstatements
and to other improvements in the exposition.
The second author is very grateful to the IAS School of Mathematics for its hospitality 
during the initial stages of our project on real GW-theory.

\section{Examples, properties, and applications}
\label{examples_sec}

\noindent
We begin this section with examples of distinct collections of real-orientable 
symplectic manifolds.
We then describe a number of properties of the real GW-invariants of 
Theorems~\ref{main_thm} and~\ref{dim3_thm},
including connections with real enumerative geometry
and compatibility with key morphisms of GW-theory.
With the exception of Proposition~\ref{vanGW_prp} and 
Corollaries~\ref{evenGWs_crl1} and~\ref{evenGWs_crl2}, 
the claims below are established in~\cite{RealGWsII,RealGWsIII}.

\begin{prp}\label{CIorient_prp}
Let $m,n\!\in\!\Z^+$, $k\!\in\!\Z^{\ge0}$, and $\a\!\equiv\!(a_1,\ldots,a_k)\!\in\!(\Z^+)^k$.
\begin{enumerate}[label=(\arabic*),leftmargin=*]
\item If $X_{n;\a}\!\subset\!\P^{n-1}$ is a complete intersection of multi-degree~$\a$
preserved by~$\tau_n$,
$$\sum_{i=1}^ka_i\equiv n \mod2, \qquad\hbox{and}\qquad
\sum_{i=1}^ka_i^2\equiv \sum_{i=1}^ka_i \mod4,$$
then $(X_{n;\a},\om_{n;\a},\tau_{n;\a})$ is a real-orientable symplectic manifold.

\item If $X_{2m;\a}\!\subset\!\P^{2m-1}$ is a complete intersection of 
multi-degree~$\a$ preserved by~$\eta_{2m}$ and
$$a_1\!+\!\ldots\!+\!a_k\equiv 2m \mod4,$$
then $(X_{2m;\a},\om_{2m;\a},\eta_{2m;\a})$ is a real-orientable symplectic manifold.
\end{enumerate}
\end{prp}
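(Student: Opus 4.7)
\textbf{Proof plan for Proposition~\ref{CIorient_prp}.}
The strategy in both cases is to exhibit an explicit rank~$1$ real bundle pair $(L,\wt\phi)$ built from the tautological line bundle $\ga\!\lra\!\P^{n-1}$ (resp.~$\P^{2m-1}$), verify the isomorphism condition in~\eref{realorient_e} by a Chern-class (adjunction) computation, and then verify the Stiefel-Whitney class condition by a direct computation in $H^*(X^\phi;\Z/2)$ using the exact sequence
\BE{exseq_pf}
0\lra TX_{n;\a}^{\tau}\lra T\R\P^{n-1}|_{X^\phi}\lra \bigoplus_{i=1}^k \ga_\R^{*,\otimes a_i}|_{X^\phi}\lra 0\,,\EE
where $\ga_\R\!=\!\ga^{\tilde\tau}$ is the tautological real line bundle on $\R\P^{n-1}$.

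For part~(1), let $d\!=\!(n\!-\!\sum a_i)/2$; the parity condition $\sum a_i\!\equiv\!n\!\pmod 2$ ensures $d\!\in\!\Z$. I take $(L,\wt\phi)\!=\!(\ga^*|_{X_{n;\a}},\wt\tau^*)^{\otimes d}$, where $\wt\tau$ is the standard conjugation on $\ga$ lifting~$\tau_n$. The complex isomorphism $\La_{\C}^{\top}TX_{n;\a}\!\cong\!\cO(n\!-\!\sum a_i)|_{X_{n;\a}}\!=\!L^{\otimes 2}$ follows from the standard adjunction/Euler sequence on~$\P^{n-1}$; since all bundles in sight carry canonical conjugations lifting $\tau_n$ and these conjugations are compatible with the defining equations (which are real), the isomorphism upgrades to one of real bundle pairs. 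This settles~\ref{LBP_it} in~\eref{realorient_e}.

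For the Stiefel-Whitney condition, setting $x\!=\!w_1(\ga_\R^*)\!\in\!H^1(\R\P^{n-1};\Z/2)$, one has $w_1(L^{\wt\phi})\!=\!dx|_{X^\phi}$ and, from~\eref{exseq_pf},
\[
w(TX^\phi)=\frac{(1\!+\!x)^n}{\prod_{i=1}^k(1\!+\!a_ix)}\Big|_{X^\phi}.
\]
Expanding in degree~$2$ gives
\[
w_2(TX^\phi)\equiv \tbinom{n}{2}+ns_1+s_1^2+e_2 \pmod{2},
\qquad s_1=\sum a_i,\;\;e_2=\sum_{i<j}a_ia_j,
\]
while $w_1(L^{\wt\phi})^2\!=\!d^2x^2\!\equiv\!dx^2\!\pmod 2$. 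The condition $w_2(TX^\phi)\!=\!w_1(L^{\wt\phi})^2$ is therefore equivalent to a congruence modulo~$4$; lifting to $\Z$ and using $2e_2\!=\!s_1^2\!-\!\sum a_i^2$ and $2s_1^2\!\equiv\!2s_1\!\pmod 4$, this congruence simplifies to $\sum a_i^2\!\equiv\!s_1\!\pmod 4$, which is exactly the second hypothesis of~(1). This is the step I expect to be the main obstacle, since the modular bookkeeping between the $\Z/2$ computation of $w_2$ and the $\Z/4$ hypothesis is delicate; the trick is to pick up the factor of~$2$ coming from $d^2\!-\!d$ and reorganize so that the $2s_1^2\!\equiv\!2s_1\!\pmod 4$ identity can be applied.

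For part~(2), the fixed locus of $\eta_{2m}$ on $\P^{2m-1}$ is empty, hence so is $X_{2m;\a}^{\eta}$, and the Stiefel-Whitney condition~\eref{realorient_e} is vacuous. Only condition~\ref{LBP_it} must be verified, and by adjunction as above this reduces to finding a rank~$1$ real bundle pair $(L,\wt\phi)$ over $(\P^{2m-1},\eta_{2m})$ with $L^{\otimes 2}\!\cong\!\cO(2m\!-\!s_1)|_X$ as real bundle pairs. The key observation is that the natural lift of $\eta_{2m}$ to $\ga$ is \emph{quaternionic} rather than real: a direct computation on fibers shows it squares to $-\id$ on $\cO(-1)$. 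Consequently, $\cO(k)$ carries a conjugation lifting $\eta_{2m}$ (i.e.~a lift squaring to $+\id$) if and only if $k$ is even; the square of such a structure exists, i.e.~admits a square root as a real bundle pair, if and only if $k\!\equiv\!0\!\pmod 4$. Applying this with $k\!=\!2m\!-\!s_1$ yields the hypothesis $s_1\!\equiv\!2m\!\pmod 4$ and lets us take $L\!=\!\cO((2m\!-\!s_1)/2)|_{X_{2m;\a}}$ with its induced conjugation; the desired isomorphism of real bundle pairs then follows once more from adjunction carried out equivariantly with respect to~$\eta_{2m}$.
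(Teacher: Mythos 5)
The paper does not prove Proposition~\ref{CIorient_prp} in-text; it defers the proof to~\cite{RealGWsIII} and merely remarks that it is obtained ``by explicitly constructing suitable rank~1 real bundle pairs~$(L,\wt\phi)$,'' which is exactly what you do. Your argument is correct: the equivariant adjunction computation in part~(1) reduces the $\Z/2$ condition $w_2(TX^{\phi})=w_1(L^{\wt\phi})^2$ precisely to the stated mod-$4$ congruence on $\sum a_i^2$ (I verified the bookkeeping separately in the two parity cases of~$n$), and the key observation for part~(2) — that the natural lift of $\eta_{2m}$ to $\ga$ is quaternionic, so that $\cO(k)$ admits a lift squaring to $+\id$ only for $k$ even, and the square-root construction forces $(2m-s_1)/2$ to be even as well — is exactly the right explanation of the mod-$4$ hypothesis, with the vacuity of the Stiefel-Whitney condition following from $(\P^{2m-1})^{\eta_{2m}}=\eset$.
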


\begin{prp}\label{CYorient_prp}
Let $(X,\om,\phi)$ be a real symplectic manifold with $w_2(X^{\phi})\!=\!0$.
If 
\begin{enumerate}[label=(\arabic*),leftmargin=*]

\item $H_1(X;\Q)\!=\!0$ and $c_1(X)\!=\!2(\mu\!-\!\phi^*\mu)$ for some $\mu\!\in\!H^2(X;\Z)$ or

\item $X$ is  compact Kahler, $\phi$ is anti-holomorphic, and 
$\cK_X\!=\!2([D]\!+\![\ov{\phi_*D}])$ for some divisor $D$ on~$X$,

\end{enumerate}
then $(X,\om,\phi)$ is a real-orientable symplectic manifold.
\end{prp}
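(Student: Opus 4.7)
The plan is to exhibit in each case an explicit rank~1 real bundle pair $(L,\wt\phi)$ over $(X,\phi)$ satisfying both conditions in~\eref{realorient_e}. The isomorphism $\La_\C^\top(TX,\tnd\phi)\!\approx\!(L,\wt\phi)^{\otimes 2}$ of real bundle pairs is the main content; the Stiefel-Whitney condition will then follow from $w_2(X^\phi)\!=\!0$ by verifying that $L^{\wt\phi}\!\lra\!X^\phi$ is orientable, so that $w_1(L^{\wt\phi})^2\!=\!0$.

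For case~(1), the smooth Picard group of $X$ is $H^2(X;\Z)$, so I would fix a complex line bundle $L_\mu$ with $c_1(L_\mu)\!=\!\mu$ and set $L\!:=\!L_\mu\!\otimes\!(\phi^*L_\mu)^*$. The canonical identification $\phi^*L\!=\!\phi^*L_\mu\!\otimes\!L_\mu^*\!\cong\!L^*$ supplies a natural conjugation $\wt\phi$ on $L$ lifting $\phi$ (well-defined up to a homotopy of Hermitian metrics on~$L_\mu$), and a direct computation gives $c_1(L^{\otimes 2})\!=\!2(\mu\!-\!\phi^*\mu)\!=\!c_1(X)$. Restricted to $X^\phi$, the bundle $L$ equals $L_\mu\!\otimes\!L_\mu^*\!\cong\!\cO$ canonically, and under this trivialization $\wt\phi$ becomes standard complex conjugation; hence $L^{\wt\phi}\!\lra\!X^\phi$ is trivial and $w_1(L^{\wt\phi})\!=\!0$.

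For case~(2), I would set $L\!:=\!\cO_X(-D\!-\!\ov{\phi_*D})$. Since $\phi$ is anti-holomorphic, the involution $D'\!\mapsto\!\ov{\phi_*D'}$ on divisors fixes $D\!+\!\ov{\phi_*D}$, endowing $L$ with a natural anti-holomorphic conjugation $\wt\phi$ lifting $\phi$. The hypothesis $\cK_X\!=\!2([D]\!+\![\ov{\phi_*D}])$ then yields $L^{\otimes 2}\!\cong\!\cK_X^{-1}\!\cong\!\La_\C^\top TX$ as holomorphic line bundles, and the isomorphism is automatically equivariant because both sides inherit their conjugations from the anti-holomorphic structure of~$\phi$. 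The relation $\phi^*L\!\cong\!L^*$ restricted to $X^\phi$ forces $L|_{X^\phi}^{\otimes 2}$ to be trivial, so $w_1(L^{\wt\phi})$ is 2-torsion and $w_1(L^{\wt\phi})^2\!=\!0$.

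The main obstacle will be upgrading the Chern-class agreement $c_1(L^{\otimes 2})\!=\!c_1(\La_\C^\top TX)$ to an isomorphism of real bundle pairs in case~(1): distinct conjugations on a fixed complex line bundle need not produce isomorphic real bundle pairs, so an equivariant Picard-type analysis is required. I would fit such pairs into a short exact sequence whose rightmost term is $H^2(X;\Z)^{-\phi^*}$ and whose remaining factors are controlled by the vanishing $H_1(X;\Q)\!=\!0$ together with the freedom to modify the auxiliary choice of $L_\mu$.
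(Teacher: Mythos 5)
The paper does not actually prove this proposition: immediately after stating it, it notes that both parts are established in the companion paper [RealGWsIII], describing case~(1) as ``obtained by explicitly constructing suitable rank~1 real bundle pairs'' and case~(2) as something that ``follows easily from the proof of \cite[Proposition~1.5]{Teh}.'' Your argument therefore cannot be matched against a proof in this text, so I assess it directly. Your candidate bundles $L=L_\mu\otimes(\phi^*L_\mu)^*$ and $L=\cO_X(-D-\ov{\phi_*D})$ are exactly the natural ones, and your verification that $L^{\wt\phi}$ is trivial over $X^\phi$ in case~(1) is correct. You also correctly flag the real obstacle in case~(1): equality of $c_1(L^{\otimes 2})$ with $c_1(\La_{\C}^{\top}TX)$ is far from an isomorphism of real bundle pairs, and the equivariant Picard-type argument using $H_1(X;\Q)=0$ is precisely what [RealGWsIII] must supply. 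You leave this step as a sketch, so case~(1) is incomplete exactly where the content lies.

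Case~(2) contains a genuine error. You conclude $w_1(L^{\wt\phi})^2=0$ from the triviality of $L^{\otimes 2}|_{X^\phi}$ via the chain ``so $w_1(L^{\wt\phi})$ is 2-torsion and $w_1(L^{\wt\phi})^2=0$.'' Every class in $H^1(X^\phi;\Z/2)$ is 2-torsion, so the first clause is vacuous, and 2-torsion does not force the square to vanish: the generator $a\in H^1(\R\P^3;\Z/2)$ satisfies $a^2\ne0$ even though $\R\P^3$ is spin. All that the isomorphism $(L^{\wt\phi})^{\otimes 2}\cong\La_{\R}^{\top}TX^\phi$ buys you is $w_1(TX^\phi)=2\,w_1(L^{\wt\phi})=0$, i.e.\ orientability of $X^\phi$; it places no constraint on $w_1(L^{\wt\phi})^2$. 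To close the gap you would need to show that $L^{\wt\phi}$ itself is trivial, as you did in case~(1) using the canonical trivialization of $L_\mu\otimes L_\mu^*$ over the fixed locus, or to extract $w_1(L^{\wt\phi})^2=0$ from the more specific Kähler/divisor hypotheses of case~(2) — which is presumably the content of the argument in \cite{Teh} that the paper invokes.
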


\noindent
Both of these propositions are established in~\cite{RealGWsIII}.
The first one is obtained by explicitly constructing suitable rank~1 real bundle pairs~$(L,\wt\phi)$,
while the second follows easily from 
the proof of \cite[Proposition~1.5]{Teh}.\\

\noindent
We recall that GW-invariants involving insertions only from the target~$X$
are called \sf{primitive}.
Such GW-invariants are related to counts of $J$-holomorphic {\it curves}
in~$X$ passing through a corresponding collection of constraints
(i.e.~of representatives for the Poincare duals of the insertions~used).
In contrast, GW-invariants also involving $\psi$-classes,
i.e.~the Chern classes of the universal tangent line bundles at the marked points,
are called \sf{descendant}. 
The next vanishing result extends \cite[Theorem~2.5]{RealEnum}.
Since the proof of the latter applies, we refer the reader to~\cite{RealEnum}.

\begin{prp}\label{vanGW_prp}
Let $(X,\om,\phi)$ be a compact real-orientable $2n$-manifold with $n\!\not\in\!2\Z$
and $g\!\in\!\Z^{\ge0}$.
The primary genus~$g$ real GW-invariants of $(X,\om,\phi)$ with conjugate pairs of constraints
that include an insertion $\mu\!\in\!H^*(X;\Q)$
such that $\phi^*\mu\!=\!\mu$ vanish.
\end{prp}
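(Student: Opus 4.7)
The plan is to extend the genus-zero vanishing argument of \cite[Theorem~2.5]{RealEnum} to arbitrary genus using the orientations provided by Theorem~\ref{orient_thm}. Assume without loss of generality that the distinguished $\phi$-invariant class is $\mu_1$, and consider the involution
$$c_1:\ov\fM_{g,l}(X,B;J)^{\phi}\lra \ov\fM_{g,l}(X,B;J)^{\phi}$$
that swaps the first conjugate pair, $(z_1^+,z_1^-)\!\mapsto\!(z_1^-,z_1^+)$, while leaving the map~$u$, the complex structure~$\fJ$, and the other conjugate pairs fixed. The proof reduces to showing (a) the cohomological integrand computing the invariant is $c_1$-invariant and (b) $c_1$ reverses the orientation of~\eref{orient_thm_e}.

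For (a), on the moduli space of real maps one has $\ev_i^-\!=\!\phi\!\circ\!\ev_i^+$, so the primary insertion $\mu_i$ at $(z_i^+,z_i^-)$ contributes $\ev_i^{+*}\mu_i$. The pullback under $c_1$ converts the $i\!=\!1$ factor into $\ev_1^{-*}\mu_1\!=\!\ev_1^{+*}(\phi^*\mu_1)\!=\!\ev_1^{+*}\mu_1$ by hypothesis, and leaves the other factors untouched.

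For (b), locally parametrize the first conjugate pair by $z_1^+\!\in\!\Si$; then $\tnd c_1$ acts on the corresponding two real tangent directions as $\tnd\si_{z_1^+}$, which is orientation-reversing since $\si$ is orientation-reversing on~$\Si$. On the complementary factor of the virtual tangent space --- the deformations of $(u,\fJ)$ and of the remaining marked points --- $c_1$ acts as the identity, and on the $(\det\dbar_{\C})^{\otimes(n+1)}$-twist in~\eref{orient_thm_e} it acts trivially as well. Provided the orientation of Theorem~\ref{orient_thm} is compatible with this splitting, $c_1$ is therefore orientation-reversing on~\eref{orient_thm_e}; combined with~(a) this gives $\langle\cdots\rangle\!=\!-\langle\cdots\rangle$, forcing the vanishing. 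In the strongly semi-positive setting, the same argument applies with a $c_1$-invariant choice of perturbation $\nu$ of \cite[Section~2]{Ge2}.

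The main obstacle is the compatibility check in step~(b): one must track the $c_1$-action through the construction of~\eref{orient_thm_e} in the proof of Theorem~\ref{orient_thm}, in particular across the codimension-one degenerations of types~(H1),~(H2),~(H3), and~(E) at which the orientations are glued. Since $c_1$ preserves each of these boundary strata and acts trivially on the corresponding node-smoothing parameters, the verification reduces to the interior sign computation on each stratum, which is the genus-zero case handled in \cite{RealEnum}.
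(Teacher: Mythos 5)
Your overall strategy --- the involution $c_1$ swapping the points in the first conjugate pair, combined with invariance of the integrand and orientation-reversal on the moduli space --- is exactly the mechanism behind \cite[Theorem~2.5]{RealEnum}, to which the paper defers. Steps (a) and the local orientation computation in (b) are both correct.

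The flag you raise at the end, however, is not the right obstacle, and leaving step (b) as a hedged ``provided the orientation is compatible with this splitting'' is a genuine gap. The compatibility you worry about is not something that needs to be traced through the (E)/(H1)/(H2)/(H3) boundary gluings: the action of $c_1$ on an oriented real line bundle over a connected base has a locally constant sign, so it suffices to determine the sign on the main stratum $\fM_{g,l}(X,B;J)^{\phi,\si}$, and the passage from $\fM_{g,l}$ to $\ov\fM_{g,l}$ in the proof of Theorem~\ref{orient_thm} only inserts a global factor $(-1)^{g+|\si|_0+1}$ that $c_1$ manifestly preserves. The fact you need on the main stratum is already in the paper, stated precisely as the last clause of Proposition~\ref{DM_prp}: the interchange of the two points within a conjugate pair reverses the canonical orientation on $\La_{\R}^{\top}(T\cM_{g,l}^{\si})\otimes(\det\dbar_{\C})$. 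Feeding this into the isomorphism~\eref{thm_maps_e3} --- under which $c_1$ acts trivially on the factor $\det D_{(TX,\tnd\phi)}$ because the interchange fixes $(u,\fJ)$, and trivially on the $(\det\dbar_{\C})^{\otimes n}$-twist for the same reason --- gives the orientation-reversal on~\eref{orient_thm_e} directly. Your ad hoc derivation via $\tnd\si_{z_1^+}$ is the right picture, but the rigorous statement is Proposition~\ref{DM_prp} (whose proof orients $\wch{H}^0(\Si;S\cC^+\oplus S\cC^-)^{\si}$ precisely via the complex orientations of $T_{z_i^+}\Si$, so your heuristic matches its content), and citing it closes the proof without any stratum-by-stratum boundary analysis. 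Your final sentence, reducing the boundary check to ``the genus-zero case handled in~\cite{RealEnum},'' is not accurate: Proposition~\ref{DM_prp} is a statement about $\cM_{g,l}^{\si}$ for all genera, not a reduction to genus zero.
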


\noindent
The genus~$g$ real GW-invariants of $\P^{2n-1}$ with conjugate pairs of constraints 
can be computed using the virtual equivariant localization theorem of~\cite{GP}.
In the $g\!=\!1$ case, all torus fixed loci are contained in the smooth locus
of the moduli space and the classical equivariant localization theorem of~\cite{AB} suffices.
The relevant fixed loci data, which we describe in~\cite{RealGWsIII} based
on the properties of the orientations of Theorem~\ref{orient_thm} obtained in~\cite{RealGWsII},
is consistent with \cite[(3.22)]{Wal}.
We also obtain the two types of cancellations of contributions 
from some fixed loci predicted in \cite[Sections~3.2,3.3]{Wal}.
We use this data to obtain the following qualitative observations in~\cite{RealGWsIII};
they extend \cite[Theorem~1.10]{Teh} from the $g\!=\!0$ case and 
\cite[Theorem~7.2]{Teh2} from the $g\!=\!1$ case (the latter assuming that genus~1 real
GW-invariants can be defined).

\begin{prp}\label{PnEG_prp}
The genus~$g$ degree~$d$ real GW-invariants of $(\P^{2n-1},\om_{2n},\tau_{2n})$ and 
$(\P^{4n-1},\om_{4n},\eta_{4n})$ with only conjugate pairs of insertions  vanish 
if $d\!-\!g\!\in\!2\Z$.
The genus~$g$ real GW-invariants of $(\P^{4n-1},\om_{4n},\tau_{4n})$ and $(\P^{4n-1},\om_{4n},\eta_{4n})$
with only conjugate pairs of insertions differ by the factor of~$(-1)^{g-1}$.
\end{prp}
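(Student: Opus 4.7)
The plan is to apply the virtual equivariant localization theorem of \cite{GP} to both invariants and reduce the assertions to combinatorial identities on a finite sum over decorated graphs. The natural torus $\bT^n\!=\!(\C^*)^n$ acting on $\P^{2n-1}$ (or $\P^{4n-1}$) by scaling the coordinate pairs $(Z_{2i-1},Z_{2i})$ commutes with $\tau$ and $\eta$ up to complex conjugation on the torus parameters, and thus lifts to an action on each real moduli space $\ov\fM_{g,l}(\P^{N},d;J)^{\phi}$. The torus-fixed loci are indexed by real decorated dual graphs $\Ga$, i.e.\ decorated graphs equipped with an involution compatible with $\phi$. By the discussion preceding the proposition, the tangent-obstruction contribution of each such real graph can be read off from \cite[(3.22)]{Wal}, up to a sign $\varepsilon_{\Ga}\!\in\!\{\pm1\}$ determined by the real orientation of Theorem~\ref{orient_thm}; the formula for $\varepsilon_{\Ga}$ is precisely what is worked out in \cite{RealGWsII}.

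To establish the vanishing claim, I would construct a fixed-point-free involution $\iota$ on the set of real graphs contributing in degree $d$ and genus $g$ with the property that it preserves the Euler-class weight of each graph but flips the sign $\varepsilon_{\Ga}$. A natural candidate is to permute the two fixed points of $\bT^n$ inside one designated pair $(Z_{2i-1},Z_{2i})$ and transport the graph data accordingly; this interchanges a conjugate pair of torus-fixed points of $\P^{2n-1}$ and simultaneously reverses the orientation of each edge mapped onto the corresponding equivariant line. The parity condition $d\!-\!g\!\in\!2\Z$ should be exactly what forces $\iota$ to be free of fixed graphs (by counting the number of edges versus half-edges incident to the distinguished pair), so that contributions cancel in pairs. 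This matches the cancellation mechanism sketched in \cite[Sections~3.2,3.3]{Wal} and extends \cite[Theorem~1.10]{Teh} and \cite[Theorem~7.2]{Teh2}.

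For the comparison of the $\tau_{4n}$- and $\eta_{4n}$-invariants, observe that the two involutions share the same underlying $\bT^n$-equivariant geometry on $\P^{4n-1}$: they have the same collection of $\bT^n$-fixed points and the same equivariant weights on $T\P^{4n-1}$, and they differ only by composition with the action $[Z_{2i-1},Z_{2i}]\!\mapsto\![-Z_{2i},Z_{2i-1}]$ on each pair. I would exhibit an explicit bijection between the sets of real graphs contributing to the two localization sums which preserves the unsigned Euler-class contribution but introduces a controlled discrepancy between $\varepsilon_{\Ga}^{\tau}$ and $\varepsilon_{\Ga}^{\eta}$ computable from the formulas of \cite{RealGWsII}. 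I expect this discrepancy to split into per-vertex and per-edge factors, so that after summing via the Euler characteristic identity $V\!-\!E\!=\!1\!-\!g$ on the quotient of $\Ga$ by its involution, the total sign discrepancy collapses to the global factor $(-1)^{g-1}$.

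The hard part will be carrying out the sign analysis for $\varepsilon_{\Ga}$ with enough precision to verify both that the involution $\iota$ of the vanishing step satisfies $\varepsilon_{\iota(\Ga)}\!=\!-\varepsilon_{\Ga}$ and that the $\tau$-to-$\eta$ bijection produces exactly a $(-1)^{g-1}$ per orbit. This analysis requires combining the relative spin and orientation data of Definition~\ref{realorient_dfn2} with the contributions from Kodaira-Spencer deformations of the nodal symmetric domains, summed over all topological types of $\si$; once the explicit formula for $\varepsilon_{\Ga}$ from \cite{RealGWsII} is in hand, both the cancellation and the sign comparison should reduce to a combinatorial manipulation of the graph sum.
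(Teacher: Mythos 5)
The paper does not prove Proposition~\ref{PnEG_prp} within this document: the opening of Section~\ref{examples_sec} states that, with the exception of Proposition~\ref{vanGW_prp} and Corollaries~\ref{evenGWs_crl1} and~\ref{evenGWs_crl2}, the claims of that section are established in~\cite{RealGWsII,RealGWsIII}, and the paragraph immediately preceding the statement says explicitly that the localization computation and these qualitative consequences are carried out in~\cite{RealGWsIII} using the sign analysis from~\cite{RealGWsII}. So there is no in-paper proof to compare against.

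Your outline does match the strategy the paper attributes to~\cite{RealGWsIII}: torus equivariant localization, consistency of the fixed-locus data with \cite[(3.22)]{Wal}, and the two cancellation mechanisms of \cite[Sections~3.2,3.3]{Wal}, extending \cite[Theorem~1.10]{Teh} and \cite[Theorem~7.2]{Teh2}. The genuine gap is that what you present is a programmatic sketch, not a proof. The two facts you yourself flag as ``the hard part'' --- that an involution $\iota$ on real decorated graphs satisfies $\varepsilon_{\iota(\Gamma)}=-\varepsilon_{\Gamma}$ and is fixed-point-free precisely when $d-g\in2\Z$, and that the graph-by-graph $\tau$-to-$\eta$ comparison aggregates to a global $(-1)^{g-1}$ --- are exactly the content of the proposition, and they hinge on an explicit formula for the orientation sign $\varepsilon_{\Gamma}$ that you neither state nor derive. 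Phrases such as ``I expect this discrepancy to split into per-vertex and per-edge factors'' and ``the parity condition should be exactly what forces $\iota$ to be free of fixed graphs'' describe the intended shape of the argument rather than verifying it, and the candidate involution (permuting the two fixed points in one coordinate pair) is offered without checking that it preserves the unsigned Euler contribution or how it interacts with the automorphism factors and with the Kodaira--Spencer part of the virtual normal bundle over the genus-carrying vertices. Until the $\varepsilon_{\Gamma}$ formula is in hand and these two combinatorial identities are actually proved, the argument is incomplete.
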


\noindent
The primary genus~$g$ real GW-invariants arising from Theorem~\ref{main_thm}  are 
in general combinations of counts of real curves of genus~$g$ and 
counts of real curves of lower genera and/or of lower degree (lower symplectic energy).
In light of \cite[Theorems~1A,1B]{g1diff} and \cite[Theorem~1.5]{FanoGV}, 
it seems plausible that the former can be extracted
from these GW-invariants to directly provide lower bounds for
enumerative counts of real curves in good situations.
This would typically involve delicate obstruction analysis.
However, the situation is fairly simple if $g\!=\!1$ and $n\!=\!3$.
 
\begin{prp}\label{g1EG_prp}
Let $(X,\om,\phi)$ be a compact real-orientable 6-manifold 
and $J\!\in\!\cJ_{\om}^{\phi}$ be an almost complex structure which is
genus~1 regular in the sense of \cite[Definition~1.4]{g1comp}.
The primary genus~1 real GW-invariants of $(X,\om,\phi)$ are then equal to 
the corresponding signed counts of real $J$-holomorphic curves
and thus provide lower bounds for the number of real genus~1 irreducible curves in~$(X,J,\phi)$. 
\end{prp}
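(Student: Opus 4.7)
My plan is to combine the virtual-versus-enumerative comparison developed by Zinger in \cite{g1diff,g1comp} with the real orientation machinery of Theorem~\ref{orient_thm} and the invariant of Theorem~\ref{dim3_thm}, and then pass from a signed count to an unsigned lower bound.

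First I would apply Theorem~\ref{dim3_thm} to know that the invariant $\blr{\mu_1,\ldots,\mu_l;\pt^k}_{1,B}^{\phi}$ is well-defined and independent of $J\!\in\!\cJ_{\om}^{\phi}$, so that we are free to compute it for the genus-1 regular almost complex structure supplied by the hypothesis. Next I would stratify $\ov\fM_{1,l}(X,B;J)^{\phi}$ by topological type of the domain: a main stratum $\fM_{1,l}(X,B;J)^{\phi}$ of smooth elliptic domains, and boundary strata parametrising maps from real one-nodal symmetric surfaces of the types H1--H3 and E recalled in Section~\ref{bnd_subs}. The real analog of \cite[Theorems~1A,1B]{g1diff}, obtained by restricting Zinger's complex genus-1 analysis to $(\phi,\si)$-invariant deformations, shows that when $J$ is genus-1 regular, the contribution to the signed count against generic representatives of the constraints $\mu_1,\ldots,\mu_l,\pt^k$ comes only from real maps in the main stratum whose underlying map is birational onto its image. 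Each such map corresponds to an irreducible real $J$-holomorphic genus~1 degree-$B$ curve passing through the prescribed constraints.

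At this point the real orientation of Theorem~\ref{orient_thm} assigns each isolated such map a sign $\pm 1$, and the GW-invariant equals the resulting finite signed sum. Consequently $\bigl|\blr{\mu_1,\ldots,\mu_l;\pt^k}_{1,B}^{\phi}\bigr|$ is at most the number of distinct real genus-1 irreducible $J$-holomorphic curves of degree~$B$ meeting the constraints, which is the stated lower bound. The main obstacle is the real adaptation of \cite{g1diff}: one has to verify that the obstruction bundle used there to absorb contributions from strata consisting of a genus-0 map with an attached elliptic ghost carries a natural lift of the involutions~$\phi$ and~$\si$, so that the cancellation descends to the $(\phi,\si)$-invariant sector; and one has to confirm that the orientation of Theorem~\ref{orient_thm} restricts on $\fM_{1,l}(X,B;J)^{\phi}$ to the orientation that assigns each simple real elliptic curve weight precisely $\pm 1$, rather than a larger multiplicity coming from automorphisms or double covers, which in genus~1 and complex dimension~3 is the regime where the analysis is simplest because a simple real elliptic curve has trivial stabiliser and balanced normal bundle.
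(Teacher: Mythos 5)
The paper itself defers the proof of this proposition to~\cite{RealGWsIII}, but the surrounding discussion makes clear what the argument must accomplish, and your proposal both identifies the right issue and then leaves it unresolved.

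The crucial point is the following. In the \emph{complex} setting, \cite[Theorems~1A,1B]{g1diff} do \emph{not} say that for genus-1 regular $J$ the only contribution to the genus-1 GW-invariant comes from the main stratum; they say the opposite: there is a nonzero correction term coming from the stratum of maps with contracted elliptic component attached to a genus-0 bubble, and this correction is what one must subtract from the GW-number to obtain the enumerative count. The paper illustrates this concretely: for $\P^3$ in degree~6 the complex genus-1 GW-invariant is $1496/3$ while the enumerative count is~$2860$. So "regularity of $J$" does not by itself kill the boundary contribution; it only confines it to a specific, computable stratum. The entire content of Proposition~\ref{g1EG_prp} is that this boundary contribution \emph{vanishes} in the real sector -- a phenomenon with no complex analogue -- because (i) the node of such a degenerate real map is forced to lie on the real locus, and (ii) the resulting real genus-0 contribution cancels (for $(\P^3,\eta_4)$ trivially since $X^{\phi}=\emptyset$; for $(\P^3,\tau_4)$ and the general case by the orientation/parity analysis carried out in~\cite{RealGWsIII}).

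Your write-up states as a conclusion of "the real analog of~\cite{g1diff}" that the contribution comes only from the main stratum, but this is exactly what has to be proved, not something those theorems deliver by restriction to a $(\phi,\si)$-invariant sector. You do flag the obstruction-bundle issue as "the main obstacle" in the final paragraph, which is the honest and correct diagnosis, but the proposal stops there: no mechanism is given for why the real obstruction-bundle contribution should vanish rather than merely be computable. Without that step, the argument establishes only that the real genus-1 GW-invariant equals the real enumerative count \emph{plus} an unknown real genus-0 correction, which does not yield the proposition. To close the gap one needs precisely the vanishing statement proved in~\cite{RealGWsIII} (or at minimum, in the projective cases, the dimension/parity argument the paper sketches: absence of a real locus for $\eta$, vanishing of even-degree genus-0 real counts for~$\tau$), together with the orientation compatibility checks you mention at the end.
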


\noindent
Since the standard complex structure~$J_0$ on~$\P^3$ is genus~1 regular,
the genus~1 real GW-invariants of $(\P^3,\om_4,\tau_4)$ and $(\P^3,\om_4,\eta_4)$
are lower bounds for the enumerative counts of such curves in 
$(\P^3,J_0,\tau_4)$ and $(\P^3,J_0,\eta_4)$, respectively.
The claim of Proposition~\ref{g1EG_prp} is particularly evident in
the case of real invariants of $(\P^3,J_0,\tau_4)$ and $(\P^3,J_0,\eta_4)$.
The only lower-genus contributions for the genus~1 GW-invariants of
6-dimensional symplectic manifolds can come from the genus~0 curves.
If $J$ is genus~1 regular, such contributions arise from the stratum of 
the moduli space consisting of morphisms with contracted genus~1 domain 
and a single effective bubble.
In the case of real morphisms, the node of the domain of such a map would have to be real.
There are no such morphisms in the case of $(\P^3,J_0,\eta_4)$ because
the real locus of $(\P^3,\eta_4)$ is empty.
In the case of  $(\P^3,J_0,\tau_4)$, the genus~0 contribution to the  genus~1 real GW-invariant
is a multiple of the  genus~0 real GW-invariant with the same insertions.
The genus~0  real GW-invariants of $(\P^3,J_0,\tau_4)$ are known to vanish in the even degrees;
see \cite[Remark~2.4(2)]{Wel6} and \cite[Theorem~1.10]{Teh}.
However, the substance of Proposition~\ref{g1EG_prp} is that the genus~0 real enumerative
counts do not contribute to the genus~1 real GW-invariants in all of the cases under consideration;
this is shown in~\cite{RealGWsIII}.
The situation in higher genus is described in~\cite{NZ}.\\

\noindent
From the equivariant localization data in~\cite{RealGWsIII}, 
we find that the genus~1 degree~$d$  real GW-invariant of $\P^3$ with $d$~pairs of 
conjugate point insertions is~0 for $d\!=\!2$, $-1$ for $d\!=\!4$,
and $-4$ for $d\!=\!6$.
The $d\!=\!2$ number is as expected, since there are no connected degree~2 curves 
of any kind passing through two generic pairs of conjugate points in~$\P^3$.
The $d\!=\!4$ number is also not surprising, since there is only one genus~1 degree~4
curve passing through 8~generic points in~$\P^3$; see the first three paragraphs
of \cite[Section~1]{Kollar}.
By \cite{growi}, the genus~0 and genus~1 degree~6 GW-invariants of~$\P^3$ with 12 point insertions
are  2576 and 1496/3, respectively.
By \cite[Theorem~1.1]{g1comp2}, this implies that the number  of genus~1 degree~6 curves
passing through 12~generic points in~$\P^3$ is~2860.
Our signed count of~$-4$ for the real genus~1 degree~6 curves
through 6~pairs of conjugate points in~$\P^3$ is thus consistent with the complex count
and provides a non-trivial lower bound for the number of real genus~1 degree~6 curves
with 6~pairs of conjugate point insertions.
Complete computations of the $d\!=\!2,4$ numbers and of the $d\!=\!6$ number
appear in~\cite{RealGWsIII} and~\cite{RealGWsApp,NZapp}, respectively.\\

\noindent
In all cases, the lower-genus contributions to the primary genus~$g$ real GW-invariants
arise from real curves passing through corresponding constraints.
If $n\!=\!3$, $\lr{c_1(X),B}\!\neq\!0$, and the almost complex structure $J\!\in\!\cJ_{\om}^{\phi}$
is sufficiently regular,
all such contributions arise from curves of the same degree.
Since the real enumerative counts are of the same parity as the complex enumerative counts,
Propositions~\ref{vanGW_prp}, \ref{PnEG_prp}, and~\ref{g1EG_prp} yield the following observations 
concerning the {\it complex} enumerative invariants 
\BE{EgB_e} \E_{g,B}\big(\mu_1,\phi^*\mu_1,\ldots,\mu_l,\phi^*\mu_l\big)\in \Z \EE
with $\mu_i\!\in\!H^*(X;\Z)$ 
that count genus~$g$ degree~$B$ $J$-holomorphic curves
passing through generic representatives of the Poincare duals of~$\mu_i$. 

\begin{crl}\label{evenGWs_crl1}
Let $(X,\om,\phi)$ be a real-orientable 6-manifold, 
$g,l\!\in\!\Z^{\ge0}$, and  $B\!\in\!H_2(X;\Z)$ with $\lr{c_1(X),B}\!\neq\!0$.
If $\phi^*\mu_i\!=\!\mu_i$ for some $i\!=\!1,\ldots,l$ and
$J\!\in\!\cJ_{\om}^{\phi}$ is sufficiently regular, then the number~\eref{EgB_e} is even.
\end{crl}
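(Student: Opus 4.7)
The plan is to combine a $\phi$-symmetry pairing argument on the set of complex curves with an inductive application of the vanishing Proposition~\ref{vanGW_prp}. \textbf{Step 1 (Pairing).} Since the multiset $\{\mu_1,\phi^*\mu_1,\ldots,\mu_l,\phi^*\mu_l\}$ is $\phi^*$-stable, one may pick generic geometric representatives of the Poincar\'e duals in $\phi$-related pairs $(R_i^+,R_i^-)$ with $R_i^-\!=\!\phi(R_i^+)$. The map $C\!\mapsto\!\phi(C)$ is then an involution on the finite set of complex $J$-holomorphic genus-$g$ degree-$B$ curves passing through all the $R_j^{\pm}$: its fixed points are precisely the real such curves, and the remaining curves form $\phi$-orbits of size two. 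Writing $N_g^{\R}(B)$ for the signed count of real $J$-holomorphic genus-$g$ degree-$B$ curves meeting the constraints, and noting that replacing an unsigned count by a signed one changes parity by an even amount, this gives
$$\E_{g,B}\big(\mu_1,\phi^*\mu_1,\ldots,\mu_l,\phi^*\mu_l\big)\equiv N_g^{\R}(B)\pmod 2.$$
It thus suffices to show $N_g^{\R}(B)$ is even; I will in fact aim for the stronger statement $N_g^{\R}(B)\!=\!0$.

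\textbf{Step 2 (Inductive vanishing).} Under the hypothesis that $\phi^*\mu_i\!=\!\mu_i$ for some~$i$, Proposition~\ref{vanGW_prp} forces the real GW-invariant $\langle\mu_1,\phi^*\mu_1,\ldots,\mu_l,\phi^*\mu_l\rangle^{\phi}_{g,B}$ to vanish. According to the paragraph preceding the corollary, in the setting $n\!=\!3$, $\lr{c_1(X),B}\!\ne\!0$, and $J$ sufficiently regular, all lower-genus corrections to this GW-invariant arise from real $J$-holomorphic curves of the same degree~$B$ and strictly smaller genera, yielding an expansion
$$0\;=\;\big\langle\mu_1,\phi^*\mu_1,\ldots,\mu_l,\phi^*\mu_l\big\rangle^{\phi}_{g,B}\;=\;N_g^{\R}(B)+\sum_{g'<g}c_{g,g'}\,N_{g'}^{\R}(B).$$
I would then induct on~$g$. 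The base $g\!=\!0$ has no correction terms, so $N_0^{\R}(B)\!=\!0$. For the inductive step, the correction sum vanishes by the inductive hypothesis applied with the same (still admissible) constraints, giving $N_g^{\R}(B)\!=\!0$ and completing Step~2; combining with Step~1 proves the corollary.

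\textbf{Main obstacle.} The delicate point is the precise form of the expansion in Step~2: one must verify that in the regime $n\!=\!3$, $\lr{c_1(X),B}\!\ne\!0$, $J$ sufficiently regular, every boundary stratum of $\ov\fM_{g,l}(X,B;J)^{\phi}$ that contributes to the virtual count factors through a moduli space of real maps of the same degree $B$ and smaller genus $g'$, with the conjugate-pair insertions carried over unchanged (so that $\phi^*$-fixedness of $\mu_i$ persists to every correction term). Establishing this structural description of the boundary, together with the identification of each contribution with a signed enumerative count $N_{g'}^{\R}(B)$, is an obstruction-theoretic analysis in the spirit of \cite[Theorems~1A,1B]{g1diff} carried out for this series in~\cite{RealGWsIII}; with that analysis in hand, the proof reduces to the elementary pairing and induction above.
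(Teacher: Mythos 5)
Your proposal is correct and follows essentially the same route as the paper. The paper's own proof of Corollary~\ref{evenGWs_crl1} is the terse paragraph preceding \eref{EgB_e}: it asserts that the lower-genus corrections to the genus-$g$ real GW-invariant come from real curves of the same degree through the same constraints (this is the deferred structural fact you flag as the ``main obstacle''), that real and complex enumerative counts agree mod~2 via the $\phi$-pairing on the set of complex solutions, and that Proposition~\ref{vanGW_prp} kills the real GW-invariant when a $\phi^*$-fixed insertion is present. Your Step~1 reproduces the parity comparison, and your Step~2 makes explicit the upper-triangular induction on genus that the paper leaves implicit. That explicitness is actually a welcome clarification: since the coefficients $c_{g,g'}$ in the expansion of the real GW-invariant by real enumerative counts are a~priori only rational multiplicities rather than integers, one cannot simply reduce the expansion mod~2, and the clean way to conclude is exactly your inductive derivation of the full vanishing $N_{g'}^{\R}(B)\!=\!0$ for all $g'\le g$, which sidesteps any integrality concerns. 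You are also correct that the structural description of the corrections (same degree, same insertions, so that $\phi^*$-fixedness propagates) is exactly what the paper defers to the companion papers; your proposal is no less complete than the paper's own exposition on that point.
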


\begin{crl}\label{evenGWs_crl2}
Let $g,l,d\!\in\!\Z^{\ge0}$ with $d\!\ge\!2g\!-\!1$.
If either $\mu_i\!\in\!H^4(\P^3;\Z)$ for some $i\!=\!1,\ldots,l$ or 
$g\!=\!0,1$ and $g\!-\!d\!\in\!2\Z$, then
the genus~$g$ degree~$d$ enumerative invariants of~$\P^3$ of the form~\eref{EgB_e}
are even.
\end{crl}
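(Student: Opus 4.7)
The strategy is to reduce each case of the corollary to the preceding results applied to $(X,\om,\phi)\!=\!(\P^3,\om_4,\phi)$ with $\phi\!\in\!\{\tau_4,\eta_4\}$, exploiting the fact that the complex enumerative count has the same parity as the count of real (i.e., $\phi$-invariant) curves whenever the constraints come in conjugate pairs.

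For case~1, where $\mu_i\!\in\!H^4(\P^3;\Z)$ for some~$i$, the plan is to apply Corollary~\ref{evenGWs_crl1} directly. Its hypotheses are satisfied because $\P^3$ is real-orientable by Proposition~\ref{CIorient_prp} (the $k\!=\!0$ case); $\lr{c_1(\P^3),B}\!=\!4d\!\neq\!0$ for $B\!=\!d[\ell]$ with $d\!\ge\!1$ (and the $d\!=\!0$ invariant vanishes trivially); and the anti-symplectic identity $\phi^*\om_4\!=\!-\om_4$ together with multiplicativity of~$\phi^*$ yields $\phi^*|_{H^{2k}(\P^3;\Z)}\!=\!(-1)^k\id$, so that $\phi^*\mu_i\!=\!\mu_i$ whenever $\mu_i\!\in\!H^4(\P^3;\Z)$. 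The role of the hypothesis $d\!\ge\!2g\!-\!1$ is to guarantee that~$J_0$ is sufficiently regular for the genus~$g$ degree~$d$ enumerative problem on~$\P^3$ in the sense required by Corollary~\ref{evenGWs_crl1}.

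For case~2, where $g\!\in\!\{0,1\}$ and $g\!-\!d\!\in\!2\Z$, the plan is to establish the following chain of congruences modulo~$2$:
\[
\E_{g,d}\big(\mu_1,\phi^*\mu_1,\ldots,\mu_l,\phi^*\mu_l\big)
~\equiv~ N^{\phi}_{g,d}
~\equiv~ \tn{GW}^{\phi,\R}_{g,d}
~\equiv~ 0 \pmod 2,
\]
where $N^{\phi}_{g,d}$ denotes the unsigned count of real genus~$g$ degree~$d$ $J_0$-holomorphic curves through generic $\phi$-invariant representatives of the constraints, and $\tn{GW}^{\phi,\R}_{g,d}$ denotes the corresponding signed real GW-invariant of~$(\P^3,\om_4,\phi)$. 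The first congruence is obtained by choosing the geometric representative of the $\phi^*\mu_i$ constraint to be the $\phi$-image of that chosen for~$\mu_i$, so that the finite set of complex solutions is $\phi$-invariant; non-real solutions pair off under~$\phi$, leaving exactly $N^{\phi}_{g,d}$ real solutions to count modulo~$2$. The second congruence uses that signs do not affect parity together with the identification of the real GW-invariant with the signed real enumerative count for $g\!\in\!\{0,1\}$: for $g\!=\!1$ this is precisely Proposition~\ref{g1EG_prp} (applied with $J\!=\!J_0$, which is genus-1 regular), and for $g\!=\!0$ it is the classical analogue relating Welschinger-type signed real counts to genus-0 real GW-invariants. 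The third congruence is the vanishing statement of Proposition~\ref{PnEG_prp}, which applies since $d\!-\!g\!\in\!2\Z$.

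The principal obstacle is the careful verification that $J_0$ on~$\P^3$ meets the regularity hypotheses of Corollary~\ref{evenGWs_crl1} and Proposition~\ref{g1EG_prp} in the respective ranges, including the exclusion of unwanted lower-genus contributions to the enumerative count and the identification of real GW-invariants with signed enumerative counts; this is precisely where the constraint $d\!\ge\!2g\!-\!1$ plays its role. Once this is in place, the deduction is a formal composition of the cited propositions and corollaries.
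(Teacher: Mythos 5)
Your proposal correctly reconstructs the paper's argument, which is only sketched in the paragraph preceding Corollaries~\ref{evenGWs_crl1} and~\ref{evenGWs_crl2}. Case~(1) is indeed just Corollary~\ref{evenGWs_crl1} applied to $(\P^3,\om_4,\tau_4)$ via $\phi^*|_{H^4(\P^3;\Z)}=\id$, and case~(2) is the chain you describe: the vanishing from Proposition~\ref{PnEG_prp} for $d-g\in 2\Z$, the identification of the real GW-invariant with the signed real enumerative count (Proposition~\ref{g1EG_prp} for $g=1$, its standard genus-0 analogue for $g=0$), and the mod-2 comparison of complex and real enumerative counts obtained from the $\phi$-action pairing off non-real solutions.
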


\noindent 
The real GW-invariants arising from Theorems~\ref{main_thm} and~\ref{dim3_thm}
are compatible with standard morphisms of GW-theory,
such as the morphisms forgetting pairs of conjugate marked points and 
the node-identifying immersions~\eref{iodfn_e2} below.
By construction, the orientations on the real line bundles~\eref{orient_thm_e}
induced by a fixed real orientation on~$(X,\om,\phi)$
are preserved by the morphisms forgetting  pairs of conjugate marked points
(the fibers of these morphisms are canonically oriented).
If $n\!\not\in\!2\Z$,  this implies that the orientations on the moduli spaces of real morphisms
induced by a fixed real orientation on~$(X,\om,\phi)$ are preserved by 
the forgetful morphisms.
If $n\!\in\!2\Z$,  the orientations on the real line bundles~\eref{twist_rmk_e}
induced by a fixed real orientation on~$(X,\om,\phi)$  
are preserved by the forgetful morphisms.
In both cases, the orientations are compatible with the standard node-identifying 
immersions~\eref{iodfn_e2} below; see Proposition~\ref{CompOrient_prp}.
This in turn implies that a uniform system of these orientations 
is determined by a choice of orientation of the Deligne-Mumford moduli space
$\ov\cM_{0,2}^{\tau}\!\approx\![0,\i]$, 
where $\tau\!\equiv\!\tau_2$ is the standard conjugation on~$\P^1$,
and a real orientation on~$(X,\om,\phi)$.
If $g\!\not\in\!2\Z$, this also implies that the real GW-invariants of
$(\P^{2n-1},\om_{2n},\tau_{2n})$ and $(\P^{4n-1},\om_{4n},\eta_{4n})$
are independent of the choice of real orientation.\\

\noindent
Let $(X,\om,\phi)$, $l$, $B$, and~$J$  be as in Theorem~\ref{orient_thm} and $g\!\in\!\Z$.
We denote~by  $\ov\fM_{g,l}^{\bu}(X,B;J)^{\phi}$
the moduli space of stable real degree~$B$ morphisms from possibly disconnected 
nodal symmetric surfaces of Euler characteristic $2(1\!-\!g)$ with $l$ pairs 
of conjugate marked points.
For each $i\!=\!1,\ldots,l$, let 
$$\ev_i\!: \ov\fM_{g,l}^{\bu}(X,B;J)^{\phi}\lra X, \qquad
\big[u,(z_1^+,z_1^-),\ldots,(z_l^+,z_l^-)\big]\lra u(z_i^+),$$
be the evaluation at the first point in the $i$-th pair of conjugate points.
Let 
$$\ov\fM_{g,l}'^{\bu}(X,B;J)^{\phi}
=\big\{[\u]\!\in\!\ov\fM_{g,l}^{\bu}(X,B;J)^{\phi}\!:\,
\ev_{l-1}([\u])\!=\!\ev_l([\u])\big\}.$$
The short exact sequence
$$0\lra T\ov\fM_{g,l}'^{\bu}(X,B;J)^{\phi}\lra 
T\ov\fM_{g,l}^{\bu}(X,B;J)^{\phi}|_{\ov\fM_{g,l}'^{\bu}(X,B;J)^{\phi}} \lra
\ev_l^*TX\lra 0$$
induces an isomorphism
\BE{SubIsom_e}
\La_{\R}^{\top}\big(T\ov\fM_{g,l}^{\bu}(X,B;J)^{\phi}|_{\ov\fM_{g,l}'^{\bu}(X,B;J)^{\phi}}\big)\\
\approx 
\La_{\R}^{\top}\big(T\ov\fM_{g,l}'^{\bu}(X,B;J)^{\phi}\big)
\otimes \ev_l^*\big(\La_{\R}^{\top}(TX)\big)\EE
of real line bundles over $\ov\fM_{g,l}'^{\bu}(X,B;J)^{\phi}$.\\

\noindent
The identification of the last two pairs of conjugate marked points induces
an immersion 
\BE{iodfn_e2}\io\!: 
\ov\fM_{g-2,l+2}'^{\bu}(X,B;J)^{\phi}\lra  \ov\fM_{g,l}^{\bu}(X,B;J)^{\phi}\,.\EE
This immersion takes the main stratum of the domain,
i.e.~the subspace consisting of real morphisms from smooth symmetric surfaces, 
to the subspace of the target  consisting
of real morphisms from symmetric surfaces with one pair of conjugate nodes.
There is a canonical isomorphism
$$\cN\io\equiv \frac{\io^*T\ov\fM_{g,l}^{\bu}(X,B;J)^{\phi}}{T\ov\fM_{g-2,l+2}'^{\bu}(X,B;J)^{\phi}}
\approx \cL_{l+1}\!\otimes_{\C}\!\cL_{l+2}$$
of the normal bundle of~$\io$ with the tensor product of the universal tangent line bundles
for the first points in the last two conjugate pairs.
It induces an isomorphism
\BE{RestrOrient_e0}\begin{split} 
\io^*\big(\La_{\R}^{\top}\big(T\ov\fM_{g,l}^{\bu}(X,B;J)^{\phi}\big)\big)
\approx \La_{\R}^{\top}\big(T\ov\fM_{g-2,l+2}'^{\bu}(X,B;J)^{\phi}\big)
\otimes \La_{\R}^2\big(\cL_{l+1}\!\otimes_{\C}\!\cL_{l+2}\big)
\end{split}\EE
of real line bundles over $\ov\fM_{g-2,l+2}'^{\bu}(X,B;J)^{\phi}$.
Along with~\eref{SubIsom_e} with $(g,l)$ replaced by~$(g\!-\!2,l\!+\!2)$, it determines an isomorphism
\BE{CompOrient_e}\begin{split}   
&\La_{\R}^{\top}\big(T\ov\fM_{g-2,l+2}^{\bu}(X,B;J)^{\phi}|_{\ov\fM_{g-2,l+2}'^{\bu}(X,B;J)^{\phi}}\big)
\otimes \La_{\R}^2\big(\cL_{l+1}\!\otimes_{\C}\!\cL_{l+2}\big) \\
&\hspace{1.5in} \approx\io^*\big(\La_{\R}^{\top}\big(T\ov\fM_{g,l}^{\bu}(X,B;J)^{\phi}\big)\big)
\otimes  \ev_{l+1}^*\big(\La_{\R}^{\top}(TX)\big)
\end{split}\EE
of real line bundles over $\ov\fM_{g-2,l+2}'^{\bu}(X,B;J)^{\phi}$.

\begin{prp}\label{CompOrient_prp}
Let $(X,\om,\phi)$, $g,l$, $B$, and $J$ be as in Theorem~\ref{orient_thm} with $n\!\not\in\!2\Z$.
The isomorphism~\eref{CompOrient_e} is orientation-reversing with respect 
to the orientations on the moduli spaces determined by  a real orientation on~$(X,\om,\phi)$
and  the canonical  orientations on  $\cL_{l+1}\!\otimes_{\C}\!\cL_{l+2}$
and~$TX$.
\end{prp}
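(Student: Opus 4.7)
The plan is to unpack the construction of the orientations furnished by Theorem~\ref{orient_thm} on each side of the node-identifying immersion $\io$ and extract the comparison sign from a local model. \emph{First}, I reduce to a statement about determinant lines. The orientation on $\La^{\top}_\R T\ov\fM^\phi_{g,l}(X,B;J)\otimes(\det\dbar_\C)^{\otimes(n+1)}$ produced by Theorem~\ref{orient_thm} is defined via a canonical isomorphism of this real line bundle with a tensor product involving the determinant lines of the linearized real Cauchy--Riemann operators $D^{(TX,\tnd\phi)}_u$ and $D^{(L,\wt\phi)}_u$, the real-orientation isomorphism $\La^{\top}_\C(TX,\tnd\phi)\approx(L,\wt\phi)^{\otimes 2}$, and the spin structure on $TX^\phi\oplus 2(L^*)^{\wt\phi^*}$. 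Hence comparing orientations across $\io$ reduces to comparing these determinant line isomorphisms on a nodal domain against its normalization.

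\emph{Second}, I apply the normalization exact sequence. For every real bundle pair $(V,\vph)\!\to\!(X,\phi)$ and real map $u$ from a symmetric surface $\Si$ with a single pair of conjugate nodes mapped to $x\!\in\!X$, with normalization $\wt u\colon\wt\Si\!\to\!X$, the six-term normalization exact sequence relating $D^{(V,\vph)}_u$ and $D^{(V,\vph)}_{\wt u}$ yields a canonical isomorphism
\begin{equation*}
\det D^{(V,\vph)}_u \approx \det D^{(V,\vph)}_{\wt u}\otimes\La^{\top}_\R V_x^*.
\end{equation*}
Combined with the identification of the smoothing parameter of the pair of conjugate nodes with the complex line $\cL_{l+1}\otimes_\C\cL_{l+2}$, applying this with $(V,\vph)\!=\!(TX,\tnd\phi)$ recovers the isomorphism~\eqref{CompOrient_e} on the level of real lines.

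\emph{Third}, I carry out the sign computation. Applying the previous step to $(L,\wt\phi)$ and squaring, one obtains the isomorphism of the previous step for $(TX,\tnd\phi)$ up to a sign, once the real-orientation isomorphism $\La^{\top}_\C(TX,\tnd\phi)\approx(L,\wt\phi)^{\otimes 2}$ is invoked. The spin structure on $TX^\phi\oplus 2(L^*)^{\wt\phi^*}$ orients both sides of~\eqref{CompOrient_e}, so the comparison of the two orientation conventions reduces to comparing the complex orientation on $\ev_{l+1}^*TX$ against the canonical real orientation on $\La^2_\R(\cL_{l+1}\otimes_\C\cL_{l+2})$, together with the order in which the tensor factors appear in the squared identification. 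The claim is that this bookkeeping produces precisely the sign~$-1$ when $n$ is odd, establishing the orientation-reversing property.

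The main obstacle is the sign tracking in the third step: it requires a careful interplay between the complex orientation on the smoothing parameter (of real rank~$2$), the complex orientation on $\ev_{l+1}^*TX$ (of real rank~$2n$), the choice of square root coming from the real orientation, and the spin structure data. I expect to handle this by reducing to a universal local model, where the determinant lines can be computed explicitly (for instance on a constant map to a point, where the orientations of the real Deligne--Mumford spaces $\R\ov\cM_{g,l}$ from Proposition~\ref{DMext_prp} already carry the relevant information), and then invoking topological invariance of the orientation to propagate the sign to general $(X,\om,\phi,J,u)$.
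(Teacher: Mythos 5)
The paper itself does not prove Proposition~\ref{CompOrient_prp}: immediately after stating it, the text says ``This proposition is established in~\cite{RealGWsII},'' the companion ``properties'' paper, so there is no in-paper argument to compare against. Judged on its own terms, your outline has the right top-level shape—unpack the Theorem~\ref{orient_thm} orientation into determinant-line and Deligne--Mumford factors via~\eref{TuMdecomp_e} on both sides of $\io$ and match the pieces—but it has two concrete gaps. First, the normalization short exact sequence in your step~2 accounts only for the $\det D_{(TX,\tnd\phi)}$ factor of~\eref{TuMdecomp_e}. The isomorphism~\eref{CompOrient_e} also encodes a comparison of the Deligne--Mumford tangent spaces $T\cM^\si_{g-2,l+2}$ and $T\ov\cM^\si_{g,l}$ across the conjugate-node degeneration, with normal line $\cL_{l+1}\!\otimes_\C\!\cL_{l+2}$; the canonical orientations of Proposition~\ref{DM_prp} on those two DM spaces are each built from four nontrivial ingredients (Kodaira--Spencer, Dolbeault plus Serre duality, the skyscraper sequence, the square trivialization from Corollary~\ref{canonisom_crl}), and you never compare them across the second normalization sequence. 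A sign can and does hide there, so the claim that your step~2 alone ``recovers''~\eref{CompOrient_e} is not justified.

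Second, your step~3 proposes to read the needed Deligne--Mumford sign off Proposition~\ref{DMext_prp}, but that proposition is about real codimension-\emph{one} boundary strata—one-nodal symmetric surfaces of types (E)--(H3), with a real normal line. The immersion~\eref{iodfn_e2} creates a \emph{conjugate pair} of nodes, a real codimension-two stratum with complex normal bundle $\cL_{l+1}\!\otimes_\C\!\cL_{l+2}$, which Proposition~\ref{DMext_prp} simply does not address. So the suggested local model does not actually carry the relevant information. Beyond these two misidentifications, the sign bookkeeping that would produce the overall $(-1)$ for odd~$n$—the entire content of the proposition—is left as a promissory note. The paper's own parenthetical observation that interchanging the two new marked points flips both $\La^2_\R(\cL_{l+1}\!\otimes_\C\!\cL_{l+2})$ and $\ev_{l+1}^*\La^\top_\R(TX)$ when $n$ is odd is only a consistency check on the statement, not a substitute for the computation. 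As written, this is a reasonable program but not yet a proof.
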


\noindent
This proposition is established in~\cite{RealGWsII}.
Its substance is that the orientations on $\ov\fM_{g-2,l+2}'^{\bu}(X,B,J)^{\phi}$
induced from the orientations of $\ov\fM_{g-2,l+2}^{\bu}(X,B,J)^{\phi}$ 
and $\ov\fM_{g,l}^{\bu}(X,B,J)^{\phi}$  via the isomorphisms~\eref{SubIsom_e} 
and~\eref{RestrOrient_e0} are opposite.
This unfortunate reversal of orientations under the immersion~\eref{iodfn_e2} 
can be fixed by multiplying the orientation on $\ov\fM_{g,l}^{\bu}(X,B,J)^{\phi}$
described at the end of Section~\ref{bnd_subs} 
by $(-1)^{\flr{g/2}+1}$, for example.
Along with the sign flip at the end of Section~\ref{outline_sec},
this would change the canonical orientation on $\fM_{g,l}^{\bu}(X,B,J)^{\phi,\si}$
constructed in the proof of Corollary~\ref{orient0_crl} by $(-1)^{\flr{g/2}+|\si|_0}$,
where $|\si|_0$ is the number of topological components of the fixed locus of~$(\Si,\si)$.
This sign change would make the real genus~1 degree~$d$ GW-invariant of $(\P^3,\om_4,\tau_4)$
with $d$~pairs of conjugate point constraints
to be~0 for $d\!=\!2$, 1 for $d\!=\!4$, and $4$ for $d\!=\!6$.
In particular, it would make the $d\!=\!4$ number congruent to its complex analogue modulo~4;
this is the case for Welschinger's (genus~0) invariants for many target spaces.
However, this property fails for the $(g,d)\!=\!(1,5)$ numbers 
(the real enumerative invariant is~0, while its complex analogue is~42).\\

\noindent
We note that the statement of Proposition~\ref{CompOrient_prp} is invariant under 
interchanging the points within the last two conjugate pairs simultaneously
(this corresponds to reordering the nodes of a nodal map). 
This interchange reverses the orientation of the last factor on the left-hand side of~\eref{CompOrient_e},
because the complex rank of $\cL_{l+1}\!\otimes_{\C}\!\cL_{l+2}$ is~1, 
and the orientation of the last factor on the right-hand side of~\eref{CompOrient_e},
because the complex rank of $TX$ is~odd.\\

\noindent
If $n\!\in\!2\Z$ and $g\!+\!l\!\ge\!2$, the comparison~\eref{CompOrient_e} should be 
made with the tangent bundles of the moduli spaces twisted as in~\eref{twist_rmk_e}.
The proof of Proposition~\ref{CompOrient_prp} appearing in~\cite{RealGWsII} still applies,
but leads to the opposite conclusion; see~\cite[Remark~1.3]{RealGWsII}.

\section{Outline of the main proofs}
\label{outline_sec}

\noindent
The origins of real GW-theory go back to~\cite{Melissa},
where the spaces~\eref{gluedspace_e} are topologized by
adapting the description of Gromov's topology in~\cite{LT}
via versal families of deformations of abstract complex curves to the real setting.
This  demonstrates that the codimension~1 boundaries of the spaces 
in~\eref{notgluedspace_e}  form hypersurfaces inside 
the full moduli space~\eref{gluedspace_e} and thus reduces the problem of constructing 
a real GW-theory for a real symplectic manifold $(X,\om,\phi)$
to showing~that
\begin{enumerate}[label=(\Alph*),leftmargin=*]

\item\label{orient_it} the uncompactified moduli spaces $\fM_{g,l}(X,B;J)^{\phi,\si}$
are orientable for all types of orientation-reversing involutions~$\si$ on a genus~$g$ symmetric surface,
and 

\item\label{bnd_it} an orientation of $\fM_{g,l}(X,B;J)^{\phi}$ extends across the (virtually)
codimension-one strata of the compact moduli space $\ov\fM_{g,l}(X,B;J)^{\phi}$. 

\end{enumerate} 
In this paper, we achieve both objectives for real-orientable $2n$-manifolds 
with $n\!\not\in\!2\Z$.\\

\noindent
Let $g,l\!\in\!\Z^{\ge0}$ with $g\!+\!l\!\ge\!2$.
Denote by $\cM_{g,l}^{\si}$ the Deligne-Mumford moduli space of $\si$-compatible 
complex structures on a genus~$g$ symmetric surface $(\Si,\si)$ with
 $l$~conjugate pairs of 
marked points and by
$$\ov\cM_{g,l}^{\si}\supset\cM_{g,l}^{\si}$$
its compactification obtained by including stable nodal symmetric surfaces.
The codimension-one boundary strata~of $\ov\cM_{g,l}^{\si}\!-\!\cM_{g,l}^{\si}$
consist of real one-nodal symmetric surfaces.
Each stratum is either a hypersurface in $\ov\cM_{g,l}^{\si}$
or is a boundary of the spaces $\ov\cM_{g,l}^{\si}$
for precisely two  topological types of orientation-reversing involutions~$\si$ on~$\Si$.
Let 
$$\R\cM_{g,l}=\bigcup_{\si}\cM_{g,l}^{\si} \qquad\hbox{and}\qquad
\R\ov\cM_{g,l}=\bigcup_{\si}\ov\cM_{g,l}^{\si}$$
denote the (disjoint) union of the uncompactified real Deligne-Mumford moduli spaces 
and the union of the compactified real Deligne-Mumford moduli spaces, respectively, 
taken over all topological types of orientation-reversing involutions~$\si$
on~$\Si$.
The moduli space $\R\ov\cM_{g,l}$ is not orientable if $g\!\in\!\Z^+$.
One of the two main steps in the proof of Theorem~\ref{orient_thm} is Proposition~\ref{DMext_prp};
it implies that the real line bundle 
\BE{DMext_e}
\La_{\R}^{\top}\big(T\R\ov\cM_{g,l}\big)\otimes\big(\!\det\dbar_{\C}\big)\lra \R\ov\cM_{g,l}\EE
has a canonical orientation.\\

\noindent
With $g,l$ as above, let
\BE{ffdfn_e}\ff\!: \ov\fM_{g,l}(X,B;J)^{\phi} \lra \R\ov\cM_{g,l}\EE
denote the \textsf{forgetful morphism}.
For each $[\u]\!\in\!\ov\fM_{g,l}(X,B;J)^{\phi}$ with stable domain, 
it induces a~canonical isomorphism
\BE{TuMdecomp_e}
\La_{\R}^{\top}\big(T_{[\u]}\ov\fM_{g,l}(X,B;J)^{\phi,\si}\big)\approx
\big(\!\det D_{(TX;\tnd\phi);\u}\big)\otimes \La_{\R}^{\top}(T_{\ff(\u)}\ov\cM_{g,l}^{\si}),\EE
where $\det D_{(TX;\tnd\phi);\u}$ is the determinant of the linearization~$D_{(TX;\tnd\phi);\u}$ of 
the real $\dbar_J$-operator at~$\u$; see Section~\ref{DetLB_subs}.
The orientability of the last factor in~\eref{TuMdecomp_e} as~$\u$ varies
is indicated by the previous paragraph.
We study the orientability of the first factor on the right-hand side 
of~\eref{TuMdecomp_e} via
\sf{the relative determinant of~$D_{(TX;\tnd\phi);\u}$}, 
\BE{fDdfn_e0}
\rdet\,D_{(TX;\tnd\phi);\u}\equiv 
\big(\!\det D_{(TX;\tnd\phi);\u}\big)\otimes\big(\!\det\dbar_{\Si;\C}\big)^{\otimes n}\,,\EE
where $2n\!=\!\dim X$ and 
$\det\dbar_{\Si;\C}$ is 
the standard real Cauchy-Riemann (or \sf{CR-}) operator on the domain~$(\Si,\si)$ of~$\u$
with values in~$(\C,\fc)$.
An orientation on~\eref{fDdfn_e0} determines a correspondence between 
the orientations on $\det D_{(TX;\tnd\phi);\u}$ and on the determinant 
$\det n\dbar_{\Si;\C}$ of the standard real $\dbar$-operator on the trivial rank~$n$ 
real bundle
\hbox{$(\Si\!\times\!\C^n,\si\!\times\fc)$} over~$(\Si,\si)$.
On the other hand, orientations on $\rdet\,D_{(TX;\tnd\phi);u}$ are naturally related 
to the topology of real bundles pairs over~$(\Si,\si)$.
In particular, the second main step in the proof of Theorem~\ref{orient_thm} is 
Proposition~\ref{canonisom_prp};
it implies that a real orientation on~$(X,\om,\phi)$ determines an orientation on~\eref{fDdfn_e0}
which varies continuously with~$\u$.
Combined with the canonical orientation of~\eref{DMext_e} and
the canonical isomorphism of~\eref{TuMdecomp_e}, 
the latter orientation determines an orientation on the line bundle~\eref{main_thm}.

\subsection{The orientability problem}
\label{orient_subs}

\noindent
The typical approaches to the orientability problem in real GW-theory, 
i.e.~\ref{orient_it} on page~\pageref{orient_it},  
involve computing the signs of the actions of appropriate real diffeomorphisms
on determinant lines of real CR-operators over some coverings of
$\fM_g(X,B;J)^{\phi,\si}$ arising from bordered surfaces.
These approaches work as long as all relevant diffeomorphisms are homotopically 
fairly simple and in particular preserve a bordered surface in~$\Si$ that doubles 
to~$\Si$ or map it to its conjugate half.
This is the case if the fixed locus $\Si^{\si}\!\subset\!\Si$ of the involution~$\si$ is separating;
a good understanding  of the orientability of the moduli spaces $\fM_g(X,B;J)^{\phi,\si}$
in  such cases is obtained in \cite{Sol, FOOO9, Ge2, Remi, XCapsSetup, XCapsSigns}.
This is also the case for any involution~$\si$  of genus $g\!=\!0,1$.
In particular, the restriction of Theorem~\ref{orient_thm} to $\fM_g(X,B;J)^{\phi,\si}$ 
for the genus~1 involutions~$\si$ is essentially \cite[Theorem~1.2]{XCapsSetup};
a less general version of \cite[Theorem~1.2]{XCapsSetup} is \cite[Theorem~1.1]{Teh2}.
However, understanding the orientability in the bordered case is not sufficient beyond genus~1, 
due to the presence of real diffeomorphisms of~$(\Si,\si)$ not preserving any half of~$\Si$; 
see Example~\ref{half_eg}.
The subtle effect of such diffeomorphisms on the orientability is hard to determine.\\

\noindent
In contrast to~\cite{Sol, FOOO9}, in~\cite{XCapsSigns} we allowed the complex structure
on a bordered domain to vary and considered diffeomorphisms interchanging
the boundary components and their lifts to automorphisms of real bundle pairs.
We discovered~that they often act with the same signs~on
\begin{enumerate}[label=(A\arabic*),leftmargin=*]

\item\label{DM_it} a natural cover of $\cM_g^{\si}$ 
and the determinant line bundle for the trivial rank~1 real bundle pair over~it;

\item\label{SQ_it} the determinants of real CR-operators on
the square of a rank~1 real bundle pair with orientable real part  
and on the trivial rank~1  real bundle pair;

\item\label{Det_it} the determinants of
real CR-operators on an odd-rank real bundle pair and its top exterior power;

\end{enumerate}
see \cite[Propositions~2.5,4.1,4.2]{XCapsSigns}.
In this paper, we show that these {\it analytic} statements are in fact underpinned
by the {\it topological} statement of Proposition~\ref{canonisom_prp} 
concerning canonical homotopy classes of 
isomorphisms between real bundle pairs over a symmetric surface~$(\Si,\si)$.
As we work on the more elemental, topological level of real bundle pairs, 
we do not compute the signs of any automorphisms,  as is done in the bordered surfaces approach. 
We instead obtain isomorphisms of real bundle pairs over (families of) symmetric surfaces 
and apply the determinant functor to these isomorphisms 
(Corollaries~\ref{canonisom_crl2a} and~\ref{canonisomExt_crl2a}).
In contrast to the bordered surfaces approach, this works for all type of involutions 
on the domain and in flat families of (possibly) nodal curves.\\

\noindent
Proposition~\ref{DM_prp}, which appears to be of its own interest, 
endows the restriction of the line bundle~\eref{DMext_e} to each 
topological component~$\cM_{g,l}^{\si}$ of~$\R\cM_{g,l}$
with a canonical orientation and thus explains~\ref{DM_it}. 
This canonical orientation over an element~$[\cC]$ of~$\cM_{g,l}^{\si}$
is obtained by tensoring canonical orientations on four lines:
\begin{enumerate}[label=(\arabic*),leftmargin=*]

\item\label{KSisom_it0} the orientation~on 
the tensor product of the top exterior powers of the left and middle terms
in~\eref{DM_prp_e3} induced by the Kodaira-Spencer (or \sf{KS}) isomorphism,

\item\label{DIandSDisom_it0}
the orientation~on 
the tensor product of the top exterior powers of the  middle term
in~\eref{DM_prp_e3} and of the right-hand side in~\eref{DM_prp_e5} 
induced by the Dolbeault Isomorphism and Serre Duality,

\item\label{FMisom_it0} the orientation on~\eref{DM_prp_e7}
induced by the short exact sequence~\eref{FMses_e}
and the specified orientations of~\eref{H0ScC_e}, 

\item\label{SQisom_it0} the orientation~on the fiber of the line bundle~\eref{domains_e15}
over~$[\cC]$
determined by Corollaries~\ref{canonisom_crl} and~\ref{canonisom_crl2a}.

\end{enumerate}
We combine the canonical orientation on the restriction of the line bundle~\eref{DMext_e}
to the main stratum $\R\cM_{g,l}$, the orientation on the relative determinants~\eref{fDdfn_e0}
induced by the real orientation on~$(X,\om,\phi)$,
and the isomorphism~\eref{TuMdecomp_e} 
to establish the restriction of Theorem~\ref{orient_thm} to
the uncompactified moduli space~$\fM_{g,l}(X,B;J)^{\phi}$; see Corollary~\ref{orient0_crl}.

\subsection{The codimension-one boundary problem}
\label{bnd_subs}

\noindent
Once the orientability problem~\ref{orient_it} is resolved,
one can study the codimension-one boundary problem, i.e.~\ref{bnd_it} on page~\pageref{bnd_it}.
It then asks whether it is possible to choose an orientation on the~subspace
$$\fM_{g,l}(X,B;J)^{\phi,\si}\subset\ov\fM_{g,l}(X,B;J)^{\phi}$$
for each topological type of  orientation-reversing involutions~$\si$
on a genus~$g$ symmetric surface
so that the resulting orientations do not change across the (virtually) codimension-one strata 
of $\ov\fM_{g,l}(X,B;J)^{\phi}$.
These strata are  (virtual) hypersurfaces inside of the full moduli space
and 
consist of morphisms from one-nodal symmetric surfaces to~$(X,\phi)$.\\

\noindent
As described in \cite[Section~3]{Melissa}, there are four distinct types of
one-nodal symmetric surfaces  $(\Si,x_{12},\si)$:
\begin{enumerate}[leftmargin=.3in]

\item[(E)] $x_{12}$ is an isolated real node, i.e.~$x_{12}$ is an isolated point
of the fixed locus $\Si^{\si}\!\subset\!\Si$;

\item[(H)]\label{Hdegen_it}  $x_{12}$ is a non-isolated real node and

\begin{enumerate}[label=(H\arabic*),leftmargin=.15in] 

\item the topological component $\Si_{12}^{\si}$ of $\Si^{\si}$ containing $x_{12}$
is algebraically irreducible (the normalization~$\wt\Si_{12}^{\wt\si}$
of~$\Si_{12}^{\si}$ is connected);

\item the topological component $\Si_{12}^{\si}$ of $\Si^{\si}$ containing $x_{12}$
is algebraically reducible, but $\Si$ is algebraically irreducible
(the normalization~$\wt\Si_{12}^{\wt\si}$ of~$\Si_{12}$ is disconnected,
but the normalization $\wt\Si$ of~$\Si$ is connected);

\item  $\Si$ is algebraically reducible (the normalization $\wt\Si$ of~$\Si$ is disconnected).

\end{enumerate}
\end{enumerate} 
In \cite[Section~3]{Teh2}, the above types  are called (II), (IC1), (IC2), and~(ID), respectively.
In the genus~0 case, the degenerations~(E) and~(H3) are known as 
\textsf{codimension-one sphere bubbling} and
\textsf{disk bubbling}, respectively;
the degenerations~(H1) and~(H2) cannot occur in the genus~0 case.\\

\begin{figure}
\begin{center}
\includegraphics[width=0.7\textwidth,height=80px]{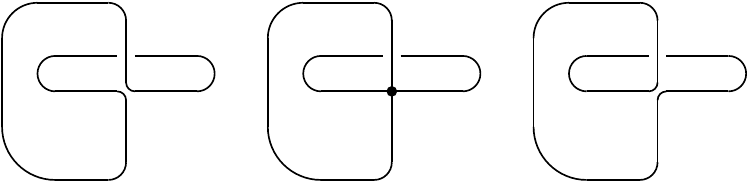}
\end{center}
\caption{The real locus transition through (H2) and (H3) degenerations}
\label{H2H3_fig}
\end{figure}

\noindent
The transitions between smooth symmetric surfaces across the four types of 
one-nodal symmetric surfaces
are illustrated in \cite[Figures~12-15]{Melissa}.
A transition through a degeneration~(H3) does not change the topological type of the involution.
Thus, each stratum of morphisms from a one-nodal symmetric surface of type~(H3)
to~$(X,\phi)$ is a hypersurface inside of $\ov\fM_{g,l}(X,B;J)^{\phi,\si}$
for some genus~$g$ involution~$\si$.
This transition does not play a material role in the approach of \cite{Wel4,Wel6}, 
which is based on counting real genus~0 curves, rather than their parametrizations.
In the approach of~\cite{Cho,Sol}, which is based on counting morphisms from disks 
as halves of morphisms from~$(\P^1,\tau)$, the degeneration~(H3) appears 
as a codimension-one boundary consisting of morphisms from two disks.
This boundary is glued to itself in~\cite{Cho,Sol} by the involution which corresponds
to flipping one of the disks; 
this involution is orientation-reversing under suitable assumptions
and so the orientation on the main stratum extends across the resulting hypersurfaces. 
A perspective that combines the hypersurface viewpoint of \cite{Wel4,Wel6} with
the parametrizations setting of~\cite{Cho,Sol} appears in~\cite{Ge2}.
It fits naturally with the approach of this paper to studying transitions
through all four degeneration types.\\

\noindent
A transition through a degeneration~(E) changes the number $|\si|_0$
of topological components (circles)
of the fixed locus $\Si^{\si}\!\subset\!\Si$ by~one. 
In the terminology of Section~\ref{SymmSurf_subs}, 
such a transition can be described
as collapsing a standard boundary component of a bordered half-surface 
(corresponding to a component of~$\Si^{\si}$) and then replacing it with a crosscap.
In particular, each stratum of morphisms from a one-nodal symmetric surface of type~(E)
to~$(X,\phi)$ is a boundary of the spaces $\ov\fM_{g,l}(X,B;J)^{\phi,\si}$
for precisely two  topological types of genus~$g$ involutions~$\si$.
In the genus~0 case, the analysis of orientations necessary for 
the gluing of the two spaces along their common boundary is carried
out in \cite[Section~3]{Teh}.\\

\noindent
A transition through a degeneration~(H1) also changes the number $|\si|_0$ 
by one, but through a more complicated process. 
Such a transition transforms two components of~$\Si^{\si}$ into one and creates 
an additional crosscap ``near" the node of the one-nodal surface $(\Si,x_{12},\si)$.
Each stratum of morphisms from a one-nodal symmetric surface of type~(H1)
to~$(X,\phi)$ is a boundary of the spaces $\ov\fM_{g,l}(X,B;J)^{\phi,\si}$
for precisely two topological types of genus~$g$ involutions~$\si$.
A degeneration~(H1) cannot occur in genus~0, but does occur in genus~1 and higher;
see the last diagram in \cite[Figure~2]{Teh2}.\\

\begin{figure}
\begin{center}
\includegraphics[width=0.7\textwidth,height=80px]{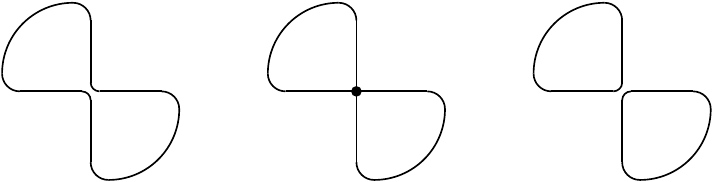}
 \end{center}
\caption{The real locus transition through an (H1) degeneration}
\label{H1_fig}
\end{figure}

\noindent
A transition through a degeneration~(H2) does not change the number of topological components
of~$\Si^{\si}$, but cuts one of them into two arcs and re-joins the arcs in the opposite way.
The transformation of the real locus is the same as in the (H3) case, but 
an (H2) transition also inserts or removes two crosscaps.
This transition may or may not change the topological type of the involution~$\si$.
If the fixed locus of  $(\Si,x_{12},\si)$ is separating,
then this transition changes the topological type of~$\si$
and each stratum of morphisms from $(\Si,x_{12},\si)$
to~$(X,\phi)$ is a boundary of the spaces $\ov\fM_{g,l}(X,B;J)^{\phi,\si}$
for precisely two topological types of genus~$g$ involutions~$\si$.
If the fixed locus of $(\Si,x_{12},\si)$ is non-separating,
then this transition does not change the topological type of~$\si$
and each stratum of morphisms from $(\Si,x_{12},\si)$
to~$(X,\phi)$ is a hypersurface inside of $\ov\fM_{g,l}(X,B;J)^{\phi,\si}$
for some genus~$g$ involution~$\si$.
A degeneration~(H2) cannot occur in genus~0 or~1, but does occur in genus~2 and~higher.\\

\noindent
The transitions~(H1) and~(H2) do not preserve any bordered surface in~$\Si$ that doubles to~$\Si$,
in contrast to the transition~(E);
the transition~(H3) does not preserve any bordered surface in~$\Si$ that doubles to~$\Si$ either,
but its nature is fairly simple.
As in the case of~\ref{orient_it} discussed in Section~\ref{orient_subs},
this makes the issue~\ref{bnd_it} difficult to study using the standard approaches 
to orienting the determinant lines of real CR-operators
even when issue~\ref{orient_it} is resolved; see \cite[Conjectures~1.3,6.3]{Teh2}.
We approach~\ref{bnd_it}  by studying isomorphisms between real bundle pairs,
but this time over one-nodal symmetric surfaces $(\Si,x_{12},\si)$.
As in the smooth case, this circumvents a direct computation of the signs
 of any automorphisms of the determinant lines of  real CR-operators.\\

\noindent
Corollary~\ref{orient0_crl} uses a  real orientation on~$(X,\om,\phi)$ to
endow  the fiber of the line bundle~\eref{orient_thm_e} 
over each element~$[\u]$ of $\fM_{g,l}(X,B;J)^{\phi}$ with an orientation.
The latter is obtained by combining
the orientation on the relative determinant~\eref{fDdfn_e0}
of the linearization of the $\dbar_J$-operator at~$\u$
induced by the real orientation on~$(X,\om,\phi)$ and
the canonical orientation on  the fiber of
the line bundle~\eref{DMext_e} over~$\ff(\u)$ via the isomorphism~\eref{TuMdecomp_e}.
The real orientation on~$(X,\om,\phi)$ specifies
a  homotopy class of isomorphisms~\eref{realorient2_e2}
with $(V,\vph)\!=\!u^*(TX,\tnd\phi)$.
The latter determines an orientation  on the relative determinant~\eref{fDdfn_e0}.
An isomorphism in the specified homotopy class over 
a one-nodal symmetric surface $(\Si,x_{12},\si)$ extends
to an isomorphism in the specified homotopy class for  each nearby smooth symmetric surface.
Therefore, so does the induced orientation 
on the relative determinant~\eref{fDdfn_e0};
see Corollary~\ref{canonisomExt2_crl2a}.
This means that the induced orientation of the line bundle formed by
the relative determinants~\eref{fDdfn_e0} does not change
across any of the codimension-one strata of $\ov\fM_{g,l}(X,B;J)^{\phi}$.\\ 

\noindent
The situation with the canonical orientation on the restriction of 
the line bundle~\eref{DMext_e} to $\R\cM_{g,l}$ provided by 
Proposition~\ref{DM_prp} on page~\pageref{DM_prp} is very different.
This is partly indicated by the statement of Proposition~\ref{DMext_prp},
but the actual situation is even more delicate.
This canonical orientation constructed in Proposition~\ref{DM_prp} is 
the tensor product of the four orientations listed at the end of Section~\ref{orient_subs}.
The line bundles on which these orientations are defined naturally extend across
the codimension-one boundary strata of~$\R\ov\cM_{g,l}$.
The behavior of the four orientations across these strata of~$\R\ov\cM_{g,l}$
is described in the proof of  Proposition~\ref{DMext_prp} at the end of Section~\ref{DMext_subs}.
The orientations~\ref{DIandSDisom_it0} and~\ref{FMisom_it0} in Section~\ref{orient_subs}
do not change across any of codimension-one strata.
The orientation~\ref{KSisom_it0}, determined by the KS isomorphism~\eref{DM_prp_e3}
for smooth symmetric surfaces, changes across all codimension-one boundary strata;
see Lemma~\ref{KSext_lmm}.
The orientation~\ref{SQisom_it0} over a smooth symmetric surface is induced 
by Corollary~\ref{canonisom_crl2a} from 
the canonical real orientation of Corollary~\ref{canonisom_crl} with $L\!=\!T^*\Si$.
The analogue of~$L$ for a one-nodal symmetric surface $(\Si,x_{12},\si)$ is played by
the restriction of the line bundle~$\wh\cT$ of Lemma~\ref{TSiext_lmm}.
The restriction of its real part to the singular component of the fixed locus 
is orientable for the degenerations of types~(E) and~(H1)
and is not orientable
for the degenerations of types~(H2) and~(H3); see Lemma~\ref{EH1vsH2H3_lmm}.
In the latter cases, the orientation~\ref{SQisom_it0} for $(\Si,x_{12},\si)$
 depends on the orientation
of the fixed locus; see Corollary~\ref{canonisom_crl}.
For the degenerations of types~(H2) and~(H3), no orientation of 
the singular component of the fixed locus  extends to nearby smooth symmetric surfaces
(because (H2) and~(H3) involve cutting a fixed circle into two arcs and re-joining them in the opposite way).
Therefore, the orientation~\ref{SQisom_it0} changes in the transitions~(H2)
and~(H3) and does not in  the transitions~(E) and~(H1); see Corollary~\ref{realorient_crl}.\\

\begin{table}
\begin{center}
\begin{small}
\renewcommand\arraystretch{1.8} 
\begin{tabular}{||c|c|c|c||}
\hline\hline
orientation/parity of&  induced by& (E)/(H1)& (H2)/(H3)\\
\hline
$\La_{\R}^{\top}(T_{\Si}\cM_{g,l}^{\si})\!\otimes\!
\La_{\R}^{\top}(\wch{H}^1(\Si;T\Si))^{\si})$ & KS isomorphism~\eref{DM_prp_e3} 
& $-$ & $-$\\
\hline
$(\det\dbar_{(T^*\Si,\tnd\si^*)^{\otimes2}})\!\otimes\!(\!\det\dbar_{\Si;\C})$
&Corollaries~\ref{canonisom_crl} and~\ref{canonisom_crl2a} & $+$ & $-$\\
\hline\hline
$|\si|_0$& N/A &  $-$ & $+$\\
\hline\hline
\end{tabular}
\end{small}
\end{center}
\caption{The extendability of the canonical orientations and of the parity of 
the number of components of~$\Si^{\si}$ across the codimension-one strata:
$+$ extends, $-$ flips.
All other canonical orientations factoring into the orientation of 
the line bundle~\eref{orient_thm_e} extend across all codimension-one strata.}
\label{SignChg_tbl}
\end{table}

\noindent
The key points of the previous paragraph are summarized in Table~\ref{SignChg_tbl}.
They imply that the canonical orientation on the restriction of 
the line bundle~\eref{DMext_e} to $\R\cM_{g,l}$ provided by 
Proposition~\ref{DM_prp}
does not change in the transitions~(H2) and~(H3) and changes
in  the transitions~(E) and~(H1).
These transitions have the same effect  on the parity of
the number~$|\si|_0$ of connected components of the fixed locus~$\Si^{\si}$ of~$(\Si,\si)$.
Thus, the canonical orientation on the restriction of~\eref{DMext_e} to $\R\cM_{g,l}$
multiplied by $(-1)^{g+|\si|_0+1}$  over each topological component~$\cM_{g,l}^{\si}$
of~$\R\cM_{g,l}$ extends over all of~$\R\ov\cM_{g,l}$.
The same considerations apply to the orientation on the restriction
of the line bundle~\eref{orient_thm_e} to $\fM_{g,l}(X,B;J)^{\phi,\si}$ 
provided by Corollary~\ref{orient0_crl}.
If $n\!\not\in\!2\Z$, this sign modification leaves the  orientations of the moduli spaces 
$\fM_{g,l}(X,B;J)^{\phi,\si}$ for separating involutions~$\si$ unchanged.

\section{Notation and review}
\label{prelim_sec}

\noindent
In this section, we set up the notation and terminology used throughout 
Sections~\ref{signcomp_sec} and~\ref{BdExt_sec}.
We recall some facts about symmetric surfaces, associated half-surfaces,
their moduli spaces, real Cauchy-Riemann operators, and 
their determinant line bundles.

\subsection{Symmetric surfaces and half-surfaces}
\label{SymmSurf_subs}

\noindent
Let $(\Si,\si)$ be a genus~$g$ symmetric surface.
We denote by $|\si|_0\!\in\!\Z^{\ge0}$ the number of connected components of~$\Si^{\si}$;
each of them is a circle.
Let $\lr\si\!=\!0$ if the quotient $\Si/\si$ is orientable, 
i.e.~$ \Si\!-\! \Si^{\si}$ is disconnected, and $\lr\si\!=\!1$ otherwise. 
There are $\flr{\frac{3g+4}{2}}$ different topological types of orientation-reversing 
involutions $\si$ on $\Si$ classified by the triples $(g,|\si|_0,\lr\si)$; 
see \cite[Corollary~1.1]{Nat}. \\

\noindent
An \sf{oriented symmetric half-surface} (or simply \sf{oriented sh-surface}) 
is a pair $(\Si^b,c)$ consisting of an oriented bordered smooth surface~$\Si^b$ 
and an involution $c\!:\prt\Si^b\!\lra\!\prt\Si^b$ preserving each component
and the orientation of~$\prt\Si^b$.
The restriction of~$c$  to a boundary component 
is either the identity or the antipodal map
\BE{antip_e}\fa\!:S^1\lra S^1, \qquad z\lra -z,\EE
for a suitable identification of $(\prt\Si^b)_i$ with $S^1\!\subset\!\C$;
the latter type of boundary structure is called \sf{crosscap} 
in the string theory literature.
We define
$$c_i=c|_{(\prt\Si^b)_i}, \qquad 
|c_i|= \begin{cases} 0,&\hbox{if}~c_i=\id;\\ 1,&\hbox{otherwise};\end{cases}
\qquad
|c|_k=\big|\{(\prt\Si^b)_i\!\subset\!\Si^b\!:\,|c_i|\!=\!k\}\big|\quad k=0,1.$$
Thus, $|c|_0$ is the number of standard boundary components of $(\Si^b,\prt\Si^b)$  
and $|c|_1$ is the number of crosscaps.
Up to isomorphism, each oriented sh-surface $(\Si^b,c)$ is determined by the genus~$g$ of~$\Si^b$,
the number~$|c|_0$ of ordinary boundary components, and the number~$|c|_1$ of crosscaps.
We denote by $(\Si_{g,m_0,m_1},c_{g,m_0,m_1})$ the genus~$g$ oriented sh-surface
with $|c_{g,m_0,m_1}|_0\!=\!m_0$ and $|c_{g,m_0,m_1}|_1\!=\!m_1$.\\

\begin{figure}
\begin{center}
\leavevmode
\includegraphics[width=0.6\textwidth,height=150px]{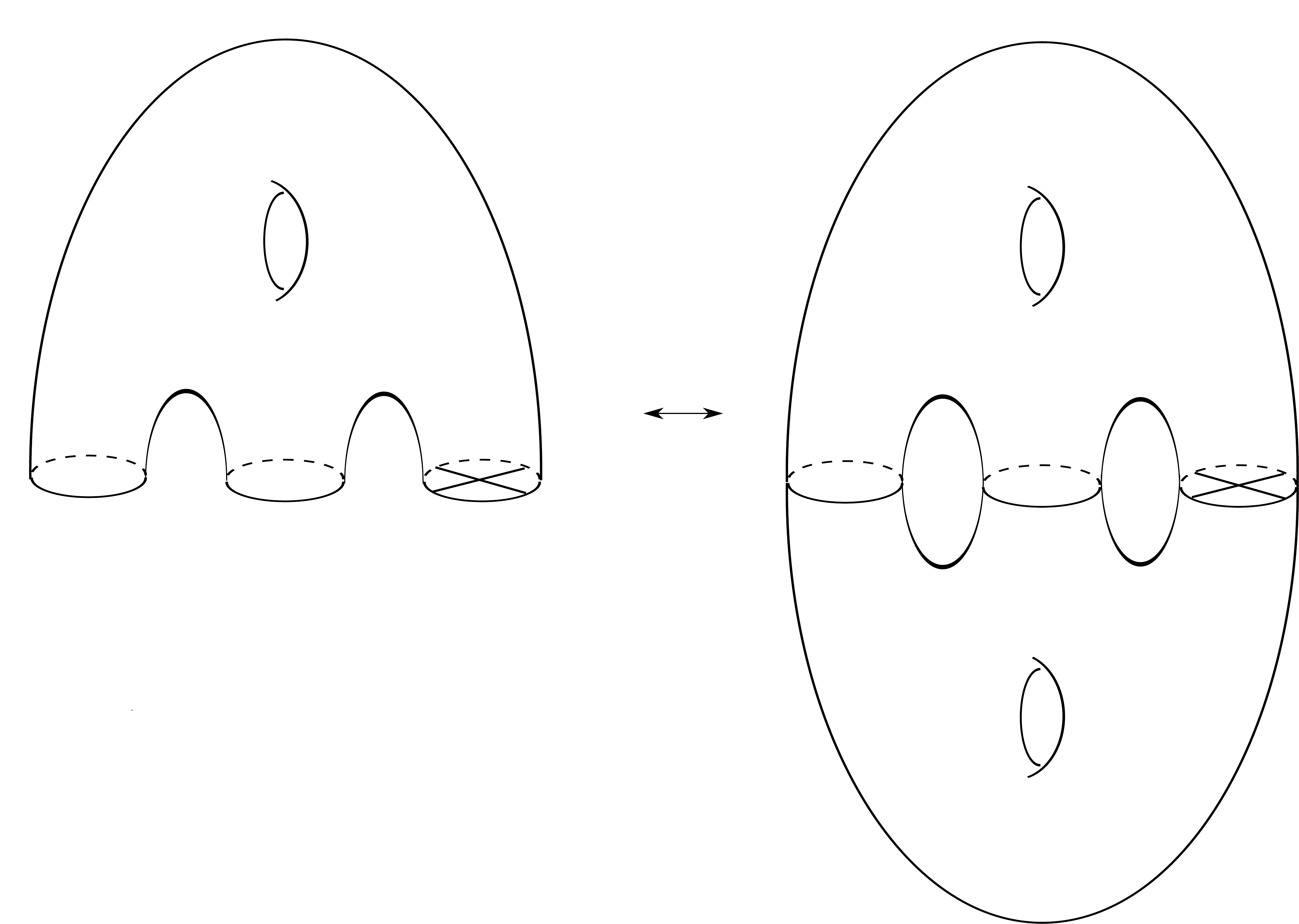}
\end{center}
\caption{Doubling an oriented sh-surface}
\label{HSdoub_fig}
\end{figure}

\noindent
An  oriented sh-surface $(\Si^b,c)$ of type $(g,m_0,m_1)$ \sf{doubles} 
to a symmetric surface~$(\Si, \si)$ of type 
$$(g(\Si), |\si|_0,\lr\si)=\begin{cases} (2g\!+\!m_0\!+\!m_1\!-\!1,m_0,0),& \text{if}~ m_1=0;\\
(2g\!+\!m_0\!+\!m_1\!-\!1,m_0,1),& \text{if}~m_1\neq 0;
\end{cases}$$
so that $\si$ restricts to~$c$ on the cutting circles (the boundary of~$\Si^b$);
see \cite[(1.6)]{XCapsSetup} and Figure~\ref{HSdoub_fig}.
Since this doubling construction covers all topological types of orientation-reversing 
involutions $\si$ on $\Si$, for every symmetric surface $(\Si,\si)$ there is 
an oriented sh-surface $(\Si^b,c)$ which doubles to~$(\Si,\si)$.
In general,  the topological type of such an sh-surface is not unique. 
There is a topologically unique oriented sh-surface~$(\Si^b,c)$
doubling to a symmetric surface~$( \Si,\si)$ if $\lr\si\!=\!0$, 
in which case $(\Si^b,c)$ has no crosscaps, or 
$|\si|_0\!\ge\!g( \Si)\!-\!1$,
in which case $(\Si^b,c)$ is either of genus at most~1 and has no crosscaps or
of genus~0 and has at most 2 crosscaps.\\

\noindent
Denote by $\cD_\si$ the group of orientation-preserving diffeomorphisms of~$\Si$  
 commuting with the involution~$\si$.
If $(X,\phi)$ is a smooth manifold with an involution, $l\!\in\!\Z^{\ge0}$,
and $B\in H_2(X;\Z)$, let  
$$\fB_{g,l}(X,B)^{\phi,\si}\subset \fB_g(X)^{\phi,\si}\times \Si^{2l}$$
denote the space of real maps  $u\!:(\Si,\si)\!\lra\!(X,\phi)$ with $u_*[ \Si]_{\Z}=B$
and $l$ pairs of conjugate non-real marked distinct points.
We define
$$\cH_{g,l}(X,B)^{\phi,\si}=
\big(\fB_{g,l}(X,B)^{\phi,\si}\!\times\!\cJ_{\Si}^{\si}\big)/\cD_\si.$$
The action of $\cD_\si$ on $\cJ_{\Si}$ given by $h\cdot\fJ=h^*\fJ$
preserves~$\cJ_{\Si}^{\si}$; thus, the above quotient is well-defined. 
If $J\!\in\!\cJ_{\om}^{\phi}$,
the moduli space of marked real $J$-holomorphic maps in the class $B\in H_2(X;\Z)$ 
is the subspace
$$\fM_{g,l}(X,B;J)^{\phi,\si}=
\big\{[u,(z_1^+,z_1^-),\ldots,(z_l^+,z_l^-),\fJ]\!\in\!\cH_{g,l}(X, B)^{\phi,\si}\!:~
\dbar_{J,\fJ}u\!=\!0\big\},$$
where $\dbar_{J,\fJ}$ is the usual Cauchy-Riemann operator with respect 
to the complex structures $J$ on $X$ and $\fJ$ on $\Si$. 
If $g\!+\!l\!\ge\!2$, 
$$\cM_{g,l}^\si\equiv \fM_{g,l}(\pt,0)^{\id,\si}\equiv \cH_{g,l}(\pt, 0)^{\id,\si}$$
is the moduli space of marked symmetric domains. 
There is a natural forgetful morphism
$$\ff:\cH_{g,l}(X,B)^{\phi,\si}\lra  \cM_{g,l}^\si\,;$$
it drops the map component~$u$ from each element of the domain.\\

\noindent 
The following example shows that the orientability of a moduli space of symmetric half-surfaces 
does not imply the orientability of the corresponding moduli space of symmetric surfaces.
It indicates the subtle effect of diffeomorphisms of a symmetric surface~$(\Si,\si)$ 
not preserving any half-surface~$\Si^b$ and 
the difficulties arising in the standard approaches to the orientability problem~\ref{orient_it}
on page~\pageref{orient_it} in positive genus.

\begin{eg}\label{half_eg}
Let $\Si^b$ be an sh-surface  of genus~2 with one boundary component and non-trivial 
involution, as in the left diagram of Figure~\ref{fig:realspace}.
Its double is a symmetric surface~$(\Si,\si)$ of genus~4 without a fixed locus, 
as in the middle diagram of Figure~\ref{fig:realspace}.
The moduli space $\cM_{\Si^b}^{c}$ of sh-surfaces~$\Si^b$ 
is orientable by \cite[Lemma~6.1]{XCapsSetup} and \cite[Lemma~2.1]{XCapsSigns}. 
The natural automorphisms of $\cM_{\Si^b}^{c}$  associated with real orientation-reversing 
diffeomorphisms of~$\Si^b$ are orientation-preserving by \cite[Lemma 6.1]{XCapsSetup} 
and \cite[Corollary~2.3]{XCapsSigns}. 
On the double~$\Si$ of~$\Si^b$, 
these diffeomorphisms correspond to flipping the surface across the crosscap. 
The real moduli space~$\cM^\si_4$ parametrizing such symmetric surfaces~$\Si$ is not orientable
for the following reason.
By \cite[Theorem~1.2]{Nat}, every representative of a point in~$\cM^\si_4$ 
has 5~invariant circles which separate the surface, 
as in the right diagram of Figure~\ref{fig:realspace}.
There is a real diffeomorphism~$h$ which fixes~3 of these circles and interchanges the other~2. 
By \cite[Corollary~2.2]{XCapsSigns}, the mapping torus of~$h$ defines a loop in $\cM^{\si}_4$ 
which pairs non-trivially with the first Stiefel-Whitney class of the moduli space. 
\end{eg}

\begin{figure}
\begin{center}
\leavevmode
\includegraphics[width=0.7\textwidth,height=150px]{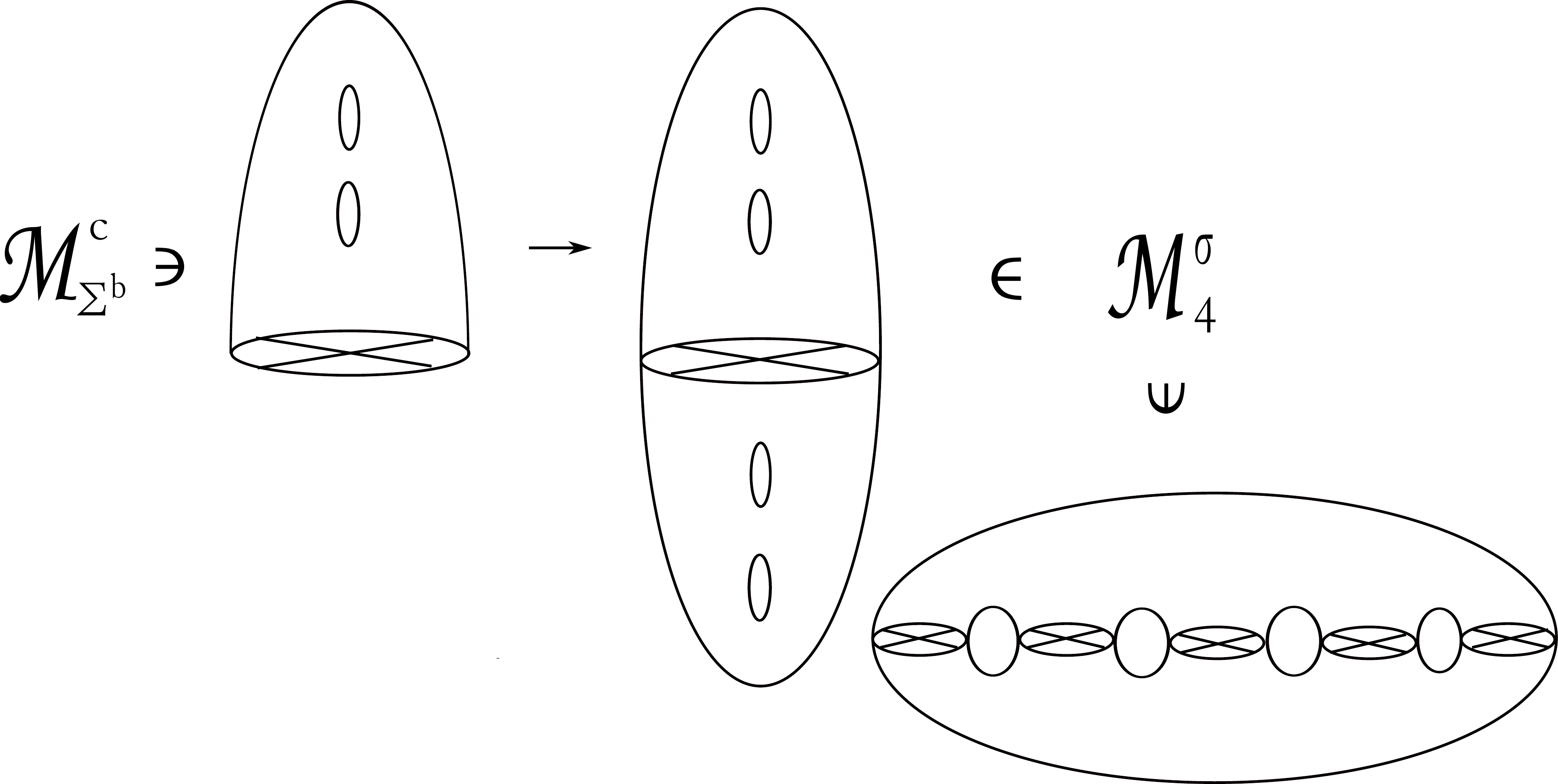}
\end{center}
\caption{Orientability of crosscaps vs.~real moduli spaces}
\label{fig:realspace}
\end{figure}


\subsection{Gromov's convergence topology}
\label{GrConv_subs}

\noindent
Let $\cC\!\equiv\!(\Si,z_1,\ldots,z_l,\fJ)$ be a compact nodal marked Riemann surface. 
A \sf{flat family of deformations} of~$\cC$ is a~tuple
\BE{CcCdfn_e}\big(\pi\!:\cU\!\lra\!\De,s_1\!:\De\!\lra\!\cU,\ldots,s_l\!:\De\!\lra\!\cU\big),\EE
where $\cU$ is a complex manifold, 
$\De\!\subset\!\C^N$ is a ball around~$0$, 
and $\pi,s_1,\ldots,s_l$ are holomorphic maps such~that
\begin{enumerate}[label=$\bullet$,leftmargin=*]

\item $\Si_{\t}\!\equiv\!\pi^{-1}(\t)$ is a (possibly nodal) 
Riemann surface for each $\t\!\in\!\De$
and $\pi$ is a submersion outside of the nodes of the fibers of~$\pi$,

\item for every $\t^*\!\equiv\!(t_1^*,\ldots,t_N^*)\!\in\!\De$ and 
every node $z^*\!\in\!\Si_{\t^*}$, 
there exist $i\!\in\!\{1,\ldots,N\}$ with $t_i^*\!=\!0$,
neighborhoods $\De_{\t^*}$ of~$\t^*$ in~$\De$ and $\cU_{z^*}$ of~$z^*$ in~$\cU$, 
and a holomorphic~map
$$\Psi\!:\cU_{z^*}\lra \big\{\big((t_1,\ldots,t_N),x,y\big)\!\in\!\De_{\t^*}\!\times\!\C^2\!:
xy\!=\!t_i\big\}$$
such that $\Psi$ is a homeomorphism onto a neighborhood of $(\t^*,0,0)$
and the composition of~$\Psi$ with the projection to~$\De_{\t^*}$ equals~$\pi|_{\cU_{z^*}}$,

\item $\pi\!\circ\!s_i\!=\!\id_{\De}$ and $s_i(\t)\!\neq\!s_j(\t)$
for all $\t\!\in\!\De$ and $i,j\!=\!1,\ldots,l$ with $i\!\neq\!j$,

\item $(\Si_0,s_1(0),\ldots,s_l(0))\!=\!\cC$.

\end{enumerate} 

\vspace{.1in}

\noindent
Let $\cC\!\equiv\!(\Si,\si,(z_1^+,z_1^-),\ldots,(z_l^+,z_l^-),\fJ)$ be 
a nodal marked symmetric Riemann surface.
A \sf{flat family of deformations} of~$\cC$ is a tuple 
$$\big(\pi\!:\cU\!\lra\!\De,\wt\fc\!:\cU\!\lra\!\cU,
s_1\!:\De\!\lra\!\cU,\ldots,s_l\!:\De\!\lra\!\cU\big)$$
such that
$(\pi,s_1,\wt\fc\!\circ\!s_1,\ldots,s_l,\wt\fc\!\circ\!s_l)$
is a flat family of deformations of $(\Si,(z_1^+,z_1^-),\ldots,(z_l^+,z_l^-),\fJ)$
and $\wt\fc$ is an anti-holomorphic involution on~$\cU$ lifting 
the standard involution~$\fc$ on~$\De$ and restricting to~$\si$ over
$\Si\!=\!\pi^{-1}(0)$.
In such a case, let $\si_{\t}\!=\!\wt\fc|_{\Si_{\t}}$ for each
parameter~$\t$ in $\De_{\R}\!\equiv\!\De\!\cap\!\R^N$.\\

\noindent
For any nodal surface~$\Si$, we denote by $\Si^*\!\subset\!\Si$ 
the subset of its smooth points.
Suppose $\pi\!:\cU\!\lra\!\De$ is a flat family of deformations of $(\Si,\fJ)$.
There then exist a neighborhood $\De'$ of~0 in~$\De$ and
a continuous collapsing map 
$$q\!:\cU\big|_{\De'} \lra \Si$$
so that the preimage of each node of $\Si$ under the restriction of~$q$ to~$\Si_{\t}$
with $\t\!\in\!\De'$ is either a node of~$\Si_{\t}$ or an embedded circle and 
the map
$$\pi\!\times\!q\!: q^{-1}(\Si^*) \lra \De'\!\times\!\Si^* $$
is a diffeomorphism.
For each $\t\!\in\!\De'$, let 
\BE{psitdfn_e}\psi_{\t}\!:\Si^*\lra q^{-1}(\Si^*) \!\cap\!\Si_{\t}\EE
be the restriction of its inverse to $\t\!\times\!\Si^*$.
If $\t_r\!\in\!\De$ is a sequence converging to $0\!\in\!\De$
and \hbox{$u_r\!:\Si_{\t_r}\!\lra\!X$} is a sequence of continuous maps
that are smooth on~$\Si_{\t_r}^*$,
we say that the sequence~$u_r$ \sf{converges to} a smooth map $u\!:\!\Si^*\!\lra\!X$
{\sf u.c.s.} (\sf{uniformly on compact subsets})
if the sequence of maps
$$u_r\!\circ\!\psi_{\t_r}\!:\Si^*\lra X$$
converges to~$u$ uniformly in the $C^{\i}$-topology on compact  subsets of~$\Si^*$.
This notion is independent of the choices of~$\De'$ and 
trivialization of~$\pi|_{q^{-1}(\Si^*)}$.\\

\noindent
For a Riemannian metric~$g$ on~$X$ and an $L^p_1$-map $u\!:\Si\!\lra\!X$,
for some $p\!>\!2$, let 
$$E_g(u)\equiv\frac12\int_{\Si}|\tnd u|_g^2 \in\R^{\ge0}$$
denote the \sf{energy} of~$u$; 
this notion is independent of the choice of $\fJ$-compatible metric on~$\Si$.

\begin{dfn}[Gromov's Convergence]\label{GromConv_dfn2}
Suppose $(X,\phi)$ is a manifold with an involution, 
$g$ is a Riemannian metric on~$X$,
and $J_r$ is an almost complex structure on~$X$ for every 
\hbox{$r\!\in\!\Z^{\ge0}\!\sqcup\!\{\i\}$}.
A sequence $(\cC_r,\si_r,u_r)$ of $\phi$-real \hbox{$J_r$-holomorphic}  maps
with $l$~conjugate pairs of marked points  
\sf{converges} to a $\phi$-real $J_{\i}$-holomorphic map $(\cC_{\i},\si_{\i},u_{\i})$
with $l$~conjugate pairs of marked points  
if $E_g(u_r)\!\lra\!E_g(u_{\i})$ as $r\!\lra\!\i$ and there exist
\begin{enumerate}[label=(\alph*),leftmargin=*]

\item a flat family $(\pi,\wt\fc,s_1,\ldots,s_l)$ of deformations of~$(\cC_{\i},\si_{\i})$
as above,

\item a sequence $\t_r\!\in\!\De_{\R}$ converging to $0\!\in\!\De$, and

\item equivalences $h_r\!:(\Si_{\t_r},\si_r)\!\lra\!(\cC_r,\si_{\t_r})$

\end{enumerate}
such that $u_r\!\circ\!h_r$ converges to~$u_{\i}|_{\Si_{\i}^*}$ u.c.s.
\end{dfn}

\noindent
Suppose $(X,\phi)$, $g$, and $J_r$ are as in Definition~\ref{GromConv_dfn2},
$X$ is compact, and the sequence $J_r$ converges to~$J_{\i}$ with respect 
to the $C^2$-topology.
Gromov's Compactness Theorem for $J$-holomorphic maps, arising from~\cite{Gr},
then implies that every sequence of stable $\phi$-real $J_r$-holomorphic maps~$u_r$
with $l$~conjugate pairs of marked points so that $\liminf E_g(u_i)$
is finite contains a subsequence that converges in the sense of Definition~\ref{GromConv_dfn2}
to some stable $\phi$-real $J_{\i}$-holomorphic map $(\cC_{\i},u_{\i})$
with $l$~conjugate pairs of marked points.
By the compactness of~$\Si_{\i}$, this notion of convergence
is independent of the choice of metric~$g$ on~$X$.

\subsection{Determinant lines of Fredholm operators}
\label{DetLB_subs}

\noindent
Let $(V,\vph)$ be a real bundle pair over a symmetric surface~$(\Si,\si)$.
A \textsf{real Cauchy-Riemann} (or \sf{CR-}) \sf{operator} on~$(V,\vph)$  is a linear map of the~form
\BE{CRdfn_e}\begin{split}
D=\bp\!+\!A\!: \Ga(\Si;V)^{\vph}
\equiv&\big\{\xi\!\in\!\Ga(\Si;V)\!:\,\xi\!\circ\!\si\!=\!\vph\!\circ\!\xi\big\}\\
&\hspace{.1in}\lra
\Ga_{\fJ}^{0,1}(\Si;V)^{\vph}\equiv
\big\{\ze\!\in\!\Ga(\Si;(T^*\Si,\fJ)^{0,1}\!\otimes_{\C}\!V)\!:\,
\ze\!\circ\!\tnd\si=\vph\!\circ\!\ze\big\},
\end{split}\EE
where $\bp$ is the holomorphic $\bp$-operator for some $\fJ\!\in\!\cJ_{\Si}^{\si}$
and a holomorphic structure in~$V$ reversed by~$\vph$
and  
$$A\in\Ga\big(\Si;\Hom_{\R}(V,(T^*\Si,\fJ)^{0,1}\!\otimes_{\C}\!V) \big)^{\vph}$$ 
is a zeroth-order deformation term. 
A real CR-operator on a real bundle pair is Fredholm in the appropriate completions.
The space of completions of all real CR-operators on~$(V,\vph)$ is contractible 
with respect to the operator~norm.\\

\noindent
If $X,Y$ are Banach spaces and $D\!:X\!\lra\!Y$ is a Fredholm operator, let
$$\det D\equiv\La_{\R}^{\top}(\ker D) \otimes \big(\La^{\top}_{\R}(\text{cok}\,D)\big)^*$$
denote the \textsf{determinant line} of~$D$. 
A continuous family of such Fredholm operators~$D_t$ over a topological space~$\cH$  
determines a line bundle over~$\cH$, called \sf{the determinant line bundle of~$\{D_t\}$}
and denoted $\det D$; see \cite[Section~A.2]{MS} and~\cite{detLB}. 
Combined with the note at the end of the previous paragraph, 
this implies that there is a canonical homotopy class of isomorphisms 
between any two CR-operators on a real bundle pair~$(V,\vph)$;
we thus denote any such operator by $D_{(V,\vph)}$.\\

\noindent
An \sf{exact triple} (short exact sequence)~$\ft$ of Fredholm operators
\BE{cTdiag_e}\begin{split}
\xymatrix{ 0 \ar[r]&  X'\ar[d]^{D'}\ar[r]& X\ar[d]^D\ar[r]&  X''\ar[d]^{D''}\ar[r]& 0\\
0 \ar[r]&  Y'\ar[r]& Y\ar[r]&  Y''\ar[r]& 0}
\end{split}\EE
determines a canonical isomorphism
\BE{sum} \Psi_{\ft}\!: (\det D')\otimes (\det D'')\stackrel{\approx}{\lra} \det D.\EE
For a continuous family of exact triples of Fredholm operators, 
the isomorphisms~\eref{sum} give rise to a canonical isomorphism
between the determinant line bundles.\\

\noindent
Let $(\Si_0,\si_0,\fJ_0)$ be a symmetric Riemann surface and 
$(\pi\!:\cU\!\lra\!\De,\wt\fc\!:\cU\!\lra\!\cU)$ be a flat family of deformations
of $(\Si_0,\si_0,\fJ_0)$ as in Section~\ref{GrConv_subs}. 
Suppose $(V,\vph)$ is a real bundle pair over~$(\cU,\wt\fc)$,
$\na$ is a $\vph$-compatible (complex-linear) connection in~$V$, and 
\BE{AcUdfn_e} A\in\Ga\big(\cU;\Hom_{\R}(V,(T^*\cU,J)^{0,1}\otimes_{\C}\!V)\big)^{\vph},\EE
where $J$ is the complex structure on~$\cU$.
The restrictions of~$\na$ and~$A$ to each fiber~$(\Si_{\t},\si_{\t})$
of~$\pi$ with $\t\!\in\!\De_{\R}$ then determine a real CR-operator
\BE{DVvphdfn_e}D_{(V,\vph);\t}\!: \Ga\big(\Si_{\t};V|_{\Si_{\t}}\big)^{\vph}\lra
\Ga_{\fJ_{\t}}^{0,1}\big(\Si_{\t};V|_{\Si_{\t}}\big)^{\vph}\EE
on $(V,\vph)|_{\Si_{\t}}$.
Let
\BE{detLB_e}\pi_{(V,\vph)}\!:\det D_{(V,\vph)}\!\equiv\!
\bigsqcup_{\t\in\De_{\R}}\!\!\!
\big(\{\t\}\!\times\!\det D_{(V,\vph);\t}\big)\lra\De_{\R}\,.\EE
The set $\det D_{(V,\vph)}$ carries natural topologies so that the projection
$\pi_{(V,\vph)}$ is a real line bundle; see Appendix~\ref{detLB_app}.
The case of~\eref{detLB_e} with $\De\!\subset\!\C$ 
(and thus $\De_{\R}$ is an open subset of~$\R$) and 
$(\Si_0,\si_0)$ having only a conjugate pair of nodes
underpins all
orienting constructions in the open GW-theory and Fukaya category literature
that follow \cite[Section~8.1]{FOOO}.\\

\noindent
Good topologies on the total space of~\eref{detLB_e} 
arise directly from some of the analytic considerations of~\cite{LT} combined with 
the algebraic conclusions of~\cite{KM}.
This implies that the resulting topologies satisfy analogues of all properties
listed in \cite[Section~2]{detLB}.
In particular,
\begin{enumerate}[label=(D\arabic*),leftmargin=*]

\item\label{dNatI_it} a homotopy class of continuous isomorphisms 
$\Psi\!:(V_1,\vph_1)\!\lra\!(V_2,\vph_2)$ of real bundle pairs over $(\cU,\wt\fc)|_{\De_{\R}}$
determines a homotopy class of isomorphisms
$$\det D_{\Psi}\!:\det D_{(V_1,\vph_1)}\lra \det D_{(V_2,\vph_2)}$$
of line bundles over $\De_{\R}$;

\item\label{dses_it} the isomorphisms \eref{sum} determine a homotopy class of isomorphisms
$$\det\!\big(\!D_{(V_1,\vph_1)\oplus(V_2,\vph_2)}\!\big)\approx
\big(\det D_{(V_1,\vph_1)}\big)\otimes \big(\det D_{(V_2,\vph_2)}\!\big)$$
of line bundles over $\De_{\R}$ for all real bundle pairs 
$(V_1,\vph_1)$ and $(V_2,\vph_2)$ over $(\cU,\wt\fc)$.

\end{enumerate}
These two properties correspond to the Naturality~I and Direct Sum properties
in \cite[Section~2]{detLB}.\\

\noindent
Families of real CR-operators often arise by pulling back data from
a target manifold by smooth maps as follows. 
Suppose $(X,J,\phi)$ is an almost complex manifold with an anti-complex  involution
and $(V,\vph)$ is a real bundle pair over~$(X,\phi)$.
Let $\na$ be a $\vph$-compatible (complex-linear) connection in $V$ and 
$$A\in\Ga\big(X;\Hom_{\R}(V,(T^*X,J)^{0,1}\otimes_{\C}\!V)\big)^{\vph}.$$ 
For any real map $u\!:(\Si,\si)\!\lra\!(X,\phi)$ 
from a symmetric surface and $\fJ\!\in\!\cJ_{\Si}^{\si}$, 
let $\na^u$ denote the induced connection in $u^*V$ and
$$ A_{\fJ;u}=A\circ \prt_{\fJ} u\in\Ga(\Si;
\Hom_{\R}(u^*V,(T^*\Si,\fJ)^{0,1}\otimes_{\C}u^*V)\big)^{u^*\vph}.$$
The homomorphisms
$$\bp_u^\na =\frac{1}{2}(\na^u+\fI\circ\na^u\circ\fJ), \,\,
D_{(V,\vph);u}\equiv \bp_u^\na\!+\!A_{\fJ;u}\!: \Ga(\Si;u^*V)^{u^*\vph}\lra
\Ga^{0,1}_{\fJ}(\Si;u^*V)^{u^*\vph}$$
are real CR-operators on $u^*(V,\vph)\!\lra\!(\Si,\si)$
that form families of real CR-operators over families of maps.\\

\noindent 
For $g,l\!\in\!\Z^{\ge0}$ and $B\!\in\!H_2(X;\Z)$, let
$$\det D_{(V,\vph)}\lra \fB_{g,l}(X,B)^{\phi,\si}\!\times\!\cJ_{\Si}^{\si}$$ 
denote the determinant line bundle of the family of the CR-operators $D_{(V,\vph);(u,\fJ)}$
constructed as above.
This line bundle descends to an orbi-bundle
$$\det D_{(V,\vph)}\lra \cH_{g,l}(X,B)^{\phi,\si};$$
it is a line bundle over the open subspace of the base consisting of marked maps 
with no non-trivial automorphisms.\\

\noindent
Let $(\pi,\wt\fc,s_1,\ldots,s_l)$ be a flat family of deformations as in Section~\ref{GrConv_subs}.
A smooth real map \hbox{$F\!:\cU\!\lra\!X$} pulls back the connection~$\na$ and
the zeroth-order deformation term~$A$ on $(V,\vph)$ above to a connection $\na^F$ and 
a zeroth-order deformation term~$A^F$ on $F^*(V,\vph)$.
The latter in turn determine a line bundle
$$\pi_{F^*(V,\vph)}\!:\det D_{F^*(V,\vph)} \lra\De_{\R}$$
as in~\eref{detLB_e}, which we call $\det D_{(V,\vph)}$ 
when there is no ambiguity.

\begin{eg}\label{ex_tbdl}
Let $g,l\!\in\!\Z^{\ge0}$ with $g\!+\!l\!\ge\!2$.
The pair $(V,\vph)\!\equiv\!(\C,\fc)$ is a real bundle pair over $(\pt,\id)$. 
The induced families of the operators $\dbar_{\C;u}\!\equiv\!D_{(\C,\fc);u}$
over flat families of stable real genus~$g$  curves with $l$ conjugate pairs of marked points 
define a line bundle 
$$\det \dbar_{\C} \lra \R\ov\cM_{g,l}\,.$$
If $(X,\phi)$ is an almost complex manifold with anti-complex involution $\phi$ and 
$$(V,\vph)=(X\!\times\!\C,\phi\!\times\!\fc)\lra (X,\phi),$$
then there is a canonical isomorphism 
$$\det D_{(\C,\fc)}\approx\ff^*\big(\!\det\dbar_{\C}\big)$$
of line bundles over $\cH_{g,l}(X,B)^{\phi,\si}$.
\end{eg}

\section{Real orientations on real bundle pairs}
\label{signcomp_sec}

\noindent
The main stepping stone in our proof of Theorem~\ref{orient_thm} for
the uncompactified moduli space
$$\fM_{g,l}(X,B;J)^{\phi}\subset \ov\fM_{g,l}(X,B;J)^{\phi}$$
is Proposition~\ref{canonisom_prp} below.
By Corollary~\ref{canonisom_crl2a} of this proposition,
a real orientation on a rank~$n$ real bundle pair $(V,\vph)$
over a symmetric surface~$(\Si,\si)$ determines an orientation on
the \sf{relative determinant}
\BE{fDdfn_e}
\rdet\,D\equiv \big(\!\det D\big)\otimes\big(\!\det\dbar_{\Si;\C}\big)^{\otimes n}\EE
for every real CR-operator~$D$ on $(V,\vph)$, where  
$\dbar_{\Si;\C}$ is 
the standard real CR-operator on~$(\Si,\si)$ with values in~$(\C,\fc)$.

\begin{dfn}\label{realorient_dfn4}
Let $(X,\phi)$ be a topological space with an involution and 
$(V,\vph)$ be a real bundle pair over~$(X,\phi)$.
A \sf{real orientation} on~$(V,\vph)$ consists~of 
\begin{enumerate}[label=(RO\arabic*),leftmargin=*]

\item\label{LBP_it2} a rank~1 real bundle pair $(L,\wt\phi)$ over $(X,\phi)$ such that 
\BE{realorient_e4}
w_2(V^{\vph})=w_1(L^{\wt\phi})^2 \qquad\hbox{and}\qquad
\La_{\C}^{\top}(V,\vph)\approx(L,\wt\phi)^{\otimes 2},\EE

\item\label{isom_it2} a homotopy class~$[\psi]$ 
of isomorphisms of real bundle pairs in~\eref{realorient_e4}, and

\item\label{spin_it2} a spin structure~$\fs$ on the real vector bundle
$V^{\vph}\!\oplus\!2(L^*)^{\wt\phi^*}$ over~$X^{\phi}$
compatible with the orientation induced by~\ref{isom_it2}.\\ 

\end{enumerate}
\end{dfn}

\noindent
An isomorphism~$\Th$ in~\eref{realorient_e4} restricts to an isomorphism 
\BE{realorient2_e3}\La_{\R}^{\top}V^{\vph}\approx (L^{\wt\phi})^{\otimes2}\EE
of real line bundles over~$X^{\phi}$.
Since the vector bundles $(L^{\wt\phi})^{\otimes2}$ and $2(L^*)^{\wt\phi^*}$ are canonically oriented, 
$\Th$~determines orientations on $V^{\vph}$ and $V^{\vph}\!\oplus\! 2(L^*)^{\wt\phi^*}$.
We will call them \textsf{the orientations determined by~\ref{isom_it2}}
if $\Th$ lies in the chosen homotopy class.
An isomorphism~$\Th$ in~\eref{realorient_e4} also induces an isomorphism 
\BE{detInd_e}\begin{split}
\La_{\C}^{\top}\big(V\!\oplus\!2L^*,\vph\!\oplus\!2\wt\phi^*\big)
&\approx \La_{\C}^{\top}(V,\vph)\otimes(L^*,\wt\phi^*)^{\otimes 2}\\
&\approx (L,\wt\phi)^{\otimes 2}\otimes(L^*,\wt\phi^*)^{\otimes 2}
\approx \big(\Si\!\times\!\C,\si\!\times\!\fc\big),
\end{split}\EE
where the last isomorphism is the canonical pairing.
We will call the homotopy class of isomorphisms~\eref{detInd_e} induced by 
the isomorphisms~$\Th$ in~\ref{isom_it2} \textsf{the homotopy class determined by~\ref{isom_it2}}.

\begin{prp}\label{canonisom_prp}
Suppose $(\Si,\si)$ is a symmetric surface and  
$(V,\vph)$ is a rank~$n$ real bundle pair over $(\Si,\si)$.
A real orientation on $(V,\vph)$ as in Definition~\ref{realorient_dfn4}
determines a homotopy class of isomorphisms
\BE{realorient2_e2} \Psi\!: \big(V\!\oplus\!2L^*,\vph\!\oplus\!2\wt\phi^*\big)
\approx\big(\Si\!\times\!\C^{n+2},\si\!\times\!\fc\big)\EE
of real bundle pairs over $(\Si,\si)$.
An isomorphism~$\Psi$ belongs to this homotopy class if and only~if
the restriction of $\Psi$ to the real locus induces the chosen spin structure~\ref{spin_it2}  and 
the isomorphism 
\BE{realorient2_e2b}
\La_{\C}^{\top}\Psi\!: \La_{\C}^{\top}\big(V\!\oplus\!2L^*,\vph\!\oplus\!2\wt\phi^*\big)
\lra \La_{\C}^{\top}\big(\Si\!\times\!\C^{n+2},\si\!\times\!\fc\big)
=\big(\Si\!\times\!\C,\si\!\times\!\fc\big)\EE
lies in the homotopy class determined by~\ref{isom_it2}.
\end{prp}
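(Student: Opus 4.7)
First I would establish existence of an isomorphism~$\Psi$ as in~\eref{realorient2_e2}. By~\ref{isom_it2} and the canonical pairing $(L,\wt\phi)\!\otimes\!(L^*,\wt\phi^*)\!\approx\!(\Si\!\times\!\C,\si\!\times\!\fc)$, the chain~\eref{detInd_e} gives an isomorphism $\La_{\C}^{\top}(V\!\oplus\!2L^*,\vph\!\oplus\!2\wt\phi^*)\!\approx\!(\Si\!\times\!\C,\si\!\times\!\fc)$; in particular, $c_1(V\!\oplus\!2L^*)\!=\!0$. Condition~\ref{spin_it2} implies that $V^{\vph}\!\oplus\!2(L^*)^{\wt\phi^*}$ is spin, hence orientable, over each component circle of~$\Si^{\si}$. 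Since real bundle pairs over $(\Si,\si)$ of given complex rank are topologically classified by the first Chern class of the underlying complex bundle together with the orientability (equivalently $w_1$) of the real part along each circle of $\Si^{\si}$, the bundle pair $(V\!\oplus\!2L^*,\vph\!\oplus\!2\wt\phi^*)$ has the same topological invariants as $(\Si\!\times\!\C^{n+2},\si\!\times\!\fc)$, so an isomorphism~$\Psi$ exists.

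Next I would parameterize the homotopy classes of such trivializations as a torsor over $[\Si,U(n\!+\!2)]^{\si}$, where $\si$ acts on $U(n\!+\!2)$ by entry-wise complex conjugation (fixed subgroup $O(n\!+\!2)$). The $\si$-equivariant sequence $1\!\to\!SU(n\!+\!2)\!\to\!U(n\!+\!2)\!\to\!U(1)\!\to\!1$ induces an exact sequence of equivariant mapping classes, so requiring $\La_{\C}^{\top}\Psi$ to lie in the homotopy class determined by~\ref{isom_it2} restricts the residual ambiguity to $[\Si,SU(n\!+\!2)]^{\si}$. Since $n\!\geq\!1$, the group $SU(n\!+\!2)$ is $2$-connected; thus any class in $[\Si,SU(n\!+\!2)]^{\si}$ should be detected by its restriction to $\Si^{\si}$, i.e., by the homotopy class of the induced map $\Si^{\si}\!\to\!SO(n\!+\!2)$, which is in turn detected via $\pi_1(SO(n\!+\!2))\!=\!\Z/2$ by the shift of spin structures on $V^{\vph}\!\oplus\!2(L^*)^{\wt\phi^*}$.

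The main obstacle is the equivariant obstruction argument: given a $\si$-equivariant $g\!:\Si\!\to\!SU(n\!+\!2)$ whose restriction $g|_{\Si^{\si}}$ is null-homotopic through framings preserving the chosen spin structure, one must exhibit an equivariant null-homotopy on all of~$\Si$. I would first extend the spin-preserving null-homotopy of $g|_{\Si^{\si}}$ to a homotopy in $SO(n\!+\!2)$, and then extend cell by cell over a $\si$-equivariant CW decomposition of~$\Si$ whose cells outside $\Si^{\si}$ lie in free orbits of size two; the successive obstructions vanish by $\pi_1(SU(n\!+\!2))\!=\!\pi_2(SU(n\!+\!2))\!=\!0$. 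Particular care is needed near crosscap-type arcs where $\si$ acts freely on a neighborhood of a boundary circle of an associated half-surface, but the framing data there is dictated by the antipodal involution on $SO(n\!+\!2)$ and does not enlarge the classification. Combining this with the determinant discussion above yields that $\Psi$ is determined up to homotopy by its determinant homotopy class and the induced spin structure, as claimed.
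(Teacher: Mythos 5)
Your proposal is correct and is, in substance, the paper's argument recast in the language of equivariant obstruction theory. The paper's Lemma~\ref{homotopextend_lmm} (cut a half-surface doubling to $(\Si,\si)$ into a disk, homotope $f$ to the identity over the cutting arcs and then over the disk using $\pi_1(\SL_{n+2}\C)=\pi_2(\SL_{n+2}\C)=0$, and double at the end) is precisely your cell-by-cell extension over a $\si$-equivariant CW decomposition, and the combination of Corollary~\ref{RBPhomot_crl} with the two explicit adjustments — Lemma~\ref{lmm_repar} to fix the spin structure over $\Si^\si$, then the scalar twist $\Psi_f$ of~\eref{Psifdfn_e} with $f|_{\Si^\si}>0$ to fix the determinant class without disturbing the spin data — plays the role of your torsor over $[\Si,U(n+2)]^{\si}$ together with its determinant exact sequence. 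Two small places where the paper is more careful than your sketch: first, you do not explicitly verify that the torsor group surjects onto the pair (determinant class, spin shift), which is what the paper's two-step construction establishes; second, your treatment of the crosscap circles is vague (the phrase ``antipodal involution on $SO(n+2)$'' is not quite meaningful) — the paper cites \cite[Lemma~2.4]{Teh} for the precise statement that a loop in $\SL_{n+2}\C$ satisfying the antipodal constraint $f(-z)=\ov{f(z)}$ is automatically equivariantly null-homotopic, which is exactly the fact your sketch needs on the $\si$-free circles bounding the half-surface.
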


\noindent
This proposition is proved in Section~\ref{ROisom_subs}
after some topological preliminaries concerning symmetric functions on symmetric surfaces
are established in Section~\ref{topolprelim_subs}.
Proposition~\ref{canonisom_prp} is applied to the orientability problem~\ref{orient_it}
 on page~\pageref{orient_it}
in Section~\ref{applic_subs}.

\subsection{Homotopies of functions from symmetric surfaces}
\label{topolprelim_subs}

\noindent
Let $(X,\phi)$ be a topological space with an involution.
For any Lie group~$G$ with a natural conjugation, such as $\C^*$, 
$\SL_n\C$, or $\GL_n\C$,
denote by $\cC(X,\phi;G)$ the topological group of continuous maps $f\!:X\!\lra\!G$
such that \hbox{$f(\phi(x))\!=\!\ov{f(x)}$} for all $x\!\in\!X$.
The restrictions of such functions to the fixed locus $X^{\phi}\!\subset\!X$ take values in
the real locus of~$G$, i.e.~$\R^*$, $\SL_n\R$, and $\GL_n\R$,
in the three examples.

\begin{lmm}\label{lmm_repar}
Let $(\Si,\si)$ be a symmetric surface with fixed components $\Si^{\si}_1,\ldots,\Si^{\si}_m$
and $n\!\in\!\Z^+$.
For every $i=1,\ldots,m$ and continuous map $\psi\!:\Si^{\si}_i\!\lra\!\GL_n\R$,
there exists $f\!\in\!\cC(\Si,\si;\GL_n\C)$ such~that
 $f|_{\Si^{\si}_i}=\psi$ and 
$f$ is the identity outside of an arbitrarily small neighborhood of $\Si^{\si}_i$.
The same statement holds with $\GL_n\R$ and $\GL_n\C$ replaced by 
$\SL_n\R$ and $\SL_n\C$, respectively.
\end{lmm}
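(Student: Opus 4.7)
\noindent
The plan is to reduce the extension problem to showing that $\psi$ is null-homotopic when viewed in~$\GL_n\C$, and then to use a symmetric null-homotopy on a $\si$-invariant tubular neighborhood of~$\Si^\si_i$ to construct the required map~$f$.

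First I would fix a $\si$-invariant tubular neighborhood $U$ of~$\Si^\si_i$ in~$\Si$.  Since $\si$ is orientation-reversing on~$\Si$ and fixes~$\Si^\si_i$ pointwise, the differential of~$\si$ acts as~$-1$ on the normal line to~$\Si^\si_i$, so $U$ can be identified with $\Si^\si_i\!\times\!(-\eps,\eps)$ in such a way that $\si(x,t)=(x,-t)$.  The radius~$\eps$ can be made as small as we wish.

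Next I would verify that $\psi\!:\Si^\si_i\!\lra\!\GL_n\R$, regarded as a loop in~$\GL_n\C$, is null-homotopic.  The group $\pi_1(\GL_n\C)\!\cong\!\Z$ is detected by the winding number of the determinant in~$\C^*$; since $\det\psi$ takes values in $\R^*\!\subset\!\C^*$, this winding number vanishes, and hence $\psi$ is null-homotopic in~$\GL_n\C$.  Fix a continuous map $H\!:\Si^\si_i\!\times\![0,1]\!\lra\!\GL_n\C$ with $H(x,0)\!=\!\psi(x)$ and $H(x,s)\!=\!I$ for $s$ in a neighborhood of~$1$, the latter achieved by a monotone reparametrization of the $s$-variable.

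Finally I would define
\begin{equation*}
f(x,t)=\begin{cases} H(x,t/\eps), & 0\le t<\eps,\\ \ov{H(x,-t/\eps)}, & -\eps<t\le 0,\end{cases}
\end{equation*}
on $U$ and $f\!\equiv\!I$ outside~$U$.  The two cases agree at $t\!=\!0$ since $\psi$ is real-valued, the conjugation symmetry $f\circ\si\!=\!\ov{f}$ holds by construction, and $f\!\equiv\!I$ near $t\!=\!\pm\eps$ so the pieces glue continuously to the identity outside~$U$.  For the $\SL$ variant, the same argument works with $H$ taking values in~$\SL_n\C$, since $\SL_n\C$ is simply connected for $n\!\ge\!2$ (and trivial for $n\!=\!1$), so any $\psi\!:\Si^\si_i\!\lra\!\SL_n\R$ is automatically null-homotopic in~$\SL_n\C$.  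The only step that is not immediate is the $\pi_1$-computation; everything else is routine, and I do not anticipate a substantive obstacle.
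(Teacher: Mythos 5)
Your proof is correct and follows essentially the same approach as the paper: identify a $\si$-invariant tubular neighborhood $\Si^\si_i\times(-\eps,\eps)$ on which $\si$ acts by $(x,t)\mapsto(x,-t)$, observe that $\psi$ is null-homotopic in $\GL_n\C$ (the paper cites triviality of $\pi_1(\GL_n\R)\to\pi_1(\GL_n\C)$, you verify it via the determinant's winding number — equivalent reasoning), and then spread a null-homotopy symmetrically across the collar with the identity outside. The only cosmetic difference is that the paper inserts a buffer region $t\in[1,2)$ where $f\equiv I_n$, while you reparametrize the homotopy so it is already constant near $s=1$; both devices achieve the same gluing.
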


\begin{proof}
Let $S^1\!\times\!(-2,2)\!\lra\!\Si$ be a parametrization of a neighborhood~$U$ 
of $\Si^\si_i$ such that $S^1\!\times\!0$ corresponds to~$\Si^\si_i$ and
$$\si(\th,t)=(\th,-t) \qquad\forall~(\th,t)\!\in\!S^1\!\times\!(-2,2).$$ 
Since the inclusion $\GL_n\R\!\lra\!\GL_n\C$ induces trivial homomorphisms 
from~$\pi_1$ of either component of~$\GL_n\R$ to $\pi_1(\GL_n\C)$, 
we can homotope~$\psi$ to the identity-valued constant map through 
maps $h_t\!:S^1\!\lra\!\GL_n\C$. 
We define~$f$ on~$U$~by   
$$f(\th,t)=
\begin{cases} 
h_t(\th), &\text{if}~ t\in[0,1];\\
I_n, &\text{if}~t\in[1,2);\\
\ov{h_{-t}(\th)},&\text{if}~t\in(-2,0];
\end{cases}$$
and extend it as the identity-valued constant map over $\Si\!-\!U$.
The same argument applies with $\GL_n\R$ and $\GL_n\C$ replaced by 
$\SL_n\R$ and $\SL_n\C$, respectively.
\end{proof}

\begin{lmm}\label{homotopextend_lmm}
Suppose $(\Si,\si)$ is a symmetric surface, 
$n\!\in\!\Z^+$, and $f\!\in\!\cC(\Si,\si;\SL_n\C)$.
If 
$$f|_{\Si^{\si}}\!: \Si^{\si} \lra \SL_n\R$$
is homotopic to a constant map, 
then $f$ is homotopic to the constant map~$\Id$ through maps 
\hbox{$f_t\!\in\!\cC(\Si,\si;\SL_n\C)$}.
\end{lmm}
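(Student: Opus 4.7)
The plan is to attack the lemma in two stages: first, reduce to the case $f|_{\Si^\si}\!=\!\Id$ by an explicit equivariant homotopy supported near the fixed locus; second, extend the constant homotopy on $\Si^\si$ across the rest of $\Si$ by equivariant obstruction theory, using that $\pi_i(\SL_n\C)\!=\!0$ for $i\!\le\!2$.

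For the first stage, since $\SL_n\R$ is path-connected, the hypothesized nullhomotopy of $f|_{\Si^\si}$ can be arranged to terminate at $\Id$; on each fixed circle $\Si^\si_i$ this yields a homotopy $H_i\!:\![0,1]\!\times\!\Si^\si_i\!\lra\!\SL_n\R$ from $f|_{\Si^\si_i}$ to $\Id$. I would then fix a $\si$-invariant tubular neighborhood $U_i\!\cong\!S^1\!\times\!(-\varepsilon,\varepsilon)$ of $\Si^\si_i$ on which $\si$ sends $(\theta,t)$ to $(\theta,-t)$, together with an even bump function $\rho\!:\!(-\varepsilon,\varepsilon)\!\to\![0,1]$ with $\rho(0)\!=\!1$ vanishing near $\pm\varepsilon$, and set
\[
\tilde f_s(\theta,t)=f(\theta,t)\,H_i(0,\theta)^{-1}\,H_i\!\left(s\rho(t),\theta\right)\qquad\text{on }U_i,
\]
extended as $f$ off $\bigcup_i U_i$. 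Since $H_i$ is $\SL_n\R$-valued and $\rho$ is even, $\tilde f_s\!\in\!\cC(\Si,\si;\SL_n\C)$, with $\tilde f_0\!=\!f$ and $\tilde f_1|_{\Si^\si_i}\!=\!\Id$. This amounts to the parametric analogue of the construction in the proof of Lemma~\ref{lmm_repar}.

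For the second stage, after replacing $f$ by $\tilde f_1$ I may assume $f|_{\Si^\si}\!=\!\Id$. I would fix a $\si$-equivariant CW structure on $\Si$ containing $\Si^\si$ as a subcomplex; the cells in $\Si\!-\!\Si^\si$ form free $\si$-orbits $\{e,\si(e)\}$. Building the desired equivariant homotopy $F_s$ with $F_0\!=\!f$, $F_1\!=\!\Id$, and $F_s|_{\Si^\si}\!=\!\Id$ reduces to extending over the skeleta one orbit at a time, defining $F_s$ on one representative $e$ and propagating by $F_s(\si z)\!=\!\overline{F_s(z)}$ on $\si(e)$. Zero-cells off $\Si^\si$ are joined to $\Id$ by arbitrary paths in $\SL_n\C$ (connected); over a $1$-cell the obstruction lies in $\pi_1(\SL_n\C)\!=\!0$, and over a $2$-cell in $\pi_2(\SL_n\C)\!=\!0$, so both extensions succeed.

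The main subtlety is in the first stage, since Lemma~\ref{lmm_repar} as stated produces an extension of a single function rather than of a $1$-parameter family, requiring one to carry out the parametric version explicitly. The obstruction-theoretic second stage is then routine, the key inputs being that $\SL_n\C$ is connected, simply connected, and has vanishing $\pi_2$ (the latter holding for any Lie group).
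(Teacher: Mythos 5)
Your proposal is correct, and it takes a genuinely different route from the paper. The paper's proof passes through an oriented symmetric half-surface $(\Si^b,c)$: it first arranges $f\!\equiv\!\Id$ on $\prt\Si^b$, which over crosscap boundary circles requires the auxiliary fact that the equivariance of $f$ forces $f|_{(\prt\Si^b)_i}$ to be equivariantly nullhomotopic (cited from the reference \cite{Teh}); it then cuts $\Si^b$ along arcs into a disk, trivializes $f$ over the arcs using $\pi_1(\SL_n\C)\!=\!0$ and over the disk using $\pi_2(\SL_n\C)\!=\!0$, and finally doubles the resulting homotopy. Your argument avoids the half-surface entirely: after the same reduction to $f|_{\Si^\si}\!=\!\Id$, you work with a $\si$-equivariant CW structure on $\Si$ having $\Si^\si$ as a subcomplex and extend the homotopy cell-orbit by cell-orbit, choosing freely on one cell of each free orbit and propagating by $F_s(\si z)\!=\!\overline{F_s(z)}$. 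This makes the crosscap circles unproblematic from the start (they lie in the free part of the action and carry no separate obstruction), so you never need the special argument the paper invokes for crosscaps; the price is a reliance on the existence of an equivariant CW/triangulation with $\Si^\si$ a subcomplex, which is a standard fact for smooth $\Z/2$-actions on surfaces. Both approaches ultimately use only that $\SL_n\C$ is $2$-connected, and both parametrize the same reduction, so the routes are logically equivalent; yours is more uniformly obstruction-theoretic, while the paper's is more explicit and stays closer to the doubling picture used throughout the article.

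One small caution worth flagging in your first stage: the parametric version of Lemma~\ref{lmm_repar} you build should be stated so that the family $\tilde f_s$ is constant in $s$ outside a small neighborhood of $\Si^\si$, which your bump-function construction does achieve; this is what guarantees $\tilde f_s\!\in\!\cC(\Si,\si;\SL_n\C)$ with $\tilde f_1|_{\Si^\si}\!=\!\Id$ while the later obstruction-theoretic stage can then keep $F_s|_{\Si^\si}\!\equiv\!\Id$ throughout.
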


\begin{proof}
Let $(\Si^b,c)$ be an oriented sh-surface which doubles to~$(\Si,\si)$. 
By assumption, 
$$f|_{(\prt\Si^b)_i}\!:(\prt\Si^b)_i \lra \SL_n\C$$
is  homotopic to~$\Id$ through maps $f_t\!\in\!\cC((\prt\Si^b)_i,c;\SL_n\C)$
on each boundary component $(\prt\Si^b)_i$ of $\Si^b$ with $|c_i|\!=\!0$.
Since $f\!\in\!\cC(\Si,\si;\SL_n\C)$, this is also the case for $f|_{(\prt\Si^b)_i}$
for each boundary 
component $(\prt\Si^b)_i$ of $\Si^b$ with $|c_i|\!=\!1$;
see \cite[Lemma~2.4]{Teh}.\\

\noindent
A homotopy~$f_t$ as above extends over $\Si^b$ as follows. 
Suppose $f_0\!=\!f|_{\prt\Si^b}$ and~$f_1\!=\!\Id$.
Let \hbox{$\bI\!=\![0,1]$} and $(\prt\Si^b)\!\times\!\bI\!\lra\!U$ be a parametrization of 
a (closed) neighborhood~$U$  of  $\prt\Si^b\!\subset\!\Si^b$ with coordinates~$(w,s)$. 
Define
$$G_t:\Si^b\lra\SL_n\C \qquad\hbox{by}\quad
G_t(z)=\begin{cases}
f_{(1-s)t}(w)\cdot f^{-1}(w),&\text{if}~ z=(w,s)\in U\approx (\prt\Si^b)\!\times\!\bI;\\
I_n,& \text{if}~ z\in \Si^b\!-\!U.
\end{cases}$$
Since $G_t(w,1)\!=\!I_n$ for all~$t$, this map is continuous. 
Moreover, $G_0(z)\!=\!I_n$ for all $z\!\in\!\Si^b$ and 
$$G_t(w, 0)=f_{t}(w)\cdot f^{-1}(w)$$ 
is a homotopy between $\Id$ and $f^{-1}$. Thus, $H_t=G_t\cdot f$ is a homotopy 
over $\Si^b$ extending~$f_t$.\\

\begin{figure}
\begin{center}
\advance\topskip-10cm
\includegraphics[width=0.3\textwidth,height=150px]{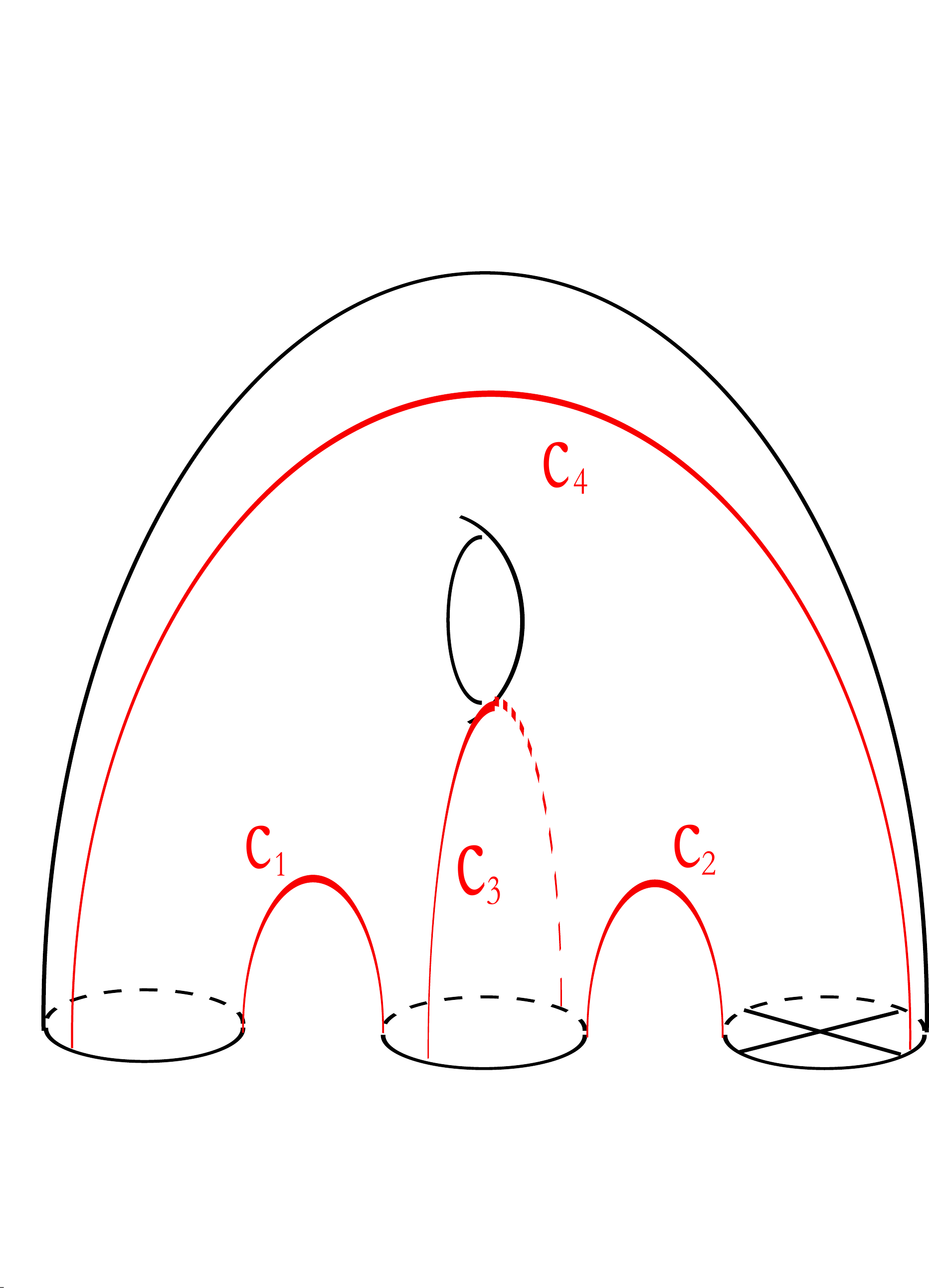}
\end{center}
\caption{The paths $C_1,\ldots,C_4$ cut $\Si^b$ to a disk.}
\label{arcs_fig}
\end{figure}

\noindent
By the previous paragraph, we may assume that $f$ is the constant map~$\Id$ on~$\prt\Si^b$. 
Choose embedded non-intersecting paths~$\{C_i\}$ in $\Si^b$ with endpoints on~$\prt\Si^b$ 
which cut $\Si^b$ into a disk~$D^2$; see Figure~\ref{arcs_fig}.
The restriction of~$f$ to each~$C_i$ defines an element~of 
$$\pi_1\big(\SL_n\C,I_n\big)\approx\pi_1\big(\SU_n,I_n\big)=0.$$
Thus, we can homotope~$f$ to~$\Id$ over~$C_i$ while keeping it fixed at the endpoints. 
Similarly to the previous paragraph, this homotopy extends over~$\Si^b$ without 
changing~$f$ over~$\prt\Si^b$ or over~$C_j$ for any $C_j\!\neq\!C_i$.
Thus, we may assume that $f$ is the constant map~$\Id$ over the boundary of~$D^2$. 
Since 
$$\pi_2\big(\SL_n\C,I_n\big)\approx\pi_2\big(\SU_n,I_n\big)=0,$$ 
the map $f\!:(D^2,S^1)\!\lra\!(\SL_n\C,I_n)$
can be homotoped to~$\Id$ as a relative map.
Doubling such a homotopy~$f_t$  by the requirement that $f_t(\si(z))\!=\!\ov{f_t(z)}$ 
for all $z\!\in\!\Si$, we obtain the desired homotopy from $f$ to~$\Id$ over all of~$\Si$.
\end{proof}

\begin{crl}\label{RBPhomot_crl}
Let $(\Si,\si)$ be a symmetric surface and
$$\Phi,\Psi\!:(V,\vph)\lra \big(\Si\!\times\!\C^n,\si\!\times\!\fc\big)$$
be isomorphisms of real bundle pairs over $(\Si,\si)$.
If the isomorphisms 
\BE{RBPhomot_e2}\begin{split}
\Phi|_{V^{\vph}},\Psi|_{V^{\vph}}\!: V^{\vph}&\lra \Si\!\times\!\R^n, \\
\La_{\C}^{\top}\Phi,\La_{\C}^n\Psi\!:\La_{\C}^{\top}(V,\vph)
&\lra \La_{\C}^{\top}\big(\Si\!\times\!\C^n,\si\!\times\!\fc\big)
=\big(\Si\!\times\!\C,\si\!\times\!\fc\big)
\end{split}\EE
are homotopic, then so are the isomorphisms~$\Phi$ and~$\Psi$. 
\end{crl}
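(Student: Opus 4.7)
The natural reduction is to study $g := \Psi \circ \Phi^{-1}$, which is an automorphism of the trivial real bundle pair $(\Si \times \C^n, \si \times \fc)$ and hence corresponds to an element of $\cC(\Si, \si; \GL_n \C)$. A homotopy from $g$ to the constant map $\Id$ in this space gives, via post-composition with $\Phi$, a homotopy from $\Psi$ to $\Phi$. Translating the two hypotheses~\eref{RBPhomot_e2} into this language: $\La_\C^{\top}\Phi \simeq \La_\C^{\top}\Psi$ says that $\det g$ is homotopic to the constant $1$ in $\cC(\Si, \si; \C^*)$, while $\Phi|_{V^{\vph}} \simeq \Psi|_{V^{\vph}}$ says that $g|_{\Si^\si}$ is homotopic to $\Id$ in $\cC(\Si^\si; \GL_n \R)$.

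The plan is to reduce to the $\SL_n$ setting so that Lemma~\ref{homotopextend_lmm} applies. Set $D(\mu) = \tn{diag}(\mu, 1, \ldots, 1)$, so that $\mu \mapsto D(\mu)$ embeds $\C^*$ into $\GL_n \C$ with $\det\!\circ\!D = \id$ and $\ov{D(\mu)} = D(\ov\mu)$. Given a homotopy $\lambda_t \in \cC(\Si, \si; \C^*)$ from $\det g$ to $1$, the family $g_t = g \cdot D(\lambda_t / \det g)$ lies in $\cC(\Si, \si; \GL_n \C)$ for all $t$, with $g_0 = g$ and $\det g_1 = 1$, so that $h := g_1$ lies in $\cC(\Si, \si; \SL_n \C)$. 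Next, from a homotopy $K_t \in \cC(\Si^\si; \GL_n \R)$ between $g|_{\Si^\si}$ and $\Id$, the corrected family $K_t' = K_t \cdot D(1/\det K_t)$ is well-defined and lies in $\cC(\Si^\si; \SL_n \R)$, with $K_0' = h|_{\Si^\si}$ and $K_1' = \Id$. Thus $h|_{\Si^\si}$ is homotopic to the constant map $\Id$ in $\cC(\Si^\si; \SL_n \R)$.

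Finally, Lemma~\ref{homotopextend_lmm} applied to $h$ yields a homotopy from $h$ to $\Id$ through maps in $\cC(\Si, \si; \SL_n \C) \subset \cC(\Si, \si; \GL_n \C)$. Concatenating with the homotopy from $g$ to $h$ constructed above produces the desired homotopy from $g$ to $\Id$. The main obstacle is to ensure that all intermediate deformations respect the conjugation constraint; this is handled uniformly by the specific choice of diagonal correction $D(\mu)$, since $D(\mu)$ is real precisely when $\mu$ is real, so multiplication by $D$ of a $\si$-equivariant (respectively real-valued) scalar preserves membership in $\cC(\Si, \si; \GL_n \C)$ (respectively in $\cC(\Si^\si; \SL_n \R)$). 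Note that in the real-locus step, the equality $\det K_t' = 1$ and reality of $1/\det K_t$ together force $K_t'$ into $\SL_n \R$ rather than merely $\SL_n \C$, which is what allows Lemma~\ref{homotopextend_lmm} to apply cleanly.
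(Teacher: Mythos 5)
Your proof is correct and follows essentially the same route as the paper: reduce modulo the hypothesis on top exterior powers to an $\SL_n\C$-valued transition function via a diagonal correction, check that its restriction to $\Si^\si$ is nullhomotopic in $\SL_n\R$, and invoke Lemma~\ref{homotopextend_lmm}. Your explicit correction $K_t' = K_t\cdot D(1/\det K_t)$ of the real-locus homotopy spells out a step the paper leaves implicit, but the argument is the same.
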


\begin{proof}
Let $f\!\in\!\cC(\Si,\si;\C^*)$ be given by
$$\La_{\C}^{\top}\Phi=f\,\La_{\C}^n\Psi\!: \La_{\C}^{\top}(V,\vph)\lra
\big(\Si\!\times\!\C,\si\!\times\!\fc\big).$$
Since the second pair of isomorphisms in~\eref{RBPhomot_e2} are homotopic,
there exists a path $f_t\!\in\!\cC(\Si,\si;\C^*)$ such that $f_0\!=\!1$ and~$f_1\!=\!f$.
Let 
$$\Psi_{f_t}\!:(V,\vph)\lra\big(\Si\!\times\!\C^{n},\si\!\times\!\fc\big)$$ 
be the composition of $\Psi$ with the real bundle~map
\BE{Psifdfn_e}\big(\Si\!\times\!\C^n,\si\!\times\!\fc\big)\lra 
\big(\Si\!\times\!\C^n,\si\!\times\!\fc\big),  \quad
(z,v)\lra \left( \begin{array}{cccc} f_t(z)& 0& \ldots& 0\\ 0& 1& \ldots& 0\\
\vdots& \vdots& \ddots& 0\\ 0& \ldots& 0& 1\end{array}\right)v.\EE
Thus, $\Psi_f\!=\!\Psi_{f_1}$ is homotopic to $\Psi$ and 
$\La_{\C}^{\top}\Phi\!=\!\La_{\C}^{\top}\Psi_f$.\\

\noindent
Let $F\!\in\!\cC(\Si,\si;\GL_n\C)$ be given~by
$$\Psi(v)=\big\{\id\!\times\!F(\pi(v))\big\}\big(\Phi(v)\big) \qquad\forall~v\!\in\!V,$$
where $\pi\!:V\!\lra\!\Si$ is the projection map.
By the previous paragraph, we can assume that \hbox{$F\!\in\!\cC(\Si,\si;\SL_n\C)$}.
Since the first pair of isomorphisms in~\eref{RBPhomot_e2} are homotopic,
$$F|_{\Si^{\si}_i}\!: \Si^{\si}_i \lra \SL_n\R$$
is homotopically trivial for  every component
$\Si^{\si}_i\!\subset\!\Si^{\si}$ of the fixed locus.
By Lemma~\ref{homotopextend_lmm},
$F$ is thus homotopic to the constant map~$\Id$ through 
elements $F_t\!\in\!\cC(\Si,\si;\SL_n\C)$.
This establishes the claim. 
\end{proof}

\subsection{Isomorphisms induced by real orientations}
\label{ROisom_subs}

\noindent
We now apply Lemma~\ref{lmm_repar} and Corollary~\ref{RBPhomot_crl} to establish 
Proposition~\ref{canonisom_prp}.
We then deduce some corollaries from this proposition.

\begin{proof}[{\bf\emph{Proof of Proposition~\ref{canonisom_prp}}}]
Let $\Si_1^{\si},\ldots\,\Si_m^{\si}\!\subset\!\Si^{\si}$ be the connected components 
of the fixed locus.
Since $c_1(V\!\oplus\!2L^*)\!=\!0$ and the vector bundle $V^{\vph}\!\oplus\!2(L^*)^{\wt\phi^*}$ 
is orientable, an isomorphism~$\Psi$ as in~\eref{realorient2_e2} exists; 
see \cite[Propositions~4.1,4.2]{BHH}.
For each $i\!=\!1,\ldots,m$, choose $\psi_i\!:\Si_i^{\si}\!\lra\!\GL_{n+2}\R$ so that
the composition of the restriction of~$\Psi$ to 
$(V^{\vph}\!\oplus\!2(L^*)^{\wt\phi^*})|_{\Si_i^{\si}}$ with the isomorphism
$$\Si_i^{\si}\!\times\!\R^{n+2}\lra \Si_i^{\si}\!\times\!\R^{n+2}, \qquad 
(z,v)\lra \big(z,\psi_i(z)v\big),$$
induces the chosen orientation and spin structure on 
$(V^{\vph}\!\oplus\!2(L^*)^{\wt\phi^*})|_{\Si_i^{\si}}$.
Let  $f_i\!:\Si\!\lra\!\GL_{n+2}\C$ be a continuous map as in Lemma~\ref{lmm_repar} 
corresponding to~$(i,\psi_i)$. 
The composition of the original isomorphism~$\Psi$ with the real~map
$$\big(\Si\!\times\!\C^{n+2},\si\!\times\!\fc\big)\lra 
\big(\Si\!\times\!\C^{n+2},\si\!\times\!\fc\big),  \qquad
(z,v)\lra \big(z,f_1(z)\!\ldots\!f_m(z)v\big),$$ 
is again an isomorphism of real bundle pairs as in~\eref{realorient2_e2}.\\

\noindent
By the previous paragraph, there exists an isomorphism~$\Psi$ as in~\eref{realorient2_e2}
that induces the chosen orientation and spin structure on 
$V^{\vph}\!\oplus\!2(L^*)^{\wt\phi^*}$.
It determines an isomorphism 
$$\La_{\C}^{\top}\big(V\!\oplus\!2L^*,\vph\!\oplus\!2\wt\phi^*\big)
\approx\La_{\C}^{\top}\big(\Si\!\times\!\C^{n+2},\si\!\times\!\fc\big)
= \big(\Si\!\times\!\C,\si\!\times\!\fc\big)$$
and thus an isomorphism $\La_{\C}^{\top}\Psi$ as in~\eref{realorient2_e2b}. 
If $\psi$ is the isomorphism in~\eref{realorient2_e2b} determined by
  an isomorphism in~\eref{realorient_e4} from the chosen homotopy class~\ref{isom_it2}, then
\BE{realorient2_e9}  \psi=f\La_{\C}^{\top}\Psi\EE
for some $f\!\in\!\cC(\Si,\si;\C^*)$.
Let 
$$\Psi_f\!:\big(V\!\oplus\!2L^*,\vph\!\oplus\!2\wt\phi^*\big)
\approx\big(\Si\!\times\!\C^{n+2},\si\!\times\!\fc\big)$$ 
be defined as in~\eref{Psifdfn_e}.
By~\eref{realorient2_e9}, $\La_{\C}^{\top}\Psi_f\!=\!\psi$. 
Since $\Psi$ and $\psi$ induce the same orientations on $V^{\vph}\!\oplus\!2(L^*)^{\wt\phi^*}$,
$f|_{\Si^{\si}}\!>\!0$.
Thus, $\Psi_f$ induces the same orientation and spin structure on 
$V^{\vph}\!\oplus\!2(L^*)^{\wt\phi^*}$ as~$\Psi$.\\

\noindent
We conclude that there exists an isomorphism~$\Psi$ as in~\eref{realorient2_e2}
inducing the chosen orientation and spin structure on $V^{\vph}\!\oplus\!2(L^*)^{\wt\phi^*}$ 
so that the isomorphism~$\La_{\C}^{\top}\Psi$ lies  
in the homotopy class of the isomorphisms~\eref{detInd_e} determined by~\ref{isom_it2}.
By Corollary~\ref{RBPhomot_crl}, any two such isomorphisms are homotopic.
\end{proof}

\begin{crl}\label{canonisom_crl}
Suppose $(\Si,\si)$ is a symmetric surface and  
$(L,\wt\phi)\!\lra\!(\Si,\si)$ is a rank~1 real bundle pair.
If $L^{\wt\phi}\!\lra\!\Si^{\si}$ is orientable, there exists a canonical homotopy 
class of isomorphisms 
\BE{canonisom_crl_e}\big(L^{\otimes2}\!\oplus\!2L^*,\wt\phi^{\otimes2}\!\oplus\!2\wt\phi^*\big)
\approx\big(\Si\!\times\!\C^3,\si\!\times\!\fc\big)\EE
of real bundle pairs over $(\Si,\si)$.
In general, an orientation of each component $\Si_i^{\si}$ of $\Si^{\si}$ 
such~that $L^{\wt\phi}|_{\Si_i^{\si}}$ is non-orientable determines a canonical homotopy 
class of isomorphisms~\eref{canonisom_crl_e};
changing an orientation of such a component~$\Si^{\si}_i$ changes 
the induced spin structure, but not the orientation, of the real part of LHS 
in~\eref{canonisom_crl_e} over~$\Si^{\si}_i$.
\end{crl}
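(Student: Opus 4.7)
The plan is to deduce this corollary as an application of Proposition~\ref{canonisom_prp} to the rank~$1$ real bundle pair $(V,\vph)\!=\!(L^{\otimes 2},\wt\phi^{\otimes 2})$, using the same $(L,\wt\phi)$ for the data~\ref{LBP_it2}. With $n\!=\!1$, the right-hand side of~\eref{realorient2_e2} becomes $(\Si\!\times\!\C^3,\si\!\times\!\fc)$, which is exactly the target of~\eref{canonisom_crl_e}. I would first check~\ref{LBP_it2}: the isomorphism $\La_\C^\top(V,\vph)\!\approx\!(L,\wt\phi)^{\otimes 2}$ is tautological, since both sides equal $(L,\wt\phi)^{\otimes 2}$, and the constraint $w_2(V^\vph)\!=\!w_1(L^{\wt\phi})^2$ is automatic because $\Si^\si$ is $1$-dimensional, so $H^2(\Si^\si;\Z/2)\!=\!0$. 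For~\ref{isom_it2}, I would take the homotopy class of this identity isomorphism; the induced orientation on $V^\vph\!\oplus\!2(L^*)^{\wt\phi^*}$ is then the canonical one, because $(L^{\wt\phi})^{\otimes 2}$ is canonically oriented as a square of a real line bundle and $(L^{\wt\phi})^*\!\oplus\!(L^{\wt\phi})^*$ inherits a canonical orientation from the $\ell\!\oplus\!\ell$ construction that works for any real line bundle~$\ell$.

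The heart of the argument is the construction of the canonical spin structure required in~\ref{spin_it2} on the rank~$3$ real vector bundle $(L^{\wt\phi})^{\otimes 2}\!\oplus\!2(L^{\wt\phi})^*$ over $\Si^\si$, treated one connected component $\Si^\si_i\!\cong\!S^1$ at a time. If $L^{\wt\phi}|_{\Si^\si_i}$ is trivializable, a trivializing section $s$ produces a trivialization of the rank~$3$ bundle via $s\!\otimes\!s$ together with two copies of~$s^*$; replacing $s$ by $-s$ changes this trivialization by the constant element $\tn{diag}(1,-1,-1)\!\in\!SO(3)$, and any constant gauge transformation lifts to $\tn{Spin}(3)$, so the induced spin structure is independent of~$s$ and is thus genuinely canonical on such components. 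If instead $L^{\wt\phi}|_{\Si^\si_i}$ is the M\"obius bundle, then an orientation of the circle $\Si^\si_i$ specifies an explicit half-angle oriented trivialization of the rank~$2$ summand $(L^{\wt\phi})^*\!\oplus\!(L^{\wt\phi})^*$, which combined with the tautological trivialization of $(L^{\wt\phi})^{\otimes 2}$ yields a trivialization of the full rank~$3$ bundle and hence a spin structure. Reversing the orientation of $\Si^\si_i$ alters this trivialization of the rank~$2$ summand by a loop $S^1\!\lra\!SO(2)$ of odd winding number, which does not lift to $\tn{Spin}(2)$; the spin structure is therefore flipped while the underlying orientation of the rank~$3$ bundle is preserved, giving exactly the behavior asserted in the last sentence of the corollary.

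With the data~\ref{LBP_it2}--\ref{spin_it2} in place, Proposition~\ref{canonisom_prp} directly produces the asserted homotopy class of isomorphisms~\eref{canonisom_crl_e}; its uniqueness follows from the uniqueness clause in that proposition together with the canonicity (or the dependence only on the specified orientation data) of~\ref{LBP_it2}--\ref{spin_it2}. The main obstacle I anticipate is the concrete verification in the M\"obius case: I would need to check that the half-angle construction yields a spin structure depending only on the orientation class of $\Si^\si_i$ and not on the particular parametrization used within that class, and that reversal of this orientation flips precisely the spin structure (and nothing else). This amounts to a transition-function computation with the clutching description of the M\"obius bundle over $S^1$, tracking the parity of the winding of the resulting gauge transformation in $SO(2)$.
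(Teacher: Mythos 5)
Your proposal is correct and takes essentially the same approach as the paper: apply Proposition~\ref{canonisom_prp} with $(V,\vph)\!=\!(L,\wt\phi)^{\otimes2}$ and the tautological isomorphism for~\ref{isom_it2}, construct the spin structure from an explicit trivialization of $2(L^*)^{\wt\phi^*}$ component by component, and compute the gauge transformation under sign or orientation reversal. The paper carries out the M\"obius case exactly as you anticipate, via the tautological bundle $\ga\!\lra\!\R\P^1$ and the half-angle trivialization $S^1\!\lra\!\R\P^1$, $\ne^{\fI\th}\!\lra\![\ne^{\fI\th/2}]$, verifying that orientation reversal multiplies the trivialization of $\ga\!\oplus\!\ga$ by the loop $\ne^{\fI\th}\!\lra\!\ne^{-\fI\th}$ generating $\pi_1(\GL_2\R)$ (odd winding), so the resulting spin structures on the stabilized bundle indeed differ.
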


\begin{proof} The line bundle $(L^{\wt\phi})^{\otimes2}$ is canonically oriented and 
thus has a canonical homotopy class of trivializations.
We apply Proposition~\ref{canonisom_prp} with $(V,\vph)\!=\!(L,\wt\phi)^{\otimes2}$.
There is then a canonical choice of isomorphism in~\eref{realorient_e4}. 
It induces the canonical orientations on the real parts of $2(L^*,\wt\phi^*)$
or of LHS in~\eref{canonisom_crl_e}. 
If $L^{\wt\phi}$ is orientable, an orientation on $L^{\wt\phi}$ determines a homotopy class
of trivializations of the real part of LHS in~\eref{canonisom_crl_e}.
The resulting spin structure is independent of the choice of the orientation.\\

\noindent
If the restriction of $L^{\wt\phi}$ to a component $\Si^{\si}_i\!\approx\!\R\P^1$ 
of the fixed locus $\Si^{\si}\!\subset\!\Si$ is not orientable, 
then $(L^*)^{\wt\phi^*}|_{\Si^{\si}_i}$ 
is isomorphic to the tautological line bundle
$$\ga\equiv\big\{\big(\ell,(x,y)\big)\!\in\!\R\P^1\!\times\!\R^2\!:\,(x,y)\!\in\!\ell\!\subset\!\R^2\big\}
\lra\R\P^1\,.$$
Combining this isomorphism with the trivialization
\BE{RBisom_e6}\ga\oplus\ga\lra \R\P^1\!\times\!\R^2, \qquad
\big(\ell,(x_1,y_1),(x_2,y_2)\big)\lra \big(\ell,(x_1\!-\!y_2,x_2\!+\!y_1)\big),\EE
we obtain an isomorphism
\BE{RBisom_e7} 2(L^*)^{\wt\phi^*}  \lra \R\P^1\!\times\!\R^2.\EE
It induces the canonical orientation on the domain.
The homotopy class of the isomorphism~\eref{RBisom_e7} does not depend
on the choice of isomorphism of $(L^*)^{\wt\phi^*}|_{\Si^{\si}_i}$ with~$\ga$,
once an identification of $\Si^{\si}_i$ with $\R\P^1$ is fixed.
However, it does depend on the orientation class of this identification
even after stabilization by the trivial line bundle,
as shown in the next paragraph.\\

\noindent
A bundle isomorphism $\ga\!\lra\!\ga$ covering an orientation-reversing map $\R\P^1\!\lra\!\R\P^1$
is given~by
$$\ga\lra\ga, \qquad \big([u,v],(x,y)\big)\lra \big([u,-v],(x,-y)\big).$$
The composition of this isomorphism with the isomorphism~\eref{RBisom_e6} is
the isomorphism
\BE{RBisom_e8}\ga\oplus\ga\lra \R\P^1\!\times\!\R^2, \qquad
\big(\ell,(x_1,y_1),(x_2,y_2)\big)\lra \big(\ell,(x_1\!+\!y_2,x_2\!-\!y_1)\big).\EE
Under the standard identification of $\R^2$ with~$\C$, $\R\P^1$ can be parametrized~as
$$S^1\lra \R\P^1, \qquad \ne^{\fI\th}\lra\big[\ne^{\fI\th/2}\big].$$
Under this identification, the isomorphisms~\eref{RBisom_e6} and~\eref{RBisom_e8} are given~by 
\begin{alignat*}{2}
\big(\ne^{\fI\th},a \ne^{\fI\th/2},b \ne^{\fI\th/2}\big)
&\lra \big(\ne^{\fI\th}, \ne^{\fI\th/2}(a\!+\!\fI b)\big) &\qquad&\forall~a,b\!\in\!\R,\\
\big(\ne^{\fI\th},a \ne^{\fI\th/2},b \ne^{\fI\th/2}\big)
&\lra\big(\ne^{\fI\th}, \ne^{-\fI\th/2}(a\!+\!\fI b)\big) &\qquad&\forall~a,b\!\in\!\R,
\end{alignat*}
respectively.
They differ by the map
$$S^1\lra\GL_2\R, \qquad \ne^{\fI\th}\lra \ne^{-\fI\th}.$$
Since this map generates $\pi_1(\GL_2\R)$, the trivializations of $\ga\!\oplus\!\ga$
in~\eref{RBisom_e6} and~\eref{RBisom_e8} are not homotopy equivalent,
even after stabilization by the trivial line bundle.
\end{proof}

\begin{crl}\label{canonisom_crl2a}
Suppose $(\Si,\si)$ is a symmetric surface and  $D\!=\!D_{(V,\vph)}$ is a real CR-operator 
on a rank~$n$ real bundle pair $(V,\vph)$ over~$(\Si,\si)$.
Then a real orientation on $(V,\vph)$ as in Definition~\ref{realorient_dfn4}
induces an  orientation on the relative determinant $\rdet\,D$ of~$D$ in~\eref{fDdfn_e}.
Changing a real orientation on $(V,\vph)$ by changing the spin structure~$\fs$ in~\ref{spin_it2}
over one component~$\Si^{\si}_i$ of~$\Si^{\si}$ reverses 
the orientation on~$\rdet\,D$.
\end{crl}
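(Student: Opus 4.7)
The plan is to deduce the orientation on $\rdet D$ from Proposition~\ref{canonisom_prp} by stabilizing $D$ with two copies of a real CR-operator on $(L^*,\wt\phi^*)$ and transporting the resulting determinant, via the homotopy class of isomorphisms $\Psi$, to the trivial real bundle pair. Concretely, choose any real CR-operator $D'$ on $(L^*,\wt\phi^*)$ and form the direct sum $\wt D \equiv D \oplus D' \oplus D'$ on $(V\!\oplus\!2L^*,\vph\!\oplus\!2\wt\phi^*)$. The direct-sum isomorphism~\eref{sum} gives
$$\det\wt D \approx (\det D)\otimes (\det D')^{\otimes 2}.$$
By Proposition~\ref{canonisom_prp}, the real orientation on $(V,\vph)$ determines a homotopy class of isomorphisms $\Psi$ as in~\eref{realorient2_e2}, which transports $\wt D$ to a real CR-operator on the trivial real bundle pair $(\Si\!\times\!\C^{n+2},\si\!\times\!\fc)$. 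Since the space of real CR-operators on a fixed real bundle pair is affine and hence contractible, any such operator is joined to $(n+2)\dbar_{\Si;\C}$ by a path of real CR-operators well-defined up to homotopy, yielding a homotopy class of isomorphisms $\det\wt D\approx(\det\dbar_{\Si;\C})^{\otimes(n+2)}$.

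Combining these two isomorphisms and tensoring with $(\det\dbar_{\Si;\C})^{\otimes n}$ produces
$$\rdet D\otimes (\det D')^{\otimes 2} \approx (\det\dbar_{\Si;\C})^{\otimes 2(n+1)},$$
determined up to homotopy by the real orientation.  Both $(\det D')^{\otimes 2}$ and $(\det\dbar_{\Si;\C})^{\otimes 2(n+1)}$ carry canonical orientations as squares of real lines, so this isomorphism transports a well-defined orientation to $\rdet D$. Independence of the auxiliary choice of $D'$ follows from the connectedness of the affine space of real CR-operators on $(L^*,\wt\phi^*)$: two such choices are joined by a continuous family, and the tensor square $(\det D')^{\otimes 2}$ stays canonically oriented along the deformation.

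For the second assertion, changing the spin structure in~\ref{spin_it2} over a single fixed-locus component $\Si^\si_i$ (while preserving~\ref{LBP_it2} and~\ref{isom_it2}) alters the homotopy class of $\Psi$ produced by Proposition~\ref{canonisom_prp} by composition with a gauge transformation $g\in\cC(\Si,\si;\SL_{n+2}\C)$. By Lemma~\ref{lmm_repar} and the analysis in the proof of Corollary~\ref{canonisom_crl} around~\eref{RBisom_e6}--\eref{RBisom_e8}, $g$ can be taken to be supported in an arbitrarily small neighborhood of $\Si^\si_i$ with $g|_{\Si^\si_i}$ representing the nontrivial element of $\pi_1(\SO_{n+2})$ detected by the change of spin structure. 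Since the canonical orientations on $(\det D')^{\otimes 2}$ and on $(\det\dbar_{\Si;\C})^{\otimes 2(n+1)}$ are unaffected by this modification, the reversal of the orientation on $\rdet D$ reduces to showing that $g$ acts by $-1$ on the determinant line of a real CR-operator on the trivial rank-$(n+2)$ real bundle pair over $(\Si,\si)$.

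The main obstacle is this final sign computation. It is the topological counterpart of \cite[Propositions~4.1,4.2]{XCapsSigns} and is naturally attacked by localizing to a tubular neighborhood of $\Si^\si_i$, trivializing the bundle there, and analyzing the action of the spin-flip gauge loop on a local CR-model. All other steps are routine: the consistency of the isomorphisms of determinant lines in continuous families follows from the contractibility of the space of real CR-operators and the naturality of the direct-sum isomorphism~\eref{sum}.
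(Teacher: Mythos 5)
Your derivation of the orientation on $\rdet D$ is correct and is essentially the same argument the paper uses: both exploit the direct-sum isomorphism~\eref{sum} to reduce to $\det D_{(V\oplus 2L^*,\vph\oplus 2\wt\phi^*)}$, invoke Proposition~\ref{canonisom_prp} to transport this to the trivial rank-$(n+2)$ real bundle pair, and absorb the leftovers into canonically oriented squares of real lines. The bookkeeping differs slightly (the paper records the canonical orientation on $(\det D_{(V,\vph)})\otimes(\det D_{(V\oplus 2L^*,\vph\oplus 2\wt\phi^*)})$ rather than on $\rdet D\otimes(\det D')^{\otimes 2}$), but the two are interchangeable. Your remark on independence of the auxiliary $D'$ and your use of the contractibility of the affine space of real CR-operators are also the standard justifications and match what the paper leaves implicit.

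The second claim, however, is where your proposal has a genuine gap, and you acknowledge it yourself: you reduce the statement to showing that a spin-flip gauge loop $g$ supported near $\Si^{\si}_i$ acts by $-1$ on $\det\bigl((n\!+\!2)\dbar_{\Si;\C}\bigr)$, and then stop at calling this ``the main obstacle.'' Saying it ``is naturally attacked by localizing to a tubular neighborhood'' is a direction, not an argument --- in particular, a direct local analysis of the CR-operator in the neighborhood would be exactly the kind of analytic sign computation the authors are at pains to avoid. The paper's resolution is a \emph{degeneration} argument: pinch the two boundary circles of the tubular neighborhood $\ov U$ of $\Si^\si_i$ to nodes, producing a nodal symmetric surface $(\Si_0,\si_0)$ which is the union of $(\P^1,\tau)$ and a smooth piece $(\Si',\si')$ along a conjugate pair of points. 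The sign of the isomorphism~\eref{canonisom_e20} is unchanged under this pinching, and over $(\Si_0,\si_0)$ it factors as a tensor product of the contributions over $(\P^1,\tau)$ and over $(\Si',\si')$. The contribution over $(\Si',\si')$ is $+1$ because the spin structure there is trivial, and the contribution over $(\P^1,\tau)$ is $-1$ by \cite[Proposition~8.1.7]{FOOO}, since the spin structure over $\Si^\si_i$ is nontrivial. That citation is the crux you are missing; without it (or an equivalent genus-$0$ computation), the final sign is undetermined and the second assertion of the corollary is not established.
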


\begin{proof} Let $((L,\wt\phi),[\psi],\fs)$ be a real orientation on~$(V,\vph)$.
By~\eref{sum}, there is a canonical homotopy class of isomorphisms
$$\det D_{(V\oplus 2L^*,\vph\oplus 2\wt\phi^*)}
\approx\big(\!\det D_{(V,\vph)}\big)\otimes\big(\!\det D_{(L^*,\wt\phi^*)}\big)^{\otimes2}$$
of real lines,
where the subscripts indicate the real bundle pair associated with the corresponding
real CR-operator. 
Since the last factor above is canonically oriented, so is the~line
\BE{thm_maps_e5}
\big(\!\det D_{(V,\vph)}\big)\otimes
\big(\!\det D_{(V\oplus 2L^*,\vph\oplus 2\wt\phi^*)}\big)\,.\EE
By Proposition~\ref{canonisom_prp}, the real orientation on~$(V,\vph)$
determines a homotopy class of isomorphisms
$$\big(V\!\oplus\!2L^*,\vph\!\oplus\!2\wt\phi^*\big)\approx 
\big(\Si\!\times\!\C^{n+2},\si\!\times\!\fc\big).$$
By~\eref{sum}, the latter in turn determines an orientation on the~line 
$$\wh\det D_{(V\oplus 2L^*, \vph\oplus 2\wt\phi^*)}\big)\equiv
\big(\!\det D_{(V\oplus 2L^*, \vph\oplus 2\wt\phi^*)}\big)\otimes 
\big(\!\det\dbar_{\Si;\C}\big)^{\otimes (n+2)}\,.$$
Combining this with the canonical orientation of the line~\eref{thm_maps_e5},
we obtain an orientation on~$\rdet\,D$.\\

\noindent
Let $\fs_{\can}$ denote the canonical spin structure on $\Si\!\times\!\R^{n+2}$.
By Proposition~\ref{canonisom_prp}, 
the identity automorphism of $\La_{\C}^{\top}(\Si\!\times\!\C^{n+2})$
and a spin structure on \hbox{$\Si^{\si}\!\times\!\R^{n+2}$} 
determine a homotopy class of isomorphisms
\BE{canonisom_e19}\Psi\!:\big(\Si\!\times\!\C^{n+2},\si\!\times\!\fc\big)
\lra \big(\Si\!\times\!\C^{n+2},\si\!\times\!\fc\big)\EE
of real bundle pairs over $(\Si,\si)$.
The latter in turn determines a homotopy class of isomorphisms
\BE{canonisom_e20}
\det D_{\Psi}\!:
\big(\!\det\dbar_{\Si;\C}\big)^{\otimes(n+2)}=\det\!\big(\!(n\!+\!2)\dbar_{\Si;\C}\!\big)
\lra \det\!\big(\!(n\!+\!2)\dbar_{\Si;\C}\!\big)=\big(\!\det\dbar_{\Si;\C}\big)^{\otimes(n+2)}\,.\EE
For the purposes of the last claim of this corollary, it is sufficient to check that 
the last isomorphisms are orientation-reversing for the spin structure~$\fs_i$ on 
$\Si^{\si}\!\times\!\R^{n+2}$ which differs from~$\fs_{\can}$  on precisely
one component $\Si^{\si}_i$ of~$\Si^{\si}$.\\

\noindent
By Lemma~\ref{lmm_repar}, we can assume that the map~$\Psi$ in~\eref{canonisom_e19}
is the identity outside of a tubular neighborhood $U\!\subset\!\Si$ of~$\Si^{\si}_i$
with $\ov{U}\!\subset\!\Si$ disjoint from \hbox{$\Si^{\si}\!-\!\Si^{\si}_i$}.
Pinching each of the two components of~$\prt\ov{U}$, we obtain
a nodal symmetric surface~$(\Si_0,\si_0)$ consisting of $(\P^1,\tau)$ 
and a smooth symmetric, possibly disconnected, surface $(\Si',\si')$
which share a pair of conjugate points.
We can choose a flat family 
$$\big(\pi\!:\cU\!\lra\!\De,\fc\!:\cU\!\lra\!\cU\big)$$
of deformations of~$(\Si_0,\si_0)$ as in Section~\ref{GrConv_subs} with $\De\!\subset\!\C^2$
and \hbox{$(\Si_{\t^*},\si_{\t^*})\!=\!(\Si,\si)$} for some \hbox{$\t^*\!\in\!\De_{\R}$} and 
a continuous collapsing map \hbox{$q\!:\cU\!\lra\!\Si_0$} so that
$q^{-1}(\Si_0^*)\!=\!\Si\!-\!\prt\ov{U}$ and the~map
$$\pi\!\times\!q\!: q^{-1}(\Si^*) \lra \De\!\times\!\Si^* $$
is a diffeomorphism.
The isomorphism~$\Psi$ in~\eref{canonisom_e19} then induces an isomorphism
$$ \wt\Psi\!:\big(\cU|_{\De_{\R}}\!\times\!\C^{n+2},\wt\fc\!\times\!\fc\big)
\lra \big(\cU|_{\De_{\R}}\!\times\!\C^{n+2},\wt\fc\!\times\!\fc\big)$$
which restricts to~$\Psi$ over $(\Si_{\t^*},\si_{\t^*})$ and to
the identity outside of $q^{-1}(\P^1)$.
Denote by
$$\Psi_0\!: \big(\Si_0\!\times\!\C^{n+2},\si_0\!\times\!\fc\big)
\lra \big(\Si_0\!\times\!\C^{n+2},\si_0\!\times\!\fc\big)$$
the restriction of~$\wt\Psi$ over $(\Si_0,\si_0)$ and by
\BE{canonisom_e20b}\det D_{\Psi_0}\!:\det\!\big(\!(n\!+\!2)\dbar_{\Si_0;\C}\!\big)
\lra \det\!\big(\!(n\!+\!2)\dbar_{\Si_0;\C}\!\big)\EE
the homotopy class of isomorphisms induced by $\Psi_0$.\\

\noindent
By~\ref{dNatI_it} on page~\pageref{dNatI_it}, $\wt\Psi$ 
determines a homotopy class of isomorphisms
$$\det D_{\wt\Psi}\!:
\det\!\big(\!(n\!+\!2)\dbar_{\cU|_{\De_{\R}};\C}\!\big)
\lra \det\!\big(\!(n\!+\!2)\dbar_{\cU|_{\De_{\R}};\C}\!\big)$$
of determinant line bundles over~$\De_{\R}$.
Since $\det D_{\wt\Psi}$ restricts to $\det D_{\Psi}$ over $(\Si_{\t^*},\si_{\t^*})$
and to $\det D_{\Psi_0}$ over $(\Si_0,\si_0)$,
the isomorphisms~\eref{canonisom_e20} are orientation-reversing if and only if 
the isomorphisms $\det D_{\Psi_0}$ are orientation-reversing.
The latter correspond to the tensor products of isomorphisms 
$\det D_{\Psi_0|_{\Si'}}$ for $(\Si',\si')$ and
$\det D_{\Psi_0|_{\P^1}}$ for $(\P^1,\tau)$.
The isomorphisms $\det D_{\Psi_0|_{\Si'}}$ are the identity.
Since the isomorphisms~$\Psi_0$ reverse the spin structure on the fixed locus $q(\Si^{\si}_i)$
of $(\P^1,\tau)$, the isomorphisms $\det D_{\Psi_0|_{\P^1}}$
are orientation-reversing;
see \cite[Proposition~8.1.7]{FOOO}.
We conclude that the isomorphisms~\eref{canonisom_e20b} and thus~\eref{canonisom_e20}
are orientation-reversing.
\end{proof}

\begin{crl}\label{canonisom_crl2}
Suppose $(X,J,\phi)$ is an almost complex manifold with an anti-complex involution and
$(V,\vph)$ is a rank~$n$ real bundle pair over~$(X,\phi)$.
Let $B\!\in\!H_2(X;\Z)$, $g,l\!\in\!\Z^{\ge0}$, and $(\Si,\si)$ be a genus~$g$ symmetric surface.
Then a real orientation on $(V,\vph)$ as in Definition~\ref{realorient_dfn4}
induces an  orientation on the line bundle 
\BE{canonisom_crl2_e}
\rdet\,D_{(V,\vph)}\equiv
\big(\!\det D_{(V,\vph)}\big)\otimes 
\big(\!\det\dbar_{\C}\big)^{\otimes n}\lra \cH_{g,l}(X,B)^{\phi,\si}\,.\EE
\end{crl}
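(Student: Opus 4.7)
The plan is to apply Corollary~\ref{canonisom_crl2a} pointwise over $\fB_{g,l}(X,B)^{\phi,\si}\!\times\!\cJ_{\Si}^{\si}$ and then verify that the resulting orientation varies continuously and is $\cD_{\si}$-invariant, so that it descends to the stated quotient. At a point $(u,(z_1^{\pm},\ldots,z_l^{\pm}),\fJ)$, the real map $u\!:(\Si,\si)\!\lra\!(X,\phi)$ pulls the real orientation data on $(V,\vph)$ back to a real orientation on $u^*(V,\vph)$: the rank~1 real bundle pair becomes $u^*(L,\wt\phi)$, the homotopy class of isomorphisms in~\eref{realorient_e4} pulls back directly (since $u^*$ is functorial under top exterior powers and tensor squares), and the relative spin structure pulls back along $u|_{\Si^{\si}}\!:\Si^{\si}\!\lra\!X^{\phi}$. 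Corollary~\ref{canonisom_crl2a} then yields an orientation on $\rdet\,D_{(V,\vph);u}$; under the identification $\det D_{(\C,\fc);u}\!\approx\!\ff^*(\det\dbar_{\C})$ from Example~\ref{ex_tbdl}, this is precisely an orientation on the fiber of~\eref{canonisom_crl2_e}.

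To establish continuity in the family, I would recall that the orientation of Corollary~\ref{canonisom_crl2a} is the tensor of two ingredients: (i)~the canonical orientation on $(\det D_{(V,\vph);u})\!\otimes\!(\det D_{(V\oplus2L^*,\vph\oplus2\wt\phi^*);u})$ produced by the short exact sequence identity~\eref{sum} together with the canonical square orientation on $(\det D_{(L^*,\wt\phi^*);u})^{\otimes2}$, and (ii)~the orientation on $(\det D_{(V\oplus2L^*,\vph\oplus2\wt\phi^*);u})\!\otimes\!(\det\dbar_{\Si;\C})^{\otimes(n+2)}$ furnished by Proposition~\ref{canonisom_prp} via a trivialization $u^*(V\!\oplus\!2L^*)\!\approx\!\Si\!\times\!\C^{n+2}$. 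Ingredient~(i) varies continuously since the short exact sequence does. For~(ii), the homotopy class of trivializations produced by Proposition~\ref{canonisom_prp} is canonical: hence any two local continuous choices of representative induce the same orientation on the determinant line bundle (by Corollary~\ref{RBPhomot_crl}), so local continuous orientations automatically patch into a global one.

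Next I would verify $\cD_{\si}$-invariance. A diffeomorphism $h\!\in\!\cD_{\si}$ sends $(u,\fJ)$ to $(u\!\circ\!h,h^*\fJ)$, and pullback by~$h$ induces an isomorphism between the corresponding fibers of $\det D_{(V,\vph)}$ and of $\det\dbar_{\C}$ (and similarly for $\det D_{(V\oplus 2L^*,\vph\oplus 2\wt\phi^*)}$). Since pullback by~$h$ commutes with every construction entering Corollary~\ref{canonisom_crl2a} --- pullback of $(V,\vph)$ and $(L,\wt\phi)$, the isomorphism~\eref{realorient_e4}, the spin structure on~$\Si^{\si}$, and the homotopy class of trivializations from Proposition~\ref{canonisom_prp} --- the orientation is preserved, and hence descends to the quotient.

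The main obstacle I anticipate is precisely implementing (ii) in families: Proposition~\ref{canonisom_prp} is an existence-and-uniqueness-up-to-homotopy statement over a single surface, and its proof via Lemma~\ref{lmm_repar} uses cut-off extensions in $\GL_{n+2}\C$ that depend on auxiliary choices near~$\Si^{\si}$. One must select these auxiliary data so that they vary continuously with $(u,\fJ)$, e.g.~by fixing a tubular neighborhood of~$\Si^{\si}$ once and for all and homotoping the initial local trivialization of $u^*(V\!\oplus\!2L^*)$ in a continuous family. Continuity of the induced orientation then follows because changing the auxiliary data alters the trivialization only by a homotopy, and a homotopy preserves the orientation of the determinant line bundle.
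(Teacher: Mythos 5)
Your proof follows essentially the same route as the paper's: apply Corollary~\ref{canonisom_crl2a} with the pulled-back real bundle pairs $u^*(V,\vph)$ and $u^*(L,\wt\phi)$ to orient each fiber, observe that the orientation varies continuously, and conclude it descends to the quotient because it is canonically determined by the real orientation on $(V,\vph)$. Your extra detail on $\cD_{\si}$-invariance and on handling auxiliary choices in Proposition~\ref{canonisom_prp} correctly fleshes out what the paper asserts in one line, so the argument is sound.
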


\begin{proof}
By Corollary~\ref{canonisom_crl2a} applied with the real bundle pairs $u^*(V,\vph)$
and $u^*(L,\wt\phi)$ over~$(\Si,\si)$,
a real orientation on $(V,\vph)$ determines an orientation
on the fiber of the line bundle 
$$\big(\!\det D_{(V,\vph)}\big)\otimes 
\big(\!\det\dbar_{\C}\big)^{\otimes n}\lra \fB_g(X,B)^{\phi,\si}\!\times\!\cJ_{\Si}^{\si}$$
over each point $(u,\fJ)$ which varies continuously with~$(u,\fJ)$.
Since the resulting orientation on this line bundle is completely determined by 
the chosen real orientation on $(V,\vph)$ via the isomorphisms~\eref{realorient2_e2}, 
it descends to the quotient~\eref{canonisom_crl2_e}.
\end{proof}

\subsection{The orientability of uncompactified moduli spaces}
\label{applic_subs}

\noindent
We will now apply Proposition~\ref{canonisom_prp} to study 
the orientability of the uncompactified real moduli spaces in Theorem~\ref{orient_thm}.
We first consider the case $X\!=\!\pt$ and then use it
to establish  the restriction of Theorem~\ref{orient_thm}
to the main stratum $\fM_{g,l}(X,B;J)^{\phi}$ of $\ov\fM_{g,l}(X,B;J)^{\phi}$. 

\begin{prp}\label{DM_prp} 
Let $g,l\in \Z^{\ge0}$ be such that $g\!+\!l\!\ge\!2$.
For every genus~$g$ type~$\si$ of orientation-reversing involutions,
the line bundle 
\BE{CidentDM_e}\La_{\R}^{\top}\big(T\cM_{g,l}^{\si}\big)
\otimes\big(\!\det\dbar_{\C}\big)\lra \cM_{g,l}^{\si}\EE
is canonically oriented.
The interchanges of pairs of conjugate points and the forgetful morphisms preserve 
this orientation;
the interchange of the points within a conjugate pair reverses this orientation.
\end{prp}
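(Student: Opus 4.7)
The plan is to construct the canonical orientation of~\eref{CidentDM_e} as a tensor product of the four canonical orientations listed at the end of Section~\ref{orient_subs}. Over a point $[\cC] \in \cM_{g,l}^\si$ represented by a smooth symmetric curve $(\Si,\si,\fJ)$ with conjugate pairs of marked points $\{(z_i^+,z_i^-)\}$, one would identify the fiber of~\eref{CidentDM_e} over~$[\cC]$ with a tensor product of canonically oriented real lines varying continuously with~$[\cC]$.

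The first step is the Kodaira-Spencer isomorphism, identifying $T_{[\cC]}\cM_{g,l}^\si$ with the $\si$-invariant part of the appropriate first cohomology of the tangent sheaf twisted by the marked points. A short exact sequence relating this to $H^1(\Si,T\Si)^\si$ and the tangent spaces at the marked points (which are complex, hence canonically oriented) reduces the orientation of $\La_\R^\top T_{[\cC]}\cM_{g,l}^\si$ to that of $\La_\R^\top H^1(\Si,T\Si)^\si$. Second, Dolbeault together with Serre duality supplies a canonical pairing between $H^1(\Si,T\Si)^\si$ and $H^0(\Si,(T^*\Si)^{\otimes 2})^\si$, which converts the remaining orientation question into one about the determinant line of the real CR-operator on the rank-one real bundle pair $((T^*\Si)^{\otimes 2},(\tnd\si^*)^{\otimes 2})$.

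To orient the latter, I would invoke Corollary~\ref{canonisom_crl} applied to $(L,\wt\phi)=(T^*\Si,\tnd\si^*)$. Each component $\Si^\si_i$ of the fixed locus is a circle, and $(T^*\Si)^{\tnd\si^*}|_{\Si^\si_i}$ is the real cotangent line to that circle, which is orientable; hence the unconditional first statement of Corollary~\ref{canonisom_crl} applies, producing a canonical homotopy class of isomorphisms
\[
\big((T^*\Si)^{\otimes 2} \oplus 2\,T\Si,\,(\tnd\si^*)^{\otimes 2} \oplus 2\tnd\si\big) \approx \big(\Si\!\times\!\C^3,\si\!\times\!\fc\big).
\]
Corollary~\ref{canonisom_crl2a} then converts this into a canonical orientation on $(\det \dbar_{(T^*\Si)^{\otimes 2}})\otimes(\det \dbar_{\Si;\C})$. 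Combined with the KS contribution, the Dolbeault-Serre identification, and the canonical orientations on the tangent spaces at the marked points, this yields an orientation on the fiber of~\eref{CidentDM_e} at~$[\cC]$; continuity in~$[\cC]$ is inherent in the construction.

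For the compatibility claims, the forgetful morphism forgets a conjugate pair of marked points, and the corresponding factor arising from the KS sequence is a complex tangent space, which is canonically oriented and thus preserved. Swapping the two points within a conjugate pair corresponds, via the identification $T_{z_i^-}\Si \cong \overline{T_{z_i^+}\Si}$, to complex conjugation on a single complex line; since conjugation on~$\C$ is orientation-reversing as a real map of~$\R^2$, such an interchange reverses the orientation of~\eref{CidentDM_e}. The main anticipated difficulty is verifying that the four canonical orientations fit together without hidden sign ambiguities: specifically, the Serre duality pairing and the Dolbeault identification must be normalized to be compatible with the canonical homotopy class of Corollary~\ref{canonisom_crl}, and the construction must be $\cD_\si$-equivariant so as to descend to the quotient moduli space rather than merely to the parameter space of representatives.
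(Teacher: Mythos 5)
Your proposal is correct and follows essentially the same route as the paper: both assemble the canonical orientation from the Kodaira--Spencer isomorphism, the Dolbeault and Serre duality identifications, a short exact sequence with skyscraper sheaves accounting for the marked points, and Corollaries~\ref{canonisom_crl} and~\ref{canonisom_crl2a} applied to $(L,\wt\phi)=(T^*\Si,\tnd\si^*)$, with the orientation-reversal under swapping $z_i^+\leftrightarrow z_i^-$ traced to the choice of the complex orientation on the $+$-side of the skyscraper. The only caveats are minor phrasing points that the paper handles cleanly: the skyscraper sequence is applied \emph{after} Serre duality so that the relevant $H^1$ vanishes and the argument is stated entirely in terms of full determinant lines (your reduction ``to $\La_{\R}^{\top}H^1(\Si;T\Si)^{\si}$'' alone is insufficient for $g\le1$, where $H^0(T\Si)^{\si}\neq0$), and your descent concern is resolved by restricting to the automorphism-free locus (whose complement has codimension at least two) or by passing to a Prym-level cover.
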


\begin{proof}
The cardinality of the automorphism group is an upper semi-continuous function 
on the compact moduli space~$\ov\cM_{g,l}^{\si}$.
Thus, there exists $l(g)\!\in\!\Z^+$
so that for every $l\!\ge\!l(g)$ every element
$$[\cC]\equiv \big[\Si,(z_1^+,z_1^-),\ldots,(z_l^+,z_l^-),\fJ\big]\in \cM^{\si}_{g,l}$$
has no automorphisms.
We first establish the proposition under the assumption that $l\!\ge\!l(g)$.\\

\noindent
Let $\cT\!\lra\!\cU_{g,l}^{\si}$ denote the vertical tangent bundle over 
the universal curve for~$\cM_{g,l}^{\si}$.
For~$\cC$ as above, let
\BE{DMprp_e1a}
T\cC=T\Si\big(\!-\!z_1^+\!-\!z_1^-\!-\!\ldots\!-\!z_l^+\!-\!z_l^-\big)
\quad\hbox{and}\quad
T^*\cC=T^*\Si\big(z_1^+\!+\!z_1^-\!+\!\ldots\!+\!z_l^+\!+\!z_l^-\big),\EE
be the twisted holomorphic line bundles associated to 
the sheaves of holomorphic tangent vector fields vanishing at the marked points
and of meromorphic one-forms with at most simple poles at the marked points 
and holomorphic everywhere else.
We construct these line bundles using holomorphic identifications of 
small neighborhoods of~$z_i^+$ and~$z_i^-$ interchanged by~$\si$.
The involutions~$\tnd\si$ on~$T\Si$ and~$\tnd\si^*$ on~$T^*\Si$ then
induce involutions on~$T\cC$ and~$T^*\cC$;
we denote the induced involutions also by~$\tnd\si$ and~$\tnd\si^*$.\\

\noindent
Let $S\cC^+$ and $S\cC^-$ be the skyscraper sheaves over~$\Si$ given~by 
$$S\cC^+=T^*\Si|_{z_1^++\ldots+z_l^+}, \qquad S\cC^-=T^*\Si|_{z_1^-+\ldots+z_l^-}\,.$$
The projection
\BE{H0ScC_e}\pi_1\!:  \wch{H}^0(\Si;S\cC^+\!\oplus\!S\cC^-)^{\si}=
\big(\wch{H}^0(\Si;S\cC^+)\!\oplus\!\wch{H}^0(\Si;S\cC^-)\big)^{\si} \lra \wch{H}^0(\Si;S\cC^+)\EE
is an isomorphism of real vector spaces.
We orient $\wch{H}^0(\Si;S\cC^+\!\oplus\!S\cC^-)^{\si}$ and its dual via the isomorphism
$$\pi_1^*\!:
 \wch{H}^0(\Si;S\cC^+)^*=T_{z_1^+}\Si\!\oplus\!\ldots\!\oplus\!T_{z_l^+}\Si
\lra \big(\wch{H}^0(\Si;S\cC^+\!\oplus\!S\cC^-)^{\si}\big)^*$$
from the complex orientations of $T_{z_1^+}\Si,\ldots,T_{z_l^+}\Si$.\\

\noindent
The Kodaira-Spencer (or \sf{KS}) map and the Dolbeault isomorphism 
provide canonical isomorphisms
\BE{DM_prp_e3} T_{[\cC]}\cM^{\si}_{g,l}\approx \wch{H}^1(\Si;T\cC)^{\si}
\approx H^1(\Si;T\cC)^{\si};\EE
see \cite[Section~3.1.2]{Melissa} and \cite[p151]{GH}.
By Serre Duality (or \sf{SD}), there is a canonical isomorphism
$$H^1(\Si;T\cC)\approx 
\big(H^0( \Si;T^*\cC\!\otimes\!T^*\Si)\big)^*;$$
see \cite[p153]{GH}.
Since $\si$ is orientation-reversing,
the real part of the SD pairing identifies the space of invariant sections
on one side with the space of anti-invariant sections on the other; 
the latter is isomorphic
to the space of invariant sections by multiplication by~$\fI$.
Thus, there is a canonical isomorphism
\BE{DM_prp_e5} 
H^1(\Si;T\cC)^{\si}\approx 
\big(H^0( \Si;T^*\cC\!\otimes\!T^*\Si)^{\si}\big)^*.\EE
Since the degree of the holomorphic line bundle $T\cC$ is negative,  
$$\La_{\R}^{\top}\big(H^0(\Si;T^*\cC\!\otimes\!T^*\Si)^{\si}\big)
=  \det\dbar_{(T^*\cC,\tnd\si^*)\otimes(T^*\Si,\tnd\si^*)}.$$
The long exact sequence in cohomology for the sequence 
\BE{FMses_e}0\lra T^*\Si\!\otimes\!T^*\Si \lra T^*\cC\!\otimes\!T^*\Si \lra S\cC^+\!\oplus\!S\cC^-
\lra  0\EE
and the chosen orientation on $\wch{H}^0(\Si;S\cC^+\!\oplus\!S\cC^-)^{\si}$ induce an orientation 
on the line
\BE{DM_prp_e7}\det\dbar_{(T^*\cC,\tnd\si^*)\otimes(T^*\Si,\tnd\si^*)}\otimes
\det\dbar_{(T^*\Si,\tnd\si^*)^{\otimes2}}\,.\EE
Thus, the real line bundle
\BE{DM_prp_e8}
\La^{\top}_{\R} \big(T\cM_{g,l}^{\si}\big)\otimes
\big(\!\det\dbar_{(\cT^*,\tnd\si^*)^{\otimes2}}\big)\lra   \cM_{g,l}^{\si}\EE
is canonically oriented.\\

\noindent
By Corollary~\ref{canonisom_crl} applied with $(L,\wt\phi)\!=\!(T^*\Si,\tnd\si^*)$, 
there is a canonical homotopy class of isomorphisms
$$\big(T^*\Si^{\otimes2}\!\oplus\!2\,T\Si,(\tnd\si^*)^{\otimes2}\!\oplus\!2\tnd\si\big)
\approx\big(\Si\!\times\!\C^3,\si\!\times\!\fc\big)$$
of real bundle pairs over~$(\Si,\si)$.
Since the determinants of $\dbar$-operators on the real bundle pairs $2(T\Si,\tnd\si)$
and $2(\Si\!\times\!\C^2,\si\!\times\!\fc)$ are canonically oriented,
so is the line bundle 
\BE{domains_e15}
\big(\!\det\dbar_{(\cT^*,\tnd\si^*)^{\otimes2}}\big)\otimes 
\big(\!\det\dbar_{\C}\big)\lra   \cM_{g,l}^{\si}.\EE
Combining this orientation with the canonical orientation for the line bundle~\eref{DM_prp_e8},
we obtain an orientation on the line bundle~\eref{CidentDM_e}.\\

\noindent
Since the interchanges of pairs of conjugate points and the forgetful morphisms preserve
the orientation of~\eref{H0ScC_e}, 
they also preserve the orientation on~\eref{CidentDM_e} constructed above.
Since the interchange of the points within a conjugate pair reverses 
the orientation of~\eref{H0ScC_e},
it also reverses the orientation  on~\eref{CidentDM_e}.\\

\noindent
For $l\!<\!l(g)$, we orient the line bundle~\eref{CidentDM_e} 
by downward induction 
from the orientation of~\eref{CidentDM_e} with~$l$ replaced by~$l\!+\!1$
and the orientation of the fibers of the forgetful morphism
\BE{thm_maps_e11a}\cM_{g,l+1}^{\si}\lra \cM_{g,l}^{\si}\EE
obtained from the complex orientation of $T_{z_{l+1}^+}\Si$. 
If the fixed locus~$\Si^{\si}$ of~$(\Si,\si)$ is separating, 
the fibers of this morphism are disconnected and differ by 
the interchange of the points in the last conjugate pair of points.
However, the induced orientation on~\eref{CidentDM_e} is still well-defined 
for the following reason.
By Proposition~\ref{DM_prp} with $l$ replaced by $l\!+\!1$, 
the interchange of the points within a conjugate pair reverses the orientation 
on the line bundle~\eref{CidentDM_e} with~$l$ replaced by~$l\!+\!1$.
In the case of the last conjugate pair of points,
such an interchange also reverses the orientation of the fibers of~\eref{thm_maps_e11a}.
Thus, it has no effect on the induced orientation on~\eref{CidentDM_e}.
\end{proof}

\begin{crl}\label{orient0_crl}
Theorem \ref{orient_thm} holds with $\ov\fM_{g,l}(X,B;J)^{\phi}$
replaced by $\fM_{g,l}(X,B;J)^{\phi,\si}$ for every genus~$g$ 
orientation-reversing involution~$\si$.
\end{crl}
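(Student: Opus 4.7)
The plan is to combine the canonical isomorphism~\eref{TuMdecomp_e} with Proposition~\ref{DM_prp} and Corollary~\ref{canonisom_crl2}, handling one factor with each result. Under the assumption $g\!+\!l\!\ge\!2$, so that the forgetful morphism~$\ff$ is defined, I would rewrite \eref{TuMdecomp_e} over $\fM_{g,l}(X,B;J)^{\phi,\si}$ in the regrouped form
\BE{plan_e1}
\La_{\R}^{\top}\big(T\fM_{g,l}(X,B;J)^{\phi,\si}\big)\otimes\big(\!\det\dbar_{\C}\big)^{\otimes(n+1)}
\approx \rdet\, D_{(TX,\tnd\phi)}\,\otimes\,\ff^*\!\big(\La_{\R}^{\top}(T\cM_{g,l}^{\si})\!\otimes\!\det\dbar_{\C}\big),
\EE
splitting the $n\!+\!1$ copies of $\det\dbar_{\C}$ on the left into $n$ copies absorbed into the relative determinant~\eref{fDdfn_e0} and one copy pulled back from $\cM_{g,l}^{\si}$ via $\ff$ and the canonical identification in Example~\ref{ex_tbdl}. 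The two factors on the right of~\eref{plan_e1} will then be oriented separately.

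The second factor is oriented directly by Proposition~\ref{DM_prp} and then pulled back along~$\ff$. For the first factor, I observe that a real map $u\!:(\Si,\si)\!\lra\!(X,\phi)$ pulls back the data of Definitions~\ref{realorient_dfn} and~\ref{realorient_dfn2} to a real orientation on the rank-$n$ real bundle pair $u^*(TX,\tnd\phi)\!\lra\!(\Si,\si)$ in the sense of Definition~\ref{realorient_dfn4}: the rank-$1$ pair $(L,\wt\phi)$, the homotopy class of isomorphisms in~\eref{realorient_e}, and the spin structure on $TX^{\phi}\!\oplus\!2(L^*)^{\wt\phi^*}$ pull back naturally along $u$ and $u|_{\Si^{\si}}$, and the two conditions in~\eref{realorient_e} are preserved under pullback (the Stiefel--Whitney relation because it is pulled back from $X^\phi$, and the square-root relation because pullback commutes with $\La_{\C}^{\top}$). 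Corollary~\ref{canonisom_crl2} then supplies an orientation on $\rdet\, D_{(TX,\tnd\phi)}$ that varies continuously with $[\u]$.

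Tensoring these two orientations through~\eref{plan_e1} yields the desired orientation on the line bundle of the statement. Descent to the quotient by $\cD_{\si}$ is automatic, since Proposition~\ref{DM_prp} and Corollary~\ref{canonisom_crl2} are already stated at the level of moduli spaces. The main point that requires care is continuity of the construction across all of $\fM_{g,l}(X,B;J)^{\phi,\si}$: this is built into Corollary~\ref{canonisom_crl2}, which delivers an orientation on a line bundle over the full parameter space, and into Proposition~\ref{DM_prp}, together with the continuity of~$\ff$ and of~\eref{TuMdecomp_e}. The unstable low cases $g\!+\!l\!<\!2$ can be treated either directly (the Deligne--Mumford factor in~\eref{TuMdecomp_e} is then trivial, so only Corollary~\ref{canonisom_crl2} is needed) or by stabilizing through the addition of conjugate pairs of marked points; the fibers of the forgetful morphisms that drop such pairs are canonically complex-oriented, so orientability transfers between $\fM_{g,l}$ and $\fM_{g,l+k}$ without sign ambiguity. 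The only real obstacle, and it is mild, is the bookkeeping needed to confirm that the pullback of the real-orientation data varies continuously and that no additional sign is introduced in the regrouping~\eref{plan_e1}; both are essentially formal consequences of the naturality of the constructions in Section~\ref{signcomp_sec}.
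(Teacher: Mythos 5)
Your treatment of the main case $g+l\ge 2$ is correct and is essentially the paper's argument: the regrouped isomorphism~\eref{plan_e1} is exactly the tensor product of~\eref{thm_maps_e3}, the canonically oriented line bundle~\eref{CidentDM_e}, and the relative determinant~\eref{thm_maps_e9}; Proposition~\ref{DM_prp} handles the Deligne--Mumford factor and Corollary~\ref{canonisom_crl2} handles the relative determinant, which is what the paper does.

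The gap is in your handling of $g+l<2$. Your ``direct'' alternative does not work as stated: when $g+l<2$, the marked domain is unstable, $\cM_{g,l}^{\si}$ is not defined, the forgetful morphism~$\ff$ does not exist, and~\eref{TuMdecomp_e} makes no sense, so one cannot say the Deligne--Mumford factor ``is trivial.'' The stabilization route is the right one and is what the paper uses, but your claim that ``orientability transfers between $\fM_{g,l}$ and $\fM_{g,l+k}$ without sign ambiguity'' because the added fibers are complex-oriented is incomplete. When the fixed locus $\Si^{\si}$ separates $\Si$, the fibers of the forgetful morphism~\eref{thm_maps_e11} dropping two conjugate pairs are \emph{disconnected}: the components differ by interchanging the two points within one of the added conjugate pairs. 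Each component is complex-oriented, but one must check that the induced orientation on the base is the same regardless of which component one uses. This is not a formal naturality statement; it requires the sign clause in Proposition~\ref{DM_prp} that the interchange of the points within a conjugate pair \emph{reverses} the canonical orientation on~\eref{CidentDM_e}. That reversal exactly cancels the reversal of the complex orientation of the fiber component, and only because of this cancellation is the induced orientation on $\fM_{g,l}(X,B;J)^{\phi,\si}$ well-defined. Your proposal omits this verification, which is the only nontrivial content of the low $g+l$ case.
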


\begin{proof}
We first assume that $g\!+\!l\!\ge\!2$ as in Proposition~\ref{DM_prp}.
The forgetful morphism $\ff$ induces a canonical isomorphism
\BE{thm_maps_e3}\La_{\R}^{\top}\big(T\fM_{g,l}(X,B;J)^{\phi,\si}\big)\approx
\big(\!\det D_{(TX,\tnd\phi)}\big)\otimes \ff^*\big(\La_{\R}^{\top}(T\cM_{g,l}^{\si})\big)
\lra \fM_{g,l}(X,B;J)^{\phi,\si}\EE
of real line bundles.
By Corollary~\ref{canonisom_crl2} applied with $(V,\vph)\!=\!(TX,\tnd\phi)$, 
a real orientation on $(X,\om,\phi)$ determines an orientation~on
\BE{thm_maps_e9} 
\rdet\,D_{(TX,\tnd\phi)}\equiv
\big(\!\det D_{(TX,\tnd\phi)}\big)\otimes 
\big(\!\det\dbar_{\C}\big)^{\otimes n}\lra 
\cH_{g,l}(X,B)^{\phi,\si}\,.\EE
Combining the canonical isomorphism~\eref{thm_maps_e3},
the canonical orientation of~\eref{CidentDM_e},
and the orientation of~\eref{thm_maps_e9} determined by the chosen real 
orientation on $(X,\om,\phi)$,
we obtain an orientation on the line bundle~\eref{orient_thm_e}
over $\fM_{g,l}(X,B;J)^{\phi,\si}$.\\

\noindent
If $g\!+\!l\!<\!2$, we orient the line bundle~\eref{orient_thm_e} 
from the orientation of~\eref{orient_thm_e} with~$l$ replaced by~$l\!+\!2$
and the orientation of the fibers of the forgetful morphism
\BE{thm_maps_e11}\fM_{g,l+2}(X,B;J)^{\phi,\si}\lra \fM_{g,l}(X,B;J)^{\phi,\si}\EE
obtained from the complex orientations of $T_{z_{l+1}^+}\Si$ and~$T_{z_{l+2}^+}\Si$. 
The induced orientation on~\eref{orient_thm_e} is still well-defined 
for the following reason.
By Proposition~\ref{DM_prp}, the interchange of the points within 
a conjugate pair reverses the orientation on the line bundle~\eref{CidentDM_e}
with~$l$ replaced by~$l\!+\!2$ and thus on the line bundle~\eref{orient_thm_e} 
with~$l$ replaced by~$l\!+\!2$.
In the case of the last two pairs of conjugate points,
such an interchange also reverses the orientation of
the fibers of~\eref{thm_maps_e11}.
Thus, it has no effect on the induced orientation on~\eref{orient_thm_e}.
\end{proof}

\noindent
Proposition~\ref{DM_prp} is also obtained in~\cite{Remi}; 
see Corollaires~1.2 and~1.1, Proposition~1.4, and Lemmes~1.3 and~1.4 in~\cite{Remi}.
A version of Corollary~\ref{orient0_crl} for certain covers of 
the uncompactified moduli spaces $\fM_{g,l}(X,B;J)^{\phi,\si}$ appears in~\cite{Remi} as well.
The orientability of these covers is obtained in~\cite{Remi} in a subset of cases
for which Corollary~\ref{orient0_crl} implies the orientability of 
the spaces $\fM_{g,l}(X,B;J)^{\phi,\si}$ themselves
(while Theorem~\ref{orient_thm} also yields the orientability of their compactifications).
For example, let $X_{n;\de}\!\subset\!\P^{n-1}$ denote a hypersurface of degree $\de\!\in\!\Z^+$ 
preserved by~$\tau_n$.
Corollary~\ref{orient0_crl} implies that $\fM_{g,l}(X_{n;\de},B;J)^{\tau_{n;\de},\si}$ is orientable~if
$$\de=0,1\mod\,4\qquad\hbox{and}\qquad \de\equiv n\mod2.$$
With the second condition strengthened to $\de\!\equiv\!n\mod4$,
this conclusion is obtained in \cite[Corollaire 2.4]{Remi}
under the additional assumption that $\Si^{\si}$ is a single circle.
If $\Si^{\si}$ consists of more than one circle, 
\cite[Corollaire~2.4]{Remi} shows that this conclusion
holds after pulling back to a cover of $\fM_{g,l}(X_{n;\de},B;J)^{\phi,\si}$.
The orientability of the compactified moduli spaces of real maps necessary for 
defining real GW-invariants is not considered in~\cite{Remi}.\\

\noindent
A canonical orientation on the real line $\rdet\,D$ in Corollary~\ref{canonisom_crl2a}
under overlapping topological assumptions is obtained in~\cite{Remi}
using a completely different approach.
We obtain it as an immediate consequence of the existence of a canonical homotopy class of
isomorphisms for the corresponding real bundle pairs.
The argument of~\cite{Remi} is heavily analytic in nature and is based
on explicit sign computations for certain automorphisms of determinant line bundles in~\cite{Remi0}.
In contrast, our proof is completely topological;
the proofs of the two statements from \cite{Teh} and~\cite{BHH} cited 
in the proofs of Lemma~\ref{homotopextend_lmm} and Proposition~\ref{canonisom_prp}, respectively, 
are also topological and take up only a few pages in total.
This approach allows us to study the extendability of the canonical orientations
of Corollary~\ref{orient0_crl} across the codimension-one boundary strata of the moduli spaces
on the topological level of real bundle pairs; see Section~\ref{BdExt_sec}.

\section{Extensions over compactifications}
\label{BdExt_sec}

\noindent
In this section, we study the extendability of the canonical isomorphisms
and orientations of Section~\ref{signcomp_sec} across paths passing through 
one-nodal symmetric surfaces.
Proposition~\ref{DMext_prp} below implies that the line bundle~\eref{DMext_e}
is orientable.
This is a key technical result needed to 
extend the proof of Corollary~\ref{orient0_crl} to
the compactified setting of Theorem~\ref{orient_thm}.
We deduce this proposition from the proof of Proposition~\ref{DM_prp}
and the statements of Corollary~\ref{realorient_crl} and Lemma~\ref{KSext_lmm}.

\begin{prp}\label{DMext_prp}
Let $g,l\in \Z^{\ge0}$ be such that $g\!+\!l\!\ge\!2$.
The orientation on the restriction of the real line bundle~\eref{DMext_e}
to $\R\cM_{g,l}$ provided by Proposition~\ref{DM_prp} flips 
across the codimension-one boundary strata of types~(E) and~(H1) and 
extends across the codimension-one boundary strata of types~(H2) and~(H3).
\end{prp}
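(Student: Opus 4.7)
The plan is to unpack the four-factor tensor construction of the canonical orientation from the proof of Proposition~\ref{DM_prp} and track how each factor behaves across a codimension-one boundary stratum of type~(E), (H1), (H2), or~(H3). Each of the underlying line bundles extends across $\R\ov\cM_{g,l}$ via the cotangent-type sheaf $\wh\cT$ of Lemma~\ref{TSiext_lmm} and the standard extension of determinant line bundles of real CR-operators to families over nodal symmetric surfaces, so the problem reduces to sign tracking for each of the four factors (1)--(4) listed after Table~\ref{SignChg_tbl}.

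First I would observe that the Dolbeault--Serre Duality identifications in~\eref{DM_prp_e3} and~\eref{DM_prp_e5}, and the contribution of the short exact sequence~\eref{FMses_e} to the orientation of~\eref{DM_prp_e7}, are built from canonical complex-linear isomorphisms together with the fiberwise complex orientations of the tangent lines $T_{z_i^\pm}\Si$ at the marked points. Since none of the four degeneration types collapses a marked point and each construction is natural in families of (nodal) symmetric surfaces, these two factors extend across every codimension-one stratum type without sign change.

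Next I would invoke Lemma~\ref{KSext_lmm} to conclude that the factor coming from the Kodaira--Spencer isomorphism~\eref{DM_prp_e3} flips across all four types of codimension-one strata. Combining this with Corollary~\ref{realorient_crl}, and using the orientability dichotomy of Lemma~\ref{EH1vsH2H3_lmm} for the real part of $\wh\cT$ on the singular component of the fixed locus, I would then conclude that the orientation on the line bundle~\eref{domains_e15} supplied by Corollaries~\ref{canonisom_crl} and~\ref{canonisom_crl2a} extends across (E) and~(H1) but flips across (H2) and~(H3). Tensoring the four factors and tallying signs, the canonical orientation on~\eref{DMext_e} flips across (E)/(H1) (a single KS sign change with the three other factors constant) and extends across (H2)/(H3) (two cancelling sign changes from KS and from~\eref{domains_e15}), as claimed.

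The main obstacle is the sign analysis of the factor coming from Corollaries~\ref{canonisom_crl}/\ref{canonisom_crl2a} across (H2) and~(H3). Via the orientation-dependence of the trivialization in Corollary~\ref{canonisom_crl}, this reduces to checking that the cut-and-reglue of the singular component of $\Si^{\si}$ passing through the node in an (H2) or (H3) transition---visible in the local picture of Figure~\ref{H2H3_fig}---realizes precisely the orientation-reversal of the limiting smooth fixed-locus component that Corollary~\ref{canonisom_crl} measures. Once this local model is matched and Corollary~\ref{realorient_crl} together with Lemmas~\ref{KSext_lmm} and~\ref{EH1vsH2H3_lmm} are granted, the proposition itself is a short tensoring argument.
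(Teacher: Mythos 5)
Your proposal is correct and follows essentially the same strategy as the paper: decompose the canonical orientation into the four tensor factors from Proposition~\ref{DM_prp}, show the Dolbeault--Serre Duality factor and the~\eref{FMses_e} factor extend (via Corollary~\ref{Serre_crl} and the naturality of~\eref{FMses_e}), show the Kodaira--Spencer factor flips across all four types via Lemma~\ref{KSext_lmm}, and show the~\eref{domains_e15} factor extends across (E)/(H1) and flips across (H2)/(H3) via Corollary~\ref{realorient_crl}, then tally. The only minor difference is that the paper encapsulates your ``main obstacle'' — the cut-and-reglue sign analysis across (H2)/(H3) — as a standalone lemma (Lemma~\ref{realorient_lmm}) feeding into Corollary~\ref{realorient_crl}, rather than treating it inline.
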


\subsection{One-nodal symmetric surfaces}
\label{OneNodal_subs}

\noindent
A \sf{one-nodal oriented surface} $\Si$ is a topological space obtained by 
identifying two distinct points of a closed oriented smooth
surface~$\wt\Si$, not necessarily connected.
The surface $\wt\Si$ is called \sf{the normalization of~$\Si$};
it is unique up to a diffeomorphism preserving the two distinct points as a set.
A \sf{one-nodal symmetric surface} $(\Si,\si)$ is a connected one-nodal surface~$\Si$ 
with an involution~$\si$ induced by an orientation-reversing involution~$\wt\si$ 
on the normalization~$\wt\Si$ of~$\Si$.
Throughout this section, we will denote the two distinguished points of~$\wt\Si$
by~$x_1$ and~$x_2$ and their image in~$\Si$, i.e.~the node, by~$x_{12}$. 
The four topological possibilities for the singular structure of~$(\Si,\si)$ are described by (E)-(H3)
in Section~\ref{bnd_subs}.
Note~that 
$$\wt\si(x_i)=\begin{cases} x_{3-i},
&\hbox{if}~(\Si,\si)~\hbox{is of type~(E)};\\
x_i, &\hbox{if}~(\Si,\si)~\hbox{is of type~(H)}.
\end{cases}$$
Let $\wt\si'(x_1)\!=\!x_2$ and $\wt\si'(x_2)\!=\!x_1$.\\

\noindent
We begin by extending the main statements of Sections~\ref{topolprelim_subs}
and~\ref{ROisom_subs} to one-nodal symmetric surfaces.
In particular, we observe that Proposition~\ref{canonisom_prp} extends to such surfaces.
In~\cite{RBP}, we show that Proposition~\ref{canonisom_prp} actually extends to all
nodal symmetric surfaces.

\begin{prp}\label{canonisomExt_prp}
The conclusion of Proposition~\ref{canonisom_prp} holds for one-nodal symmetric surfaces.
\end{prp}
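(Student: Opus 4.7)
The plan is to reduce Proposition \ref{canonisomExt_prp} to Proposition \ref{canonisom_prp} via the normalization. Let $\pi\!:(\wt\Si,\wt\si)\lra(\Si,\si)$ denote the normalization map, with $\pi^{-1}(x_{12})\!=\!\{x_1,x_2\}$. Pulling back along $\pi$ yields a real bundle pair $(\wt V,\wt\vph)\!=\!\pi^*(V,\vph)$ and a rank~1 real bundle pair $(\wt L,\wt\phi_L)\!=\!\pi^*(L,\wt\phi)$ on $(\wt\Si,\wt\si)$. The real bundle pair $(V,\vph)$ is recovered from $(\wt V,\wt\vph)$ together with a $\C$-linear gluing isomorphism $\iota\!:\wt V_{x_1}\lra\wt V_{x_2}$ intertwining $\wt\vph_{x_1}$ and $\wt\vph_{x_2}$: in type~(E), where $\wt\si(x_1)\!=\!x_2$, this says $\iota$ interlaces the two conjugations; in types~(H1)--(H3), where $x_1,x_2\!\in\!\wt\Si^{\wt\si}$, it says $\iota$ restricts to an $\R$-linear isomorphism of the two real fibers. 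The real orientation on $(V,\vph)$ pulls back to a real orientation on $(\wt V,\wt\vph)$ with analogous gluing data on $\wt L$ and on the spin structure on $\wt V^{\wt\vph}\!\oplus\!2(\wt L^*)^{\wt\phi_L^*}$.

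For existence, I would first apply Proposition \ref{canonisom_prp}, applied component-wise to $(\wt\Si,\wt\si)$, to obtain an isomorphism
$$\wt\Psi\!:\big(\wt V\!\oplus\!2\wt L^*,\wt\vph\!\oplus\!2\wt\phi_L^*\big)\lra
\big(\wt\Si\!\times\!\C^{n+2},\wt\si\!\times\!\fc\big)$$
inducing the specified orientation, spin structure, and top exterior power data on each component. The obstruction to descending $\wt\Psi$ to $\Si$ is a single element $A\!\in\!\GL_{n+2}\C$ measuring the failure of $\wt\Psi|_{x_2}\!\circ\!\iota$ to equal $\wt\Psi|_{x_1}$ (with $\iota$ extended in the evident way to $\wt V\!\oplus\!2\wt L^*$). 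Since $\GL_{n+2}\C$ is path-connected, I would choose a path from $\id$ to $A^{-1}$, promote it to a $\GL_{n+2}\C$-valued function supported in a small neighborhood~$U_2$ of $x_2$, and extend it to a neighborhood $U_1$ of $x_1$ by $\wt\si$-equivariance. In type~(E) this is automatic since $\wt\si(U_2)\!=\!U_1$; in types~(H), $A$ lies in $\GL_{n+2}\R$ and the path can be chosen in $\GL_{n+2}\R$ once one has arranged by Lemma~\ref{lmm_repar} that $A$ lies in the identity component, so the extension to $U_1\!=\!U_2$ is through real matrices. Post-composing $\wt\Psi$ with this adjustment and, if necessary, further rescaling by an $\SL_{n+2}\C$-valued function as in Corollary~\ref{RBPhomot_crl} to restore the top exterior power normalization yields an isomorphism on $(\wt\Si,\wt\si)$ that descends through $\pi$ to the desired $\Psi$ on $(\Si,\si)$.

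For uniqueness, I would extend Corollary \ref{RBPhomot_crl} to the nodal setting. Two isomorphisms $\Psi,\Psi'$ on $(\Si,\si)$ satisfying the conditions of the proposition differ by multiplication by some $F\!\in\!\cC(\Si,\si;\SL_{n+2}\C)$ whose restriction to $\Si^\si$ is nullhomotopic. The pullback $\pi^*F\!\in\!\cC(\wt\Si,\wt\si;\SL_{n+2}\C)$ satisfies the node-compatibility $(\pi^*F)(x_2)\!=\!\iota\!\circ\!(\pi^*F)(x_1)\!\circ\!\iota^{-1}$ and is nullhomotopic on $\wt\Si^{\wt\si}$. The main obstacle is to construct a $\wt\si$-equivariant nullhomotopy of $\pi^*F$ that maintains this node compatibility throughout. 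This requires a mild refinement of Lemma \ref{homotopextend_lmm} allowing prescribed boundary values along arcs from $\partial\wt\Si^b$ to the marked points $x_1,x_2$ of an associated sh-surface. The refinement goes through by exactly the argument of Lemma \ref{homotopextend_lmm}: after cutting $\wt\Si^b$ along such arcs together with the paths~$\{C_i\}$ to reach a disk, the relevant obstructions lie in $\pi_1(\SL_{n+2}\C)\!=\!\pi_2(\SL_{n+2}\C)\!=\!0$, so the prescribed paths at $x_1$ and their $\iota$-conjugates at $x_2$ can be interpolated to a nullhomotopy. Granting this refinement, the resulting homotopy descends through $\pi$ to a homotopy from $F$ to $\id$ in $\cC(\Si,\si;\SL_{n+2}\C)$, proving uniqueness and completing the proposition.
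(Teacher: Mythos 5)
Your overall strategy — pull back to the normalization, apply Proposition~\ref{canonisom_prp} there, correct the resulting trivialization so that it descends through the node, and extend Lemma~\ref{homotopextend_lmm} and Corollary~\ref{RBPhomot_crl} to the nodal setting for uniqueness — is the same as the paper's. But there is a genuine gap in the existence step for type (E), and a smaller one in type (H1).

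In type (E) you claim one can take any path from $\Id$ to $A^{-1}$, support it near $x_2$, and extend by $\wt\si$-equivariance. Because $\wt\si(x_1)=x_2$, equivariance forces $f(x_1)=\ov{f(x_2)}$; unwinding the descent condition for the modified isomorphism shows that one needs $f(x_2)\in\GL_{n+2}\C$ with $\ov{f(x_2)}=f(x_2)\,A$, not merely $f(x_2)=A^{-1}$. Thus the correction is not free: one must solve $A=B^{-1}\ov{B}$ for $B$, and the solvability of this conjugation-twisted factorization (with the additional determinant normalization) is a nontrivial linear-algebra fact. It holds precisely because the real-orientation compatibility built into $\psi_{12}$ and $\wt\vph_{12}$ produces two $\C$-antilinear involutions whose top exterior powers commute; the paper isolates this as Lemma~\ref{LinAlg_lmm} and invokes it at exactly this point. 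Your proposal does not supply this input, and without it the correction you construct changes the gluing obstruction from $A$ to $\ov{A}$ rather than to $\Id$, so it does not descend.

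In type (H1), where $x_1$ and $x_2$ lie on the same fixed component of $\wt\Si^{\wt\si}$, you only say the extension is through real matrices. But the path from $\psi$ at $x_1$ to $\Id$ at $x_2$ lives on a single circle of $\wt\Si^{\wt\si}$ whose image in $\Si^{\si}$ is a wedge of two circles, and the resulting trivialization may lie in the wrong spin structure on one of those loops. The paper fixes this by composing with a further $\wt\psi'$ trivial at $x_1,x_2$ chosen to land in the prescribed spin class; this verification is missing from your proposal. Your uniqueness argument is on the right track and, modulo the same bookkeeping around the node in the extended homotopy lemma, matches the paper's Lemma~\ref{Exthomotopextend_lmm} and Corollary~\ref{RBPhomotExt_crl}.
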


\begin{lmm}\label{Exthomotopextend_lmm}
The conclusion of Lemma~\ref{homotopextend_lmm} holds for one-nodal symmetric surfaces. 
\end{lmm}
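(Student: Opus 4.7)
\textbf{Proof proposal for Lemma~\ref{Exthomotopextend_lmm}.}
Let $q\!:\wt\Si\!\lra\!\Si$ denote the normalization and $\wt f\!=\!f\!\circ\!q\!\in\!\cC(\wt\Si,\wt\si;\SL_n\C)$ the pulled-back map; then $\wt f(x_1)\!=\!\wt f(x_2)\!\equiv\!c$, and a homotopy $\wt f_t\!\in\!\cC(\wt\Si,\wt\si;\SL_n\C)$ descends to one in $\cC(\Si,\si;\SL_n\C)$ precisely when $\wt f_t(x_1)\!=\!\wt f_t(x_2)$ for every $t$. The plan is to produce such a descending homotopy from $\wt f$ to $\Id$. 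The identity $\wt f(x_2)\!=\!\ov{\wt f(x_1)}$ holds in type~(E) because $\wt\si$ swaps $x_1,x_2$, and $\wt f(x_i)\!\in\!\SL_n\R$ holds in type~(H) because each $x_i$ is $\wt\si$-fixed; in either case, together with $\wt f(x_1)\!=\!\wt f(x_2)$, this forces $c\!\in\!\SL_n\R$. Similarly, $\wt f|_{\wt\Si^{\wt\si}}$ is null-homotopic on each component, since it is the pullback of the null-homotopic map $f|_{\Si^\si}$ by the continuous surjection $\wt\Si^{\wt\si}\!\lra\!\Si^\si$.

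Next, I would arrange, through a descendable homotopy, that $\wt f\!\equiv\!\Id$ on a $\wt\si$-symmetric neighborhood $U$ of $\{x_1,x_2\}$. The key input is the connectedness of $\SL_n\R$, which provides a path $\ga\!:\![0,1]\!\lra\!\SL_n\R$ from $\Id$ to $c^{-1}$. Choose a $\wt\si$-symmetric neighborhood $V\!\supset\!U$ of $\{x_1,x_2\}$ consisting of two disks swapped by $\wt\si$ in type~(E) or two $\wt\si$-invariant disks in type~(H), and let $g\!:\!\wt\Si\!\lra\!\SL_n\C$ be the $\wt\si$-equivariant map equal to $c^{-1}$ on $U$, equal to $\Id$ outside $V$, and interpolating radially via $\ga$ on $V\!\setminus\!U$; symmetrizing under $\wt\si$ preserves equivariance precisely because $\ga$ takes values in the fixed locus $\SL_n\R$. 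Then $g\wt f$ is $\wt\si$-equivariant with $(g\wt f)(x_i)\!=\!\Id$, and a further $\wt\si$-equivariant contraction using local contractibility of $\SL_n\C$ near $\Id$ arranges $g\wt f\!\equiv\!\Id$ on all of~$U$. Interpolating $g$ from $\Id$ to its final value gives a homotopy from $\wt f$ to $\wt f'\!\equiv\!g\wt f$, and at both $x_1,x_2$ the values trace the common path $\ga(t)c$ in $\SL_n\R$, so the homotopy descends to~$\Si$.

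Finally, I would invoke the proof of Lemma~\ref{homotopextend_lmm} applied relative to the neighborhood $U$ on which $\wt f'\!\equiv\!\Id$. The normalization $\wt\Si$ is smooth and possibly disconnected in type~(H3); on each connected component I would choose an oriented sh-surface doubling, homotope on each boundary component to~$\Id$, extend through a collar, cut the sh-surface to a disk by embedded non-intersecting arcs chosen to avoid $U$, and use $\pi_1(\SL_n\C)\!=\!\pi_2(\SL_n\C)\!=\!0$ to contract on the arcs and on the residual disk exactly as in the smooth case. Throughout this construction, $\wt f'_t$ remains identically $\Id$ on $U$, so the matching condition $\wt f'_t(x_1)\!=\!\wt f'_t(x_2)$ is trivially maintained; concatenating with the homotopy from the previous paragraph and pushing down via $q$ yields the required homotopy from $f$ to $\Id$ in $\cC(\Si,\si;\SL_n\C)$.

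The main obstacle is the second step, where one must simultaneously respect $\wt\si$-equivariance and preserve the matching condition $\wt f_t(x_1)\!=\!\wt f_t(x_2)$ while modifying the map near the nodes. The connectedness of $\SL_n\R$ is exactly the ingredient that bridges these constraints: a real-valued path from $c$ to $\Id$ simultaneously governs the values at the two nodal preimages and the interpolating bump function, so that the symmetrization used on $V$ does not spoil either requirement. Once $\wt f\!\equiv\!\Id$ near the nodes, the remaining relative form of Lemma~\ref{homotopextend_lmm} is a straightforward adaptation of the smooth argument.
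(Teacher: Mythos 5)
You take a genuinely different route from the paper. You first use the connectedness of $\SL_n\R$ to make $\wt{f}$ equal to $\Id$ on a $\wt\si$-invariant neighborhood $U$ of $\{x_1,x_2\}$ through a descendable homotopy, and then propose to run the smooth argument of Lemma~\ref{homotopextend_lmm} on $\wt\Si$ relative to $U$. The paper instead keeps the order of the smooth argument and inserts case-by-case adjustments: in type~(E) it arranges that $x_1$ is an interior point of the final disk $D^2$, adjoins a path from $x_1$ to $\prt D^2$, and homotopes $\wt{f}$ to $\Id$ along that path keeping the value at $x_1$ real; in type~(H) it replaces the boundary null-homotopy by one satisfying the matching condition $\wt{f}_t(x_1)=\wt{f}_t(x_2)$ and observes that the subsequent steps preserve it.

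There is a gap in type~(H1), where $x_1$ and $x_2$ lie on the same circle $C$ of $\wt\Si^{\wt\si}$. Your $U$ meets $C$ in two disjoint arcs, and the claim that ``$\wt{f}'_t$ remains identically $\Id$ on $U$'' during the boundary-circle null-homotopy of $\wt{f}'|_C$ amounts to requiring that $\wt{f}'|_C$ be null-homotopic rel $U\cap C$; this holds if and only if the restrictions of $\wt{f}'$ to the two arcs of $C$ complementary to $U$ are \emph{each} null-homotopic rel endpoints in $\SL_n\R$. The input you invoke---that $\wt{f}|_{\wt\Si^{\wt\si}}$ is freely null-homotopic on each component---gives only that the sum of these two classes in $\pi_1(\SL_n\R)$ vanishes, and since $\pi_1(\SL_n\R)$ is nontrivial for $n\ge2$ this does not force each to vanish. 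The stronger statement is in fact available from the hypothesis of Lemma~\ref{homotopextend_lmm}: $f|_{\Si^\si}$ is null-homotopic, and in type~(H1) the singular component $\Si^\si_{12}$ of $\Si^\si$ is a wedge of two circles, so pulling back an actual null-homotopy of $f|_{\Si^\si_{12}}$ gives a null-homotopy of $\wt{f}|_C$ that satisfies the matching condition and hence splits $\wt{f}|_C$ into two trivial arc classes. As written, though, you only cite the weaker fact, so the (H1) case is not justified. The remaining cases do go through: in~(E) the nodal preimages are interior to $\Si^b$ so $U$ is disjoint from $\prt\Si^b$, and in~(H2)/(H3) each fixed circle meets $U$ in a single arc, where a free null-homotopy of a loop into the connected topological group $\SL_n\R$ that is constant on an arc always refines to a null-homotopy rel that arc.
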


\begin{proof}
Let $\wt{f}\!\in\!\cC(\wt\Si,\wt\si;\SL_n\C)$ be the function corresponding 
to $f\!\in\!\cC(\Si,\si;\SL_n\C)$. In particular, $\wt{f}(x_1)\!=\!\wt{f}(x_2)$.\\

\noindent
Suppose $(\Si,\si)$ is of type~(E). 
Proceeding as in the proof of Lemma~\ref{homotopextend_lmm}, 
choose~$\Si^b$ and~$U$ so that $x_1\!\in\!\Si^b\!-\!U$,
the cutting paths~$C_i$ so that $x_1\!\not\in\!C_i$,
and the extensions of the homotopies of~$\wt{f}$ from~$C_i$ to~$\Si^b$ 
so that they do not change~$\wt{f}$ at~$x_1$.
Choose an embedded path~$\ga$ in the disk~$D^2$ in the last paragraph of
the proof of Lemma~\ref{homotopextend_lmm} from~$x_1$ to~$\prt D^2$.
Since $\wt{f}(x_1)\!\in\!\SL_n\R$ in this case, 
we can homotope~$\wt{f}$ to~$\Id$ over~$\ga$ while keeping
the values of~$\wt{f}$ at~$x_1$
 and at the other endpoint in~$\SL_n\R$
and at~$\Id$, respectively.
Similarly to the second paragraph in the proof of Lemma~\ref{homotopextend_lmm}, 
this homotopy extends over~$D^2$ without changing~$\wt{f}$ over~$\prt D^2$ 
and thus descends to~$\Si^b$.
We then cut~$D^2$ along~$\ga$ into another disk and proceed as in
the second half of the last paragraph in the proof of Lemma~\ref{homotopextend_lmm}.
The doubled homotopy then satisfies $\wt{f}_t(x_1)\!=\!\wt{f}_t(x_2)$ and so descends to~$\Si$.\\

\noindent
If $(\Si,\si)$ is of type~(H), then 
$$\wt{f}\!:\bigcup_{|c_i|=0}(\prt\Si^b)_i\lra\SL_n\R$$
is homotopic to $\Id$ through maps~$\wt{f}_t$ such that $\wt{f}_t(x_1)\!=\!\wt{f}_t(x_2)$.
The remainder of the proof of Lemma~\ref{homotopextend_lmm} preserves this condition 
on the homotopy.
\end{proof}

\begin{crl}\label{RBPhomotExt_crl}
The conclusion of Corollary~\ref{RBPhomot_crl} holds for one-nodal symmetric surfaces. 
\end{crl}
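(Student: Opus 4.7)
The plan is to run the proof of Corollary~\ref{RBPhomot_crl} essentially verbatim, replacing Lemma~\ref{homotopextend_lmm} by its nodal analogue Lemma~\ref{Exthomotopextend_lmm} at the one place it is invoked. Every other ingredient of that argument is either purely algebraic or a pointwise gauge transformation, both of which automatically descend from the normalization $(\wt\Si,\wt\si)$ to the nodal $(\Si,\si)$: a continuous function on~$\Si$ is precisely a function on~$\wt\Si$ whose values at $x_1$ and $x_2$ agree, and this matching condition is preserved by pointwise operations in~$\C^*$, $\SL_n\C$, and $\GL_n\C$.

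Concretely, I would first form $f\in\cC(\Si,\si;\C^*)$ with $\La_{\C}^{\top}\Phi=f\,\La_{\C}^{\top}\Psi$, which is well-defined pointwise on~$\Si$. The hypothesis that the two $\La_{\C}^{\top}$-isomorphisms are homotopic over the nodal~$(\Si,\si)$ directly yields a path $f_t\in\cC(\Si,\si;\C^*)$ from $1$ to~$f$; no descent argument is needed, since the homotopy already lives over~$\Si$. The gauge transformation $\Psi_{f_t}$ of~\eref{Psifdfn_e}, being built pointwise from~$f_t$, descends to $(\Si,\si)$ for every~$t$ and provides a homotopy from~$\Psi$ to $\Psi_f\!:=\!\Psi_{f_1}$ satisfying $\La_{\C}^{\top}\Psi_f=\La_{\C}^{\top}\Phi$.

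Writing $\Psi_f=(\id\!\times\!F)\circ\Phi$ with $F\in\cC(\Si,\si;\GL_n\C)$, the equality of top exterior powers forces~$F$ to land in~$\SL_n\C$. The restriction $f_t|_{\Si^\si}$ is continuous, real-valued, and nowhere zero on~$\Si^\si$ with $f_0\!\equiv\!1$, so it stays in~$\R^+$; hence $\Psi_{f_t}|_{V^\vph}$ is a valid homotopy of real-part trivializations, which combined with the hypothesis that $\Phi|_{V^\vph}$ is homotopic to $\Psi|_{V^\vph}$ implies that $F|_{\Si^\si_i}$ is null-homotopic in~$\SL_n\R$ on each fixed component~$\Si^\si_i$.

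At this point Lemma~\ref{Exthomotopextend_lmm} supplies a homotopy $F_t\in\cC(\Si,\si;\SL_n\C)$ from~$F$ to~$\Id$. Composing~$\Phi$ with the associated pointwise gauge transformations and concatenating with the earlier homotopy from~$\Psi_f$ back to~$\Psi$ yields the required homotopy from~$\Phi$ to~$\Psi$. The only substantive new input beyond the smooth case is Lemma~\ref{Exthomotopextend_lmm}, which is precisely where the topological subtlety of the nodal setting was already isolated and resolved; the present corollary is therefore essentially a matter of verifying that each construction in the smooth proof respects the node identification, which it does for pointwise reasons.
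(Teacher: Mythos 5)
Your proposal is correct and takes exactly the paper's approach: the paper's own proof simply observes that the first paragraph of the proof of Corollary~\ref{RBPhomot_crl} applies verbatim and the second paragraph applies with Lemma~\ref{homotopextend_lmm} replaced by Lemma~\ref{Exthomotopextend_lmm}. You have spelled out the details of why each step descends to the nodal surface, but the substance is identical.
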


\begin{proof}
The first paragraph of the proof of Corollary~\ref{RBPhomot_crl} applies without any changes.
The second paragraph applies with Lemma~\ref{homotopextend_lmm} replaced by Lemma~\ref{Exthomotopextend_lmm}.
\end{proof}

\begin{lmm}\label{LinAlg_lmm}
Let $(V,\fI)$ be a finite-dimensional complex vector space and $A,B\!:V\!\lra\!V$
be \hbox{$\C$-antilinear} isomorphisms such that $A^2,B^2\!=\!\Id_V$.
Then there exists a $\C$-linear isomorphism $\psi\!:V\!\lra\!V$ such that 
$\psi\!=\!A\!\circ\!\psi\!\circ\!B$.
If 
\BE{LinAlg_e}\big\{\La_{\C}^{\top}A\big\}\!\circ\!\big\{\La_{\C}^{\top}B\big\}
=\big\{\La_{\C}^{\top}B\big\}\!\circ\!\big\{\La_{\C}^{\top}A\big\}\!:
\La_{\C}^{\top}V\lra \La_{\C}^{\top}V,\EE
then $\psi$ can be chosen so that $\La_{\C}^{\top}\psi\!=\!\Id$.
\end{lmm}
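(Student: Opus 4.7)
The plan is to reduce both parts to the fact that every $\C$-antilinear involution on a finite-dimensional complex vector space is $\C$-linearly conjugate to standard complex conjugation on~$\C^n$. In particular, the fixed sets $V^A := \ker(A - \Id_V)$ and $V^B := \ker(B - \Id_V)$ are totally real $\R$-subspaces with $V = V^A \oplus \fI V^A = V^B \oplus \fI V^B$ and $\dim_\R V^A = \dim_\R V^B = n := \dim_\C V$.

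For the first statement, I would pick any $\R$-linear isomorphism $\psi_0 \colon V^B \to V^A$ and extend it $\C$-linearly to a $\C$-linear isomorphism $\psi \colon V \to V$. The identity $\psi = A \circ \psi \circ B$ holds on $V^B$ because $B|_{V^B} = \Id_{V^B}$ and $A|_{V^A} = \Id_{V^A}$, and it propagates to all of $V$ by the $\C$-linearity of $\psi$ and the $\C$-antilinearity of $A$ and $B$. This already suffices for the first claim.

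For the refinement, choose $\R$-bases $\{e_i\}$ of $V^A$ and $\{f_i\}$ of $V^B$ and form the top wedges $\omega_A := e_1 \wedge \cdots \wedge e_n$ and $\omega_B := f_1 \wedge \cdots \wedge f_n$ inside the complex line $\La_\C^\top V$; $\omega_A$ generates the fixed real line of $\La_\C^\top A$ and $\omega_B$ that of $\La_\C^\top B$. Writing $\omega_B = \mu\, \omega_A$ with $\mu \in \C^*$, a direct computation on the line shows that the hypothesis \eref{LinAlg_e}, together with $(\La_\C^\top A)^2 = (\La_\C^\top B)^2 = \Id$, forces $\mu^2 \in \R$. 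Assuming the sharper conclusion $\mu \in \R$, one gets $\La_\C^\top \psi = (\det_\R \psi_0)/\mu \in \R$; a positive real rescaling $\psi_0 \mapsto t\,\psi_0$ (which rescales $\La_\C^\top \psi$ by $t^n$) composed, if necessary, with a reflection of $V^B$ flipping the sign of $\det_\R \psi_0$ then reduces $\La_\C^\top \psi$ to $\Id$.

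The main obstacle is the passage from $\mu^2 \in \R$ to $\mu \in \R$, i.e., ruling out $\mu \in \fI\R^*$; the top-exterior commutation alone leaves both cases on the table, so the argument must exploit that $A$ and $B$ are antilinear involutions on the full space $V$ and not merely on its top exterior. I would approach this by studying the $\C$-linear automorphism $A \circ B \in \GL_\C(V)$: its induced action on $\La_\C^\top V$ is multiplication by $\bar\mu/\mu \in \{\pm 1\}$, and matching this sign against the relative position of the real forms $V^A$ and $V^B$ inside $V$ (which is constrained by the fact that both are real forms for involutions, not for arbitrary antilinear isomorphisms) should eliminate the negative alternative and yield $\mu \in \R$.
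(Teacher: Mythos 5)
Your argument for the first statement is correct and essentially the same as the paper's: both amount to sending a real basis of $V^B$ to a real basis of $V^A$ and extending $\C$-linearly.

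For the second statement, you have correctly identified a genuine gap, and it cannot be closed: under the hypothesis \eref{LinAlg_e} as literally stated, the conclusion $\La_{\C}^{\top}\psi=\Id$ is simply false. Take $V=\C$, $A(z)=\bar z$, $B(z)=-\bar z$. Both are $\C$-antilinear involutions, and $\La_{\C}^{\top}A\circ\La_{\C}^{\top}B = \La_{\C}^{\top}B\circ\La_{\C}^{\top}A = -\Id$, so~\eref{LinAlg_e} holds; but every $\C$-linear $\psi(z)=cz$ with $\psi=A\circ\psi\circ B$ has $c=-\bar c$, i.e.~$c\in\fI\R^*$, so $\La_{\C}^{\top}\psi$ can never be~$\Id$. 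In particular, $V^A=\R$ and $V^B=\fI\R$ are both perfectly good real forms of antilinear involutions, so your hope that the relative position of $V^A$ and $V^B$ is constrained enough to rule out $\mu\in\fI\R^*$ is unfounded. The paper's own proof has the same difficulty: after invoking~\eref{LinAlg_e} it asserts that the $\pm 1$-eigenspaces of $\La_{\C}^{\top}A$ and $\La_{\C}^{\top}B$ coincide, but two commuting antilinear involutions on a complex line may instead have \emph{swapped} real lines, which is precisely the $\mu\in\fI\R^*$ case. The hypothesis actually available in the one place the lemma is invoked (the proof of Proposition~\ref{canonisomExt_prp}, which notes that ``both compositions are the identity'') is the stronger $\La_{\C}^{\top}A\circ\La_{\C}^{\top}B=\Id$, equivalently $\La_{\C}^{\top}A=\La_{\C}^{\top}B$. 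With~\eref{LinAlg_e} strengthened to this, the two real lines coincide, $\mu\in\R$, and your rescaling step completes the argument.
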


\begin{proof}
Since $A^2,B^2\!=\!\Id_V$, the isomorphisms $A,B$ are diagonalizable with all eigenvalues $\pm1$.
Since $A,B$ are $\C$-antilinear, we can choose $\C$-bases $\{v_i\}$ and $\{w_i\}$ for~$V$
such~that 
$$A(v_i)=v_i, \quad A(\fI v_i)=-\fI v_i, \qquad B(w_i)=w_i, \quad B(\fI w_i)=-\fI w_i.$$
The $\C$-linear isomorphism $\psi\!:V\!\lra\!V$ defined by $\psi(w_i)\!=\!v_i$
then has the first desired property.\\

\noindent
The automorphisms $\La_{\C}^{\top}A$ and $\La_{\C}^{\top}B$ are $\C$-antilinear and
have one eigenvalue of $+1$ and one of~$-1$.
If~\eref{LinAlg_e} holds, the eigenspaces of $\La_{\C}^{\top}A$ and $\La_{\C}^{\top}B$ 
are the same and~so
$$v_1\!\w_{\C}\!\ldots\!\w_{\C}\!v_n= r\cdot w_1\!\w_{\C}\!\ldots\!\w_{\C}\!w_n
\in \La_{\C}^{\top}V$$
for some $r\!\in\!\R^*$.
Replacing $w_1$ by $rw_1$ in the previous paragraph, we obtain an isomorphism~$\psi$
that also satisfies the second property.
\end{proof}

\begin{proof}[{\bf\emph{Proof of Proposition~\ref{canonisomExt_prp}}}]
Let $\wt{V},\wt{L}\!\lra\!\wt\Si$ be complex vector bundles and
$$\psi_1\!:\wt{V}\big|_{x_1}\lra \wt{V}\big|_{x_2}
\qquad\hbox{and}\qquad 
\psi_2\!:\wt{L}\big|_{x_1}\lra \wt{L}\big|_{x_2}$$
be isomorphisms of complex vector spaces such~that 
$$V=\wt{V}\big/\!\!\sim, ~~ v\!\sim\!\psi_1(v)~\forall\,v\!\in\!\wt{V}\big|_{x_1},
\qquad\hbox{and}\qquad 
L=\wt{L}\big/\!\!\sim, ~~ v\!\sim\!\psi_2(v)~\forall\,v\!\in\!\wt{L}\big|_{x_1}.$$
Denote by $\wt\vph_1$ and~$\wt\vph_2$ the lift of $\vph$ to~$\wt{V}$ and 
the lift of $\wt\phi$ to~$\wt{L}$, respectively.
Define
$$\big(\wt{W},\wt\vph_{12}\big)=\big(\wt{V}\!\oplus\!2\wt{L}^*,\wt\vph_1\!\oplus\!2\wt\vph_2^*\big),
\qquad 
\psi_{12}=\psi_1\oplus2(\psi_2^{-1})^*\!: \wt{W}\big|_{x_1}\lra \wt{W}\big|_{x_2}\,.$$
Thus,  $(\wt{V},\wt\vph_1)$ and $(\wt{L},\wt\vph_2)$ are real bundle pairs over $(\wt\Si,\wt\si)$
that descend to the real bundle pairs $(V,\vph)$ and $(L,\wt\phi)$ over~$(\Si,\si)$.
Furthermore,
\BE{canIsomExt_e3}
\psi_{12}(v)=\begin{cases} \wt\vph_{12}(\psi_{12}^{-1}(\wt\vph_{12}(v))),
&\hbox{if}~(\Si,\si)~\hbox{is of type~(E)};\\
\wt\vph_{12}(\psi_{12}(\wt\vph_{12}(v))), &\hbox{if}~(\Si,\si)~\hbox{is of type~(H)};
\end{cases}\EE
for all $v\!\in\!\wt{W}\big|_{x_1}$.\\

\noindent
For any $f\!\in\!\cC(\wt\Si,\wt\si;\GL_{n+2}\C)$, let
$$\wt\Psi_f\!:\big(\wt\Si\!\times\!\C^{n+2},\wt\si\!\times\!\fc\big)\lra
\big(\wt\Si\!\times\!\C^{n+2},\wt\si\!\times\!\fc\big), \qquad
\wt\Psi_f(z,v)=\big(z,f(z)v\big).$$
The choices~\ref{isom_it2} and~\ref{spin_it2} in Definition~\ref{realorient_dfn4}
for~$(\Si,\si)$  lift to~$(\wt\Si,\wt\si)$. 
By Proposition~\ref{canonisom_prp}, there thus exists an isomorphism 
$$\wt\Phi\!:(\wt{W},\wt\vph_{12})\lra 
\big(\wt\Si\!\times\!\C^{n+2},\wt\si\!\times\!\fc\big)$$
of real bundle pairs over $(\wt\Si,\wt\si)$ that lies in the homotopy class
determined by the lifted real orientation.
By the proof of Proposition~\ref{canonisom_prp}, $\wt\Phi$ can be chosen so that 
it induces the isomorphism in~\eref{realorient2_e2b} over~$(\wt\Si,\wt\si)$
determined by the lift of a given isomorphism in~\eref{realorient_e4} over~$(\Si,\si)$.
This implies~that 
\BE{canIsomExt_e2a}
\big\{\wt\si'\!\times\!\Id\big\}\!\circ\!\big\{\La_{\C}^{\top}\wt\Phi\big\}=
\big\{\La_{\C}^{\top}\wt\Phi\big\}\!\circ\!\big\{\La_{\C}^{\top}\psi_{12}\big\}\!:
\La_{\C}^{\top}\wt{W}|_{x_1}\lra 
\{x_2\}\!\times\!\La_{\C}^{\top}\C^{n+2}\!=\!\{x_2\}\!\times\!\C.\EE
We show below that there exists $f\!\in\!\cC(\Si,\si;\SL_{n+2}\C)$ so that 
\BE{canIsomExt_e2b} \big\{\wt\si'\!\times\!\Id\big\}\!\circ\!\wt\Psi_f\!\circ\!\wt\Phi=
\wt\Psi_f\!\circ\!\wt\Phi\!\circ\!\psi_{12}\!:
\wt{W}|_{x_1}\lra \{x_2\}\!\times\!\C^{n+2}\,.\EE
Thus, $\wt\Psi_f\!\circ\!\wt\Phi$ descends 
to an isomorphism~$\Psi$ in~\eref{realorient2_e2} of real bundle pairs over~$(\Si,\si)$
that induces the isomorphism in~\eref{realorient2_e2b} determined by 
a given isomorphism in~\eref{realorient_e4}.
Furthermore, $f$ can be chosen so that~$\Psi$ satisfies the spin structure
requirement of Proposition~\ref{canonisom_prp}.
By Corollary~\ref{RBPhomotExt_crl}, any two isomorphisms~\eref{realorient2_e2}
satisfying the conditions at the end of Proposition~\ref{canonisom_prp}
are homotopic.\\

\noindent
Suppose $(\Si,\si)$ is of type~(E). 
By~\eref{canIsomExt_e3}, the $\C$-antilinear isomorphisms
$$\id\!\times\!\fc,\big\{\wt\si\!\times\!\fc\big\}
\!\circ\!\wt\Phi\!\circ\!\psi_{12}\!\circ\!\wt\Phi^{-1}
\!=\!\wt\Phi\!\circ\!\wt\vph_{12}\!\circ\!\psi_{12}\!\circ\!\wt\Phi^{-1}\!: 
\{x_1\}\!\times\!\C^{n+2} \lra \{x_1\}\!\times\!\C^{n+2}$$
square to the identity.
By~\eref{canIsomExt_e2a}, the top exterior powers of these automorphisms commute 
(both compositions are the identity).
By Lemma~\ref{LinAlg_lmm}, there thus exists $\psi\!\in\!\SL_{n+2}\C$ such~that 
\BE{canIsomExt_e4a} 
\id\!\times\!\psi=\big\{\wt\si\!\times\!\fc\psi\fc\big\}
\!\circ\!\wt\Phi\!\circ\!\psi_{12}\!\circ\!\wt\Phi^{-1}\!:
\{x_1\}\!\times\!\C^{n+2}\lra \{x_1\}\!\times\!\C^{n+2}.\EE
Since $\SL_{n+2}\C$ is connected, there exist $f\!\in\!\cC(\wt\Si,\wt\si;\SL_{n+2}\C)$ 
and a neighborhood $U$ of $x_1$ in~$\wt\Si$ such~that 
\BE{canIsomExt_e4b} 
f(z)=
\begin{cases}\psi,&\hbox{if}~z\!=\!x_1;\\
\Id,&\hbox{if}~z\!\not\in\!U\!\cup\!\wt\si(U);
\end{cases}
\qquad U\!\cap\!\wt\si(U)=\eset.\EE
By~\eref{canIsomExt_e4a} and~\eref{canIsomExt_e4b}, $f$ satisfies~\eref{canIsomExt_e2b}.
Since $f$ restricts to the identity over~$\wt\Si^{\wt\si}$, $\wt\Psi_f\!\circ\!\wt\Phi$ 
induces the same orientation and spin structure over $\wt\Si^{\wt\si}\!-\!\{x_{12}\}$
as~$\wt\Phi$.
The orientation and spin conditions are automatically satisfied over~$x_{12}$,
since they are determined by the real part of the isomorphism~\eref{realorient2_e2b}.\\

\noindent
If $(\Si,\si)$ is of type~(H), define $\psi\!\in\!\GL_{n+2}\C$ by
\BE{canIsomExt_e7}
\id\!\times\!\psi=\big\{\wt\si'\!\times\!\Id\big\}
\!\circ\!\wt\Phi\!\circ\!\psi_{12}\!\circ\!\wt\Phi^{-1}\!:
\{x_1\}\!\times\!\C^{n+2}\lra \{x_1\}\!\times\!\C^{n+2}\,.\EE
By~\eref{canIsomExt_e3} and~\eref{canIsomExt_e2a}, 
$\psi\!\circ\!\fc=\!\fc\!\circ\!\psi$
and $\det_{\C}\!\psi\!=\!1$, i.e.~$\psi\!\in\!\SL_{n+2}\R$.
If $(\Si,\si)$ is of type~(H2) or~(H3), i.e.~$x_1$ and $x_2$ lie on different topological components
$\wt\Si^{\wt\si}_1,\wt\Si^{\wt\si}_2$ of~$\wt\Si^{\wt\si}$,
let 
\BE{canIsomExt_e8a} \wt\psi\!:\wt\Si^{\wt\si}_1 \lra\SL_{n+2}\R\EE 
be the constant function with value~$\psi$.
If $(\Si,\si)$ is of type~(H1), i.e.~$x_1$ and $x_2$ lie on the same topological component
$\wt\Si_1^{\wt\si}$ of~$\wt\Si^{\wt\si}$, first choose~\eref{canIsomExt_e8a}
so that $\wt\psi(x_1)\!=\!\psi$ and~$\wt\psi(x_2)\!=\!\Id$.
Since $f\!=\!\wt\psi$ satisfies~\eref{canIsomExt_e2b},  $\wt\Psi_f\!\circ\!\wt\Phi$ 
induces a trivialization of $V^{\vph}\!\oplus\!2(L^*)^{\wt\phi^*}$ over the image~$\Si^{\si}_1$
of $\wt\Si^{\wt\si}_1$ in~$\Si$.
This is also the case if~$\wt\psi$ is replaced by $\wt\psi'\wt\psi$ for any~$\wt\psi'$
as in~\eref{canIsomExt_e8a} such that $\wt\psi'(x_1),\wt\psi'(x_2)\!=\!\Id$.
Choose such $\wt\psi'$ so that the induced trivialization on each of the two loops 
in~$\Si^{\si}_1$ lies in the chosen spin structure; we then replace $\wt\psi$ 
with $\wt\psi'\wt\psi$.
Returning to the general~$(H)$ case, choose $f\!\in\!\cC(\wt\Si,\wt\si;\SL_{n+2}\C)$ 
and a neighborhood~$U$ of $\wt\Si^{\wt\si}_1$ in~$\wt\Si$ such~that 
\BE{canIsomExt_e8} 
f(z)=\begin{cases}\wt\psi,&\hbox{if}~z\!\in\!\wt\Si^{\wt\si}_1;\\
\Id,&\hbox{if}~z\!\not\in\!U;
\end{cases}
\qquad U\!\cap\!\big(\wt\Si^{\wt\si}\!-\!\wt\Si_1^{\wt\si}\big)=\eset;\EE
this is possible by Lemma~\ref{lmm_repar}.
By~\eref{canIsomExt_e7} and~\eref{canIsomExt_e8}, $\wt\Psi$ satisfies~\eref{canIsomExt_e2b}.
Since $f$ restricts to the identity over~$\wt\Si^{\wt\si}\!-\!\wt\Si^{\wt\si}_1$, 
$\wt\Psi_f\!\circ\!\wt\Phi$ 
induces the same orientation and spin structure over $\wt\Si^{\wt\si}\!-\!\wt\Si^{\wt\si}_1$
as~$\wt\Phi$.
If $(\Si,\si)$ is of type~(H2) or~(H3), the latter is also the case over~$\wt\Si^{\wt\si}_1$
because $f$ is constant over~$\wt\Si^{\wt\si}_1$.
If $(\Si,\si)$ is of type~(H1),  
the orientation and spin structure structure induced by $\wt\Psi_f\!\circ\!\wt\Phi$ 
over~$\Si^{\si}_1$ are those of the original real orientation by the choice of~$\wt\psi$ above.
\end{proof}

\begin{crl}\label{canonisomExt_crl2a}
The first conclusion of Corollary~\ref{canonisom_crl2a} holds for one-nodal symmetric surfaces.
\end{crl}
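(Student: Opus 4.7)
The plan is to mirror the proof of Corollary~\ref{canonisom_crl2a}, substituting Proposition~\ref{canonisomExt_prp} for Proposition~\ref{canonisom_prp}. The key observation is that all of the formal ingredients used in the smooth case — the determinant line of a real CR-operator, the isomorphism~\eref{sum} induced by a short exact sequence of real CR-operators, and the fact that the square of a real line is canonically oriented — continue to make sense for real CR-operators on the one-nodal symmetric surface $(\Si,\si)$. Concretely, a real CR-operator on a real bundle pair $(V,\vph)$ over $(\Si,\si)$ can be defined by pulling back to the normalization $(\wt\Si,\wt\si)$ and imposing the matching condition at $\{x_1,x_2\}$ cut out by the identification determining $V$; such an operator is Fredholm in the appropriate completions and its determinant line is well-defined.

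Granting this, the argument proceeds in three steps. First, the short exact sequence of real bundle pairs
\BE{ExtStep_e1}
0\lra (V,\vph)\lra \big(V\!\oplus\!2L^*,\vph\!\oplus\!2\wt\phi^*\big)
\lra 2(L^*,\wt\phi^*)\lra 0
\EE
over $(\Si,\si)$ yields, via~\eref{sum}, a canonical isomorphism
\[
\det D_{(V\oplus 2L^*,\vph\oplus 2\wt\phi^*)}
\approx \big(\!\det D_{(V,\vph)}\big)\otimes\big(\!\det D_{(L^*,\wt\phi^*)}\big)^{\otimes2},
\]
so that the real line
\BE{ExtStep_e2}
\big(\!\det D_{(V,\vph)}\big)\otimes
\big(\!\det D_{(V\oplus 2L^*,\vph\oplus 2\wt\phi^*)}\big)
\EE
is canonically oriented as before, because the square $(\det D_{(L^*,\wt\phi^*)})^{\otimes 2}$ is.

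Second, by Proposition~\ref{canonisomExt_prp}, the chosen real orientation on $(V,\vph)$ selects a homotopy class of isomorphisms $\Psi$ as in~\eref{realorient2_e2} over~$(\Si,\si)$. Any such $\Psi$ identifies the family of real CR-operators on $(V\!\oplus\!2L^*,\vph\!\oplus\!2\wt\phi^*)$ with a family of real CR-operators on the trivial real bundle pair $(\Si\!\times\!\C^{n+2},\si\!\times\!\fc)$; since the space of real CR-operators on a fixed real bundle pair is affine and hence contractible, this induces a well-defined homotopy class of isomorphisms
\[
\det D_{(V\oplus 2L^*,\vph\oplus 2\wt\phi^*)}
\approx \det \dbar_{(\Si\times\C^{n+2},\si\times\fc)}
= \big(\!\det\dbar_{\Si;\C}\big)^{\otimes(n+2)},
\]
and thus an orientation on $(\det D_{(V\oplus 2L^*,\vph\oplus 2\wt\phi^*)})\otimes(\det\dbar_{\Si;\C})^{\otimes(n+2)}$.

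Combining this orientation with the canonical orientation of~\eref{ExtStep_e2}, tensoring with the canonical orientation on $(\det\dbar_{\Si;\C})^{\otimes 2}$, and cancelling out the squared factors, one obtains an orientation on $(\det D_{(V,\vph)})\otimes(\det\dbar_{\Si;\C})^{\otimes n}=\rdet\,D$. The main obstacle is the analytic verification behind the first step: that on a one-nodal symmetric surface the determinant line of a real CR-operator is well-defined, depends continuously on the zeroth-order term, and satisfies~\eref{sum} for the sequence~\eref{ExtStep_e1}. This is standard once one passes to the normalization and tracks the finite-dimensional matching conditions at $\{x_1,x_2\}$, but it is the only part of the argument that genuinely uses the nodal structure rather than the formal consequences of Proposition~\ref{canonisomExt_prp}.
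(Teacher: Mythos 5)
Your argument is correct and arrives at the same orientation as the paper, but it is organized differently. You run the short-exact-sequence argument and the cancellation of squared factors of Corollary~\ref{canonisom_crl2a} directly over the nodal surface $(\Si,\si)$, replacing Proposition~\ref{canonisom_prp} with Proposition~\ref{canonisomExt_prp}, and you defer to a remark the verification that the determinant line of a real CR-operator and the isomorphism~\eref{sum} make sense over a nodal domain. The paper instead front-loads the reduction to the normalization: it records at the outset that an orientation on $\det D_{(V,\vph)}$ over $(\Si,\si)$ is equivalent to an orientation on the determinant line of the corresponding operator over the smooth $(\wt\Si,\wt\si)$ together with an orientation on the finite-dimensional matching space $V_{x_{12}}^{\vph}$; it then notes that Proposition~\ref{canonisomExt_prp} simultaneously supplies the homotopy class of $\Psi$ over $(\Si,\si)$ (which lifts compatibly to $(\wt\Si,\wt\si)$) and the orientation of $V_{x_{12}}^{\vph}$, and finally invokes the \emph{proof} of Corollary~\ref{canonisom_crl2a} only over the smooth normalization. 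This lets the paper reuse the smooth result essentially as a black box and avoids re-establishing the determinant-line formalism over nodal surfaces — precisely the step you flag as the ``main obstacle.'' Your route is slightly more self-contained at the nodal level but requires spelling out that formalism (which does reduce, as you say, to the normalization plus the node exact sequence); the paper's route is shorter because it isolates the single finite-dimensional exact sequence at the node and lets the smooth case do the rest of the work. Both hinge on the same topological input, Proposition~\ref{canonisomExt_prp}.
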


\begin{proof}
An orientation on the determinant line of a real CR-operator
on a real bundle pair~$(V,\vph)$ over a  one-nodal symmetric surface~$(\Si,\si)$
is determined~by 
\begin{enumerate}[label=(\arabic*),leftmargin=*]
\item an orientation on the determinant line of a real CR-operator 
on the corresponding real bundle pair~$(\wt{V},\wt\vph)$ over $(\wt\Si,\wt\si)$ 
as in the proof of Proposition~\ref{canonisomExt_prp}, and
\item\label{match_it} an orientation on the real vector space $V_{x_{12}}^{\vph}$.
\end{enumerate}
An isomorphism of real bundle pairs over~$(\Si,\si)$ as in~\eref{realorient2_e2}
lifts to a similar isomorphism over~$(\wt\Si,\wt\si)$ which respects 
all identifications on the lifted bundles.
A real orientation on~$(V,\vph)$ determines~\ref{match_it} and 
an isomorphism of real bundle pairs over~$(\Si,\si)$ as in~\eref{realorient2_e2};
see Proposition~\ref{canonisomExt_prp}.
Thus, the claim follows from
the proof of the first conclusion of Corollary~\ref{canonisom_crl2a}.
\end{proof}

\subsection{Smoothings of one-nodal symmetric surfaces}
\label{SymmSurfDegen_subs}

\noindent
Let $\cC\!\equiv\!(\Si,\si,(z_1^+,z_1^-),\ldots,(z_l^+,z_l^-))$ be 
a one-nodal marked symmetric Riemann surface and 
$$(\pi\!:\cU\!\lra\!\De,\wt\fc\!:\cU\!\lra\!\cU,s_1\!:\De\!\lra\!\cU,\ldots,s_l\!:\De\!\lra\!\cU)$$
be a flat family of deformations of~$\cC$ as in Section~\ref{GrConv_subs}
with $\De\!\subset\!\C$.
Define
$$\De^*=\De\!-\!\{0\}, \quad \De_{\R}=\De\!\cap\!\R, \quad 
\De_{\R}^*=\De^*\!\cap\!\R, \quad 
\De_{\R}^{\pm}=\De\!\cap\!\R^{\pm}\,.$$
Denote by $x_{12}\!\in\!\Si$ the node of~$\Si$,
by $\wt\Si\!\lra\!\Si$ its normalization, and by 
\hbox{$\Si^*\!\equiv\!\Si\!-\!\{x_{12}\}$} its smooth locus.\\

\noindent
A neighborhood of $x_{12}$ in~$\cU$ is isomorphic to
$$\cU_0\equiv\big\{(t,z_1,z_2)\!\in\!\De\!\times\!\C^2\!:\,|z_1|,|z_2|\!<\!1,~z_1z_2\!=\!t\big\}.$$
As fibrations over~$\De$,
\BE{cUdfn_e}\cU\approx\big(\cU_0 \sqcup \cU'\big)\big/\sim, \qquad 
(t,z_1,z_2)\sim\begin{cases}(t,z_1),&\hbox{if}~|z_1|\!>\!|z_2|;\\
(t,z_2),&\hbox{if}~|z_1|\!<\!|z_2|;\end{cases}\EE
for some family~$\cU'$ of deformations of~$\Si^*$ over~$\De$,
a choice of coordinates $z_i$ on~$\wt\Si$ centered at~$x_i$,
and their extensions to~$\cU$.
The local coordinates $z_1,z_2$ and the family~$\cU'$ in~\eref{cUdfn_e} 
can be chosen so that~$\cU'$ is preserved by~$\wt\fc$ and 
the identification in~\eref{cUdfn_e} intertwines 
the involution
\BE{cU0inv_e}\cU_0\lra\cU_0, \qquad (t,z_1,z_2)\lra\big(\ov{t},\ov{z_2},\ov{z_1}\big)
~~\hbox{or}~~ (t,z_1,z_2)\lra\big(\ov{t},\ov{z_1},\ov{z_2}\big),\EE
depending on whether $(\Si,x_{12},\si)$ is of type~(E) or~(H),
with the involution~$\wt\fc$ on~$\cU$.
In particular, $\cU$ retracts onto~$\Si_0$ respecting the involution~$\wt\fc$.\\

\noindent
Suppose $\pi\!:\cU\!\lra\!\De$ and $\wt\fc$ are as above, $(V,\vph)\!\lra\!(\cU,\wt\fc)$
is a real bundle pair, and $\na$ and~$A$ are a connection and a 0-th order deformation
term on~$(V,\vph)$ as in Section~\ref{DetLB_subs}.
The restriction of $\na$ and~$A$ to $(V,\vph)|_{(\Si_t,\si_t)}$ with 
$t\!\in\!\De_{\R}$ determines a real CR-operator~$D_t$.
The determinant lines of these operators form a line bundle
\BE{detExt_e}\det D_{(V,\vph)}\lra\De_{\R}\,;\EE
see Section~\ref{DetLB_subs} and Appendix~\ref{detLB_app}.
We denote by $\det\dbar_{\C}\!\lra\!\De_{\R}$ the determinant line bundle 
associated with the standard holomorphic structure on $(\cU\!\times\!\C,\wt\fc\!\times\!\fc)$.

\begin{crl}\label{canonisomExt2_crl2a}
Let $(\pi,\wt\fc)$, $(V,\vph)$, and $(\na,A)$ be as above.
Then a real orientation on $(V,\vph)$ as in Definition~\ref{realorient_dfn4}
induces an orientation on the line bundle
\BE{canonisomExt2_e}
\wh\det\, D_{(V,\vph)}\equiv
\big(\!\det D_{(V,\vph)}\big) \otimes (\det\dbar_{\C})^{\otimes n}\lra \De_{\R} ,\EE
where $n\!=\!\rk_{\C}V$.
The restriction of this orientation to the fiber over each $t\!\in\!\De_{\R}^*$
is the orientation on~$\wh\det\,D_t$
induced by the restriction of the real orientation to 
$(V,\vph)|_{(\Si_t,\si_t)}$ as in Corollary~\ref{canonisom_crl2a}.
\end{crl}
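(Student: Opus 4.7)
The plan is to build, from the given real orientation, a single trivializing isomorphism of real bundle pairs over the entire family $(\cU,\wt\fc)$ and then run the determinant-line construction of Corollary~\ref{canonisom_crl2a} in families.  The pointwise compatibility with Corollary~\ref{canonisom_crl2a} over each $t\!\in\!\De_\R^*$ will then follow automatically by restriction.

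A real orientation on $(V,\vph)$ over $(\cU,\wt\fc)$ restricts to a real orientation on $(V,\vph)|_{(\Si_t,\si_t)}$ for every $t\!\in\!\De_\R$, because by the local smoothing model~\eref{cUdfn_e}--\eref{cU0inv_e} the fixed locus $\cU^{\wt\fc}$ equivariantly deformation retracts onto~$\Si_0^{\si}$.  By Proposition~\ref{canonisomExt_prp}, the restricted real orientation on the central fiber produces a homotopy class of isomorphisms
$$\Psi_0\!:\bigl(V\!\oplus\!2L^*,\vph\!\oplus\!2\wt\phi^*\bigr)\big|_{\Si_0}\lra\bigl(\Si_0\!\times\!\C^{n+2},\si\!\times\!\fc\bigr).$$
My first move is to extend $\Psi_0$ to a global isomorphism $\Psi$ of real bundle pairs over $(\cU,\wt\fc)$.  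I would do this by re-running the proofs of Lemma~\ref{lmm_repar}, Lemma~\ref{Exthomotopextend_lmm}, and Proposition~\ref{canonisomExt_prp} parametrically over the contractible base~$\De_\R$: each step reduces to obstruction-theoretic statements about $\pi_1$ and $\pi_2$ of $\SL_k\C$ or to equivariant collaring near the fixed locus, and these arguments are insensitive to an additional contractible parameter.  By continuity, for every $t\!\in\!\De_\R^*$ the restriction $\Psi_t$ then lies in the homotopy class of trivializations over the smooth symmetric surface $(\Si_t,\si_t)$ characterized by Proposition~\ref{canonisom_prp} for the restricted real orientation.

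Given $\Psi$, the orientation of the line bundle~\eref{canonisomExt2_e} over $\De_\R$ is assembled exactly as in the proof of Corollary~\ref{canonisom_crl2a}, but in families.  The short exact sequence
$$0\lra(V,\vph)\lra\bigl(V\!\oplus\!2L^*,\vph\!\oplus\!2\wt\phi^*\bigr)\lra 2(L^*,\wt\phi^*)\lra 0$$
of real bundle pairs over $(\cU,\wt\fc)$ and the family version of the isomorphism~\eref{sum} yield a canonical isomorphism
$$\det D_{(V\oplus 2L^*,\vph\oplus 2\wt\phi^*)}\approx\bigl(\det D_{(V,\vph)}\bigr)\otimes\bigl(\det D_{(L^*,\wt\phi^*)}\bigr)^{\otimes 2}$$
of line bundles over $\De_\R$, so that $\bigl(\det D_{(V,\vph)}\bigr)\otimes\bigl(\det D_{(V\oplus 2L^*,\vph\oplus 2\wt\phi^*)}\bigr)$ acquires a canonical orientation from the square factor on the right.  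The global isomorphism $\Psi$ identifies $\det D_{(V\oplus 2L^*,\vph\oplus 2\wt\phi^*)}$ with $(\det\dbar_\C)^{\otimes(n+2)}$ over~$\De_\R$, whence $\bigl(\det D_{(V\oplus 2L^*,\vph\oplus 2\wt\phi^*)}\bigr)\otimes(\det\dbar_\C)^{\otimes(n+2)}$ is oriented as well.  Combining these two orientations and using the canonical orientations on the squares $(\det\dbar_\C)^{\otimes 2}$ and $\bigl(\det D_{(V\oplus 2L^*)}\bigr)^{\otimes 2}$ yields the asserted orientation on~\eref{canonisomExt2_e}.  On each fiber $t\!\in\!\De_\R^*$ this recipe is exactly the one in the proof of Corollary~\ref{canonisom_crl2a} applied to~$\Psi_t$, so the fiberwise restriction coincides with the orientation of Corollary~\ref{canonisom_crl2a}.

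The main obstacle is the parametric extension of Proposition~\ref{canonisomExt_prp}.  Although the base $\De_\R$ is contractible, the fiber topology jumps at $t\!=\!0$, so one cannot naively pull the central-fiber construction back to nearby smooth fibers.  What rescues the argument is that all nontrivial steps in Proposition~\ref{canonisomExt_prp} localize near the node, where the smoothing model~\eref{cUdfn_e}--\eref{cU0inv_e} provides an explicit equivariant structure, and the homotopy lifting in $\SL_k\C$ used in Lemma~\ref{Exthomotopextend_lmm} is stable under the inclusion of a contractible parameter.  Carefully turning this sketch into a rigorous construction of $\Psi$ near $t\!=\!0$ is where I expect the bulk of the real work to lie.
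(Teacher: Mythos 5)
Your overall plan matches the paper's proof: restrict the real orientation to the central fiber, obtain $\Psi_0$ from Proposition~\ref{canonisomExt_prp}, extend it to an isomorphism $\Psi$ over all of $(\cU,\wt\fc)$, and then run the determinant-line argument of Corollary~\ref{canonisom_crl2a} in families so that the orientation over $\De_{\R}$ restricts fiberwise to the one induced by $\Psi_t$. But the step you flag as ``the main obstacle'' --- extending $\Psi_0$ to $\Psi$ --- is precisely where you miss the simple observation the paper uses, and the parametric re-run of Lemma~\ref{lmm_repar}, Lemma~\ref{Exthomotopextend_lmm}, and Proposition~\ref{canonisomExt_prp} that you propose does not make sense as stated: those results are formulated over a single (nodal or smooth) symmetric surface, and the fiber topology of $\pi\colon\cU\to\De$ changes at $t=0$, so there is no obvious ``family'' version of their hypotheses to re-run.

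The missing ingredient is that the \emph{entire} total space $\cU$, not just its fixed locus, deformation retracts onto the central fiber $\Si_0$, and this retraction can be taken $\wt\fc$-equivariant; this is immediate from the local model~\eref{cUdfn_e}--\eref{cU0inv_e}. You only invoke the weaker fact that $\cU^{\wt\fc}$ retracts onto $\Si_0^{\si}$, and you use it only to restrict the real orientation to fibers. Once one knows that $i\colon(\Si_0,\si_0)\hookrightarrow(\cU,\wt\fc)$ is an equivariant homotopy equivalence, the extension of $\Psi_0$ is standard homotopy theory: pull back $\Psi_0$ along the retraction $r\colon\cU\to\Si_0$ and compose with the natural isomorphism $(V\oplus 2L^*)\big|_{\cU}\approx r^*\bigl((V\oplus 2L^*)|_{\Si_0}\bigr)$ (given, say, by equivariant parallel transport along the retraction homotopy), which restricts to the identity over $\Si_0$. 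This produces a global $\Psi$ restricting to $\Psi_0$, with no need to redo any of the earlier constructions parametrically. With that correction the rest of your argument goes through and coincides with the paper's proof.
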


\begin{proof}
By Proposition~\ref{canonisomExt_prp}, 
the restriction of the real orientation to $(V,\vph)|_{(\Si_0,\si_0)}$
determines a homotopy class of isomorphisms~$\Psi$ of real bundle pairs as 
in~\eref{realorient2_e2}.
Since $\cU$ retracts onto~$\Si_0$ respecting the involution~$\wt\fc$,
every isomorphism~$\Psi_0$ over~$(\Si_0,\si_0)$ extends to 
an isomorphism
\BE{canonisomExt2_e1a}\Psi\!:\big(V\!\oplus\!2L^*,\vph\!\oplus\!2\wt\phi^*\big)
\approx\big(\cU\!\times\!\C^{n+2},\wt\fc\!\times\!\fc\big)\EE
of real bundle pairs over~$(\cU,\wt\fc)$.
Since an isomorphism~$\Psi_0$ in the homotopy class determined by 
the restriction of the real orientation to $(V,\vph)|_{(\Si_0,\si_0)}$ satisfies 
the spin structure and $\La_{\C}^{\top}$ conditions at the end of Proposition~\ref{canonisom_prp},
the restriction~$\Psi_t$ of~\eref{canonisomExt2_e1a} to
$(V,\vph)|_{(\Si_t,\si_t)}$   also satisfies these conditions.
The restriction of the orientation of the line bundle~\eref{canonisomExt2_e}
induced by~$\Psi$ to the fiber over each $t\!\in\!\De_{\R}^*$
is the orientation induced by~$\Psi_t$.
The latter is the orientation induced by the restriction of the real orientations to 
$(V,\vph)|_{(\Si_t,\si_t)}$.
\end{proof}

\noindent
Thus, the real line bundle~\eref{thm_maps_e9} extends across the (codimension-one) boundary strata
of the moduli spaces $\ov\fM_{g,l}(X,B;J)^{\phi}$ and so does its orientation induced by 
a real orientation on~$(X,\phi)$.
The other factor in orienting the line bundle~\eref{orient_thm_e} 
over the uncompactified  space $\fM_{g,l}(X,B;J)^{\phi}$
is the canonical orientation of the line bundle~\eref{CidentDM_e}.
The next lemma makes it possible to extend the orientations induced by the isomorphisms~\eref{DM_prp_e3} 
used in orienting~\eref{CidentDM_e} to (but not {\it across})
the boundary strata.\\

\noindent
Let $\wt\Si$ be a smooth Riemann surface and $x\!\in\!\wt\Si$.
A holomorphic vector field~$\xi$ on a neighborhood of~$x$ in~$\wt\Si$ 
with $\xi(x)\!=\!0$ determines an element 
$$\na\xi\big|_{x}\in T_x^*\wt\Si\otimes_{\C} T_x\wt\Si=\C\,.$$
Similarly, a meromorphic one-form~$\eta$ on a neighborhood of~$x$ in~$\wt\Si$ 
has a well-defined residue at~$x$, which we denote by~$\fR_x\eta$.
For a holomorphic line bundle $L\!\lra\!\wt\Si$, we denote by $\Om(L)$ the sheaf
of holomorphic sections of~$L$.

\begin{lmm}\label{TSiext_lmm}
Let $(\pi\!:\cU\!\lra\!\De,\wt\fc)$ be a flat family of deformations 
of a one-nodal symmetric Riemann surface $(\Si,\si)$ with $\De\!\subset\!\C$
and $x_1,x_2\!\in\!\wt\Si$ be the preimages of the node $x_{12}\!\in\!\Si$ in
its normalization.
There exist holomorphic line bundles $\cT,\wh\cT\!\lra\!\cU$ with involutions~$\vph,\wh\vph$
lifting~$\wt\fc$ such~that 
\begin{gather*}
(\cT,\vph)\big|_{\Si_t}=\big(T\Si_t,\tnd\wt\fc|_{T\Si_t}\big), \quad 
(\wh\cT,\wh\vph)\big|_{\Si_t}=\big(T^*\Si_t,(\tnd\wt\fc|_{T\Si_t})^*\big) 
\qquad\forall~t\!\in\!\De^*,\\
\begin{split}
\Om\big(\cT|_{\Si_0}\big)=\big\{\xi\!\in\!\Om\big(T\wt\Si(-x_1\!-\!x_2)\big)\!:\,~
\na\xi|_{x_1}\!+\!\na\xi|_{x_2}\!=\!0\big\},\\
\Om\big(\wh\cT|_{\Si_0}\big)=\big\{\eta\!\in\!\Om\big(T^*\wt\Si(x_1\!+\!x_2)\big)\!:\,~
\fR_{x_1}\eta\!+\!\fR_{x_2}\eta\!=\!0\big\}.
\end{split}\end{gather*}
Furthermore, $(\wh\cT,\wh\vph)\!\approx\!(\cT,\vph)^*$.
\end{lmm}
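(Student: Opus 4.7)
The plan is to take $\wh\cT$ to be the relative dualizing sheaf $\om_{\cU/\De}$ of $\pi$, set $(\cT,\vph)\!\equiv\!(\wh\cT,\wh\vph)^*$, and thereby make the duality $(\wh\cT,\wh\vph)\!\approx\!(\cT,\vph)^*$ tautological. Recall that $\om_{\cU/\De}$ is a canonically defined invertible sheaf on the family of nodal curves $\cU\!\to\!\De$: on the open set $\cU'$ where $\pi$ is smooth it agrees with the relative cotangent sheaf $T^*_{\cU'/\De}$, and on the local model $\cU_0\!=\!\{z_1z_2\!=\!t\}$ it is freely generated by the meromorphic differential $\eta_0\!\equiv\!dz_1/z_1\!=\!-dz_2/z_2$; this identification is consistent because the defining relation gives $z_1\,dz_2\!+\!z_2\,dz_1\!=\!0$. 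These two local descriptions glue to a holomorphic line bundle on all of $\cU$ whose restriction to each smooth fiber $\Si_t$ is $T^*\Si_t$ by definition.

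The restrictions to the nodal fiber $\Si_0$ can be extracted from this local model. A section of $\wh\cT|_{\Si_0}$ near the node is of the form $h\!\cdot\!\eta_0$ with $h\!\in\!\cO_{\Si_0,x_{12}}$, which on the normalization $\wt\Si$ corresponds to a pair $(h_1(z_1),h_2(z_2))$ with $h_1(0)\!=\!h_2(0)$ realized as the form $h_1(z_1)\,dz_1/z_1$ on the $z_1$-branch and $-h_2(z_2)\,dz_2/z_2$ on the $z_2$-branch; the residues $h_1(0)$ and $-h_2(0)$ at $x_1,x_2$ then sum to zero, and conversely every form on $\wt\Si$ with simple poles at $x_1,x_2$ satisfying the residue condition arises this way. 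Dualizing locally, $\cT|_{\Si_0}$ is generated by $\eta_0^*\!=\!z_1\partial_{z_1}\!=\!-z_2\partial_{z_2}$; writing an arbitrary section as $h\!\cdot\!\eta_0^*$ gives a vector field on $\wt\Si$ equal to $h_1(z_1)z_1\partial_{z_1}$ on the $z_1$-branch and $-h_2(z_2)z_2\partial_{z_2}$ on the $z_2$-branch, whose $\na$-values at $x_1,x_2$ are $h_1(0)$ and $-h_2(0)$; their sum vanishes precisely when $h_1(0)\!=\!h_2(0)$. This yields the asserted description of $\Om(\cT|_{\Si_0})$.

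The remaining point is to construct the antiholomorphic involution $\wh\vph$ lifting $\wt\fc$ and to verify that it extends across the nodal fiber. Away from the node, $\wh\vph$ is the standard lift of $\wt\fc$ to the relative cotangent sheaf $T^*_{\cU'/\De}$ via cotangent pullback. Near the node, I would verify the extension directly on the generator $\eta_0$ using~\eref{cU0inv_e}. In type~(H), where $\wt\fc$ acts by $(z_1,z_2)\!\mapsto\!(\bar z_1,\bar z_2)$ and preserves each branch of~$\cU_0$, one computes that $\eta_0$ is sent to a nonvanishing antiholomorphic multiple of itself, so $\wh\vph$ extends. In type~(E), where $\wt\fc$ swaps the two branches via $(z_1,z_2)\!\mapsto\!(\bar z_2,\bar z_1)$, the same relation $dz_1/z_1\!=\!-dz_2/z_2$ shows that $\eta_0$ is again carried antiholomorphically to a nonzero multiple of itself, so $\wh\vph$ still extends. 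Setting $\vph\!=\!(\wh\vph^*)^{-1}$ on $\cT\!=\!\wh\cT^*$ then produces the dual antiholomorphic involution.

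The main technical point I expect to dwell on is the type~(E) case, where $\wt\fc$ swaps the two branches of the node: one must confirm that the sign in the identification $dz_1/z_1\!=\!-dz_2/z_2$ is consistent with the resulting $\wh\vph$ satisfying $\wh\vph^2\!=\!\Id_{\wh\cT}$. This ultimately reduces to the identity $\wt\fc^2\!=\!\Id_{\cU}$ and the functoriality of cotangent pullback, so no genuine obstruction arises, but the bookkeeping of antilinearity and signs near the node must be carried out carefully.
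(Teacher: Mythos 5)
Your proposal is correct and follows essentially the same route as the paper. The paper constructs both $\cT$ and $\wh\cT$ explicitly by gluing $\cU_0\!\times\!\C$ to $T^{\vrt}\cU'$ (resp.\ $(T^{\vrt}\cU')^*$) using the identifications $(t,z_1,z_2,c)\!\sim\!c\,z_1\prt_{z_1}$ (resp.\ $c\,dz_1/z_1$) with the branch-dependent sign, then exhibits an explicit pairing $\cT\!\otimes_{\C}\!\wh\cT\!\lra\!\cU\!\times\!\C$ to conclude $\wh\cT\!\approx\!\cT^*$, and checks that the gluings intertwine the trivial conjugation on $\cU_0\!\times\!\C$ with the conjugations induced by $\tnd\wt\fc$; your version names $\wh\cT$ as the relative dualizing sheaf, sets $\cT\!=\!\wh\cT^*$ so the duality is tautological, and verifies the same local model $\eta_0\!=\!dz_1/z_1\!=\!-dz_2/z_2$ with the same residue/linearization bookkeeping, so the mathematical content and the verification over $\Si_0$ are identical. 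The only slight efficiency in your rearrangement is that the isomorphism $(\wh\cT,\wh\vph)\!\approx\!(\cT,\vph)^*$ is free; the only place where you give a bit less detail than the paper is the extension of $\wh\vph$ across the node (you sketch the type~(E) sign check rather than exhibiting the intertwining explicitly), but the argument you outline is correct: in type~(E) one finds $\wh\vph(\eta_0)\!=\!-\eta_0$ and in type~(H) $\wh\vph(\eta_0)\!=\!\eta_0$, both nonvanishing antilinear multiples, so the conjugation extends.
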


\begin{proof}
We continue with the notation as in~\eref{cUdfn_e} and~\eref{cU0inv_e}.
Denote by $T^{\vrt}\cU'\!\lra\!\cU'$ the vertical tangent bundle.
Let
\begin{alignat*}{2}
\cT&=\big(\cU_0\!\times\!\C \sqcup T^{\vrt}\cU'\big)\big/\!\sim, &\qquad
\wh\cT&=\big(\cU_0\!\times\!\C \sqcup (T^{\vrt}\cU')^*\big)\big/\!\sim,\\
(t,z_1,z_2,c)&\sim\begin{cases}
c\,z_1\frac{\prt}{\prt z_1}\big|_{(t,z_1)},&\hbox{if}~|z_1|\!>\!|z_2|;\\
-c\,z_2\frac{\prt}{\prt z_2}\big|_{(t,z_2)},&\hbox{if}~|z_1|\!<\!|z_2|;\\
\end{cases} &\qquad
(t,z_1,z_2,c)&\sim\begin{cases}
c\,\frac{\tnd_{(t,z_1)}z_1}{z_1},&\hbox{if}~|z_1|\!>\!|z_2|;\\
-c\,\frac{\tnd_{(t,z_2)}z_2}{z_2},&\hbox{if}~|z_1|\!<\!|z_2|.\\
\end{cases}
\end{alignat*}
Under the identifications~\eref{cUdfn_e}, the vector field and one-form on
a neighborhood of the node in~$\cU$ associated with $(t,z_1,z_2,c)\!\in\!\cU_0\!\times\!\C$
correspond to the vector field and one-form on~$\cU_0$
given~by
$$c\bigg(z_1\frac{\prt}{\prt z_1}-z_2\frac{\prt}{\prt z_2}\bigg)
\qquad\hbox{and}\qquad c\,\frac{\tnd z_1|_{\Si_t}}{z_1}=-c\,\frac{\tnd z_2|_{\Si_t}}{z_2},$$
respectively (the above equality of one-forms holds for $t\!\neq\!0$).
Thus, $\cT$ and~$\wh\cT$ have the desired restriction properties.
Since the~map
\begin{equation*}
{}\quad\cT\!\otimes_{\C}\!\wh\cT\lra\cU\!\times\!\C, 
\hspace{-.7in}
\begin{split}
&\big[t,z_1,z_2,c_1\big]\!\otimes\!\big[t,z_1,z_2,c_2\big]\lra  \big([t,z_1,z_2],c_1c_2\big),
\quad (t,z_1,z_2)\in\cU_0,~c_1,c_2\!\in\!\C,\\
&[v]\otimes[\al]\lra \al(v)\quad\forall\,v\!\in\!T_{z_i}\Si_t,~\al\!\in\!T_{z_i}^*\Si_t,~
(t,z_i)\in\cU',
\end{split}\end{equation*}
is a well-defined isomorphism of holomorphic line bundles, $\wh\cT\!\approx\!\cT^*$.\\

\noindent
The identifications in the construction of~$\cT$ and~$\wh\cT$ above intertwine
the trivial lift of~\eref{cU0inv_e} to a conjugation on $\cU_0\!\times\!\C$
with the conjugations on $T^{\vrt}\cU'$ and $(T^{\vrt}\cU')^*$ induced by~$\tnd\wt\fc$.
Thus, they induce conjugations~$\vph$ and~$\wh\vph$ on~$\cT$ and~$\wh\cT$.
The above trivialization of $\cT\!\otimes_{\C}\!\wh\cT$ intertwines the resulting
conjugation on the domain with the conjugation~$\wt\fc\!\times\!\fc$ on $\cU\!\times\!\C$.
Thus, $(\wh\cT,\wh\vph)$ and $(\cT,\vph)^*$ are isomorphic as real bundle pairs 
over~$(\cU,\wt\fc)$.
\end{proof}

\begin{lmm}[Dolbeault Isomorphism]\label{Cech_lmm}
Suppose $(\Si,\si)$ and $(\pi\!:\cU\!\lra\!\De,\wt\fc)$ are as in Lemma~\ref{TSiext_lmm}
and \hbox{$(L,\wt\phi)\!\lra\!(\cU,\wt\fc)$} is a holomorphic line bundle so that  
$\deg L|_{\Si}\!<\!0$ and $\deg L|_{\Si'}\!\le\!0$ for each irreducible component $\Si'\!\subset\!\Si$.
The families of vector spaces $H_{\dbar}^1(\Si_t;L)$ and $\wch{H}^1(\Si_t;L)$
then form vector bundles $R_{\dbar}^1\pi_*L$ and $\wch{R}^1\pi_*L$ over~$\De$ 
with conjugations lifting~$\fc$ which are canonically isomorphic as real bundle pairs
over~$(\De,\fc)$.
\end{lmm}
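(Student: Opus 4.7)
The plan is to produce the isomorphism by showing both sides are locally free sheaves on $\De$ of the same rank and then identifying them fiberwise via the classical Dolbeault isomorphism, checking at each step that everything is compatible with the conjugations induced by $\wt\phi$.

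First I would establish the vanishing $H^0(\Si_t;L)\!=\!0$ for \emph{every} $t\!\in\!\De$. For $t\!\neq\!0$ this is immediate from $\deg L|_{\Si_t}\!=\!\deg L|_{\Si}\!<\!0$. For $t\!=\!0$ the curve $\Si$ is nodal: any global section $s$ restricts to a section of $L$ on each irreducible component $\Si'$; since $\deg L|_{\Si'}\!\le\!0$, this restriction is either zero or nowhere vanishing, and the hypothesis $\sum_{\Si'}\!\deg L|_{\Si'}\!=\!\deg L|_{\Si}\!<\!0$ forces some component on which $s$ vanishes identically. The node-compatibility conditions $s(x_1)\!=\!s(x_2)$ then propagate the vanishing across each node, and the connectedness of~$\Si$ gives $s\!\equiv\!0$.

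Next I would conclude that the Euler characteristic $\chi(\Si_t;L)\!=\!\deg L\!+\!1\!-\!p_a$ is constant in~$t$ (being a topological invariant of the flat family $\pi$), and so by Grauert's theorem together with the vanishing of $H^0$ above, $\dim H^1(\Si_t;L)$ is constant and the coherent sheaf $R^1\pi_*L$ is locally free on~$\De$. The same reasoning, applied to a relative Stein open cover $\{U_\al\}$ of $\cU$ and its associated Čech complex, yields the locally free sheaf $\wch R^1\pi_*L$ of the same rank. Both sheaves inherit conjugations from $\wt\phi$ (acting antiholomorphically on $\bp$-harmonic representatives and on Čech cochains, respectively) that lift the standard conjugation~$\fc$ on~$\De$.

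Finally I would upgrade the fiberwise Dolbeault isomorphism to an isomorphism of vector bundles. The Čech-to-Dolbeault map is induced by a partition-of-unity construction: given a Čech $1$-cocycle $\{s_{\al\be}\}$ with values in~$L$, write $s_{\al\be}\!=\!\tau_\al\!-\!\tau_\be$ for smooth sections $\tau_\al$ over $U_\al$, and send $\{s_{\al\be}\}$ to the global $(0,1)$-form equal to $\bp\tau_\al$ on each $U_\al$. Choosing the partition of unity to be $\wt\fc$-invariant (which is possible since $\wt\fc$ is a proper involution of the base), this construction extends across the nodal fiber and intertwines the two conjugations. The resulting map $\wch R^1\pi_*L\!\to\!R^1_{\bp}\pi_*L$ of real bundle pairs over $(\De,\fc)$ is a fiberwise isomorphism by the classical Dolbeault theorem on each smooth $\Si_t$ and on the nodal $\Si$ (where both groups compute the same derived functor $R^1\pi_*L$ of the coherent sheaf~$L|_\Si$, as in the proof of Proposition~\ref{DM_prp}). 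The main subtlety is the verification at $t\!=\!0$, where one must check that the Čech-to-Dolbeault comparison is still well-defined despite the singularity of~$\Si_0$; this is where the condition $\deg L|_{\Si'}\!\le\!0$ enters crucially, guaranteeing the vanishing of $R^0\pi_*L$ and hence the base-change identification of both sides with the same stalk of $R^1\pi_*L$ at~$0$.
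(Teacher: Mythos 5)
Your proof is essentially correct and follows the same route as the paper: establish $H^0(\Si_t;L)=0$ for all $t$ from the degree conditions (the paper states this without the detail you supply about propagating vanishing across the node), deduce that both $R^1_{\dbar}\pi_*L$ and $\wch{R}^1\pi_*L$ are vector bundles, equip them with the induced conjugations, and apply the fiberwise Dolbeault isomorphism.

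Two small remarks. First, the paper's accompanying Remark~\ref{DIandSD_rmk} isolates the one point that genuinely requires justification on the nodal fiber: the exactness of the Dolbeault resolution of $\cO(L)$ over $\Si_0$, which amounts to surjectivity of the standard $\dbar$-operator on a wedge of two disks. You invoke ``the classical Dolbeault theorem on the nodal $\Si$'' without explaining why the $\dbar$-Poincar\'e lemma holds near the node; your partition-of-unity construction of the \v{C}ech-to-Dolbeault map tacitly uses precisely this local surjectivity to solve $\dbar\tau_\al=\zeta$ on the members of the cover containing the node. Second, your closing sentence somewhat conflates the role of the degree condition with the well-definedness of the comparison map: the vanishing of $R^0\pi_*L$ is what guarantees that the two cohomology sheaves are locally free (so that a fiberwise isomorphism of bundles makes sense), whereas the comparison map is well-defined for any $L$ once the local $\dbar$-lemma is available. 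Also a minor slip: $\wt\fc$ is the involution on the total space $\cU$, not on the base $\De$ (that one is $\fc$); the $\wt\fc$-invariant partition of unity lives on $\cU$.
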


\begin{proof}
The assumptions on $L$ ensure that $H_{\dbar}^0(\Si_t;L)\!=\!0$ for all $t\!\in\!\De$.
By the Dolbeault Theorem \cite[p151]{GH}, this implies that $\wch{H}^0(\Si_t;L)\!=\!0$
for all $t\!\in\!\De$.
Since $H_{\dbar}^0(\Si_t;L)\!=\!0$ for all $t\!\in\!\De$,
the vector spaces $H_{\dbar}^1(\Si_t;L)$ naturally form a vector bundle 
$R_{\dbar}^1\pi_*L$ over~$\De$.
By the second statement, the sheaf $R^1\pi_*L$ is locally free over~$\De$
and thus corresponds to a vector bundle  $\wch{R}^1\pi_*L$ over~$\De$.
The involution~$\wt\fc$ and conjugation~$\wt\phi$ induce conjugations on the two bundles.
The Dolbeault Isomorphism provides an isomorphism between the two resulting real bundle pairs
over~$(\De,\fc)$.
\end{proof}

\begin{lmm}[Serre Duality]\label{Serre_lmm}
Suppose $(\Si,\si)$, $(\pi\!:\cU\!\lra\!\De,\wt\fc)$,  and $(\wh\cT,\wh\vph)$
are as in Lemma~\ref{TSiext_lmm}
and \hbox{$(L,\wt\phi)\!\lra\!(\cU,\wt\fc)$} is a holomorphic line bundle 
so that  $\deg L|_{\Si}\!>\!2g_a(\Si)\!-\!2$
and $\deg L|_{\Si'}\!\ge\!2g_a(\Si')\!-\!2$
for each irreducible component $\Si'\!\subset\!\Si$.
The family of vector spaces $H_{\dbar}^0(\Si_t;L)$ 
then forms a vector bundle $R_{\dbar}^0\pi_*L$ over~$\De$  with a conjugation lifting~$\fc$ 
and there is a canonical isomorphism
\BE{Serre_e}R_{\dbar}^1\pi_*\big(L^*\!\otimes\!\wh\cT\big)\approx\big(R_{\dbar}^0\pi_*L\big)^*\EE
of real bundle pairs over~$(\De,\fc)$.
\end{lmm}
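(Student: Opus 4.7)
The plan is to deduce the isomorphism pointwise from classical Serre duality on each fiber and then verify that the pointwise pairings patch together into a morphism of real bundle pairs over $(\De,\fc)$.

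First I would establish that $R^0_{\dbar}\pi_*L$ is a vector bundle. By Riemann-Roch on each fiber $\Si_t$, together with the hypothesis $\deg L|_\Si > 2g_a(\Si)-2$ and the componentwise bound $\deg L|_{\Si'}\ge 2g_a(\Si')-2$, one sees that $H^1_{\dbar}(\Si_t;L)=0$ for every $t\in\De$, where on the nodal fiber $\Si_0$ this is obtained via the normalization sequence and Serre duality with the dualizing sheaf. Consequently the family of $\dbar$-operators is fiberwise surjective, which forces $\dim H^0_{\dbar}(\Si_t;L) = \deg L|_\Si - g_a(\Si) + 1$ to be independent of~$t$, so the kernels assemble into a holomorphic vector bundle. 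The conjugation is induced by $s\mapsto\wt\phi\!\circ\!s\!\circ\!\wt\fc$, which intertwines holomorphic sections over $\Si_t$ with those over $\Si_{\bar t}$ since $\wt\fc$ is anti-holomorphic and $\wt\phi$ is anti-linear.

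Next I would verify that Lemma~\ref{Cech_lmm} applies to $L^*\!\otimes\!\wh\cT$. By Lemma~\ref{TSiext_lmm}, the restriction $\wh\cT|_{\Si_0}$ is precisely the dualizing sheaf of the nodal curve $\Si_0$, of degree $2g_a(\Si)-2$. Combined with the degree hypotheses on~$L$, this gives $\deg(L^*\!\otimes\!\wh\cT)|_\Si<0$ and $\deg(L^*\!\otimes\!\wh\cT)|_{\Si'}\le 0$ on each irreducible component, so Lemma~\ref{Cech_lmm} furnishes the vector bundle $R^1_{\dbar}\pi_*(L^*\!\otimes\!\wh\cT)$ over $\De$ with a conjugation lifting~$\fc$, canonically isomorphic to $\wch{R}^1\pi_*(L^*\!\otimes\!\wh\cT)$. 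The Serre duality isomorphism on each smooth fiber $\Si_t$ with $t\!\ne\!0$ is then the classical perfect pairing
\BE{SerreSketch_e} H^0_{\dbar}(\Si_t;L)\otimes H^1_{\dbar}\bigl(\Si_t;L^*\!\otimes\!T^*\Si_t\bigr)\lra\C, \qquad (s,[\eta])\lra \int_{\Si_t} s\!\w\!\eta, \EE
which identifies the second factor with $H^0_{\dbar}(\Si_t;L)^*$ using $\wh\cT|_{\Si_t}=T^*\Si_t$. At $t=0$, the identification in Lemma~\ref{TSiext_lmm} makes $\wh\cT|_{\Si_0}$ coincide with the dualizing sheaf of~$\Si_0$, so Serre duality for nodal curves supplies the analogous pairing, with the residue condition of Lemma~\ref{TSiext_lmm} guaranteeing that the integral on the normalization is well-defined on cohomology. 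These fiberwise pairings fit into a morphism of bundles over $\De$ via the relative trace $R^1_{\dbar}\pi_*\wh\cT\!\lra\!\cO_\De$, which is defined globally by fiber integration for the proper flat family~$\pi$.

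The main obstacle is compatibility with the conjugations, which reduces to the identity
$$\int_{\Si_{\bar t}} \bigl(\wt\phi\!\circ\!s\!\circ\!\wt\fc\bigr)\!\w\!\bigl((\wt\phi^*\!\otimes\!\wh\vph)(\eta)\bigr) = \overline{\int_{\Si_t}s\!\w\!\eta}.$$
This follows from the anti-holomorphicity of~$\wt\fc$, which reverses the orientation of the fiber, combined with the anti-linearity of $\wt\phi$ and $\wh\vph$, which complex-conjugates the pointwise value of $s\!\w\!\eta$. The two sign contributions combine to give complex conjugation of the integral, so the pairing~\eref{SerreSketch_e} descends to a morphism of real bundle pairs, producing the required isomorphism.
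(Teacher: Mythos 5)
The overall strategy --- establishing $R^0_{\dbar}\pi_*L$ via vanishing of $H^1_{\dbar}(\Si_t;L)$, checking the degree hypotheses so that Lemma~\ref{Cech_lmm} applies to $L^*\!\otimes\!\wh\cT$, and then using the fiberwise Serre duality pairing --- matches the paper's. However, your final compatibility computation has a sign error that breaks the conclusion. Because $\wt\fc$ reverses orientation and the bundle conjugations are anti-linear, the two effects do \emph{not} cancel to give $\overline{(\cdot)}$: pulling back by $\wt\fc$ and accounting for the orientation flip yields
$$\int_{\Si_{\bar t}} (\wt\vph_0 s)\!\w\!(\wt\vph_1\eta) \;=\; -\,\overline{\int_{\Si_t}s\!\w\!\eta}\,,$$
not $\overline{\int_{\Si_t}s\!\w\!\eta}$. (Concretely: $\wt\fc^*\!\big[(\wt\vph_0 s)\!\w\!(\wt\vph_1\eta)\big]=\overline{s\!\w\!\eta}$ as a complex-valued $2$-form, and the orientation reversal contributes an extra factor of $-1$.) Consequently, when $s$ and $\eta$ are $\si$-invariant the pairing takes values in $\fI\R$ rather than $\R$, and the map induced by the naive pairing anti-intertwines the conjugations on the two sides of~\eref{Serre_e} instead of intertwining them.

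This is exactly the subtlety the paper addresses (and flags earlier, in the proof of Proposition~\ref{DM_prp}): because $\si$ is orientation-reversing, the real part of the SD pairing identifies invariant sections on one side with anti-invariant ones on the other, and one must compose the SD isomorphism with multiplication by $\fI$ to obtain an isomorphism of real bundle pairs over $(\De,\fc)$. Your proposal omits this step, so the morphism you produce is not the claimed isomorphism of real bundle pairs. The gap is small and fixable --- insert the multiplication by $\fI$ --- but as written the last paragraph's conclusion does not follow from the computation.
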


\begin{proof}
The left-hand side of~\eref{Serre_e} is a vector bundle by Lemma~\ref{Cech_lmm}.
The assumptions on~$L$ ensure that $H_{\dbar}^1(\Si_t;L)\!=\!0$ for all $t\!\in\!\De$.
Thus, the vector spaces $H_{\dbar}^0(\Si_t;L)$ with $t\!\in\!\De$ 
naturally form a vector bundle $R_{\dbar}^0\pi_*L$ over~$\De$.
The involution~$\wt\fc$ and conjugation~$\wt\phi$ induce a conjugation on 
the right-hand side of~\eref{Serre_e}.
The Serre Duality provides an isomorphism between the two bundles in~\eref{Serre_e}. 
Its composition with the multiplication by~$\fI$ is an isomorphism between 
the two bundles in~\eref{Serre_e} as real bundle pairs over~$(\De,\fc)$.
\end{proof}

\begin{rmk}\label{DIandSD_rmk}
The justification of Dolbeault Isomorphism Theorem in the case of Lemma~\ref{Cech_lmm}
consists of applying the exact sequence of sheaves at the bottom of \cite[p150]{GH}
with $p,q\!=\!0$ and $E\!=\!L$.
As the standard $\dbar$-operator on a wedge of two disks is surjective,
this sequence is indeed exact over the central fiber $\Si_0\!=\!\Si$
(the exactness is established in~\cite{GH} over complex manifolds).
The Serre Duality for CR-operators over nodal Riemann surfaces 
appears in \cite[Lemma~2.3]{FanoGV} and endows the total spaces of 
the left-hand side in~\eref{Serre_e} and of the bundle $R_{\dbar}^1\pi_*L$ in Lemma~\ref{Cech_lmm}  
with a topology via the fiberwise SD~isomorphisms.
The Serre Duality appears on the level of \v{C}ech cohomology in
the standard algebro-geometric perspective; see \cite[p98]{ACH}.
This viewpoint would establish Corollary~\ref{Serre_crl} below by applying the Serre Duality first
and the Dolbeault Isomorphism second.
\end{rmk}

\noindent
Let $(\pi,\wt\fc,s_1,\ldots,s_l)$ be a smoothing of a one-nodal marked symmetric Riemann surface 
\BE{cCdfn_e}\cC \equiv \big(\Si,(z_1^+,z_1^-),\ldots,(z_l^+,z_l^-)\big),\EE
$\cT,\wh\cT\!\lra\!\cU$ be the holomorphic line bundles with involutions~$\vph,\wh\vph$
as in Lemma~\ref{TSiext_lmm},
and
$$\cT\cC=\cT\big(\!-\!s_1\!-\!\wt\fc\!\circ\!s_1\!-\!\ldots\!-\!s_l\!-\!\wt\fc\!\circ\!s_l\big), \qquad
\wh\cT\cC=\wh\cT\big(s_1\!+\!\wt\fc\!\circ\!s_1\!+\!\ldots\!+\!s_l\!+\!\wt\fc\!\circ\!s_l\big).$$
By the last statement of Lemma~\ref{TSiext_lmm}, $\cT\cC^*\!=\!\wh\cT\cC$. 

\begin{crl}\label{Serre_crl}
If the marked curve~\eref{cCdfn_e} is stable, the orientation on the restriction of the real line bundle
\BE{SerreCrl_e}
\La_{\R}^{\top}\big((\wch{R}^1\pi_*\cT\cC)^{\si}\big)\otimes
\La_{\R}^{\top}\big((R_{\dbar}^0\pi_*(\wh\cT\cC\!\otimes\!\wh\cT))^{\si}\big)
 \lra \De_{\R}\EE
to $\De_{\R}^*$ induced by the Dolbeault and SD isomorphisms 
as in  the proof of Proposition~\ref{DM_prp} 
extends across $t\!=\!0$.
\end{crl}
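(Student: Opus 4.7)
The plan is to interpret the given orientation as the canonical pairing orientation arising from a duality between the two real vector bundles whose top exterior powers appear in \eref{SerreCrl_e}, and to verify that this duality, defined fiberwise over $\De_\R^*$, extends across $t\!=\!0$ by invoking the family-version Lemmas \ref{Cech_lmm} and \ref{Serre_lmm}.

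First I would check that the stability of $\cC$ implies the degree hypotheses of Lemmas \ref{Cech_lmm} and \ref{Serre_lmm} for the relevant line bundles. On each irreducible component $\Si'\!\subset\!\Si_0$, stability gives $\deg\cT\cC|_{\Si'}\!\le\!0$, with strict global inequality since $g\!+\!l\!\ge\!2$; dualizing and tensoring with $\wh\cT$ yields the complementary lower bound $\deg(\wh\cT\cC\!\otimes\!\wh\cT)|_{\Si'}\!\ge\!2g(\Si')\!-\!2$, again with strict global inequality. Using the identity $(\wh\cT\cC\!\otimes\!\wh\cT)^*\!\otimes\!\wh\cT\!=\!\cT\cC$, which follows from the last statement of Lemma \ref{TSiext_lmm}, the two lemmas apply in tandem and produce canonical isomorphisms
\begin{equation*}
\wch{R}^1\pi_*\cT\cC \;\approx\; R^1_{\dbar}\pi_*\cT\cC \;\approx\;
\bigl(R^0_{\dbar}\pi_*(\wh\cT\cC\!\otimes\!\wh\cT)\bigr)^*
\end{equation*}
of real bundle pairs over $(\De,\fc)$.

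Restricting to $\si$-invariant subbundles over $\De_\R$ and composing with the real isomorphism exchanging invariants and anti-invariants given by multiplication by $\fI$, exactly as in the proof of Proposition \ref{DM_prp}, then yields an isomorphism
\begin{equation*}
(\wch{R}^1\pi_*\cT\cC)^\si \;\approx\; \bigl((R^0_{\dbar}\pi_*(\wh\cT\cC\!\otimes\!\wh\cT))^\si\bigr)^*
\end{equation*}
of real vector bundles over all of $\De_\R$. The canonical evaluation pairing between a real vector space and its dual then provides a nowhere-vanishing section of \eref{SerreCrl_e} over $\De_\R$, and by construction its restriction to $\De_\R^*$ is precisely the orientation defined fiberwise in the proof of Proposition \ref{DM_prp}; this is the required extension.

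The main obstacle is ensuring that the $\si$-invariant parts glue into genuine real subbundles across $t\!=\!0$ and that the $\fI$-twist is continuous in~$t$. Under the degree hypotheses above, Riemann-Roch forces $R^1_{\dbar}\pi_*\cT\cC$ and $R^0_{\dbar}\pi_*(\wh\cT\cC\!\otimes\!\wh\cT)$ to have constant complex rank over $\De$, and the induced conjugations, inherited from $\wt\fc$, are fiberwise $\C$-antilinear and continuous in $t$; the invariants therefore cut out real subbundles of half rank. The $\fI$-twist is a global bundle endomorphism, so its continuity is automatic. The one genuinely foundational point, namely the exactness of the Dolbeault sheaf sequence on the nodal fiber $\Si_0$, is already absorbed into Lemma \ref{Cech_lmm} via Remark \ref{DIandSD_rmk}, so the argument is complete modulo these references.
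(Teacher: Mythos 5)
Your proof takes essentially the same route as the paper's: apply Lemma~\ref{Cech_lmm} with $L\!=\!\cT\cC$ and Lemma~\ref{Serre_lmm} with $L\!=\!\wh\cT\cC\!\otimes\!\wh\cT$, use $(\wh\cT\cC\!\otimes\!\wh\cT)^*\!\otimes\!\wh\cT\!=\!\cT\cC$ from Lemma~\ref{TSiext_lmm} to chain the resulting isomorphisms of real bundle pairs over $(\De,\fc)$, and then orient the tensor product in~\eref{SerreCrl_e} by the canonical pairing of a real line with its dual. Your explicit checks of the degree hypotheses and rank constancy are left implicit in the paper, but they are correct and do not alter the argument.
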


\begin{proof}
By Lemma~\ref{Cech_lmm} with $L\!=\!\cT\cC$ 
and Lemma~\ref{Serre_lmm} with $L\!=\!\wh\cT\cC\!\otimes\!\wh\cT$,
there are canonical isomorphisms of vector bundles
$$\wch{R}^1\pi_*\cT\cC \approx  R_{\dbar}^1\pi_*\cT\cC
= R_{\dbar}^1\pi_*\big((\wh\cT\cC\!\otimes\!\wh\cT)^*\!\otimes\!\wh\cT\big)
\approx \big(R_{\dbar}^0\pi_*(\wh\cT\cC\!\otimes\!\wh\cT)\big)^*$$
over $\De$ which restrict to 
the Dolbeault and SD isomorphisms over each point.
Since they commute with the involutions on the vector bundles, 
these isomorphisms induce an orientation on the real line bundle~\eref{SerreCrl_e} that restricts 
to the orientation on each fiber induced by the real parts of 
the Dolbeault and SD isomorphisms.
\end{proof}

\subsection{The orientability of the real Deligne-Mumford space}
\label{DMext_subs}

\noindent
We now study the extendability of the canonical orientations of the line bundles
appearing in the proof of Proposition~\ref{DM_prp} and establish Proposition~\ref{DMext_prp}.
The two main ingredients in this proof are Lemmas~\ref{realorient_lmm} 
and~\ref{KSext_lmm} below. The next lemma summarizes 
the fundamental difference between the two pairs of cases in Proposition~\ref{DMext_prp}.

\begin{lmm}\label{EH1vsH2H3_lmm}
Let $(\Si,x_{12},\si)$,  $(\pi,\wt\fc)$, and
$(\cT,\vph)$ be as in Lemma~\ref{TSiext_lmm}.
The restriction of the real line bundle $\cT^{\vph}\!\lra\!\Si^{\si}$ 
to the singular topological component $\Si^{\si}_1\!\subset\!\Si^{\si}$
is orientable if the one-nodal symmetric surface $(\Si,x_{12},\si)$ is
of type~(E) or~(H1) and is not orientable if  $(\Si,x_{12},\si)$ is
of type~(H2) or~(H3).
\end{lmm}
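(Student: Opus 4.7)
The plan is to pull $\cT^{\vph}$ back from $\Si^{\si}_1$ to the normalization $\wt\Si^{\wt\si}$, where $\cT$ has a concrete description coming from Lemma~\ref{TSiext_lmm}, decide orientability on each fixed circle by a twist count, and then for type~(H1) verify that the identification at the node respects a global trivialization of the pullback.

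For type~(E), $\wt\si(x_1)=x_2$ and $x_{12}$ is an isolated point of $\Si^{\si}$, so $\Si^{\si}_1=\{x_{12}\}$ and the restriction of $\cT^{\vph}$ is tautologically orientable. For the (H) cases, $\wt\si$ fixes both $x_1$ and $x_2$, and by Lemma~\ref{TSiext_lmm} the normalization pullback $\nu^*\cT$ is $T\wt\Si(-x_1-x_2)$ with real structure induced by $\tnd\wt\si$. Restricting to a component $\wt\Si^{\wt\si}_i$ of the fixed locus gives the real line bundle $T\wt\Si^{\wt\si}_i(-D_i)$, where $D_i$ is the part of $x_1+x_2$ supported on $\wt\Si^{\wt\si}_i$. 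Since $T\wt\Si^{\wt\si}_i$ is the trivial line bundle on a circle and each $\cO(-p)$ contributes the non-trivial class to $w_1$ of a real line bundle on~$S^1$, the bundle $T\wt\Si^{\wt\si}_i(-D_i)$ is orientable if and only if $|D_i|$ is even. In types (H2) and (H3), $x_1$ and $x_2$ lie on distinct fixed circles, so each relevant $|D_i|=1$ and the pullback is non-orientable on at least one circle; this already forces $\cT^{\vph}|_{\Si^{\si}_1}$ to be non-orientable. In type (H1), both points lie on the single fixed circle $\wt\Si^{\wt\si}_1$, so the pullback is orientable.

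It remains for~(H1) to check that the gluing
$$
\cT^{\vph}_{x_1}=\cT^{\vph}_{x_{12}}=\cT^{\vph}_{x_2}
$$
coming from the $c$-trivialization of $\cT$ on~$\cU_0$ is compatible with a chosen global trivialization of $T\wt\Si^{\wt\si}_1(-x_1-x_2)$. I would parametrize $\wt\Si^{\wt\si}_1$ by an angular coordinate~$\theta$ with $x_1$ at~$0$ and $x_2$ at~$\pi$, and take as global nowhere-zero real section $\sin\theta\,\partial_{\theta}$. Using the local rules $c\leftrightarrow z_1\partial_{z_1}$ near~$x_1$ and $c\leftrightarrow -z_2\partial_{z_2}$ near~$x_2$ from the proof of Lemma~\ref{TSiext_lmm}, a short computation shows that $\sin\theta\,\partial_{\theta}$ corresponds to $c=+1$ at both~$x_1$ and~$x_2$: the minus sign built into the gluing at $x_2$ exactly cancels the sign flip of $\sin\theta$ across $\theta=\pi$. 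Hence the node identification preserves the global trivialization, and $\cT^{\vph}|_{\Si^{\si}_1}$ is orientable.

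The main subtlety is the sign matching in the last step: the orientation-preservation at the node is not automatic but follows from the asymmetry between the two branches that is baked into the construction of~$\cT$ in Lemma~\ref{TSiext_lmm}, combined with the fact that in~(H1) a global trivialization of $T\wt\Si^{\wt\si}_1(-x_1-x_2)$ necessarily has opposite sign near~$x_2$ relative to~$x_1$.
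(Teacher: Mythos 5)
Your argument is correct, and it rests on the same core ingredient as the paper's: the explicit local model for $\cT^{\vph}$ near the node from Lemma~\ref{TSiext_lmm}, namely the identifications $c\leftrightarrow z_1\partial_{z_1}$ on one branch and $c\leftrightarrow -z_2\partial_{z_2}$ on the other, which determine how the fibers over the four arcs emanating from $x_{12}$ match up. The paper records this by observing that the local section $x\,\partial_x$, $-y\,\partial_y$ points away from the origin on one branch and toward it on the other, and then reads off the global conclusion from the middle diagrams of Figures~\ref{H1_fig} and~\ref{H2H3_fig}. You instead pull back to the normalization, note that $\nu^*\cT^{\vph}$ over $\wt\Si^{\wt\si}_i$ is $T\wt\Si^{\wt\si}_i(-D_i)$ whose $w_1$ is $|D_i|\bmod 2$ (disposing of~(H2)/(H3) immediately), and then for~(H1) exhibit the explicit nowhere-zero section $\sin\theta\,\partial_\theta$ and verify that the built-in sign asymmetry at the node ($+z_1\partial_{z_1}$ versus $-z_2\partial_{z_2}$) exactly absorbs the sign flip of $\sin\theta$ across $\theta=\pi$, so the section descends. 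This is a genuinely more self-contained version of the same argument: what it buys you is that the (H1) case is verified by a one-line computation rather than by inspecting a figure, and it makes transparent that the asymmetric sign in the definition of $\cT$ in Lemma~\ref{TSiext_lmm} is precisely what makes $\cT^{\vph}|_{\Si^{\si}_1}$ orientable in that case.
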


\begin{proof}
If  $(\Si,x_{12},\si)$ is of type~(E),  $\Si^{\si}_1$ consists of the node~$x_{12}$ and there is nothing 
to prove.
Otherwise, a local section of~$\cT^{\vph}$ near~$x_{12}$  is given by 
$x\frac{\prt}{\prt x}$ along the $x$-axis and 
$-y\frac{\prt}{\prt y}$ along the $y$-axis.
It points away from the origin along the $x$-axis and towards along the $y$-axis.
The claims in the  (H1) and (H2)/(H3)  are thus immediate from 
the middle diagrams in Figures~\ref{H1_fig} and~\ref{H2H3_fig}, respectively.
\end{proof}

\noindent
Suppose $(\Si,x_{12},\si)$ is of type~(E) or~(H1).
By the first part of the proof of Corollary~\ref{canonisom_crl}, the restriction 
of the real bundle~pair
\BE{whcTcC_e} 
\big(\wh\cT^{\otimes2}\!\oplus\!2\cT,\wh\vph^{\otimes2}\!\oplus\!2\vph\big)
\lra(\cU,\wt\fc)\EE
to the central fiber $(\Si,\si)$ thus has a canonical real orientation. 
It extends to a real orientation on~\eref{whcTcC_e} which restricts 
to the canonical real orientation over each fiber~$(\Si_t,\si_t)$ with $t\!\in\!\De_{\R}^*$.\\

\noindent
Suppose $(\Si,x_{12},\si)$ is of type~(H2) or~(H3).
The singular component~$\Si^{\si}_1$ of~$\Si^{\si}$ consists of two copies of~$S^1$
with a point~$x_1$ on the first copy identified with a point~$x_2$ on the second copy.
By Corollary~\ref{canonisom_crl},
there are then {\it four} natural  real orientations on  the restriction of~\eref{whcTcC_e}
to~$(\Si,\si)$.
They correspond to the two orientations of each of the two irreducible components
of~$\Si^{\si}_1$.
Each of the four real orientations
extends to a real orientation on the real bundle pair~\eref{whcTcC_e}
over~$(\cU,\wt\fc)$. 

\begin{lmm}\label{realorient_lmm}
Let $\cC$, $(\pi,\wt\fc)$, and $\cT,\wh\cT\!\lra\!\cU$ be as in Lemma~\ref{TSiext_lmm}
with $(\Si,x_{12},\si)$ of type~(H2) or~(H3).
For each of the four natural real orientations on the restriction of~\eref{whcTcC_e} to~$(\Si,\si)$,
there exists $\ve\!\in\!\{\pm1\}$ such that 
the restriction over~$(\Si_t,\si_t)$ of the extension of this real orientation 
over~$(\cU,\wt\fc)$ is the canonical real orientation if $\ve t\!\in\!\De_{\R}^+$ 
and differs from the canonical real orientation by the spin structure over precisely one 
component of~$\Si_t^{\si_t}$ if $\ve t\!\in\!\De_{\R}^-$. 
\end{lmm}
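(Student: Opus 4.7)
The plan is to fix one of the four real orientations on the restriction of~\eref{whcTcC_e} to~$(\Si,\si)$, extend it across the family over $(\cU,\wt\fc)$, and compare the restriction to each smooth real fiber $(\Si_t,\si_t)$ with the canonical real orientation there. By Corollary~\ref{canonisom_crl}, each of the four choices corresponds to a pair $(o_1,o_2)$ of orientations on the two irreducible components $\wt C_1,\wt C_2$ of the normalization of the singular topological component $\Si^\si_1\subset\Si^\si$; all four induce the same orientation on the real part of $\wh\cT^{\otimes 2}\oplus 2\cT$ but different spin structures. Since $\cT^{\vph_t}$ is orientable on each circle of $\Si_t^{\si_t}$ for $t\ne 0$, the extended real orientation on $(\Si_t,\si_t)$ automatically induces the canonical orientation on the real part, so only the induced spin structures remain to be compared. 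Moreover, outside a small neighborhood $U$ of the node the real orientation is unchanged across the family, so the comparison reduces to the single circle $C_t\subset\Si_t^{\si_t}$ obtained by smoothing~$\Si^\si_1$.

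Next I would work out the local model. On $U\cap\cU_0$ with $\cU_0=\{z_1 z_2=t\}$ as in Lemma~\ref{TSiext_lmm}, the $\wt\fc$-invariant holomorphic section $\chi=z_1\prt_{z_1}-z_2\prt_{z_2}$ trivializes $\cT$ off the node, so it restricts to a nowhere-vanishing real section of $\cT^{\vph_t}$ on $C_t\cap U$ when $t\ne 0$. A direct computation on $\R^2\cap\{x_1 x_2=t\}$ gives, parametrizing each hyperbola branch by $s=x_1$, $\chi=s\,\prt_s$, so $\chi$ points away from the node in the $|x_1|$ direction on both branches, for either sign of~$t$.

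The key topological step is the splicing. In both the (H2) and (H3) cases, smoothing $\Si^\si_1$ over $\De_\R^+$ and over $\De_\R^-$ produces the same underlying circle $C_t$ but pairs the four exit points of $\wt C_1\sqcup\wt C_2$ out of $U$ in opposite ways: the hyperbola arcs lie in quadrants I and III for $t>0$ and in quadrants II and IV for $t<0$. The orientations $o_1,o_2$ extend continuously across the outer arcs $\be_1\subset\wt C_1$, $\be_2\subset\wt C_2$, while $\chi$ orients each hyperbola arc by the preceding formula. For exactly one sign of~$t$---depending on $(o_1,o_2)$ and thereby defining $\ve\in\{\pm 1\}$---these pieces of orientation data splice into a globally consistent orientation of~$C_t$; for the opposite sign they are inconsistent on precisely one of the two hyperbola arcs. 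Translating to the stabilized real bundle via the argument in the second part of the proof of Corollary~\ref{canonisom_crl}---where trivializations~\eref{RBisom_e6} and~\eref{RBisom_e8} of $\ga\oplus\ga\to\R\P^1$ associated to opposite orientations of $\R\P^1$ differ by a generator of $\pi_1(\GL_2\R)$---the consistent splicing recovers the canonical spin structure on the real part of $\wh\cT^{\otimes 2}\oplus 2\cT$ over~$C_t$, while the inconsistent splicing reverses it; by the outside-$U$ stability observed above, no other component of $\Si_t^{\si_t}$ is affected.

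The main obstacle will be this last translation: realizing the $\pi_1(\GL_2\R)$-monodromy from Corollary~\ref{canonisom_crl} as the flip of orientation on one hyperbola arc. Concretely, one would set up over $C_t\cap U$ an isomorphism between the real part of $\wh\cT^{\otimes 2}\oplus 2\cT$ and the trivial $\R^3$-bundle in which $\chi$ plays the role of the non-orientable section~$\ga\to\R\P^1$, and verify that the splicing inconsistency pulls back to precisely the generator of $\pi_1(\GL_2\R)$ distinguishing \eref{RBisom_e6} and \eref{RBisom_e8}. Once this is done, the four choices $(o_1,o_2)$ bijectively produce the four combinations of $\ve\in\{\pm 1\}$ with the spin-structure parity on~$C_t$, which would yield the lemma.
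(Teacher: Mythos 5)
Your proposal follows essentially the same route as the paper's proof: reduce to comparing spin structures on the single smoothed circle $\Si_{t;1}^{\si_t}$, observe that the two smoothings ($t>0$ vs.\ $t<0$) glue the four endpoints of the two cut circles in the two opposite ways so that passing from one sign of $t$ to the other amounts to flipping the orientation on one arc, and then invoke the $\pi_1(\GL_2\R)$ computation from the second part of the proof of Corollary~\ref{canonisom_crl} to convert that orientation flip into a spin-structure flip on $\Si_{t;1}^{\si_t}$. The hyperbola-quadrant and explicit $\chi=z_1\partial_{z_1}-z_2\partial_{z_2}$ picture you develop is precisely the geometry behind the paper's more compressed combinatorial statement about which endpoints $1^\pm$, $2^\pm$ get identified, so the two proofs are the same argument at different levels of explicitness.
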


\begin{proof}
For $t\!\in\!\De_{\R}^*$, the topological component $\Si_{t;1}^{\si_t}$ of $\Si_t^{\si_t}$
corresponding to~$\Si^{\si}_1$ is obtained as follows.
Cut the first copy of~$S^1$ at~$x_1$ into a closed interval~$S^1_1$ with endpoints~$1^-$ and~$1^+$;
cut the second copy of~$S^1$ at~$x_2$ into a closed interval~$S^1_2$ with endpoints~$2^-$ and~$2^+$.
For $t\!\in\!\De_{\R}^+$, $\Si_{t;1}^{\si_t}\!\approx\!S^1$ is formed from~$S^1_1$ and~$S^1_2$
by identifying either~$1^-$ with~$2^+$ and~$1^+$ with~$2^-$
or~$1^-$ with~$2^-$ and~$1^+$ with~$2^+$.
For  $t\!\in\!\De_{\R}^-$, $\Si_{t;1}^{\si_t}$ is formed by the other identification.
Thus, the transition from $\Si_{t;1}^{\si_t}$ with $t\!\in\!\De_{\R}^-$ to 
$\Si_{t;1}^{\si_t}$ with $t\!\in\!\De_{\R}^+$ is equivalent to flipping
the second copy of~$S^1$ around~$x_2$ and another point.
This flips the orientation on~$S_2^1$.
By the second part of the proof of Corollary~\ref{canonisom_crl}, this is equivalent 
to flipping the spin structure on the restriction of the real part of~\eref{whcTcC_e}
to half of~$\Si_t^{\si_t}\!\approx\!S^1$ with $t\!\in\!\De_{\R}^*$.
Thus, precisely one of the two spin structures (either before or after the flip)
on the restriction of the real part of~\eref{whcTcC_e} to~$\Si_t^{\si_t}$ is the canonical~one.
\end{proof}

\begin{rmk}\label{realorient_rmk}
Suppose both copies of $S^1$ in the proof of Lemma~\ref{realorient_lmm} are oriented from 
the~$-$ to~$+$ end.
These orientations determine spin structures on the restrictions of the real part of~\eref{whcTcC_e} 
to the two irreducible components of~$\Si^{\si}_1$.
The spin structure over~$\Si_t^{\si_t}$ is then the canonical one if~$\Si_t^{\si_t}$
is obtained by gluing $1^-$ with~$2^-$ and~$1^+$ with~$2^+$.
This gluing untwists back a half-spin of $\R$ in~$\R^2$ over the first circle,
instead of completing it to a full twist.
\end{rmk}

\begin{crl}\label{realorient_crl}
Let $(\Si,\si)$, $(\wt\Si,\wt\si)$, $(\pi,\wt\fc)$, and 
$\cT,\wh\cT\!\lra\!\cU$ be as in Lemma~\ref{TSiext_lmm}.
The orientation on the restriction of the real line bundle
\BE{crealorient_crl_e}
\big(\!\det\dbar_{(\wh\cT,\wh\vph)^{\otimes2}}\big)\otimes 
\big(\!\det\dbar_{\C}\big)\lra   \De_{\R}\EE
to $\De_{\R}^*$ determined by the canonical isomorphisms of
Corollary~\ref{canonisom_crl} extends across $t\!=\!0$
if $(\Si,x_{12},\si)$ is of type~(E) or~(H1) and 
flips if $(\Si,x_{12},\si)$ is of type~(H2) or~(H3).
\end{crl}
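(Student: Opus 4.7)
The plan is to apply Corollary~\ref{canonisomExt2_crl2a} to the rank~1 real bundle pair $(\wh\cT,\wh\vph)^{\otimes2}$ over $(\cU,\wt\fc)$ with $L\!=\!\wh\cT$, so that the associated relative determinant line bundle of~\eref{canonisomExt2_e} coincides with~\eref{crealorient_crl_e}. Any real orientation on $(\wh\cT,\wh\vph)^{\otimes2}$ over $(\cU,\wt\fc)$ then yields an orientation on~\eref{crealorient_crl_e} over all of $\De_{\R}$, whose restriction to each $t\!\in\!\De_{\R}^*$ is the orientation on $\rdet\,D_t$ induced via Corollary~\ref{canonisom_crl2a} by the restriction of this real orientation to $(\Si_t,\si_t)$. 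The proof thus reduces to comparing this restricted real orientation with the canonical one at each smooth fiber.

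In case (E) or (H1), Lemma~\ref{EH1vsH2H3_lmm} ensures that $\cT^{\vph}$, and hence $(\wh\cT)^{\wh\vph}$, is orientable on every component of $\Si^{\si}$. The one-nodal extension of Corollary~\ref{canonisom_crl} provided by Proposition~\ref{canonisomExt_prp} thus supplies a canonical real orientation $\cR_0$ on $(\wh\cT,\wh\vph)^{\otimes2}$ over $(\Si_0,\si_0)$ requiring no auxiliary choices. Extend $\cR_0$ to a real orientation $\cR$ over $(\cU,\wt\fc)$, using that $\cU$ equivariantly deformation retracts onto $\Si_0$. For each $t\!\in\!\De_{\R}^*$, the fixed locus $\Si_t^{\si_t}$ is a disjoint union of smooth circles on which $(\cT|_{\Si_t})^{\vph}$ is orientable, and a direct inspection of the local model of $\cT$ near the node shows that the orientation on the real part induced by $\cR|_{(\Si_t,\si_t)}$ is the one coming from this continuous orientability. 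Hence $\cR|_{(\Si_t,\si_t)}$ coincides with the canonical real orientation, and Corollary~\ref{canonisomExt2_crl2a} yields the desired extension of the canonical orientations across $t\!=\!0$.

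In case (H2) or (H3), fix any one of the four natural real orientations $\cR_0$ on $(\wh\cT,\wh\vph)^{\otimes2}$ over $(\Si_0,\si_0)$ supplied by Corollary~\ref{canonisom_crl} and extend it to a real orientation $\cR$ over $(\cU,\wt\fc)$. By Lemma~\ref{realorient_lmm}, there is a sign $\ve\!\in\!\{\pm1\}$ such that $\cR|_{(\Si_t,\si_t)}$ agrees with the canonical real orientation when $\ve t\!\in\!\De_{\R}^+$ and differs from it by a spin structure flip on precisely one component of $\Si_t^{\si_t}$ when $\ve t\!\in\!\De_{\R}^-$. By the last statement of Corollary~\ref{canonisom_crl2a}, such a single spin flip reverses the induced orientation on $\rdet\,D_t$. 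The continuous orientation on~\eref{crealorient_crl_e} provided by Corollary~\ref{canonisomExt2_crl2a} therefore agrees with the canonical orientation on one side of $t\!=\!0$ and is opposite on the other, proving that the canonical orientation flips across $t\!=\!0$. The main technical obstacle is the verification in the (E)/(H1) case that $\cR|_{(\Si_t,\si_t)}$ restricts to the canonical real orientation on every smooth fiber; unlike in the (H2)/(H3) case, this step is not packaged by a ready-made lemma and requires the local analysis near the node sketched above.
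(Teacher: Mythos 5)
Your proof takes essentially the same route as the paper's: extend a real orientation from $(\Si_0,\si_0)$ over $(\cU,\wt\fc)$ via the equivariant retraction, use Corollary~\ref{canonisomExt2_crl2a} for continuity of the induced orientation on~\eref{crealorient_crl_e}, then compare fiberwise with the canonical real orientations, invoking Lemma~\ref{realorient_lmm} and the last statement of Corollary~\ref{canonisom_crl2a} for the flip in the (H2)/(H3) cases. The step you flag in the (E)/(H1) case as needing local analysis near the node is exactly the assertion the paper makes (without further detail) in the paragraph preceding Lemma~\ref{realorient_lmm}, that the extension of the canonical real orientation at $t=0$ restricts to the canonical real orientation over each smooth fiber; your instinct to verify this via the local model is sound and consistent with the paper's intent.
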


\begin{proof}
Since $\cU$ retracts onto~$\Si$ respecting the involution~$\wt\fc$,
a real orientation on the restriction of the real bundle pair~\eref{whcTcC_e} to 
the central fiber~$(\Si,\si)$ extends to a real orientation on~\eref{whcTcC_e}.
By Corollary~\ref{canonisomExt2_crl2a}, the former induces an orientation on
the real line bundle~\eref{crealorient_crl_e} over~$\De_{\R}$.
The restriction of this orientation to the fiber over each $t\!\in\!\De_{\R}^*$
is the orientation induced by the restriction of the extended real orientation to
the fiber of~\eref{whcTcC_e} as in Corollary~\ref{canonisom_crl2a}.\\

\noindent
Suppose $(\Si,x_{12},\si)$ is of type~(E) or~(H1). 
The canonical real orientation on~\eref{whcTcC_e} over~$(\Si,\si)$ then
induces the canonical real orientation on the restriction of~\eref{whcTcC_e}
over $(\Si_t,\si_t)$ with $t\!\in\!\De_{\R}^*$.
Thus, the orientation on~\eref{crealorient_crl_e} induced by 
the canonical real orientation on~\eref{whcTcC_e}
over~$(\Si,\si)$ restricts to the canonical orientation over~$t\!\in\!\De_{\R}^*$.
This establishes the claim for types~(E) and~(H1).\\   

\noindent
Suppose $(\Si,x_{12},\si)$ is of type~(H2) or~(H3).
Fix one of the four natural real orientations on~\eref{whcTcC_e}
over~$(\Si,\si)$ and let $\ve\!\in\!\{\pm1\}$ be as in Lemma~\ref{realorient_lmm}.
Since this real orientation induces the canonical real orientation on~\eref{whcTcC_e}
over~$(\Si,\si)$ if $\ve t\!\in\!\De_{\R}^+$, 
the orientation on~\eref{crealorient_crl_e} induced by the former 
restricts to the canonical  orientation if~$\ve t\!\in\!\De_{\R}^+$.
Since the chosen real orientation on~\eref{whcTcC_e} over~$(\Si,\si)$
induces an  orientation on~\eref{whcTcC_e}
differing from the canonical one by the spin structure over precisely one 
component of~$\Si_t^{\si_t}$ if $\ve t\!\in\!\De_{\R}^-$, 
the orientation on~\eref{crealorient_crl_e} induced by the former 
restricts to the opposite of the canonical orientation if~$\ve t\!\in\!\De_{\R}^-$;
see Corollary~\ref{canonisom_crl2a}.
This establishes the claim for types~(H2) and~(H3).
\end{proof}

\begin{lmm}\label{KSext_lmm}
Suppose $g,l\!\in\!\Z^{\ge0}$ with $g\!+\!l\!\ge\!2$ and
$(\Si,x_{12},\si)$, $\cC$, and $(\pi,\wt\fc,s_1,\ldots,s_l)$ are as in~\eref{cCdfn_e}
with $\cU|_{\De_{\R}}\!\lra\!\De_{\R}$ embedded inside of the universal curve fibration
over~$\R\ov\cM_{g,l}$.
The orientation on the restriction of the real line bundle
\BE{KSlbr_e}\big(\La_{\R}^{\top}(T\R\ov\cM_{g,l})\big)^*\otimes
\La_{\R}^{\top}\big((\wch{R}^1\pi_*\cT\cC)^{\si}\big) \lra \De_{\R}\EE
to $\De_{\R}^*$ induced by the KS~isomorphism 
as in~\eref{DM_prp_e3} flips across $t\!=\!0$.
\end{lmm}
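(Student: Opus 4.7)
My plan is to realize the KS isomorphism~\eref{DM_prp_e3} as (the real part of) a canonical morphism of line bundles $T\De\!\lra\!R^1\pi_*\cT\cC$ on~$\De$ arising as the connecting homomorphism of a short exact sequence of sheaves on~$\cU$, and then to compute this morphism explicitly in the local coordinates near the node. First, I would produce a short exact sequence
\[ 0\lra\cT\cC\lra\wt\cT\cC\lra\pi^*T\De\lra 0 \]
on~$\cU$, where $\wt\cT\cC$ is an appropriate subsheaf of $T\cU\bigl(\!-\!\textstyle\sum_i(s_i\!+\!\wt\fc\!\circ\!s_i)\bigr)$ of log-tangent vector fields along the nodal fiber. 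Pushing forward by~$\pi$ and invoking Lemma~\ref{Cech_lmm} gives a canonical morphism of real line bundles $T\De_\R\!\lra\!(\wch R^1\pi_*\cT\cC)^\si|_{\De_\R}$ whose restriction at each $t\!\in\!\De_\R^*$ is the standard KS map.

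Second, I would compute this morphism near the node using the coordinates $(z_1,z_2)$ of~\eref{cUdfn_e} and the trivialization of $\cT$ by the local section $s\!=\!z_1\prt_{z_1}\!-\!z_2\prt_{z_2}$ from Lemma~\ref{TSiext_lmm}. The two natural lifts of $\prt/\prt t$ to~$\cU$ are $V_1\!=\!z_2^{-1}\prt_{z_1}$ on $\{z_2\!\neq\!0\}$ and $V_2\!=\!z_1^{-1}\prt_{z_2}$ on $\{z_1\!\neq\!0\}$, and on the overlap,
\[ V_1-V_2=\frac{1}{t}\bigl(z_1\prt_{z_1}-z_2\prt_{z_2}\bigr)=\frac{1}{t}\,s\,. \]
Thus the Čech cocycle representing the KS image of $\prt/\prt t$ at~$\Si_t$ is $\tfrac{1}{t}$ times the canonical local generator~$s$, which extends across $t\!=\!0$ and vanishes only at the node itself.

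Third, I would deduce the orientation flip. On~$\De_\R^*$, the orientations of $\La_\R^{\top}T\R\ov\cM_{g,l}$ and $\La_\R^{\top}(\wch R^1\pi_*\cT\cC)^\si$ come from the complex orientations of the ambient complex line bundles; these extend continuously across $t\!=\!0$. Since the KS map is $\C$-linear on each smooth fiber, the orientation induced on~\eref{KSlbr_e} by KS is governed by the sign of the KS determinant in any compatible pair of complex trivializations. Step~2 identifies this sign with $\sgn(1/t)$, which flips as $t$ passes through~$0$. This local analysis is uniform across the four degeneration types (E), (H1), (H2), (H3): only the involution~$\wt\fc$ on~$\cU$ distinguishes the four cases, while the complex sign computation depends only on the local model $z_1z_2\!=\!t$ and the trivialization of~$\cT$, both common to all four.

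The main obstacle will be to justify Step~1 rigorously: constructing the sheaf~$\wt\cT\cC$ globally so that its pushforward reproduces the standard KS map~\eref{DM_prp_e3} on smooth fibers (including the contributions of the marked points) and verifying that the local $\tfrac{1}{t}$ Čech computation extends to a global statement about the determinant, i.e.~that contributions from patches away from the node do not introduce additional sign factors. These are standard points in the deformation theory of nodal curves, but require care in bookkeeping the matching condition $\na\xi|_{x_1}\!+\!\na\xi|_{x_2}\!=\!0$ from Lemma~\ref{TSiext_lmm} and the compatibility with the Dolbeault--Serre duality identification of Corollary~\ref{Serre_crl}.
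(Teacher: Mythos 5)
Your Step~2 is correct and agrees with the paper's explicit \v{C}ech computation (the paper computes the cocycle for $\prt/\prt|t|$, getting $|t|^{-1}z_1\prt_{z_1}$, while you compute for $\prt/\prt t$, getting $t^{-1}s$; these are equivalent). The sheaf-theoretic reformulation in Step~1 is a reasonable alternative to the paper's direct \v{C}ech computation. But Step~3 has two genuine problems.

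First, the claim that the orientations of $\La_\R^{\top}T\R\ov\cM_{g,l}$ and $\La_\R^{\top}(\wch R^1\pi_*\cT\cC)^\si$ ``come from the complex orientations of the ambient complex line bundles'' and extend continuously is not meaningful: these are {\it real} vector bundles (the tangent bundle of the real moduli space, and the real part of a complex bundle), of rank $3g-3+2l$, and they carry no canonical orientation. Their non-orientability is precisely what Propositions~\ref{DM_prp} and~\ref{DMext_prp} are about. (Also, $t^{-1}s$ does not ``extend across $t=0$''---it blows up; it is $s$ that extends.)

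Second, and more seriously, you only compute the KS map in the single normal direction $T\De_\R$, but then conclude that the sign of the determinant of the entire rank-$(3g-3+2l)$ KS isomorphism is $\sgn(1/t)$. This requires showing that the {\it tangential} part---the restriction of the KS map to $T\R\cN_{g,l}$, mapping to $\wch H^1(\wt\Si;\cO(T\wt\cC))^\si$ on the normalization---contributes a determinant sign that extends continuously across $t=0$. Invoking ``$\C$-linearity'' does not deliver this: the real KS map is $\R$-linear between real vector spaces and there is no automatic continuity of its determinant. This is exactly the structural content supplied in the paper by the short exact sequences~\eref{KSext_e5a},~\eref{KSext_e5b}, the isomorphism~\eref{KSext_e7}, and the homotopy-commutative diagram relating the KS maps for $(\Si_t,\si_t)$ and $(\wt\Si,x_1,x_2)$. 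Without this decomposition and the continuity of the tangential factor (which follows because the KS map for the normalized family has no node-degeneration), the local $1/t$ computation does not control the full determinant sign, and the argument is incomplete.
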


\begin{proof}
Let $x_1,x_2\!\in\wt\Si$ be the preimages of the node $x_{12}\!\in\!\Si$ as before and
$$T\wt\cC=T\wt\Si\big(\!-\!z_1^+\!-\!z_1^-\!-\!\ldots\!-\!z_l^+\!-\!z_l^-\!-\!x_1\!-\!x_2\big).$$
Denote by $\cN_{g,l}\!\subset\!\ov\cM_{g,l}$ and $\R\cN_{g,l}\!\subset\!\R\ov\cM_{g,l}$ 
the one-node strata,  by $L^{\R}\!\lra\!\R\cN_{g,l}$ the normal bundle
of $\R\cN_{g,l}$ in~$\R\ov\cM_{g,l}$, and~by $\cT\wt\cC\!\lra\!\wt\cU_{g-2,l+2}$
the twisted down vertical tangent bundle over the universal curve 
$\pi\!:\wt\cU_{g-2,l+2}\!\lra\!\cN_{g,l}$.
Let $\C_{x_{12}}\!\lra\!\Si$ be the skyscraper sheaf over~$x_{12}$.\\

\noindent
The short exact sequence of sheaves 
\BE{KSext_e5a0}0\lra \cO(\cT\cC|_{\Si}) \lra \cO\big(T\wt\cC\big)\lra  \C_{x_{12}}\lra 0
\EE
induces an exact sequence 
$$0\lra \C \lra \wch{H}^1\big(\Si;\cO(\cT\cC|_{\Si})\big) 
\lra \wch{H}^1\big(\wt\Si;\cO\big(T\wt\cC\big)\big) \lra 0$$
of complex vector spaces.
Its real part is a short exact sequence 
\BE{KSext_e5a} 0\lra \R \lra \wch{H}^1\big(\Si;\cO(\cT\cC|_{\Si})\big)^{\si} 
\lra \wch{H}^1\big(\wt\Si;\cO\big(T\wt\cC\big)\big)^{\si} \lra 0\EE
of real vector spaces.
By the definition of~$L^{\R}$, there is also a natural short exact sequence 
\BE{KSext_e5b} 
0\lra T_{\cC}\R\cN_{g,l} \lra T_{\cC}\R\ov\cM_{g,l}\lra L^{\R}|_{\cC}\lra 0\EE
of real vector spaces.\\

\noindent
By~\eref{KSext_e5a} and~\eref{KSext_e5b}, there is a canonical isomorphism
\BE{KSext_e7}\begin{split}
&\La_{\R}^{\top}(T_{\cC}\R\cN_{g,l}) \!\otimes\!
\La_{\R}^{\top}\big(\wch{H}^1\big(\wt\Si;\cO\big(T\wt\cC\big)\big)^{\si}\big)\\
&\hspace{.7in}\approx
\Big(\La_{\R}^{\top}(T_{\cC}\R\ov\cM_{g,l})\!\otimes\!
\La_{\R}^{\top}\big(\wch{H}^1\big(\Si;\cO(\cT\cC|_{\Si})\big)^{\si}\big)\Big)
\otimes L^{\R}\!\otimes\!\R\,.
\end{split}\EE
The complex vector bundles 
$$T\cN_{g,l},\wch{R}^1\pi_*\big(\cT\wt\cC\big)\lra \cN_{g,l}$$
extend over a neighborhood of $\cN_{g,l}$ in $\ov\cM_{g,l}$
as a subbundle of $T\ov\cM_{g,l}$ and a quotient bundle of $\wch{R}^1\pi_*\cT\cC$.
The KS~map induces an isomorphism between these two extensions.
Over a neighborhood of~$\cC$, these extensions can be chosen to be $\si$-invariant.
We then obtain a diagram 
$$\xymatrix{ T_{\cC}\R\cN_{g,l} \ar[r]\ar[d]_{\tn{KS}}^{\approx}&
T_{\cC_t}\R\ov\cM_{g,l} \ar[d]_{\tn{KS}}^{\approx}\ar[r]& 
L^{\R}|_{\cC}\ar[d]_{\tn{KS}}^{\approx}\\
\wch{H}^1\big(\wt\Si;\cO\big(T\wt\cC\big)\big)^{\si}
& \ar[l] \wch{H}^1\big(\Si_t;\cO(\cT\cC|_{\Si_t})\big)^{\si_t}& \ar[l]\R}$$
of vector space homomorphisms which
commutes up to homotopy of the isomorphisms given by the vertical arrows.
The KS~map for $(\wt\Si,x_1,x_2)$  induces a continuous orientation 
on the first tensor product on the right-side side in~\eref{KSext_e7}
and its extension over~$\De_{\R}$.
Thus, it is sufficient to show that for small values of $t\!\in\!\De_{\R}^*$
the KS map for $(\Si_t,\si_t)$ associates the radial vector 
\BE{KSext_e9}\frac\prt{\prt|t|}\in T_{\Si_t}\R\cM_{g,l}\EE
with the same direction of the factor~$\R$ in~\eref{KSext_e7}, 
regardless of whether $t\!\in\!\De_{\R}^+$ or $t\!\in\!\De_{\R}^-$ 
(in these two cases, the radial vector field determines opposite orientations on~$L^{\R}|_{\Si}$).\\

\noindent
We use the explicit description of the KS map at the bottom of page~11 in~\cite{Melissa}
and continue with the notation in the proof of Lemma~\ref{TSiext_lmm}.
We cover a neighborhood of~$\Si_t$ in~$\cU$ by the open~sets
$$\cU_1=\big\{(t,z_1,z_2)\!\in\!\cU_0\!:\,2|z_2|\!<\!1\big\} \qquad\hbox{and}\qquad
\cU_2=\big\{(t,z_1,z_2)\!\in\!\cU_0\!:\,2|z_1|\!<\!1\big\},$$
along with coordinate charts each of which intersects at most one of~$\cU_1$ and~$\cU_2$.
Since $z_1z_2\!=\!t$ on~$\cU_0$, the overlaps between the coordinates 
$z_1$ on~$\cU_1$ and $z_2$ on~$\cU_2$ are given~by
$$z_1\equiv f_{12}(t,z_2)= tz_2^{-1} \qquad\hbox{and}\qquad 
z_2\equiv f_{21}(t,z_1)= tz_1^{-1};$$
all other overlap maps do not depend on~$t$.
Thus, the KS map takes the tangent vector~\eref{KSext_e9} to the \v{C}ech 1-cocycle on~$\Si_t$ given~by
$$\th_{t;12}\equiv \frac{\prt f_{12}}{\prt|t|}\,\frac{\prt}{\prt z_1}
=|t|^{-1}z_1\frac{\prt}{\prt z_1}, \qquad
\th_{t;21}\equiv \frac{\prt f_{21}}{\prt|t|}\,\frac{\prt}{\prt z_2}
=|t|^{-1}z_2\frac{\prt}{\prt z_2},$$
and vanishing on all remaining overlaps.
The positive factor of $|t|^{-1}$ does not effect the orientation 
on the fiber of~\eref{KSlbr_e} over $t\!\in\!\De_{\R}^*$ 
induced  by the KS map and can be dropped above.
The resulting  \v{C}ech 1-cocycle~$\wh\th_t$ is then an extension of 
the  \v{C}ech 1-cocycle~$\wh\th_0$ on~$\Si$ given~by
\BE{KSext_e15}\wh\th_{0;12}\equiv z_1\frac{\prt}{\prt z_1} - z_2\frac{\prt}{\prt z_2}, 
\qquad
\wh\th_{0;21}\equiv -z_1\frac{\prt}{\prt z_1} + z_2\frac{\prt}{\prt z_2},\EE
and vanishing on all remaining overlaps.
For $t\!\in\!\De_{\R}*$, the positive direction of the last tensor product 
on the right-hand side of~\eref{KSext_e7} is thus given~by
$$\frac\prt{\prt|t|}\otimes \wh\th_t;$$
this orientation does not extend across $t\!=\!0$.
\end{proof}

\begin{proof}[{\bf \emph{Proof of Proposition~\ref{DMext_prp}}}] 
Suppose $(\Si,x_{12},\si)$, $\cC$, and $(\pi,\wt\fc,s_1,\ldots,s_l)$ are as in~\eref{cCdfn_e}
with $\cU|_{\De_{\R}}\!\lra\!\De_{\R}$ embedded inside of the universal curve fibration
over~$\R\ov\cM_{g,l}$.
The orientation on the restriction of the real line bundle~\eref{DMext_e}
to~$\De_{\R}^*$ provided by Proposition~\ref{DM_prp} is the tensor product~of 
\begin{enumerate}[label=(\arabic*),leftmargin=*]

\item\label{KSisom_it} the orientation~on the restriction of the real line bundle~\eref{KSlbr_e}  
to $\De_{\R}^*$ induced by the KS isomorphism,

\item\label{DIandSDisom_it}
the orientation~on the restriction of the real line bundle~\eref{SerreCrl_e}  
to $\De_{\R}^*$ induced by the Dolbeault and SD isomorphisms, 

\item\label{FMisom_it} the orientation~on the restriction of the real line bundle
$$ \big(\!\det\dbar_{(\wh\cT\cC,\wh\vph)\otimes(\wh\cT,\wh\vph)}\big)
\otimes  \big(\!\det\dbar_{(\wh\cT,\wh\vph)^{\otimes2}}\big) \lra   \De_{\R}$$
to $\De_{\R}^*$ induced by the short exact sequences~\eref{FMses_e}
and the specified orientations of~\eref{H0ScC_e}, 

\item\label{SQisom_it} the orientation~on the restriction of the real line 
bundle~\eref{crealorient_crl_e} to $\De_{\R}^*$
determined by the canonical isomorphisms of
Corollary~\ref{canonisom_crl}. 

\end{enumerate}
Since the family of the short exact sequences~\eref{FMses_e} and 
the specified orientations of~\eref{H0ScC_e}
extend across $t\!=\!0$, so does the orientation in~\ref{FMisom_it}.
By Corollary~\ref{Serre_crl}, the orientation in~\ref{DIandSDisom_it} also extends across $t\!=\!0$.
By Lemma~\ref{KSext_lmm}, the orientation in~\ref{KSisom_it} flips across $t\!=\!0$.
By Corollary~\ref{realorient_crl}, the orientation in~\ref{SQisom_it} 
extends across $t\!=\!0$ if $(\Si,x_{12},\si)$ is of type~(E) or~(H1) and 
flips if $(\Si,x_{12},\si)$ is of type~(H2) or~(H3).
Combining these four statements, we obtain the claim.
\end{proof}

\subsection{Proofs of the main statements}
\label{MainPfs_subs}

\noindent
We now establish the main statements of this paper,
Theorems~\ref{orient_thm} and~\ref{dim3_thm}.

\begin{proof}[{\bf \emph{Proof of Theorem \ref{orient_thm}}}] 
By Corollary~\ref{orient0_crl}, a real orientation on $(X,\om,\phi)$ determines
an orientation on the restriction of the real line bundle~\eref{orient_thm_e}
to the uncompactified moduli space
$$\fM_{g,l}(X,B;J)^{\phi,\si}\subset \ov\fM_{g,l}(X,B;J)^{\phi}$$
for every topological type~$\si$ of genus~$g$ orientation-reversing involutions.
We show that  these orientations multiplied by $(-1)^{g+|\si|_0+1}$
extend across the codimension-one strata of $\ov\fM_{g,l}(X,B;J)^{\phi}$.\\

\noindent
Suppose $[u,(z_1^+,z_1^-),\ldots,(z_l^+,z_l^-),\fJ]$ is a stable real morphism
from a one-nodal symmetric surface~$(\Si,\si)$.
Since the fibers of the forgetful morphism
$$\ov\fM_{g,l+1}(X,B;J)^{\phi}\lra \ov\fM_{g,l}(X,B;J)^{\phi}$$
are canonically oriented, we can assume that 
$$\cC\equiv \big(\Si,(z_1^+,z_1^-),\ldots,(z_l^+,z_l^-),\fJ\big)$$
is a stable symmetric surface and thus defines an element of $\R\ov\cM_{g,l}$.
The canonical isomorphism~\eref{thm_maps_e3} then extends across~$[u]$.
By Corollary~\ref{canonisomExt2_crl2a}, the canonical orientation on the restriction 
of the real line bundle~\eref{thm_maps_e9} to $\fM_{g,l}(X,B;J)^{\phi}$ also 
extends across~$[u]$.
Since $(-1)^{g+|\si|_0+1}$ flips 
across the codimension-one boundary strata of types~(E) and~(H1) and 
extends across the codimension-one boundary strata of types~(H2) and~(H3),
the claim now follows from Proposition~\ref{DMext_prp}.
\end{proof}

\begin{proof}[{\bf \emph{Proof of Theorem \ref{dim3_thm}}}] 
For $J\!\in\!\cJ_{\om}^{\phi}$ and a (real) perturbation~$\nu$ of 
the real $\dbar_J$-equation, we denote by $\ov\fM_{1,l;k}(X,B;J,\nu)^{\phi}$ 
the moduli space of real genus~1 degree~$B$ $(J,\nu)$-maps 
with $l$~conjugate pairs of marked points and $k$ real marked points.
For $k\!=\!0$, we omit the corresponding subscript.
If $(X,\om,\phi)$ is semi-positive in the sense of \cite[Definition~1.2]{RealRT},
then $\nu$ can be taken to be a real Ruan-Tian perturbation as defined 
in \cite[Section~3.1]{RealRT}.
In general, $\nu$ is a perturbation in the sense of Kuranishi structures.\\

\noindent
By Theorem~\ref{orient_thm}, the compactified moduli space 
$\ov\fM_{1,l}(X,B;J,\nu)^{\phi}$ is orientable. 
Thus, the orientability of $\ov\fM_{1,l;k}(X,B;J,\nu)^{\phi}$ is determined by 
the orientability of the vertical tangent bundle of the forgetful morphism
\BE{Rforg_e} \ov\fM_{1,l;k}(X,B;J,\nu)^{\phi}\lra \ov\fM_{1,l}(X,B;J,\nu)^{\phi}\EE
dropping the real marked points.
The fibers of~\eref{Rforg_e} over the main strata 
$$\fM_{1,l}(X,B;J,\nu)^{\phi,\si}\subset \ov\fM_{1,l}(X,B;J,\nu)^{\phi}$$
are open subsets of $(S^1)^k$.\\

\noindent 
Since there are diffeomorphisms $h\!\in\!\cD_{\si}$
which reverse an orientation on the fixed locus,
the vertical tangent bundle of~\eref{Rforg_e} is not orientable over 
$\fM_{1,l}(X,B;J,\nu)^{\phi,\si}$ if $k$ is odd.
If $k$ is even, the fibers of~\eref{Rforg_e} are canonically oriented as follows.
If $|\si|_0\!=\!1$, an orientation on the fixed locus determines an orientation on
each  fiber of~\eref{Rforg_e} which is independent of the choice of the first orientation.
If $|\si|_0\!=\!2$, the fixed locus~$\Si^{\si}$ splits~$\Si$ into two annuli;
let~$\Si^b$ be either of these annuli.
Endow one of the boundary circles of~$\Si^b$ with the induced boundary orientation 
and the other with the opposite of the induced boundary orientation.
These choices determine an orientation on each fiber of~\eref{Rforg_e}.
Since $k$ is even, this orientation is independent of which circle is oriented as a boundary
and thus of the choice of the half~$\Si^b$.
We determine the orientability of the vertical tangent bundle over $\ov\fM_{1,l}(X,B;J,\nu)^{\phi,\si}$
by studying how these canonical orientations change across the codimension-one boundary strata.\\

\noindent
If $g\!=\!1$, the codimension-one boundary strata can be of types~(E), (H1), and~(H3) only. 
If $k\!>\!0$, the domains of all morphisms of type~(E)
are one-nodal symmetric surfaces $(\Si,x_{12},\si)$ with the fixed locus consisting
of the node~$x_{12}$ and a fixed circle $\Si^{\si}_1$ containing all of the real marked points.
The canonical orientations on the fibers of~\eref{Rforg_e} extend across such strata.\\

\noindent
In the (H1)~case, the nodal symmetric surface~$(\Si,\si)$ is $(\P^1,\tau)$ 
with two real points identified.
In particular, the fixed locus~$\Si^{\si}$ splits~$\Si$ into two copies of 
a disk with two boundary points identified;
denote by~$\Si^b$ either of these copies and by $x_{12}\!\in\!\Si^{\si}$ the node.
Let $(\cT,\vph)$ be the real bundle pair over a one-parameter family of smoothings of~$(\Si,\si)$
as in Lemma~\ref{TSiext_lmm}.
An orientation on $\cT^{\vph}|_{\Si^{\si}}\!\lra\!\Si^{\si}$ induces an orientation 
on~$T\Si_t^{\si_t}$ for every smoothing of~$(\Si_t,\si_t)$.
By the matching condition on $\Om(\cT|_{\Si_0})$ in Lemma~\ref{TSiext_lmm},
the orientation~on
$$ \cT^{\vph}|_{\Si^{\si}-x_{12}} = T\big(\Si^{\si}\!-\!x_{12}\big)$$
as the boundary of~$\Si^b$ does not extend over~$x_{12}$.
This implies that the orientation on~$\Si_t^{\si_t}$ with $|\si_t|_0\!=\!2$
induced by an orientation on $\cT^{\vph}|_{\Si^{\si}}$ is not 
the boundary orientation from either of the annuli obtained by cutting~$\Si_t$
along~$\Si_t^{\si_t}$.
Thus, the canonical orientations on the fibers of~\eref{Rforg_e} extend across 
the (H1) boundary strata as~well.\\ 

\noindent
In the (H3)~case, the nodal symmetric surface~$(\Si,\si)$ consists of a genus~1 surface with 
a sphere bubble attached.
A choice of an orientation on~$\Si^{\si}$ is compatible with the orientation of 
the fixed locus on only one side of the boundary. 
If the number of the real marked points on either the torus or the sphere is even, 
then the orientation of the fibers of~\eref{Rforg_e} still extends across this stratum.
We will call the codimension-one boundary strata of type~(H3) with odd numbers of real marked
points on the torus and the sphere to be of type~(H3$^-$).
Following the approach of \cite{Cho,Sol}, 
we show that in a generic one-parameter family the cut-down moduli space 
does not cross such strata  and 
thus the counting invariant~\eref{dim3_thm} is well-defined.\\

\noindent
Let
\begin{gather*}
\ev\!:\fM_{1,l;k}^*(X,B;J,\nu)^{\phi}\lra X^l\!\times\!(X^{\phi})^k, \\
\big[u,(z_1^+,z_1^-),\ldots,(z_l^+,z_l^-),x_1,\ldots,x_k,\fJ\big]\lra
\big(u(z_1^+),\ldots,u(z_l^+),u(x_1),\ldots,u(x_k)\big),
\end{gather*}
be the total evaluation map from the moduli space of simple $(J,\nu)$-maps.
Choose pseudocycle representatives
$$h_1\!:Y_1\lra X, \quad\ldots, \quad h_l\!:Y_l\lra X$$
for the Poincare duals of $\mu_1,\ldots,\mu_l$;
this is possible to do by \cite[Theorem~1.1]{pseudo}.
We can assume~that 
\BE{eq_dimc}\sum_{i=1}^{l}(\deg\mu_i\!-\!2)+2k=\blr{c_1(X),B}\EE
and so $k$ is even under our assumptions.
Choose $k$ real points $p_1,\ldots,p_k\!\in\!X^{\phi}$.
If $(X,\om,\phi)$ is semi-positive, $(J,\nu)$ is generic,
and $h_1,\ldots,h_l,p_1,\ldots,p_k$ are chosen generically,
then $\ev$ is transverse to the pseudocycle
$$\prod_{i=1}^lY_i\lra X^l\!\times\!(X^{\phi})^k, \quad
\big(y_1,\ldots,y_l\big)\lra 
\big(h_1(y_1),\ldots,h_l(y_l),p_1,\ldots,p_k\big) \,.$$
The intersection of~$\ev$ with this pseudocycle, i.e.
\begin{equation*}\begin{split}
&\fM_{1,l;k}^*(X,B;J,\nu)_{h_1,\ldots,h_l;p_1,\ldots,p_k}^{\phi}\\
&\quad\equiv
\Big\{\big([u,(z_1^+,z_1^-),\ldots,(z_l^+,z_l^-),x_1,\ldots,x_k,\fJ],y_1,\ldots,y_l\big)
\in\!\fM_{1,l;k}^*(X,B;J,\nu)^{\phi}\!\times\!\prod_{i=1}^lY_i\!:\\
&\hspace{2.35in}u(z_i^+)\!=\!h_i(y_i)~\forall\,i\!=\!1,\ldots,\ell,~u(x_i)\!=\!p_i~\forall\,i\!=\!1,\ldots,k\Big\},
\end{split}\end{equation*}
is then a zero-dimensional manifold.
A real orientation on~$(X,\om,\phi)$ and the canonical orientation 
on the vertical tangent bundle of~\eref{Rforg_e} determine an orientation of this manifold.
We~set
$$\blr{\mu_1,\ldots,\mu_l;\pt^k; J,\nu}_{1,B}^{\phi}
=\, ^{\pm}\!\big|\fM_{1,l;k}^*(X,B;J,\nu)_{h_1,\ldots,h_l;p_1,\ldots,p_k}^{\phi}\big|$$
to be the signed cardinality of this set.\\

\noindent
Let $(J_1,\nu_1)$ and $(J_2,\nu_2)$ be two regular $\phi$-invariant pairs  
and $\{J_t,\nu_t\}$ be a generic path between them.
If $(X,\om,\phi)$ is semi-positive, 
the image of the $\phi$-multiply covered maps is of codimension at least~2;
a generic path of cut-down moduli spaces thus avoids~them.
Along the path $\{J_t,\nu_t\}$, the cut-down moduli space forms a one-dimensional bordism
and contains finitely many points in the codimension-one boundary strata of type~(H3$^-$).  
We orient this bordism outside of the (H3$^-$) elements as the preimage~of the~submanifold
$$\big\{(q_1,\ldots,q_l,p_1,\ldots,p_k,q_1,\ldots,q_l)\!:\,q_1,\ldots,q_l\!\in\!X\big\}
\subset X^l\!\times\!(X^{\phi})^k\!\times\!X^l$$
under the transverse morphism 
\begin{align*}
\ev\times h_1\!\times\!\ldots\!\times\!h_l\!:
\bigcup_{t\in[0,1]} \!\!\! \{t\}\!\times\!\fM_{1,l;k}^*(X,B;J_t,\nu_t)
\times\prod_{i=1}^l Y_i \lra X^{l}\!\times\!(X^\phi)^k\!\times\!X^l.
\end{align*}
The signed cardinalities of the boundaries of this bordism over $t\!=\!0$
and $t\!=\!1$ are  
\BE{condend_e}- \blr{\mu_1,\ldots,\mu_l;\pt^k; J_0,\nu_0}_{1,B}^{\phi}
\qquad\hbox{and}\qquad  
\blr{\mu_1,\ldots,\mu_l;\pt^k; J_1,\nu_1}_{1,B}^{\phi},\EE  
respectively.\\

\noindent
Suppose that in a one-parameter family the cut-down moduli space crosses
a codimension-one boundary stratum of type~(H3) with the map degree splitting 
into classes $B_1,B_2\!\in\!H_2(X;\Z)$ between the genus~1 surface and 
the sphere bubble, respectively.
Let $l_1,l_2\!\in\!\Z^{\ge0}$ be the numbers of conjugate pairs of marked points 
carried by the two components and $k_1,k_2\!\in\!\Z^{\ge0}$
be the numbers of real marked points carried by~them.
Thus,
$$B_1\!+\!B_2=B, \qquad l_1\!+\!l_2=l, \qquad k_1\!+\!k_2=k.$$
By a dimension count, this can happen  only~if 
$$ \sum_{i=1}^{l_1}(\deg\mu_{j_i}\!-\!2)+2k_1\le \blr{c_1(X),B_1}+1 \quad\text{and}\quad
 \sum_{i=1}^{l_2 }(\deg\mu_{j_i}\!-\!2)+2k_2 \le \blr{c_1(X),B_2}+1.$$
Using (\ref{eq_dimc}), we obtain  
$$\sum_{i=1}^{l_2 }(\deg\mu_{j_i}\!-\!2)+2k_2-1 \le  \blr{c_1(X),B_2}\le
 \sum_{i=1}^{l_2 }(\deg\mu_{j_i}\!-\!2)+2k_2+1\,. $$
Since $\deg\mu_{j_i}\!-\!2$ and $\lr{c_1(X),B_2}$ are divisible by~4, this implies that $k_2$ is even
and that the codimension-one boundary strata of type~(H3$^-$) are never crossed.
Thus, the canonical orientations extend over the whole cobordism
and the two counts in~\eref{condend_e} are equal.\\ 

\noindent
A similar cobordism argument holds for a semi-positive deformation of $\om$ 
and for a change of the pseudocycle representatives.
The general case is treated using Kuranishi structures similarly to \cite[Section~7]{Sol}.
\end{proof}

\appendix

\section{Topologizing determinant line bundles}
\label{detLB_app}

\noindent
The existence of topologies on the total space of~\eref{detLB_e} with good properties
is readily implied by the main algebraic conclusion of~\cite{KM}
in combination with some of the analytic results obtained in \cite[Section~3]{LT}.
The latter ensure that the kernels of surjective CR-operators 
$D_{(V,\vph);\t}$ in~\eref{DVvphdfn_e} and their extensions~$D_{\Th;\t}$
in~\eref{DThtdfn_e}
form vector bundles over~$\De$ and thus so do the determinants of these operators
(the determinants are then the top exterior powers of the kernels);
see Proposition~\ref{LTtopol_prp}.
In the general case, the bundle isomorphisms~\eref{cIThD_e2} would  topologize
the total space of~\eref{detLB_e} from the determinant line bundles of surjective operators 
if the resulting overlap maps between the latter 
are continuous.
For the property~\ref{dses_it} on page~\pageref{dses_it} to hold,
{\it all} isomorphisms~\eref{cIThD_e2} and thus the aforementioned overlap maps
{\it need to} be~continuous.\\

\noindent
The isomorphisms~\eref{cIThD_e2} are special cases of the isomorphisms~$\wh\cI_{\Th;D}$
in~\eref{whcIcIdfn_e}; the latter are induced by the isomorphisms~\eref{sum}
associated with the exact triples~\eref{cIhatdiag_e} of Fredholm operators.
The isomorphisms~\eref{sum} associated with the exact triples~\eref{compEX_e}
induce the isomorphisms~$\cI_{\Th;D}$ in~\eref{whcIcIdfn_e}
going in the opposite direction.
The property~\ref{dses_it} and thus the continuity of the isomorphisms in~\eref{whcIcIdfn_e}
would be implied by the two purely algebraic Compositions properties of \cite[Section~2]{detLB}
for homomorphisms between {\it finite}-dimensional vector spaces.
By the main algebraic conclusion of~\cite{KM}, it is possible to {\it choose} 
the isomorphisms~\eref{sum} for exact triples of operators
between {\it finite}-dimensional vector spaces so that
they satisfy these two properties.
Furthermore, the resulting topologies on the determinant line bundles~\eref{detLB_e} satisfy
all properties in \cite[Section~2]{detLB}
with Fredholm spaces replaced by real bundle pairs.
In fact, the choice of a good collection of the isomorphisms~\eref{sum} for
 non-surjective operators between finite-dimensional vector spaces is not unique.
However, any two choices induce topologies that differ by homeomorphisms 
intertwining all isomorphisms between determinant line bundles listed in
\cite[Section~2]{detLB}; see \cite[Theorem~2]{detLB}.\\

\noindent
As shown in \cite[Appendix~D.2]{Huang} and \cite[Section~3.2]{detLB}, 
the topologies on the determinant line bundles over families of Fredholm operators 
between {\it fixed} Banach spaces arise from {\it exactly} the same algebraic considerations.   
The only difference is that 
the analogue of Proposition~\ref{LTtopol_prp} for continuous families of 
surjective Fredholm operators between {\it fixed} Banach spaces is
straightforward.

\subsection{Linear algebra}
\label{KMdetLB_subs}

\noindent
We begin by recalling the relevant algebraic facts from~\cite{detLB}.
We denote by
$$\OmN \equiv e_1\!\w\!\ldots\!\w\!e_N$$ 
the standard volume tensor on~$\R^N$ and by $\Om_N^*\!\in\!(\La_{\R}^N\R^N)^*$
its dual.
For a Banach space~$X$ and \hbox{$N,N_1,N_2\!\in\!\Z^{\ge0}$}, define
\begin{alignat*}{2}
\io_{X;N}\!:X&\lra X\!\oplus\!\R^N, &\qquad \io_{X;N}(x)&=(x,0), \\
R_{X;N_1,N_2}\!:X\!\oplus\!\R^{N_1}\!\oplus\!\R^{N_2}&\lra 
X\!\oplus\!\R^{N_2}\!\oplus\!\R^{N_1}, &\qquad 
R_{X;N_1,N_2}(x,v_1,v_2)&=(x,v_2,v_1).
\end{alignat*}
For vector space homomorphisms $\Th\!:\R^N\!\lra\!Y$ and \hbox{$R:\R^{N'}\!\lra\!\R^N$}
and a Fredholm operator \hbox{$D\!:X\!\lra\!Y$},
define
\begin{alignat*}{2}
D_{\Th}\!:X\!\oplus\!\R^N&\lra Y, &\qquad D_{\Th}(x,v)&=Dx+\Th(v), \\
R_{\Th;D}\!:\ker D_{\Th\circ R}&\lra\ker D_{\Th}, &\qquad R_{\Th;D}(x,v')&=(x,Rv').
\end{alignat*}
In particular, the triple $\ft_{\Th;D}$ 
\BE{cIhatdiag_e}\begin{split}
\xymatrix{ 0\ar[r]& X\ar[d]^D\ar[r]& X\!\oplus\!\R^N 
\ar[d]^{D_{\Th}}\ar[r]^<<<<{\pi_2}&  \R^N \ar[d]\ar[r] & 0\\
0\ar[r]& Y \ar[r]& Y\ar[r]& 0\ar[r]& 0}
\end{split}\EE
of Fredholm operators is exact.\\

\noindent
A \sf{homomorphism between Fredholm operators $D\!:X\!\lra\!Y$ and
$D'\!:X'\!\lra\!Y'$} is a pair of homomorphisms $\phi\!:X\!\lra\!X'$ 
and $\psi\!:Y\!\lra\!Y'$ so that $D'\!\circ\!\phi\!=\!\psi\!\circ\!D$;
an \textsf{isomorphism between Fredholm operators $D$ and~$D'$} 
is a homomorphism $(\phi,\psi)\!:D\!\lra\!D'$
so that $\phi$ and $\psi$ are isomorphisms.
Such an isomorphism induces isomorphisms
\begin{gather*}
\det\phi\!:\La^{\top}_{\R}(\ker D)\stackrel{\approx}{\lra}\La^{\top}_{\R}(\ker D'), \qquad
\det\psi^{-1}\!:\La^{\top}_{\R}(\cok\,D')\stackrel{\approx}{\lra}\La^{\top}_{\R}(\cok\,D),\notag\\
\label{cIphipsi_e}
\cI_{\phi,\psi;D}\!:\det D\stackrel{\approx}{\lra} \det D', \quad
x\!\w\!\al\lra  \big(\{\det\phi\}(x)\big)\!\w\!\big(\al\!\circ\!\{\det\psi^{-1}\}\big).
\end{gather*}
For homomorphisms $\Th_1\!:\R^{N_1}\!\lra\!Y$ and $\Th_2\!:\R^{N_2}\!\lra\!Y$, let 
$$\cR_{\Th_1,\Th_2;D}\!=\!\cI_{R_{X;N_1,N_2},\id_Y;D_{\Th_1\oplus\Th_2}}\!:
\det D_{\Th_1\oplus\Th_2}\stackrel{\approx}{\lra} \det D_{\Th_2\oplus\Th_1}\,.$$ 

\vspace{.2in}

\noindent
A pair of Fredholm operators $D_1\!:X_1\!\lra\!X_2$ and $D_2\!:X_2\!\lra\!X_3$
determines an exact triple
\BE{compEX_e}\begin{split}
\xymatrix{0\ar[r]& X_1\ar[d]^{D_1}
\ar[r]^>>>>>{\fI_X}& X_1\!\oplus\!X_2\ar[d]|{D_2\circ D_1\oplus\id_{X_2}}\ar[r]^<<<<<{\fJ_X}& 
X_2\ar[d]^{D_2}\ar[r]&0
&*\txt{$~~~~\fI_X(x_1)=(x_1,D_1x_1)$\\ $\fJ_X(x_1,x_2)=D_1x_1\!-\!x_2$}\\
0\ar[r]& X_2\ar[r]^>>>>>{\fI_Y}& X_3\!\oplus\!X_2\ar[r]^<<<<<{\fJ_Y}& X_3\ar[r]&0
&*\txt{$~~~~~\fI_Y(x_2)=(D_2x_2,x_2)$\\ $\fJ_Y(x_3,x_2)=x_3\!-\!D_2x_2$\,,}}
\end{split}\EE
of Fredholm operators.
The isomorphism~\eref{sum} for this triple~becomes
\BE{cCisom_e}\wt\cC_{D_1,D_2}\!:\big(\!\det D_1\!\big)\!\otimes\!\big(\!\det D_2\!\big)
\stackrel{\approx}{\lra} \det(D_2\!\circ\!D_1)\,.\EE
If $D$, $\Th$, and $\pi_2$ are as above, \hbox{$D\!=\!D_{\Th}\!\circ\!\io_{X;N}$} and 
the projection
$$\pi_2\!: \cok\,\io_{X;N}\lra\R^N$$
is an isomorphism.
We thus obtain two isomorphisms induced by~\eref{sum},
\BE{whcIcIdfn_e}\begin{aligned}
\wh\cI_{\Th;D}\!: \det D&\stackrel{\approx}{\lra} \det D_{\Th}, &\qquad
\wh\cI_{\Th;D}(\vp)&=\Psi_{\ft_{\Th;D}}\big(\vp\!\otimes\!\OmN\!\otimes\!1^*\big),\\
\cI_{\Th;D}\!: \det D_{\Th}&\stackrel{\approx}{\lra} \det D, &\qquad
\cI_{\Th;D}(\vp)&=\cC_{\io_{X;N},D_{\Th}}\big(1\!\otimes\!
(\OmN^*\!\circ\!\{\det\pi_2\})\otimes\vp\big),
\end{aligned}\EE
via~\eref{cIhatdiag_e} and~\eref{cCisom_e}, respectively.\\

\noindent
Every short exact sequence
\BE{sesVdfn_e}0\lra V'\lra V \lra V''\lra 0\EE
of finite-dimensional vector spaces determines an isomorphism
\BE{FDsum_e}\big(\La_{\R}^{\top}V'\big)\!\otimes\!\big(\La_{\R}^{\top}V''\big)
\stackrel{\approx}{\lra} \La_{\R}^{\top}V\EE
between the top exterior powers of the vector spaces involved;
see \cite[Lemma~4.1]{detLB}.
By the Snake Lemma, an exact triple \eref{cTdiag_e} induces an exact sequence 
\BE{ETles_e} 0\lra \ker D'\stackrel{\fI_X}{\lra} \ker D \stackrel{\fJ_X}{\lra} \ker D''
\stackrel{\de}{\lra} \cok\,D'  \stackrel{\fI_Y}{\lra} \cok\,D
\stackrel{\fJ_Y}{\lra} \cok\,D''\lra0\EE
of finite-dimensional vector spaces.
It is equivalent to four {\it short} exact sequences, such as 
$$0\lra \ker D'\stackrel{\fI_X}{\lra} \ker D \stackrel{\fJ_X}{\lra}\Im\,\fJ_X\lra0.$$
The isomorphisms~\eref{sum} should clearly be induced by the isomorphisms~\eref{FDsum_e}
corresponding to these four short exact  sequences.
However, there are at least choices of signs involved in putting the four resulting isomorphisms
together, depending on the dimensions of the vector spaces appearing 
in the four sequences.
Choosing these signs in some compatible fashion is necessary to ensure
that the isomorphisms~\eref{cIThD_e2} used to topologize determinant line bundles
overlap continuously.\\

\noindent
If the operators in~\eref{cTdiag_e} are surjective, 
the exact sequence~\eref{ETles_e} reduces~to the exact sequence
$$0\lra \ker D'\stackrel{\fI_X}{\lra} \ker D \stackrel{\fJ_X}{\lra} \ker D''\lra0.$$
It is then standard to require that the corresponding isomorphism~\eref{sum}
be given by the isomorphism~\eref{FDsum_e} associated with this exact sequence
of kernels; this property is Normalization~II in \cite[Section~2]{detLB}.
An explicit formula for the isomorphism~\eref{sum} in the general case
with this property is given by \cite[(4.10)]{detLB}.
The induced isomorphisms~\eref{cCisom_e} satisfy the two 
algebraic Compositions properties in \cite[Section~2]{detLB}
and thus the remaining algebraic properties listed there 
(Naturality~II and~III and Exact Squares); see the paragraph
after Theorem~1 in \cite[Section~2]{detLB}.
The associated isomorphisms~\eref{whcIcIdfn_e} satisfy
\begin{gather}\label{whcIcIprp_e1}
\cI_{\Th;D}\!\circ\!\wh\cI_{\Th;D}=(-1)^{(\ind\,D)N}\id\!:
\det D\stackrel{\approx}{\lra}\det D, \\
\label{whcIcIprp_e2}
\cI_{\Th_2;D}^{-1}\!\circ\!\cI_{\Th_1;D}=(-1)^{N_1N_2}
\cI_{\Th_1;D_{\Th_2}}\!\circ\!\cR_{\Th_1,\Th_2;D}\!\circ\!\cI_{\Th_2;D_{\Th_1}}^{-1}
\!:\det D_{\Th_1}\stackrel{\approx}{\lra} \det D_{\Th_2}
\end{gather}
for all Fredholm operators $D\!:X\!\lra\!Y$ and homomorphisms
$$\Th\!:\R^N\lra Y, \qquad  \Th_1\!:\R^{N_1}\lra Y, \qquad \Th_2\!:\R^{N_2}\lra Y\,;$$
see \cite[Lemma~4.11]{detLB} and the end of the proof of \cite[Proposition~5.3]{detLB}.

\subsection{Analysis and topology}
\label{LTdetLB_subs}

\noindent
Let $(\Si_0,\si_0,\fJ_0)$, $(\pi,\wt\fc)$, and $(V,\vph)$
be as above~\eref{detLB_e}.
Fix $p\!>\!2$, $\wt\fc$-invariant Riemannian metric on~$\cU$, and 
a $\vph$-invariant metric on~$V$.
For each $\t\!\in\!\De_{\R}$, we denote by 
$$\cE_{\t}(V)^{\vph}\supset \Ga\big(\Si_{\t};V|_{\Si_{\t}}\big)^{\vph}
\qquad\hbox{and}\qquad 
\cE_{\t}^{0,1}(V)^{\vph}\supset \Ga_{\fJ_{\t}}^{0,1}\big(\Si_{\t};V|_{\Si_{\t}}\big)^{\vph}$$
the completions of the spaces of smooth $(\vph,\si_{\t})$-invariant bundle sections
in the modified $L^p_1$- and $L^p$-norms $\|\cdot\|_{p,1}$ and $\|\cdot\|_p$,
respectively, introduced in \cite[Section~3]{LT}. 
The norms $\|\cdot\|_{p,1}$ and $\|\cdot\|_p$ dominate the usual $L^p_1$- and $L^p$-norms,
but are equivalent to them away from the nodes of~$\Si_{\t}$.
Some of the key properties of these norms are summarized by the next statement.
Let
\BE{Dtdfn_e}D_{\t}\!:\cE_{\t}(V)^{\vph}\lra \cE_{\t}^{0,1}(V)^{\vph}\EE
be the operator induced by $D_{(V,\vph);\t}$.

\begin{lmm}\label{LTnorms_lmm}
For every $\t^*\!\in\!\De_{\R}$, there exist a neighborhood~$\De_{\t^*}$ of~$\t^*$ in~$\De_{\R}$
and $C_{\t^*}\!\in\!\R^+$ such~that 
$$\|\xi\|_{C^0}\le C_{\t^*}\|\xi\|_{p,1},\quad 
\big\|D_{\t}\xi\big\|_p\le  C_{\t^*}\|\xi\|_{p,1}, \quad
\|\xi\|_{p,1}\le C_{\t^*}\big(\|D_{\t}\xi\big\|_p\!+\!\|\xi\|_p \big)$$
for all $\xi\!\in\!\cE_{\t}(V)^{\vph}$ and $\t\!\in\!\De_{\t^*}$.
\end{lmm}

\noindent
The second inequality above is immediate from the definition of the norms
$\|\cdot\|_{p,1}$ and $\|\cdot\|_p$.
The first inequality holds even with the standard $L^p_1$-norm on the right-hand side;
see \cite[Lemma~3.2]{LT} and \cite[Proposition~4.10]{anal}.
The last inequality is the crucial uniform elliptic estimate of \cite[Lemma~3.9]{LT};
see the proof of \cite[Proposition~5.11]{gluing} and \cite[Section~4.3]{anal}
for more details.
By Lemma~\ref{LTnorms_lmm}, \eref{Dtdfn_e} is a Fredholm operator;
its index, which we denote by $\ind\,D_{(V,\vph)}$, does not depend on~$\t$.\\

\noindent
The normed topologies on the fibers of the projections
$$\cE(V)^{\vph}\!\equiv\!\bigsqcup_{\t\in\De_{\R}}\!\!\!
\big(\{\t\}\!\times\!\cE_{\t}(V)^{\vph}\big) \lra\De_{\R} \quad\hbox{and}\quad
\cE^{0,1}(V)^{\vph}\!\equiv\!\bigsqcup_{\t\in\De_{\R}}\!\!\!
\big(\{\t\}\!\times\!\cE_{\t}^{0,1}(V)^{\vph}\big) \lra\De_{\R}$$
are extended to topologies on $\cE(V)^{\vph}$ and $\cE^{0,1}(V)^{\vph}$
in \cite[Section~3]{LT}.
These topologies are described as follows.
Let $\t\!\in\!\De_{\R}$ and 
$$\psi_{\t'}\!:\Si_{\t}^*\lra q^{-1}(\Si_{\t}^*)\!\cap\!\Si_{\t'}$$
be analogues of the diffeomorphisms~\eref{psitdfn_e} defined for $\t'\!\in\!\De$
in a neighborhood of~$\t$.
For each $\de\!\in\!\R^+$, denote by $B_{\t;\de}'\!\subset\!\cU$ the $\de$-neighborhood
of the nodes of~$\Si_{\t}$.
Suppose $\t_r\!\in\!\De_{\R}$ is a sequence converging to~$\t$.
A sequence $\xi_r\!\in\!\cE_{\t_r}(V)^{\vph}$  \sf{converges} to 
$\xi\!\in\!\cE_{\t}(V)^{\vph}$ if 
\begin{enumerate}[label=(\alph*),leftmargin=*]

\item the sequence $\xi_r\!\circ\!\psi_{\t_r}$ converges to $\xi$ in
the $L^p_1$-norm on compact subsets of $\Si_{\t}$ and 

\item $\displaystyle\lim_{\de\lra0}\lim_{r\lra\i}
\big\|\xi_r|_{B_{\t;\de}'\cap\Si_{\t_r}}\big\|_{p,1}\!=\!0$.

\end{enumerate}
The topology on $\cE^{0,1}(V)^{\vph}$ introduced in~\cite{LT} is described
analogously, with the $L^p_1$-norms replaced by $L^p$-norms.\\

\noindent
For any bundle homomorphism
\BE{Thdfn_e}\Th\!: \De_{\R}\!\times\!\R^N\lra \cE^{0,1}(V)^{\vph},\qquad
\Th(\t,v)=\big(\t,\ze_{\t,v}\big),\EE
and $\t\!\in\!\De_{\R}$, let
$$\Th_{\t}\!:\R^N\lra \cE_{\t}^{0,1}(V)^{\vph}, \qquad
\Th_{\t}(v)=\ze_{\t,v}\,,$$
be the restriction of $\Th$ to the fiber over~$\t$.
Define
\begin{gather} 
\label{DThtdfn_e}
D_{\Th;\t}\!=\!(D_{\t})_{\Th_{\t}}\!: 
\cE_{\t}(V)^{\vph}\!\oplus\!\R^N\lra\cE_{\t}^{0,1}(V)^{\vph},\quad
U_{\Th}=\big\{\t\!\in\!\De_{\R}\!:\cok\,D_{\Th;\t}\!=\!\{0\}\!\big\},\\
\label{kerDThdfn_e}
\ker D_{\Th}=\big\{(\t,\xi,v)\!\in\!\cE(V)^{\vph}\!\times\!\R^N\!\!:\t\!\in\!U_{\Th},\,
D_{\Th;\t}(\xi,v)\!=\!0\big\}\lra U_{\Th}\,,\\
\notag
\wh\cI_{\Th;\t}\!=\!\wh\cI_{\Th_{\t};D_{\t}}\!: 
\det D_{\t}\stackrel{\approx}{\lra} \det D_{\Th;\t}, \quad
\cI_{\Th;\t}\!=\!\cI_{\Th_{\t};D_{\t}}\!: 
\det D_{\Th;\t}\stackrel{\approx}{\lra}\det D_{\t}\,.
\end{gather}
If in addition 
\begin{gather}
\label{Rbnddfb_e}
R:\De_{\R}\!\times\!\R^{N'}\lra\De_{\R}\!\times\!\R^N, \\
\label{Th12bnddfn_e}
\Th_1\!:\De_{\R}\!\times\!\R^{N_1}\lra \cE^{0,1}(V)^{\vph},\quad
\Th_2\!:\De_{\R}\!\times\!\R^{N_2}\lra \cE^{0,1}(V)^{\vph}
\end{gather}
are bundle homomorphisms, let
\begin{gather*}
R_{\Th;\t}\!=\!R_{\Th_{\t};D_{\t}}\!:\ker D_{\Th\circ R;\t}\lra\ker D_{\Th;\t},\\
\cR_{\Th_1,\Th_2;\t}\!=\!\cR_{(\Th_1)_{\t},(\Th_2)_{\t};D_{\t}}\!:
\det D_{\Th_1\oplus\Th_2;\t}\stackrel{\approx}{\lra} \det D_{\Th_2\oplus\Th_1;\t}\,.
\end{gather*}
By~\eref{whcIcIprp_e1} and~\eref{whcIcIprp_e2},
\begin{gather}\label{whcIcIprp_e3}
\cI_{(\Th_2)_{\t};D_{\Th_1;\t}}\!\circ\!\wh\cI_{(\Th_2)_{\t};D_{\Th_1;\t}}
=(-1)^{(\ind\,D_{(V,\vph})+N_1)N_2}\id\!:
\det D_{\Th_1;\t}\stackrel{\approx}{\lra}\det D_{\Th_1;\t}, \\
\label{whcIcIprp_e4}
\cI_{\Th_2;\t}^{-1}\!\circ\!\cI_{\Th_1;\t}=(-1)^{N_1N_2}
\cI_{(\Th_1)_{\t};D_{\Th_2;\t}}\!\circ\!\cR_{\Th_1,\Th_2;\t}\!\circ\!
\cI_{(\Th_2)_{\t};D_{\Th_1;\t}}^{-1}
\!:\det D_{\Th_1;\t}\stackrel{\approx}{\lra} \det D_{\Th_2;\t}
\end{gather}
for all $\t\!\in\!\De_{\R}$.\\

\noindent
We call a bundle homomorphism as in~\eref{Thdfn_e} \sf{smoothly supported} 
if $\ze_{\t,v}\!\in\!\cE_{\t}^{0,1}(V)^{\vph}$ is smooth and 
$\supp(\ze_{\t,v})\!\subset\!\Si_{\t}^*$
for all $\t\!\in\!\De_{\R}$ and $v\!\in\!\R^N$.

\begin{prp}\label{LTtopol_prp}
For every continuous smoothly supported bundle homomorphism~$\Th$ as in~\eref{Thdfn_e},
\hbox{$U_{\Th}\!\subset\!\De_{\R}$} is an open subset and \eref{kerDThdfn_e}
is a vector bundle.
If in addition~$R$ is a continuous bundle homomorphism as in~\eref{Rbnddfb_e},
then $U_{\Th\circ R}\!\subset\!U_{\Th}$ and 
$$R_{\Th}\!: \ker D_{\Th\circ R}\lra\ker D_{\Th}\big|_{U_{\Th\circ R}}, \qquad
R_{\Th}(\t,\xi,v)=\big(\t,R_{\Th;\t}(\xi,v)\!\big),$$
is a continuous bundle map.
\end{prp}

\noindent
This proposition follows from Lemma~\ref{LTnorms_lmm},
as demonstrated by the gluing construction of \cite[Section~3]{LT} for 
$(J,\nu)$-holomorphic maps instead of bundle sections.
The greatly simplified, linear version of this construction
(without the quadratic term of the first equation in the proof of 
\cite[Proposition~3.4]{LT}) provides
local trivializations~for the projection
$$\cE(V)^{\vph}\!\times\!\R^N\supset\ker D_{\Th}\lra U_{\Th}$$
around every point $\t\!\in\!\De_{\R}$ and thus that $U_{\Th}\!\subset\!\De_{\R}$ is open.
This construction in the $N\!=\!0$ case and without restricting to the invariant sections
in carried out in \cite[Section~3.2]{g1cone}.
By the smooth support assumption on~$\Th$ (which is in line with the setup in~\cite{LT}), 
the reasoning in~\cite{g1cone} applies in the general case,
including for invariant sections, and implies the first statement of
the proposition.
The claim $U_{\Th\circ R}\!\subset\!U_{\Th}$ is immediate from the definitions.
Factoring $R$ through its graph reduces the remaining claim of the proposition
to the case that $R$ has constant rank.
This case in turn reduces to showing  that $\ker D_{\Th\circ R}$ is a subbundle of 
$\ker D_{\Th}|_{U_{\Th\circ R}}$ if $R$ is induced by the inclusion 
of a coordinate subspace of~$\R^N$.
This follows readily from the setup in \cite[Section~3.2]{g1cone}.\\

\noindent
By the first statement of Proposition~\ref{LTtopol_prp}, the total space of the projection 
$$\det D_{\Th}\big|_{U_{\Th}}\!=\!\La_{\R}^{\ind\,D_{(V,\vph)}+N}\!(\ker D_{\Th})
\lra U_{\Th}$$
is a real line bundle with a natural topology.
The isomorphisms~\eref{FDsum_e} associated with short exact sequences of vector spaces
as in~\eref{sesVdfn_e}
induce continuous isomorphisms of the same kind for short exact sequences of vector bundles.
Along with the second statement of Proposition~\ref{LTtopol_prp}, 
this implies that the bundle isomorphisms
\begin{alignat*}{2}
\wh\cI_{\Th_2;D_{\Th_1}}\!:
\det D_{\Th_1}&\lra \det D_{\Th_1\oplus\Th_2}\big|_{U_{\Th_1}}, &\quad
\wh\cI_{\Th_2;D_{\Th_1}}(\t,\vp)&=\big(\t,\wh\cI_{(\Th_2)_{\t};D_{\Th_1};\t}(\vp)\big),\\
\cR_{\Th_1,\Th_2}\!:
\det D_{\Th_1\oplus\Th_2}\big|_{U_{\Th_1\oplus\Th_2}}&\lra
\det D_{\Th_2\oplus\Th_1}\big|_{U_{\Th_1\oplus\Th_2}}, &\quad
\cR_{\Th_1,\Th_2}(\t,\vp)&=\big(\t,\cR_{\Th_1,\Th_2;\t}(\vp)\big),
\end{alignat*}
are continuous with respect to the natural topologies on the domains and targets
for all continuous compactly supported bundle homomorphisms~$\Th_1$ and~$\Th_2$ 
as in~\eref{Th12bnddfn_e}.
Combining this with~\eref{whcIcIprp_e3} and~\eref{whcIcIprp_e4}, 
we obtain the following.

\begin{crl}\label{detLBoverlap_crl}
For all continuous compactly supported bundle homomorphisms~$\Th_1$ and~$\Th_2$
as in~\eref{Th12bnddfn_e}, the bundle map 
$$\wh\cI_{\Th_1\Th_2}\!:\det D_{\Th_2}\big|_{U_{\Th_1}\cap U_{\Th_2}}\lra
\det D_{\Th_1}\big|_{U_{\Th_1}\cap U_{\Th_2}}, \quad
\wh\cI_{\Th_1\Th_2}(\t,\vp)=\big(\t,
\wh\cI_{\Th_1;\t}\big(\wh\cI_{\Th_2;\t}^{-1}(\vp)\big)\!\big),$$
is continuous with respect to the natural topologies on 
its domain and~target.
\end{crl}

\noindent
We now topologize the total space of the projection~\eref{detLB_e}.
Let $\t^*\!\in\!\De_{\R}$.
By the elliptic regularity of~$D_{\t^*}$, there exists a homomorphism
\BE{Thtdfn_e}\Th_{\t^*}\!: \R^N\lra \cE_{\t^*}^{0,1}(V)^{\vph},\qquad
\Th_{\t^*}(v)=\ze_{\t^*,v},\EE
such that every $(0,1)$-form $\ze_{\t^*,v}$ is smooth and supported in~$\Si_{\t^*}^*$ and 
the operator 
\BE{Dtstardfn_e}\big(D_{\t^*}\big)_{\!\Th_{\t^*}}\!: 
\cE_{\t^*}(V)^{\vph}\!\oplus\!\R^N\lra\cE_{\t^*}^{0,1}(V)^{\vph},
\qquad \big(D_{\t^*}\big)_{\!\Th_{\t^*}}(\xi,v)=D_{\t^*}\xi\!+\!\ze_{\t^*,v},\EE
is surjective.
Choose  a continuous compactly supported homomorphism~$\Th$ as 
in~\eref{Thdfn_e} which restricts to~\eref{Thtdfn_e} over~$\t^*$.
By~\eref{Dtstardfn_e}, $\t^*\!\in\!U_{\Th}$.
We topologize $\det D_{(V,\vph)}|_{U_{\Th}}$ by requiring that the bundle map 
\BE{cIThD_e2}\wh\cI_{\Th}\!: \det D_{(V,\vph)}\big|_{U_{\Th}}\lra \det D_{\Th}\big|_{U_{\Th}}, 
\qquad \wh\cI_{\Th}(\t,\vp)=\big(\t,\wh\cI_{\Th;\t}(\vp)\big),\EE
be a homeomorphism with respect to the natural topology on its target.
By Corollary~\ref{detLBoverlap_crl}, the overlaps between these maps are continuous.
Thus, these maps define a topology on the total space of the projection~\eref{detLB_e}.\\

\noindent
It is immediate from the construction that the resulting topologies on the determinant
line bundle corresponding to different real bundle pairs~$(V,\vph)$ satisfy~\ref{dNatI_it}
on page~\pageref{dNatI_it}.
By the proof of \cite[Corollary~5.4]{detLB}, these topologies also satisfy~\ref{dses_it}.

\begin{rmk}\label{detLB_rmk}
Other topologies on the total space of the projection~\eref{detLB_e}
with good properties can be obtained by modifying the isomorphisms~\eref{sum}
associated with exact sequences~\eref{ETles_e} as described above Theorem~2 
in \cite[Section~3.2]{detLB}.
This would modify the topologizing maps~\eref{cIThD_e2} and would thus generally change
the topology on the total space of~\eref{detLB_e}.
The two topologies would differ by a homeomorphism which restricts to the identity
over the points $\t\!\in\!\De_{\R}$ such that $D_{\t}$ is surjective. 
\end{rmk}

\begin{rmk}\label{kerDTh_rmk}
A connection $\na$ as above~\eref{detLB_e} induces a splitting 
$$TV\approx \pi^*V\oplus \pi^*T\cU\,.$$
The complex structure $\fI$ in the fibers of $\pi\!:V\!\lra\!\cU$, 
the complex structure~$\fJ_{\cU}$ on~$\cU$, and 
the zeroth-order deformation term~\eref{AcUdfn_e} induce a complex structure~$J_V$
on the total space of~$V$ by
$$ J_V\big|_v\big(\dot{v},\dot{x}\big)=
\big(\fI\dot{v}\!+\!\{Av\}(\dot{x}),\fJ_{\cU}\dot{x}\big).$$
For each $\t\!\in\!\De_{\R}$, $D_{\t}$ is then the $\dbar_{J_V}$-operator on
the space of real maps from $(\Si_{\t},\fJ_{\t})$ to the total space of~$V$.
In particular, $\ker D_{\t}$ consists of real $(J_V,\fJ_{\t})$-holomorphic maps.
By the smooth support assumption on~$\Th$, 
the subspace topology on $\ker D_{\Th}$ can thus be described in terms of convergence
of sequences similarly to Definition~\ref{GromConv_dfn2}.
\end{rmk}

\vspace{.2in}

\noindent
{\it Institut de Math\'ematiques de Jussieu - Paris Rive Gauche,
Universit\'e Pierre et Marie Curie, 
4~Place Jussieu,
75252 Paris Cedex 5,
France\\
penka.georgieva@imj-prg.fr}\\

\noindent
{\it Department of Mathematics, Stony Brook University, Stony Brook, NY 11794\\
azinger@math.stonybrook.edu}


\end{document}